\documentclass[11pt,oneside,english]{article}
\usepackage[T1]{fontenc}
\usepackage[latin9]{inputenc}
\usepackage{babel}
\usepackage{mathtools}
\usepackage{amstext}
\usepackage{amsthm}
\usepackage{amssymb}
\usepackage{stmaryrd}
\usepackage[unicode=true,pdfusetitle,
 bookmarks=true,bookmarksnumbered=false,bookmarksopen=false,
 breaklinks=false,pdfborder={0 0 1},backref=false,colorlinks=false]
 {hyperref}

 \usepackage{changes}

\makeatletter
\numberwithin{equation}{section}
\numberwithin{figure}{section}
\theoremstyle{plain}
\newtheorem{thm}{\protect\theoremname}[section]
\theoremstyle{definition}
\newtheorem{defn}[thm]{\protect\definitionname}
\theoremstyle{remark}
\newtheorem{rem}[thm]{\protect\remarkname}
\theoremstyle{plain}
\newtheorem{assumption}[thm]{\protect\assumptionname}
\theoremstyle{plain}
\newtheorem{prop}[thm]{\protect\propositionname}
\theoremstyle{plain}
\newtheorem{lem}[thm]{\protect\lemmaname}
\theoremstyle{plain}
\newtheorem{cor}[thm]{\protect\corollaryname}

\newtheorem*{theorem}{Theorem}

\usepackage[a4paper]{geometry}
\usepackage{amsfonts}

\usepackage{times}

\allowdisplaybreaks[3]

\makeatother

\providecommand{\assumptionname}{Assumption}
\providecommand{\corollaryname}{Corollary}
\providecommand{\definitionname}{Definition}
\providecommand{\lemmaname}{Lemma}
\providecommand{\propositionname}{Proposition}
\providecommand{\remarkname}{Remark}
\providecommand{\theoremname}{Theorem}

\begin{document}
\title{Well-posedness of the obstacle problem for generalized Dean--Kawasaki equation\thanks{
The first author is also supported by the National Science Foundation of Shanghai (Grant No.25ZR1402408). The second author is supported by Open Foundation of the State Key Laboratory of Mathematical Sciences (Grant No. 09), Beijing Institute of Technology Research Fund Program for Young Scholars and MIIT Key Laboratory of Mathematical Theory and Computation in Information Security.} }
\author{Ruoyang Liu\thanks{Department of Mathematics, Shanghai Normal University, Shanghai, China (Email: {\tt ryliu@shnu.edu.cn}).}
\and
Rangrang Zhang \thanks{ Corresponding Author, School of Mathematics and Statistics,
Beijing Institute of Technology, Beijing 100081, China (Email:{\tt rrzhang@amss.ac.cn}).}
}
\date{}
\maketitle

\vspace{-1ex}
\begin{abstract}

We investigate the obstacle problem for generalized Dean--Kawasaki equations driven by correlated conservative noise, establishing the existence, uniqueness, and $L^1$-stability of stochastic kinetic solutions.
Our core strategy combines a kinetic characterization of the Skorokhod condition with a precise description of the reflection measure term associated with the obstacle, in which the barrier substitutes the solution.
This formulation makes the reflection mechanism explicit at the kinetic level and yields a stable framework adapted to $L^1$ doubling of variables method.
Consequently, under a merely continuous obstacle and the same structural assumptions as in the obstacle-free setting, we obtain well-posedness over the full porous-medium regime, covering degenerate diffusion and the critical square-root noise coefficient.
This extends the existing theory of obstacle problems for stochastic partial differential equations to a class of degenerate equations with singular diffusion coefficients.

\medskip

\noindent\textbf{Keywords}: Dean--Kawasaki equation,
kinetic solutions, obstacle problem, conservative noise
\medskip

\noindent\textbf{MSC (2020):} {60H15; 35R60; 35K86; 35R35. }

\end{abstract}

\section{Introduction}
In this paper, we study the well-posedness and stability of the obstacle problem for generalized Dean--Kawasaki equations with correlated conservative noise.
More precisely, given a nonnegative upper obstacle $\psi=\psi(x,t)$, we seek a nonnegative random field $\{u(x,t): t\geq0,\ x\in\mathbb{T}^d\}$ which is constrained by the unilateral condition $u\leq\psi$ and obeys the generalized Dean--Kawasaki equations
\begin{equation}
\partial_{t}u=\Delta\Phi(u)-\nabla\cdot(\sigma(u)\circ\dot{\xi}^{F}+g(u))\label{eq:main}
\end{equation}
in the non-contact set $\{(x,t):u(x,t)<\psi(x,t)\}$. Here $\dot{\xi}^{F}$ denotes a spatially correlated, temporally white vector-valued noise, and the stochastic integral is interpreted in the Stratonovich sense.

The generalized Dean--Kawasaki equation is a conservative stochastic partial differential equations (SPDEs) postulated by fluctuating hydrodynamics, which can be viewed as mesoscopic model for non-equilibrium fluctuations. A basic example is the classical Dean--Kawasaki equation with correlated noise,
\begin{align*}
\partial_{t}u=\Delta u-\nabla\cdot(\sqrt{u}\circ\dot{\xi}^{F})
\end{align*}
which can capture the fluctuation of Brownian particles (see \cite{dean1996langevin,kawasaki1998microscopic}). Additionally,
 conservative SPDEs of the form
\begin{align*}
\partial_{t}u=\Delta\Phi(u)-\nabla\cdot \Phi(u)-\nabla\cdot(\Phi^{\frac{1}{2}}(u)\circ\dot{\xi}^{F})
\end{align*}
can serve as ``mesoscopic'' models of the asymmetric zero-range process (see \cite{dirr2016entropic,fehrman2024well,fehrman2023non}).
In the present paper, the coefficients $\Phi,\sigma,$ and $g$ are allowed to be sufficiently general.
In particular, our setting includes the porous-medium family
\[\Phi(r)=r^m,\qquad m\in(0,\infty),\]
 which encompasses the slow-diffusion regime $m>1$ and the fast-diffusion regime $m<1$.
Consequently, the equation \eqref{eq:main} considered herein combine two singular features: degenerate diffusion and conservative noise with a low-regularity coefficient.

Obstacle problems for deterministic PDEs have been extensively studied within the framework of variational inequalities; see, for instance, \cite{lions1967variational,brezis1972problemes,bensoussan1982applications,alt1983quasilinear,caffarelli1998obstacle,bogelein2011degenerate,bogelein2015obstacle,scheven2015existence,caffarelli2017obstacle,athanasopoulos2019parabolic}.
In the stochastic case, Haussmann and Pardoux \cite{haussmann1989stochastic} first  formulated the obstacle problem through the framework of the Skorokhod problem.
A key feature of this approach is the introduction of a nonnegative Radon measure $\nu$ in \eqref{eq:main}, which means
\begin{equation}
\partial_{t}u=\Delta\Phi(u)-\nabla\cdot(\sigma(u)\circ\dot{\xi}^{F}+g(u))-\nu,\label{eq:SVI}
\end{equation}
together with the Skorokhod condition
\begin{equation}
\langle u-\psi,\nu\rangle=0            \quad(\text{a.s.})
\label{eq:skorohod}
\end{equation}
The measure $\nu$ plays the role of a minimal downward forcing which keeps the solution below the upper barrier $\psi$, while \eqref{eq:skorohod} ensures that this forcing acts only on the contact set $\{u=\psi\}$. Then, the solution to the obstacle problem becomes a dual $(u,\nu)$.
This viewpoint has become a standard framework for studying obstacle problems for SPDEs \cite{rockner2013stochastic,denis2014obstacle,yang2019obstacle,brzezniak2023reflection,liu2024obstacle}, and it has been successfully applied to the heat equation driven by space-time white noise \cite{nualart1992white,donati1993white,xu2009white} as well as to quasilinear SPDEs \cite{matoussi2010obstacle,yang2019obstacle,brzezniak2023reflection}.
However, for degenerate stochastic equations, results within the Skorokhod framework are relatively sparse, and strong assumptions on the coefficients and the barrier are typically required.
For instance, Yang and Zhang \cite{yang2019obstacle} studied quasilinear degenerate equations with Lipschitz nonlinearities;
R\"ockner, Wang, and Zhang \cite{rockner2013stochastic} considered the stochastic porous medium equations with the constant lower obstacle $\psi\equiv0$;
and the first author and Tang \cite{liu2024obstacle} obtained the well-posedness under a special assumption on the barrier.

For the degenerate conservative equation \eqref{eq:main} with singular coefficients considered in this paper, however, it seems that \eqref{eq:skorohod} does not provide a suitable starting point.
The main difficulty is that the a priori estimates available in our setting (Lemma \ref{lem:priori-estimates}) only yield $\nu$ as a Radon measure, while the solution $u$ lacks the regularity (such as continuity or quasi-continuity\cite{denis2014obstacle}) required to give a strong interpretation of $\langle u-\psi,\nu\rangle$ and to pass to the limit in penalized approximations (see Section \ref{sec:exist} for more details).
To overcome this difficulty, one possible strategy is to introduce an appropriate notion of solution that incorporates the Skorokhod condition only implicitly. Recently, Du and the first author \cite{du2024well} studied the obstacle problem for stochastic nonlinear diffusion equations by using a new entropy formulation, in which the Skorokhod condition is bypassed through a suitable modification of the entropy inequality.
Specifically, for each convex function $\eta\in C^2(\mathbb{R})$ with compact support of $\eta^{\prime\prime}$, they tested the equation against $\eta^\prime(u)$, and replaced $\eta^\prime(u)\nu$ with $\eta^\prime(\psi)\nu$.
This substitution serves two purposes: it circumvents the definitional difficulty arising from the unmatched regularity of $u$ and $\nu$, while simultaneously incorporating the Skorokhod condition directly into the entropy inequality.
However, the entropy formulation is essentially restricted to cases where $\sigma$ in \eqref{eq:main} is Lipschitz continuous (see e.g. \cite{dareiotis2020nonlinear,fehrman2024well}), the barrier is twice continuously differentiable and cannot handle singular noise coefficient, such as those that are only locally $1/2$-H\"{o}lder continuous, as considered in the present paper.

\subsection{Main results}

We aim to lift the idea of \cite{du2024well} to the kinetic level to resolve the critical square-root diffusion coefficient in the generalized Dean--Kawasaki equation \eqref{eq:main} and reduce the regularity assumption on the barrier.
Our starting point is the kinetic formulation developed in \cite{fehrman2024well} for \eqref{eq:main} without obstacles.
By introducing a non-negative obstacle defect measure to capture
the effect of the barrier constraint, we provide a finer kinetic characterization of the Skorokhod condition.
Importantly, this defect measure can be reformulated as a component of the kinetic measure, thus the obstacle problem is incorporated into a genuinely kinetic framework that is robust under penalized approximations and well adapted to the $L^1$ doubling-of-variables method.  We provide a detailed explanation in Section
\ref{sec-kinetic}.

Under the assumption that the barrier $\psi$ is merely continuous in both time and space, and without imposing any additional conditions on the coefficients $\Phi, \sigma, g$ beyond those employed in \cite{fehrman2024well}, we establish the well-posedness of the obstacle problem for the equation \eqref{eq:main}. It reads as follows.
\begin{theorem}
(Theorems \ref{thm:uniqueness}, \ref{thm:existence}, Corollary \ref{cor-1}) Let Assumption \ref{assu:assum for F} and Assumption \ref{assu:assu for unique} hold for $\dot{\xi}^{F}, \psi$, $u_{\mathrm{init}}, \Phi, \sigma, g$.  Then the obstacle problem for \eqref{eq:main} has at most one stochastic kinetic solution with the initial value $u_{\mathrm{init}}$ in the sense of Definition \ref{def:def of stochastic kinetic solution}. Moreover, if $(u_1,\nu_1)$ and $(u_2,\nu_2)$ are two stochastic kinetic solutions with initial data $u_{1,\mathrm{init}}$ and $u_{2,\mathrm{init}}$, then
almost surely for every $t\in(0,T)$,
\begin{align*}
\|u_1(t)-u_{2}(t)\|_{L^1(\mathbb{T}^d)}
\leq\|u_{1,\mathrm{init}}-u_{2,\mathrm{init}}\|_{L^1(\mathbb{T}^d)}.
\end{align*}
If further Assumption \ref{assu:assu for existence} is in force for some $p\in [2, \infty)$, there exists a unique stochastic kinetic solution $(u,\nu)$ in the sense of Definition \ref{def:def of stochastic kinetic solution}.

The same results also hold for the lower obstacle problem for \eqref{eq:main} (the constraint is changed to $u\geq\psi$), under the additional assumptions $|\sigma(r)|\leq C(1+|r|)$ and $p=2$. This includes the classical Dean--Kawasaki equation
\begin{align*}
\mathrm{d}u & =\Delta\Phi(u)\mathrm{d}t+\nabla\cdot\sqrt{\Phi(u)}\circ\mathrm{d}\xi^{F},
\end{align*}
provided that $|\Phi(r)|\leq C(1+|r|^{2})$ for all $r\in\mathbb{R}$, and thus in particular covers the case $\Phi(r)=r$.

\end{theorem}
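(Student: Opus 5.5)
The plan follows the obstacle-free kinetic theory of Fehrman--Gess for \eqref{eq:main}. The two new points are the obstacle defect measure and the substitution of $\psi$ for $u$ in the reflection term. We treat uniqueness/$L^1$-contraction, existence, and the lower obstacle in that order.

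\emph{Uniqueness and $L^1$-stability.} Let $\chi_i(x,\xi,t)=\mathbf 1_{0<\xi<u_i}$ be the kinetic functions of two solutions $(u_i,\nu_i)$. We use the doubling-of-variables identity
\[
\|u_1-u_2\|_{L^1}=\int (\chi_1+\chi_2-2\chi_1\chi_2)\,\mathrm dx\,\mathrm d\xi ,
\]
regularized by convolution kernels $\rho_\varepsilon(x-y)\varphi_\delta(\xi-\eta)$. We apply It\^o's formula to the mollified products $\chi_i^{\varepsilon,\delta}$ and a cutoff in $\xi$ away from $0$ and $\infty$, as in the obstacle-free proof. All diffusion, noise and It\^o-correction terms are handled exactly as before: the entropy defect measures are controlled by the parabolic defect through the local regularity of $\Phi^{1/2}(u)$, and the critical square-root coefficient enters only through the Assumption~\ref{assu:assu for unique} bounds on $\sigma^2/\Phi'$ and $(\sigma\sigma')^2/\Phi'$.

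The only new contribution is the obstacle term. In the kinetic equation it appears as $\delta_{\psi(x,t)}(\xi)\,\nu_i$, which Definition~\ref{def:def of stochastic kinetic solution} absorbs into the kinetic measure. Tested against $1-2\chi_j$, it gives
\[
\int \nu_1(\mathrm dx,\mathrm dt)\,\bigl(1-2\chi_2^{\varepsilon,\delta}(x,\psi(x,t),t)\bigr)+(1\leftrightarrow 2).
\]
Since $u_2\le\psi$, the indicator $\chi_2(\cdot,\psi)$ vanishes except at $u_2=\psi$. Hence, after regularization the integrand is $\le 1+o(1)$ uniformly, and the term carries a favourable sign: it is the negative contribution $-\nu_1$ appearing in $\partial_t\int u_1$, paired against $\operatorname{sgn}(u_1-u_2)$, with the Skorokhod mechanism forcing the sign.

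The mixed terms involve $\psi$ evaluated at $x$ versus $y$. These are where mere continuity of $\psi$ is used. The idea is to pass $\delta\to 0$ \emph{after} $\varepsilon\to0$, or with $\varepsilon\ll\delta$, so that the modulus of continuity of $\psi$ at scale $\varepsilon$ is swallowed by $\delta$. The remainder is then bounded by $\omega_\psi(\varepsilon)\delta^{-1}\nu_i(\mathbb T^d\times[0,T])$, and the a priori mass bound on $\nu_i$ from Lemma~\ref{lem:priori-estimates} controls it. We expect this sign and continuity step to be the main obstacle. The first thing to try is to write the pairing as an integral against $\mathbf 1_{\{\xi>u_2\}}$ near $\xi=\psi$ and then use monotonicity. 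Finally we remove the cutoffs using the vanishing of the kinetic measures at $\xi\to0,\infty$ and take expectation, then pathwise a.s. via the stochastic integral having mean zero and a continuity argument.

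\emph{Existence.} We penalize the constraint: $\partial_t u^n=\Delta\Phi(u^n)-\nabla\cdot(\sigma(u^n)\circ\dot\xi^F+g(u^n))-n(u^n-\psi)^+$, after also regularizing coefficients and noise as in the obstacle-free setting. We derive $n$-uniform estimates (Lemma~\ref{lem:priori-estimates}): $L^p$ bounds, $\Phi^{1/2}(u^n)$ in $H^1$ locally, and $\nu^n:=n(u^n-\psi)^+$ bounded in total mass. We also need $\mathbb E\int (u^n-\psi)^+\to0$ to recover $u\le\psi$. Tightness via fractional time regularity and Skorokhod--Jakubowski then gives a martingale limit $(\tilde u,\tilde\nu)$ with $\nu^n\rightharpoonup\nu$ weakly as measures.

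The key identification is $\eta'(u^n)\nu^n\approx\eta'(\psi)\nu^n$. On the support of $\nu^n$ we have $u^n>\psi$ and $|u^n-\psi|\le \nu^n/n$, and $\int (\eta'(u^n)-\eta'(\psi))\nu^n\le \|\eta''\|_\infty n\int((u^n-\psi)^+)^2$, which we show tends to zero with an estimate on the penalty. Continuity of $\psi$ then passes $\eta'(\psi)\nu^n\to\eta'(\psi)\nu$. Pathwise uniqueness combined with Gy\"ongy--Krylov upgrades this to a probabilistically strong solution.

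\emph{Lower obstacle.} We apply the same argument with reversed signs. The extra hypotheses $|\sigma(r)|\le C(1+|r|)$ and $p=2$ are needed because the penalization $+n(\psi-u)^+$ pushes mass upward, so the uniform $L^2$ and measure-mass bounds must come from linear growth rather than from $u\le\psi$. For Dean--Kawasaki, $\sigma=\sqrt{\Phi}$ with $\Phi\lesssim1+r^2$ satisfies this, which includes $\Phi(r)=r$.
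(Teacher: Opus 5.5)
\textbf{Gap 1: the identification step in your existence argument fails.} You bound $\int(\eta'(u^n)-\eta'(\psi))\,\nu^n$ by $\|\eta''\|_\infty\, n\int((u^n-\psi)^+)^2$ and then assert that this tends to zero. Nothing gives that. The penalization estimate of Lemma~\ref{lem:priori-estimates} only says $n\,\mathbb{E}\|(u^n-\psi)^+\|^2_{L^2(Q_T)}\le C$ uniformly in $n$.

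The quantity $n\int((u^n-\psi)^+)^2=\int(u^n-\psi)^+\,\nu^n$ is exactly the penalized Skorokhod term. Making it vanish requires strong convergence of $u^n$ (e.g.\ in $C(Q_T)$) that is compatible with the merely weak convergence of $\nu^n$ as measures. That is precisely what is unavailable for degenerate diffusion with a square-root noise coefficient.

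The paper's central idea is not to kill this term. It writes $(\phi(x,u^n)-\phi(x,\psi))\nu^n=\int_0^1\partial_\xi\phi(x,\psi+\theta(u^n-\psi))\,\mathrm{d}\theta\; n[(u^n-\psi)^+]^2$. The error is therefore $\int\partial_\xi\phi\,\mathrm{d}\lambda_n$ for a nonnegative measure $\lambda_n$ of uniformly bounded mass, supported on the levels between $\psi$ and $u^n$. Because it enters as $\partial_\xi\lambda_n$, it has the same sign and structure as a parabolic defect measure. Its weak limit $\lambda$ is kept and absorbed into $q=m+\lambda$, and one then checks that $q$ still satisfies (vi)--(vii) of Definition~\ref{def:def of stochastic kinetic solution}, which needs a tail bound for $\lambda_n$ on $\{\xi\in[N,N+1]\}$.

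This also corrects your remark in the uniqueness part: what is absorbed into $q$ is $\lambda$, not $\delta_{\psi}(\xi)\nu$. The term $-\int\phi(x,\psi,s)\,\mathrm{d}\nu$ stays separate in (viii). Without $\lambda$, your limiting equation is missing the $\partial_\xi\lambda$ term, and the identification is unproved.

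\textbf{Gap 2: the uniqueness part treats $t\mapsto\chi_i$ as continuous.} You use It\^o's formula for the mollified products and ``a continuity argument'' at the end. But $\nu_i$, and hence $q_i$, may have atoms in time, so $\chi_i$ jumps. This has several consequences.
\begin{itemize}
\item \emph{Jumps.} You must work with the one-sided versions $\chi_i^{\pm}$ of Proposition~\ref{prop:atom point in t}. The finite-variation parts need the product rule $\chi_2^{+}\,\mathrm{d}\chi_1^{+}+\chi_1^{-}\,\mathrm{d}\chi_2^{+}$. The two sign facts you rely on, $\chi_i^{\pm}\le\chi_\psi$ in the obstacle term and $\partial_\xi\chi_i^{\pm}\le 0$ in the measure term, must be re-proved at atom times; this uses time continuity of $\psi$.
\item \emph{Initial time.} You cannot reach $t=0$ by continuity. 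The paper runs the argument between continuity times $s<t$, averages over $s\in(0,\tau)$, and uses the time-averaged initial condition (iii). Consequently your existence proof must also verify (iii), which the paper does through an energy identity and Lemma~\ref{lem:converge to u init}.
\item \emph{Pathwise contraction.} Taking expectations to remove the stochastic integral only gives the contraction in mean. For the pathwise statement, the martingale term has to be shown to vanish pathwise in the limits, as in Fehrman--Gess.
\item \emph{Uniqueness of $\nu$.} You never show $\nu_1=\nu_2$, which uniqueness of the pair requires. The paper tests (viii) with $\rho(x)\alpha(t)\zeta_N(\xi)\varphi_\beta(\xi)$ and uses the vanishing of $q_i$ near $\xi=0$ (Proposition~\ref{prop:proposition for limit measure}, where the mass identity (ii) enters) and as $\xi\to\infty$.
\end{itemize}

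\textbf{The obstacle-term sign argument is mis-stated.} The bound ``integrand $\le 1+o(1)$'' points the wrong way. Also, $\mathrm{sgn}(u_1-u_2)$ plays no role, since the term only sees $\psi$. The correct mechanism is to bound $\chi_2^{+}$ by the larger $\chi_\psi$. The cross term then converges to exactly one half of the diagonal term, by evenness of $\kappa_1^\delta$ and oddness of its primitive. So the factor $2$ produces an exact cancellation rather than a strict sign. Your order of limits (spatial scale before $\delta$) and your remainder $\omega_\psi(\varepsilon)\delta^{-1}\nu_i(\mathbb{T}^d\times[0,T])$ are the right ingredients for this computation.
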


We present two motivating examples to illustrate that \eqref{eq:SVI} provides a natural mathematical framework for stochastic transport under local upper-capacity constraints. As such constraints arise naturally in both interacting particle systems and reactive transport, our results are applicable across these contexts.
The first example is drawn from weakly asymmetric zero-range-type processes with a time-dependent capacity field.
In the absence of such capacity constraints, the mesoscopic evolution is expected to combine nonlinear diffusion with conservative stochastic flux \cite{dirr2016entropic,fehrman2023non,fehrman2024well}.
If one further assumes that any excess mass above the local capacity is removed, the macroscopic density is constrained by $u\le \psi$, while the measure $\nu$ records the cumulative removed mass.
Another example stems from reactive transport in porous media, where $u$ denotes the concentration of a dissolved species and $\psi$ represents a local saturation or solubility threshold.
When precipitation or immobilization is much faster than transport, the dissolved concentration is effectively kept below $\psi$, and the measure $\nu$ describes the mass transferred from the mobile phase to an immobile phase \cite{appelo2004geochemistry,steefel2005reactive,krautle2011semismooth,banshoya2023simulation}.

\subsection{Kinetic characterization of the Skorokhod condition}\label{sec-kinetic}
In this subsection, we explain the main idea behind our kinetic formulation of the obstacle problem.

\textbf{(I)}\quad We first review the verification of the formal Skorokhod condition (may be not well-defined):
\[
\int_0^T\int_{\mathbb{T}^d}(u(x,t)-\psi(x,t))\nu(\mathrm{d}x,\mathrm{d}t)=0.
\]
Taking the upper obstacle problem as an example. Since $u\leq\psi$ and measure $\nu$ is nonnegative, we have
\[
\int_0^T\int_{\mathbb{T}^d}(u(x,t)-\psi(x,t))\nu(\mathrm{d}x,\mathrm{d}t)\leq0.
\]
The difficult part is to prove the reverse inequality, which is obtained through penalization approximation.
In the classical penalty method, one considers the penalized equation
\begin{equation}
\partial_{t}u_\varepsilon=\Delta\Phi(u_\varepsilon)-\nabla\cdot(\sigma(u_\varepsilon)\circ\dot{\xi}^{F}+g(u_\varepsilon))-\frac1\varepsilon (u_\varepsilon-\psi)^+.
\end{equation}
Defining the penalty term $\nu_\varepsilon:= (u_\varepsilon-\psi)^+/\varepsilon$.
If one can establish a strong convergence of $u_\varepsilon$ to a $u$ in $C(Q_T)$ and a weak convergence of the penalized term $\frac{1}{\varepsilon}(u_\varepsilon-\psi)^+$ to a $\nu$ in the space of nonnegative Borel measures $\mathcal{M}(Q_T)$, then it is straightforward to deduce that the ``Skorokhod term'' $\langle u_\varepsilon-\psi,\nu_\varepsilon\rangle$ satisfies
\begin{equation}\label{eq:reverse SKorokhod}
0\leq\int_0^T\int_{\mathbb T^d}
(u_\varepsilon-\psi)\nu_\varepsilon\mathrm dx\mathrm dt\overset{{\varepsilon\to0}}{\longrightarrow} \int_0^T\int_{\mathbb T^d}(u-\psi)\nu(\mathrm dx,\mathrm dt).
\end{equation}
This, together with the first inequality, yields the Skorokhod condition.

However, when applying this method to more general equations, it encounters limitations in establishing the strong convergence of $u_\varepsilon$ required to prove \eqref{eq:reverse SKorokhod}, especially for the degenerate equation with singular coefficients considered in this paper.

\textbf{(II)}\quad We now explain how the obstacle condition is represented in the kinetic formulation.
The key idea is not to impose the formal Skorokhod condition directly, but rather to identify the ``kinetic Skorokhod term'': an object generated by the penalization procedure that serves as the kinetic counterpart of the classical condition.

Compared with the weak formulation, the renormalized kinetic formulation tests the equation not only with respect to the spatial variable, but also looks at different level sets of the solution $u$.
In other words, it studies the sets $\{(x,t): u(x,t)>\xi\}$ for all levels $\xi$.
This point of view is useful for singular equations, as it avoids working directly on $u$ when $u$ is equal to some singular values. Consequently, it is well suited for obstacle problems, since an obstacle condition is exactly a restriction on the possible levels of $u$.

More precisely, the kinetic function of $u_\varepsilon$ is defined as
\[
\chi_\varepsilon(x,t,\xi)=\mathbf 1_{\{0<\xi<u_\varepsilon(x,t)\}}.
\]
Here, $\xi$ is the level parameter.
The function $\chi$ records whether the value of $u_\varepsilon$ is above the level $\xi$.
Note that for a test function $\phi\in C_c^\infty(\mathbb T^d\times(0,\infty))$, we have
\[
\phi(x,u_\varepsilon)=\int_0^\infty\phi(x,\xi)\chi_\varepsilon(x,t,\xi)\mathrm{d}\xi.
\]
Then, we can rewrite the penalized equation into a kinetic form with the test function $\phi(x,\xi)$.
See \eqref{eq:kinetic for penalty} or \cite{fehrman2024well} for further details.
Here, we focus only on the new contribution generated by the penalization term $\nu_\varepsilon$.
The penalty term would formally give
\[
\int_0^T\int_{\mathbb T^d}
\phi(x,u_\varepsilon)\nu_\varepsilon\,\mathrm dx\mathrm dt .
\]
Our idea is to replace this term by
\[
\int_0^T\int_{\mathbb T^d}
\phi(x,\psi)\nu_\varepsilon\,\mathrm dx\mathrm dt,
\]
This replacement is crucial for the limiting procedure: the resulting term is well suited for passing to the limit $\varepsilon\to0$, even when $\nu_\varepsilon$ converges only weakly in the space of measures, owing to the continuity of $\phi(x,\psi)$.
Furthermore, the resulting difference
\begin{equation}\label{eq:kinetic-obstacle-difference}
\int_0^T\int_{\mathbb T^d}\big(\phi(x,u_\varepsilon)-\phi(x,\psi)\big)\nu_\varepsilon\mathrm dx\mathrm dt ,
\end{equation}
 can be interpreted as a penalized form of the kinetic Skorokhod term.
Indeed, when $\phi(x,r)=r$, this term is exactly  the ``Skorokhod term'' appearing in the classical verification of the Skorokhod condition.
For a general test function $\phi$, however, \eqref{eq:kinetic-obstacle-difference} provides a whole family of such quantities, with one corresponding to each kinetic level.
This is the reason why it contains more precise information than the scalar condition obtained from the linear test function.

Using the fundamental theorem of calculus in the kinetic variable, we have
\[
\begin{aligned}
&\int_0^T\int_{\mathbb T^d}
\big(\phi(x,u_\varepsilon)-\phi(x,\psi)\big)\nu_\varepsilon
\,\mathrm dx\mathrm dt     \\
&=
\int_0^T\int_{\mathbb T^d}\int_0^1
\partial_\xi\phi\big(x,\psi+\theta(u_\varepsilon-\psi)\big)
\frac1\varepsilon\big[(u_\varepsilon-\psi)^+\big]^2
\,\mathrm d\theta\,\mathrm dx\mathrm dt .
\end{aligned}
\]
If we define the measure
\[
\lambda_\varepsilon(\mathrm dx,\mathrm d\xi,\mathrm dt):=\int_0^1\frac1\varepsilon\big[(u_\varepsilon-\psi)^+\big]^2\delta_0\big(\xi-\psi-\theta(u_\varepsilon-\psi)\big)\mathrm d\theta\,\mathrm dx\mathrm dt\mathrm d\xi.
\]
Then, we obtain
\begin{equation}\label{eq:lambda-epsilon-action}
\int_0^T\int_{\mathbb T^d}
\big(\phi(x,u_\varepsilon)-\phi(x,\psi)\big)\nu_\varepsilon
\,\mathrm dx\mathrm dt
=
\int_0^T\int_{\mathbb T^d}\int_0^\infty
\partial_\xi\phi(x,\xi)\,
\lambda_\varepsilon(\mathrm dx,\mathrm d\xi,\mathrm dt).
\end{equation}
Thus, $\lambda_\varepsilon$ is not introduced artificially. Instead, it is precisely the measure which records the level-by-level cost of replacing $\phi(x,u_\varepsilon)$ by $\phi(x,\psi)$ in the reflection term.
Moreover, the support of $\lambda_\varepsilon$ lies on the segment in the kinetic variable joining $\psi$ and $u_\varepsilon$:
\[
\xi=\psi+\theta(u_\varepsilon-\psi),\qquad 0\leq\theta\leq1.
\]
As a result, $\lambda_\varepsilon$ quantifies both the magnitude by which the penalized solution exceeds the obstacle and the kinetic levels at which such excess occurs.
The standard a priori estimate (see \eqref{eq:priori} for our equation, and \cite[Lemma 4]{denis2014obstacle} or \cite[Theorem 3.3]{liu2024obstacle} for other cases)
\[
\mathbb E\int_0^T\int_{\mathbb T^d}
\frac1\varepsilon\big[(u_\varepsilon-\psi)^+\big]^2
\,\mathrm dx\mathrm dt
\leq C
\]
implies that $\{\lambda_\varepsilon\}_{\varepsilon>0}$ is uniformly bounded as a family of nonnegative measures on
$\mathbb T^d\times(0,\infty)\times[0,T]$.
Hence, up to a subsequence, $\lambda_\varepsilon \rightharpoonup \lambda$ weakly as measures.
The limit $\lambda$ is called the ``obstacle defect measure''.

Importantly, our analysis does not rely on any strong convergence property of $u^{\varepsilon}$, ensuring the broad applicability of the result. On the other hand, In the  classical Skorokhod formulation in \cite{donati1993white,denis2014obstacle,yang2019obstacle,liu2024obstacle}, we have $\lambda\equiv0$ based on its definition.

Building upon the above result, in the limiting kinetic formulation, the penalty term is therefore represented by two pieces:
\[
-\delta_0(\xi-\psi)\nu\qquad\text{and}\qquad\partial_\xi\lambda .
\]
The first piece corresponds to  the reflected force evaluated at the obstacle level and
the second piece is the defect measure arising from the replacement of $u$ with $\psi$ in the kinetic test.
Combined together, they constitute the kinetic counterpart of the Skorokhod mechanism.

Formally, compared with the formal kinetic term $-\delta_0(\xi-u)\nu$, the obstacle part of the limiting kinetic equation has the form $\partial_\xi \lambda-\delta_0(\xi-\psi)\nu$, and one may read the identity as
\[
-\delta_0(\xi-u)\nu
=
-\delta_0(\xi-\psi)\nu+\partial_\xi\lambda .
\]
This identity is not used as a pointwise statement.
Instead, it is encoded weakly through \eqref{eq:lambda-epsilon-action}.
This is the kinetic formulation of the Skorokhod condition.
In contrast to the entropy solution framework employed by \cite{du2024well}, our method yields a refined characterization of the Skorokhod term, which is one of the main novelties of the present paper.

Furthermore, we can verify that the measure $\lambda$ is a kinetic measure.
If $m$ denotes the usual parabolic defect measure coming from the degenerate diffusion, we set
\begin{align*}
q:=m+\lambda .
\end{align*}
The limiting kinetic equation is then written with the total kinetic measure $q$.
This is crucial: the obstacle defect measure has the same sign and the same structural role as the standard kinetic defect measure.
Hence, it fits naturally into the kinetic framework and can be handled by the same doubling-of-variables method employed for uniqueness.

\textbf{(III)}\quad Next, we explain why our kinetic formulation is consistent with the classical Skorokhod condition.
Suppose that the solution is regular enough (e.g. continuous or quasi-continuous in \cite{denis2014obstacle}) so that the product $(u-\psi)\nu$ is well-defined.
Then one may take the test function $\alpha(t)\phi(x,r)=\alpha(t)\varphi(x)r$ by approximation, where $\alpha\in C_c^\infty([0,T))$ and $\varphi\in C_c^\infty(\mathbb{T}^d)$ are nonnegative.
Comparing the kinetic equation with directly using It\^o's formula to $u\to\int_{\mathbb{T}^d}\alpha\varphi u^2\mathrm{d}x$, we have,
\[
\int_0^T\int_{\mathbb{T}^d}
\alpha(t)\varphi(x)\psi\,\nu(\mathrm{d}x,\mathrm{d}t)
+
\int_0^T\int_{\mathbb{T}^d}\int_0^\infty
\alpha(t)\varphi(x)\lambda(\mathrm{d}x,\mathrm{d}\xi,\mathrm{d}t)=\int_0^T\int_{\mathbb{T}^d}\alpha(t)
\varphi(x)u\,\nu(\mathrm{d}x,\mathrm{d}t).
\]
Equivalently,
\[
\int_0^T\int_{\mathbb{T}^d}
\alpha(t)\varphi(x)(u-\psi)\,\nu(\mathrm{d}x,\mathrm{d}t)
=
\int_0^T\int_{\mathbb{T}^d}\int_0^\infty
\alpha(t)\varphi(x)\lambda(\mathrm{d}x,\mathrm{d}\xi,\mathrm{d}t).
\]
Since $\lambda$ is nonnegative, based on the arbitrary of nonnegative functions $\alpha$ and $\varphi$, this identity prevents $\nu$ from charging the set where $u<\psi$.
Thus $\nu$ is supported on the contact set $\{u=\psi\}$, and the usual Skorokhod condition is recovered.

Therefore, our kinetic formulation substantially generalizes the classical obstacle framework.
When the function $u$ is regular, it recovers the standard Skorokhod condition.
When the function $u$ is not regular, it still gives a meaningful condition through the obstacle defect measure $\lambda$. This key feature makes the present formulation applicable to
generalized Dean--Kawasaki equation with degenerate diffusion and singular conservative noise.

\subsection{Comments on the results and main contributions}
To our knowledge, the kinetic solution framework has not yet been applied to the obstacle problem for SPDEs in existing literature. The present work makes the first attempt to address this research gap. This paper is closely related to the independent works \cite{du2024well} and \cite{fehrman2024well}. We provide a detailed comparison of our framework with these two key references.
\begin{itemize}
  \item
Compared with the existing results in \cite{du2024well}, which studied the obstacle problem for stochastic porous media equations using entropy formulation, our work offers three main significant generalizations.
First, our framework applies to degenerate diffusion operators and in particular to the whole porous media regime $\Phi(r)=r^m$ with $m\in(0,\infty)$, thereby covering the fast diffusion regime $m<1$ which is not explored in \cite{du2024well}.
Second, our work imposes a minimal regularity assumption on the noise coefficient $\sigma$: local 1/2-H\"older continuity is sufficient whereas \cite{du2024well} requires $C^4$ continuity.
Consequently, our result includes the critical square-root case $\sigma(r)=\sqrt r$.
In particular, our theory is applicable to  genuine generalized Dean--Kawasaki equations with correlated conservative noise, while previous obstacle results require much smoother stochastic flux coefficients.
Third, we substantially relax the assumptions on the obstacle. In contrast to the entropy theory employed in \cite{du2024well}, which requires spatial H\"older regularity for existence and even $C^2$-regularity for uniqueness, we only assume that the barrier is continuous in time and space.

These improvements are accompanied by substantial new challenges. In the uniqueness part, since the standard doubling variables argument for kinetic solutions does not double the time variable, the terms involving the Radon measure $\nu$ cannot be handled by the same time-regularization mechanism available in the entropy formulation \cite{du2024well}. The reflection term becomes sensitive to temporal traces of the solution, thereby we have to work with suitable left- and right-continuous representatives of kinetic solutions in order to control the contribution of the reflection terms in proving the $L^1$-contraction principle.
The existence proof is also considerably more delicate than that in the entropy formulation. Indeed, the penalized approximations simultaneously yield the solution sequence, the penalization terms, the parabolic defect measures, and the obstacle defect measures. Owing to the low regularity of $\sigma$, the available a priori estimates are barely sufficient to identify all of these objects in the limit. A central part of this paper is therefore devoted to obtaining simultaneous compactness and convergence of the kinetic measures, the penalization measures, and the solutions themselves, as well as to proving that the limiting objects satisfy the desired obstacle kinetic formulation.

\item In contrast to \cite{fehrman2024well}, the obstacle formulation introduces two distinct random measures: the obstacle defect measure $\lambda$ and the Radon measure $\nu$. The former is non-negative and can be  incorporated into the kinetic measure. However, the presence of the Radon measure $\nu$ cause discontinuities in the function $u$ at a subset of $[0,T]$, which prevents the kinetic solution from satisfying the initial condition in the classical sense. This differs from \cite{fehrman2024well}, where the kinetic solution is shown to be continuous in time. To remedy this, we introduce a new condition (iii) in Definition \ref{def:def of stochastic kinetic solution}
\[
\lim_{\tau\to0}\frac{1}{\tau}\int_0^{\tau}\int_{\mathbb{T}^d}|u(x,t)-u_{\mathrm{init}}(x)|\mathrm{d}x\mathrm{d}t=0,
\]
to ensure that the kinetic solution $u$ satisfies the initial condition in the weak integral sense. To attain such a weak initial condition, we establish a novel property of the kinetic measure (see \eqref{eq:limit for N q}).

Regarding the proof of uniqueness, it is not a direct repetition of the argument in \cite{fehrman2024well}.
The obstacle constraint introduces an additional measure $\nu$ in the kinetic formulation, which gives rise to new terms that not appear in the non-obstacle case.
A substantial part of the proof is devoted to showing that these additional terms are compatible with the doubling-of-variables argument.
Moreover, the presence of measure $\nu$ may cause the kinetic function $\chi$ to be discontinuous in time.
This makes it necessary to work with the right- and left-continuous versions $\chi^{+}$ and $\chi^{-}$, especially in the integrals with respect to the measure $\nu$ and kinetic measure $q$.
These difficulties are specific to the obstacle problem and constitute the main ingredients of our uniqueness proof.
Furthermore, to construct a stochastic kinetic solution of the obstacle problem for \eqref{eq:main}, we introduce an approximating equation involving the penalization term $(u_{\varepsilon}-\psi)^{+}/{\varepsilon}$.
The main difficulty is that this penalty term  has a Lipschitz constant of order $1/\varepsilon$, which diverges as $\varepsilon \to 0$.
Consequently, the convergence result in \cite{fehrman2024well} is not directly applicable.
We instead establish the monotonicity of $u_{\varepsilon}$ with respect to $\varepsilon$ (see Lemma \ref{lem:(Comparison-theorem)}).
Combined with the non-negativity of the solution, this monotonicity guarantees the convergence of $u_{\varepsilon}$ as $\varepsilon \to 0$.

 \end{itemize}

\subsection{Overview of the literature}

Macroscopic fluctuation theory (MFT) and fluctuating hydrodynamics (FHD) provide a general framework connecting microscopic particle systems with continuous macroscopic fluid dynamics (see \cite{BDSG15} and \cite{S12}).
In the FHD picture, non-equilibrium density fluctuations are described at the mesoscopic level by conservative, and often singular, SPDEs.
A central example is the Dean--Kawasaki equation, introduced by Dean and Kawasaki \cite{dean1996langevin,kawasaki1998microscopic} as a fluctuating continuum model for particle densities.
This equation plays a basic role in fluctuating hydrodynamics and stochastic density functional theory, and it has been used in the study of colloidal suspensions, supercooled liquids, polymeric fluids, and other complex systems.
From the mathematical point of view, Dean--Kawasaki type equations are highly singular, since they combine degenerate diffusion with conservative multiplicative noise, often of square-root type.
For this reason, several earlier works established existence only after modifying the coefficients or drift terms; see \cite{VRS09,KLVR19,AvR10}.
A recent breakthrough was obtained by Fehrman and Gess \cite{fehrman2024well}, who proved well-posedness for function-valued solutions to generalized Dean--Kawasaki equations with correlated noise and locally $1/2$-H\"older coefficients.
See also \cite{fehrman2023non} for the related large-deviation analysis arising in fluctuating hydrodynamics.
The present paper serves as an obstacle-problem counterpart to this line of research.


Within the framework of kinetic solutions, the investigation into the well-posedness theory for stochastic conservative laws has garnered considerable research attention.
Debussche and Vovelle \cite{DV10} analyzed the Cauchy problem in any spatial dimensions and obtained the existence and uniqueness of the kinetic solutions. Subsequently, Debussche, Hofmanov\'a and Vovelle \cite{DHV16}, as well as Gess and Hofmanov\'a \cite{GH18}, extended the notion of kinetic solution to degenerate parabolic stochastic partial differential equations.
More recently, leveraging the kinetic formulation approach, Fehrman and Gess \cite{fehrman2024well} explored conservative stochastic PDE with correlated noise, covering the generalized Dean--Kawasaki equation and the full range of porous medium with fast diffusion exponents. This line of research was further generalized to the Dean--Kawasaki equation with singular nonlocal interaction kernel by Wang, Wu and Zhang \cite{WWZ26}, while Popat \cite{P25} established the well-posedness of kinetic solutions to the generalized Dean--Kawasaki equation on bounded domains. By virtue of the kinetic formulation, we successfully extend the study of Du and Liu \cite{du2024well} in three key aspects: the framework accommodates generalized Dean--Kawasaki conservative noises, covers fully degenerate diffusions (including fast diffusion regimes), and only requires a continuous obstacle condition.

\subsection{Organization of the paper}
This paper is organized as follows. In Section \ref{sec:Pre}, we introduce the assumptions on the noise and obstacle, and further derive the kinetic formulation for the obstacle problem. In Section \ref{sec:unique}, we establish the $L^1-$stability and uniqueness via a doubling variables technique adapted to the obstacle problem and kinetic solutions. In Section \ref{sec:exist}, the existence result is obtained by means of penalization, approximation, and subsequent limit passing. Two auxiliary lemmas are given in the Appendix \ref{sec:Auxiliary}.

\section{Preliminaries}\label{sec:Pre}
Throughout the paper, $\mathbb{T}^d$ denotes a $d-$dimensional torus with volume 1.
Let $\|\cdot\|_{L^p(\mathbb{T}^d)}$ denote the norm of Lebesgue space $L^p(\mathbb{T}^d)$ (or $L^p(\mathbb{T}^d;\mathbb{R}^d)$) for integer $p\in [1,\infty]$. The inner product in $L^2(\mathbb{T}^d)$ will be denoted by
$(\cdot,\cdot)$.
Let $C^{\infty}(\mathbb{T}^d\times (0,\infty))$ denote the space of infinitely differentiable functions on $\mathbb{T}^d\times (0,\infty)$. $C^{\infty}_c(\mathbb{T}^d\times (0,\infty))$ contains all infinitely continuous differentiable functions with compact supports on $\mathbb{T}^d\times (0,\infty)$.
For a non-negative integer $k$ and $p\in [1,\infty]$, denote by $W^{k,p}(\mathbb{T}^d)$ the usual Sobolev space on $\mathbb{T}^d$. Let
$H^a(\mathbb{T}^d)=W^{a,2}(\mathbb{T}^d)$, and $H^{-a}(\mathbb{T}^d)$ stands for the topological dual of $H^a(\mathbb{T}^d)$.
$C^{k}_{\mathrm{loc}}(\mathbb{R})$ denotes $k$-th differential functions on any compact sets in $\mathbb{R}$. Let the bracket $\langle\cdot,\cdot\rangle$ stand for the duality between $C^{\infty}(\mathbb{T}^d)$ and the space of distributions over $\mathbb{T}^d$.
Einstein summation is used throughout this paper.
Define $Q_T:=\mathbb{T}^d\times[0,T)$.

Let $X$ be a real Banach space with norm $\|\cdot\|_X$. The space $L^p([0,T];X)$ denotes the standard Lebesgue space, and $W^{1,p}([0,T];X)$ denotes the standard Sobolev space.

\subsection{The assumption on the noise and the obstacle}\label{sec:propose definition}

Let $(\Omega,\mathcal{F},\mathbb{P},\{\mathcal{F}_t\}_{t\in
[0,T]},(\{B^k(t)\}_{t\in[0,T]})_{k\in\mathbb{N}})$ be a stochastic basis. Without loss of generality, here the filtration $\{\mathcal{F}_t\}_{t\in [0,T]}$ is assumed to be complete and $\{B^k(t)\}_{t\in[0,T]},k\in\mathbb{N}$, are independent ($d$-dimensional)  $\{\mathcal{F}_t\}_{t\in [0,T]}-$Wiener processes.
We use $\mathbb{E}$ to denote the expectation with respect to $\mathbb{P}$.
Let $F=\{f_k\}_{k\in\mathbb{N}}:\mathbb{T}^d\to\mathbb{R}$ be a sequence of continuously differentiable functions on $\mathbb{T}^d$. Define
\begin{align}\label{kk-43}
\xi^F=\sum_{k=1}^{\infty}f_k(x)B_t^k.
\end{align}
For the sequence $\{f_k\}_{k\in\mathbb{N}}$, define
\begin{equation}\label{noise}
	F_1=\sum_{k=1}^{\infty}|f_k|^2,\  F_2=\frac{1}{2}\sum_{k=1}^{\infty}\nabla f_k^2\  \text{and}\  F_3=\sum_{k=1}^{\infty}|\nabla f_k|^2.
	\end{equation}

\begin{assumption} \label{assu:assum for F}
The noise, obstacle and initial data are supposed to satisfy the following conditions.
\begin{description}
          \item[(i)] The functions $\{F_i\}_{i\in \{1,2,3\}}$ are continuous on $\mathbb{T}^d$ and the divergence $\nabla\cdot F_2=\frac{1}{2}\Delta F_1$ is bounded on $\mathbb{T}^d$.
          \item[(ii)] The nonnegative obstacle $\psi$ is continuous on $Q_T$.
          \item[(iii)] The initial data $u_{\mathrm{init}}$ is nonnegative,
 $\mathcal{F}_{0}$-measurable and satisfies $u_{\mathrm{init}}\leq\psi(\cdot,0)$ a.e.\ on $\Omega\times\mathbb{T}^d$.
\end{description}

\end{assumption}
\subsection{The kinetic formulation of the obstacle problem}
In this section, we conduct a formal calculation that motivates the definition of a stochastic kinetic solution to the obstacle problem with the upper obstacle constraint $u\leq \psi$, for which the equation
\begin{equation}
\mathrm{d}u=\Delta\Phi(u)\mathrm{d}t-\nabla\cdot(f^{k}(x)\sigma(u)\circ\mathrm{d}B_{t}^{k}+g(u)\mathrm{d}t)
\end{equation}
is formally satisfied on the non-contact set $\{(t,x):u(t,x)<\psi(t,x)\}$.
Our starting point is the following penalized equation, which is used to approximate the obstacle problem.
\[
\mathrm{d}u_{\varepsilon}=[\Delta\Phi(u_{\varepsilon})-\varepsilon^{-1}(u_{\varepsilon}-\psi)^{+}]\mathrm{d}t-\nabla\cdot(f_{k}(x)\sigma(u_{\varepsilon})\circ\mathrm{d}B_{t}^{k}+g(u_\varepsilon)\mathrm{d}t),\quad\varepsilon>0.
\]
The kinetic formulation of the generalized Dean--Kawasaki equation has been derived by \cite{fehrman2024well}, hence we will pay attention to the new terms arising from penalized term $\nu_{\varepsilon}:=\varepsilon^{-1}(u_{\varepsilon}-\psi)^{+}$.

For a smooth function $S:\mathbb{R}\to\mathbb{R}$, applying the It\^{o}'s formula, with $\{F_i\}_{i\in \{1,2,3\}}$ defined by (\ref{noise}), we have
\begin{align*}
\mathrm{d}S(u_{\varepsilon}) & =\nabla\cdot\big(\Phi^{\prime}(u_{\varepsilon})S^{\prime}(u_{\varepsilon})\nabla u_{\varepsilon}\big)\mathrm{d}t+\frac{1}{2}\nabla\cdot\big(F_{1}[\sigma^{\prime}(u_{\varepsilon})]^{2}S^{\prime}(u_{\varepsilon})\nabla u_{\varepsilon}+\sigma(u_{\varepsilon})\sigma^{\prime}(u_{\varepsilon})S^{\prime}(u_{\varepsilon})F_{2}\big)\mathrm{d}t\\
 & \quad-S^{\prime\prime}(u_{\varepsilon})\Big(\Phi^{\prime}(u_{\varepsilon})|\nabla u_{\varepsilon}|^{2}+\frac{1}{2}F_{1}|\nabla\sigma(u_{\varepsilon})|^{2}+\frac{1}{2}\sigma(u_{\varepsilon})\nabla\sigma(u_{\varepsilon})\cdot F_{2}\Big)\mathrm{d}t-S^{\prime}(u_{\varepsilon})\nabla\cdot g(u_{\varepsilon})\mathrm{d}t\\
 & \quad+\frac{1}{2}S^{\prime\prime}(u_{\varepsilon})\sum_{k=1}^{\infty}|\nabla(\sigma(u_{\varepsilon})f_{k})|^{2}\mathrm{d}t
 -S^{\prime}(u_{\varepsilon})\frac{1}{\varepsilon}(u_{\varepsilon}-\psi)^{+}\mathrm{d}t-S^{\prime}(u_{\varepsilon})\nabla\cdot(\sigma(u_{\varepsilon})f_{k})\mathrm{d}B_{t}^{k}.
\end{align*}
A simple calculation shows that
\begin{align*}
  S^{\prime}(u_{\varepsilon})\frac{1}{\varepsilon}(u_{\varepsilon}-\psi)^{+}
  &=(S^{\prime}(u_{\varepsilon})-S^{\prime}(\psi))\frac{1}{\varepsilon}(u_{\varepsilon}-\psi)^{+}+S^{\prime}(\psi)\nu_{\varepsilon}\\
  &= \int_{0}^{1}S^{\prime\prime}(\psi+\theta(u_{\varepsilon}-\psi))\mathrm{d}\theta\frac{1}{\varepsilon}[(u_{\varepsilon}-\psi)^{+}]^{2}+S^{\prime}(\psi)\nu_{\varepsilon}.
\end{align*}
Define the kinetic function $\chi_\varepsilon$ associated with $u_\varepsilon$ by
\[
\chi_\varepsilon(x,\xi,t):=\bar\chi(u_\varepsilon(x,t),\xi):=\mathbf{1}_{\{0<\xi<u_\varepsilon(x,t)\}},
\]
where $\bar\chi(r,\xi):=\mathbf{1}_{\{0<\xi<r\}}$ is the kinetic indicator function.
Clearly, it holds that
\begin{align*}
S(u_{\varepsilon}(x,t))=\int_{\mathbb{R}}S'(\xi)\chi_{\varepsilon}(x,\xi,t)d\xi,
\end{align*}
provided $S(0)=0$.
In view of
\[
S^{\prime}(\psi)\nu_{\varepsilon}=\int_{\mathbb{R}}S^{\prime}(\xi)\delta_{0}(\xi-\psi)\nu_{\varepsilon}\mathrm{d}\xi,
\]
and
\begin{align*}
 & \int_{0}^{1}S^{\prime\prime}(\psi+\theta(u_{\varepsilon}-\psi))\mathrm{d}\theta\frac{1}{\varepsilon}[(u_{\varepsilon}-\psi)^{+}]^{2}\\
 & =\int_{0}^{1}\int_{\mathbb{R}}S^{\prime\prime}(\xi)\delta_{0}(\xi-\psi+\theta(u_{\varepsilon}-\psi))\mathrm{d}\xi\mathrm{d}\theta\frac{1}{\varepsilon}[(u_{\varepsilon}-\psi)^{+}]^{2}\\
 & =\int_{\mathbb{R}}S^{\prime\prime}(\xi)\int_{0}^{1}\delta_{0}(\xi-(\psi+\theta(u_{\varepsilon}-\psi)))\frac{1}{\varepsilon}[(u_{\varepsilon}-\psi)^{+}]^{2}\mathrm{d}\theta\mathrm{d}\xi,
\end{align*}
we reach
\begin{align*}
\mathrm{d}\chi_{\varepsilon} & =\nabla\cdot\big(\Phi^{\prime}(\xi)\delta_{0}(\xi-u_{\varepsilon})\nabla u_{\varepsilon}\big)\mathrm{d}t+\frac{1}{2}\nabla\cdot\big[\delta_{0}(\xi-u_{\varepsilon})\big(F_{1}[\sigma^{\prime}(\xi)]^{2}\nabla u_{\varepsilon}+\sigma(\xi)\sigma^{\prime}(\xi)F_{2}\big)\big]\mathrm{d}t\\
 & \quad+\partial_{\xi}\Big[\delta_{0}(\xi-u_{\varepsilon})\Big(\Phi^{\prime}(\xi)|\nabla u_{\varepsilon}|^{2}-\frac{1}{2}\sigma(\xi)\nabla\sigma(u_{\varepsilon})\cdot F_{2}-\frac{1}{2}\sigma^{2}(\xi)F_{3}\Big)\Big]\mathrm{d}t-\delta_{0}(\xi-u_{\varepsilon})\nabla\cdot g(u_{\varepsilon})\mathrm{d}t\\
 & \quad+\partial_{\xi}\int_{0}^{1}\frac{1}{\varepsilon}[(u_{\varepsilon}-\psi)^{+}]^{2}\delta_{0}(\xi-(\psi+\theta(u_{\varepsilon}-\psi)))\mathrm{d}\theta\mathrm{d}t-\delta_{0}(\xi-\psi)\nu_{\varepsilon}\mathrm{d}t\\
 & \quad-\delta_{0}(\xi-u_{\varepsilon})\nabla\cdot(\sigma(u_{\varepsilon})f_{k})\mathrm{d}B_{t}^{k}.
\end{align*}

When $\varepsilon\to0$, the limiting kinetic function $\chi$ does not, in general, satisfy the above equation exactly. In the framework of entropy solutions, this defect is reflected in the appearance of an entropy inequality; see \cite{du2024well}.
On the kinetic level, the entropy inequality is quantified exactly by a parabolic defect measure and an obstacle defect measure, which are described in detail below.

Regarding the parabolic defect measure associated with $u_\varepsilon$, it is a nonnegative measure $m_\varepsilon$ on $\mathbb{T}^{d}\times\mathbb{R}\times[0,T]$.
More precisely, in the sense of measures,
\[
\delta_{0}(\xi-u_{\varepsilon})\Phi^{\prime}(\xi)|\nabla u_{\varepsilon}|^{2}\leq m_\varepsilon.
\]

We now analyse the terms associated with the obstacle problem.
Let $\lambda_{\varepsilon}$ denote a nonnegative measure on $\mathbb{T}^d\times \mathbb{R}\times [0,T]$ defined by
\begin{align*}
d\lambda_{\varepsilon}(x,t,\xi):=\int_{0}^{1}\frac{1}{\varepsilon}[(u_{\varepsilon}-\psi)^{+}]^{2}\delta_{0}(\xi-(\psi+\theta(u_{\varepsilon}-\psi)))\mathrm{d}\theta \mathrm{d}x\mathrm{d}t\mathrm{d}\xi.
\end{align*}
A priori estimates for the penalized equation (see Lemma \ref{lem:priori-estimates} and Lemma \ref{lem:(Existence-of-solution for penalized eqn } below) are given by
\begin{align}\label{r-1}
\mathbb{E}\Vert{\varepsilon}^{-1}[(u_\varepsilon-\psi)^{+}]^2\Vert_{L^{1}(Q_{T})}+\mathbb{E}\Vert\varepsilon^{-1}(u_{\varepsilon}-\psi)^{+}\Vert_{L^{1}(Q_{T})}\leq C(1+\Vert u_{\mathrm{init}}\Vert_{L^{2}(\mathbb{T}^d)}^2),
\end{align}
where the constant $C$ is independent of $\varepsilon$.
In particular, the family $\{\lambda_\varepsilon\}_{\varepsilon>0}$ is uniformly bounded in the space of finite nonnegative Radon measures on
$\mathbb{T}^d\times\mathbb{R}\times[0,T]$.
Hence, up to a subsequence, $\lambda_\varepsilon$ converges weakly to a nonnegative measure $\lambda$ in the sense of measures,
that is, for every $\phi\in C_c(\mathbb{T}^d\times\mathbb{R}\times[0,T])$,
\[
\int_{\mathbb{T}^d\times\mathbb{R}\times[0,T]}\phi\,\mathrm{d}\lambda_\varepsilon
\rightarrow
\int_{\mathbb{T}^d\times\mathbb{R}\times[0,T]}\phi\,\mathrm{d}\lambda .
\]
The limiting measure $\lambda$ is called the obstacle defect measure.
We define $q:=m+\lambda$, which corresponds to the total kinetic measure for the limiting equation of the penalized equations.
Furthermore, estimate (\ref{r-1}) ensures that $\nu_{\varepsilon}$ is a finite measure on $Q_T$, thus it admits a limiting measure denoted by $\nu$.

Prior to defining a stochastic kinetic solution to \eqref{eq:main}, we first introduce the concept of kinetic measure.
\begin{defn}
Let $(\Omega,\mathcal{F},\mathbb{P})$ be a probability space with a filtration $(\mathcal{F}_{t})_{t\in[0,\infty)}$.
A kinetic measure is a map $q$ from $\Omega$ to the space of nonnegative, locally finite measures on $\mathbb{T}^{d}\times(0,\infty)\times[0,T]$ that satisfies the property that the process
\begin{align*}
(\omega,t)\in \Omega\times[0,T]\to\int^t_0\int_{\mathbb{R}}\int_{\mathbb{T}^d}\phi(x,\xi)\mathrm{d}q(x,\xi,r)
\end{align*}
is $\mathcal{F}_t-$predictable, for every $\phi\in C^{\infty}_c(\mathbb{T}^d\times (0,\infty))$.
\end{defn}

In the sequel, for any $\phi\in C_{c}^{1}(\mathbb{T}^{d}\times(0,\infty)\times[0,T))$, we write $(\nabla \phi)(x,u(x,t),t)=\nabla \phi(x,\xi,t)\big|_{\xi=u(x,t)}$, meaning that the gradient $\nabla \phi$ is evaluated at the point $(x,u(x,t),t)$.
We are now ready to define the stochastic kinetic solution to the obstacle problem for \eqref{eq:main}.

\begin{defn}
\label{def:def of stochastic kinetic solution}Let $u_{\mathrm{init}}\in L^{1}(\Omega;L^{1}(\mathbb{T}^{d}))$
be nonnegative and $\mathcal{F}_{0}$-measurable.
A stochastic kinetic solution of the obstacle problem for \eqref{eq:main} is a pair $(u,\nu)$, where $u\in L^{1}(\Omega\times[0,T];L^{1}(\mathbb{T}^{d}))$ is a nonnegative, $L^{1}(\mathbb{T}^{d})$-valued $\mathcal{F}_{t}$-predictable function, and $\nu$ is a predictable Radon measure on $Q_T$ satisfying the following properties.
\begin{itemize}
\item[(i)] Almost surely $u\leq \psi$, and $\mathbb{E}[\nu(Q_T)]<\infty$.

\item[(ii)] Preservation of mass: almost surely for every $\phi\in C_c^{\infty}([0,T))$,
\begin{align}\label{eq:L1 perservation}
    -\int_{0}^{T}\int_{\mathbb{T}^d} u(x,s)\partial_s\phi(s)\mathrm{d}x\mathrm{d}s+\int_{0}^{T}\int_{\mathbb{T}^d}\phi(s)\mathrm{d}\nu(x,s)=\phi(0)\int_{\mathbb{T}^d} u_{\mathrm{init}}\mathrm{d}x.
\end{align}
\item[(iii)] The initial condition: almost surely
\begin{align}\label{eq:limit for uinit}
\lim_{\tau\to0}\frac{1}{\tau}\int_0^{\tau}\int_{\mathbb{T}^d}|u(x,t)-u_{\mathrm{init}}(x)|\mathrm{d}x\mathrm{d}t=0.
\end{align}
\item[(iv)]  $\sigma(u)\in L^{2}(\Omega;L^{2}(Q_T))$, \quad $g(u)\in L^{1}(\Omega;L^{1}(Q_T;\mathbb{R}^d))$.

\item[(v)] For every $K\in\mathbb{N}$,
\[
[(u\land K)\lor(1/K)]\in L^{2}(\Omega\times[0,T];H^{1}(\mathbb{T}^{d}))).
\]
\end{itemize}
Furthermore, there exists a kinetic measure $q$ satisfying the following three properties.
\begin{itemize}
\item[(vi)] Almost surely as a nonnegative measure, $q$ satisfies
\[
\delta_{0}(\xi-u)\Phi^{\prime}(\xi)|\nabla u|^{2}\leq q\quad\text{on}\quad\mathbb{T}^{d}\times(0,\infty)\times[0,T].
\]
\item[(vii)] (Vanishing)
\[
\lim_{N\to\infty}\mathbb{E}\big[q(\mathbb{T}^{d}\times[N,N+1]\times[0,T))\big]=0.
\]
\item[(viii)] For every $\phi\in C_{c}^{1}(\mathbb{T}^{d}\times(0,\infty)\times[0,T))$, the kinetic function $\chi$ of $u$ almost surely satisfies
\begin{align}
& -\int_{0}^{T}\int_{\mathbb{R}}\int_{\mathbb{T}^d}\chi(x,\xi,s)\partial_s\phi(x,\xi,s)\mathrm{d}x\mathrm{d}\xi\mathrm{d}s\label{eq:kinetic equation test with time}\\
 & =\int_{\mathbb{R}}\int_{\mathbb{T}^{d}}\bar{\chi}(u_{\mathrm{init}},\xi)\phi(x,\xi,0)\mathrm{d}x\mathrm{d}\xi-\int_{0}^{T}\int_{\mathbb{T}^{d}}\Phi^{\prime}(u)\nabla u\cdot(\nabla\phi)(x,u,s)\mathrm{d}x\mathrm{d}s\nonumber \\
 & \quad-\frac{1}{2}\int_{0}^{T}\int_{\mathbb{T}^{d}}\big(F_{1}(x)[\sigma^{\prime}(u)]^{2}\nabla u+\sigma(u)\sigma^{\prime}(u)F_{2}(x)\big)\cdot(\nabla\phi)(x,u,s)\mathrm{d}x\mathrm{d}s\nonumber\\
 & \quad-\int_{0}^{T}\int_{\mathbb{R}}\int_{\mathbb{T}^{d}}\partial_{\xi}\phi(x,\xi,s)\mathrm{d}q(x,\xi,s)-\int_{0}^{T}\int_{\mathbb{T}^d}\phi(x,u,s)\nabla\cdot g(u)\mathrm{d}x\mathrm{d}s\nonumber \\
 & \quad+\frac{1}{2}\int_{0}^{T}\int_{\mathbb{T}^{d}}\Big(\sigma(u)\sigma^{\prime}(u)\nabla u\cdot F_{2}(x)+\sigma^{2}(u)F_{3}(x)\Big)(\partial_{\xi}\phi)(x,u,s)\mathrm{d}x\mathrm{d}s\nonumber \\
 & \quad-\int_{0}^{T}       \int_{\mathbb{T}^{d}}\phi(x,\psi,s)\mathrm{d}\nu(x,s)-\int_{0}^{T}\int_{\mathbb{T}^{d}}\phi(x,u,s)\nabla\cdot(\sigma(u)f_{k})\mathrm{d}x\mathrm{d}B_{s}^{k}.\nonumber
\end{align}
\end{itemize}
\end{defn}
\begin{rem}
The relevant literature \cite{DV10,fehrman2024well} indicates that the kinetic measure $q$ introduced in Definition \ref{def:def of stochastic kinetic solution} does not affect the continuity of the solution $u$.
This result follows from (vii) in Definition \ref{def:def of stochastic kinetic solution}, together with the observation that the integral of $\partial_{\xi}\phi$ against the measure $q$ in \eqref{eq:kinetic equation test with time} vanishes when the test function $\phi$ approximates a function independent of $\xi$.
In the setting of the obstacle problem, however, there will be additional terms involving Radon measure $\nu$.
Crucially, the integral of $\phi$ against the measure $\nu$ does not vanish when $\phi$ is independent of $\xi$.
The atoms of the measure $\nu$ can therefore cause discontinuities in $u$ at a random (almost surely countable) set of points.
As the standard doubling of variables method for kinetic solutions does not involve mollification in time, this necessitates a special treatment using, for instance, the left- and right-continuous versions of the kinetic function as outlined in Proposition \ref{prop:atom point in t}.
\end{rem}

\begin{rem}
Compared with the work \cite{fehrman2024well}, the presence of the Radon measure $\nu$ necessitates several modifications to the definition of the kinetic solution.
The main adjustments are as follows:
(1) We introduce a revised condition (ii) in Definition \ref{def:def of stochastic kinetic solution} to replace the condition $\|\rho(t)\|_{L^1(\mathbb{T}^d)}=\|\rho_0\|_{L^1(\mathbb{T}^d)}$ used in \cite{fehrman2024well}, which is the key to establish the vanishing property of the kinetic measure at $\xi=0$ (see Proposition \ref{prop:proposition for limit measure}).
(2) Owing to the presence of the Radon measure $\nu$, the function $u$ may be discontinuous in time and hence cannot be expected to satisfy the initial condition in the classical sense.
To remedy it, we incorporate a new condition (iii) in Definition \ref{def:def of stochastic kinetic solution}. This condition is naturally fulfilled by the solution constructed as the limit of penalized solutions; see Lemma \ref{lem:converge to u init} and the proof of Theorem \ref{thm:existence} for further details.
\end{rem}

Since the function $u$ may have discontinuities in time, we need to establish the existence of the left and right weak limits.
The proof can be done by a similar argument as in \cite[Proposition 8]{DV10}, thus we omit it.
\begin{prop}\label{prop:atom point in t}
Let $(u,\nu)$ be a stochastic kinetic solution of the obstacle problem for \eqref{eq:main} under Definition \ref{def:def of stochastic kinetic solution}, and $\chi$ be a kinetic function of $u$.
Then, $\chi$ admits almost surely left and right limits at all points $t\in[0,T]$, which means that for all $t\in[0,T]$ there exist some kinetic functions $\tilde{\chi}_{t}^{\pm}$ on $\Omega\times\mathbb{T}^d\times\mathbb{R}$ such that $\mathbb{P}$-a.s.
\[
\int_{\mathbb{T}^d}\int_{\mathbb{R}}\chi(x,\xi,t\pm\epsilon)\phi(x,\xi)\mathrm{d}\xi\mathrm{d}x\to \int_{\mathbb{T}^d}\int_{\mathbb{R}}\tilde{\chi}_{t}^{\pm}(x,\xi)\phi(x,\xi)\mathrm{d}\xi\mathrm{d}x,\quad \text{as }\epsilon\to 0,
\]
for all $\phi\in C_c^{1}(\mathbb{T}^d\times(0,\infty))$ and almost surely
\begin{align}
&\int_{\mathbb{T}^d}\int_{\mathbb{R}}\big[\tilde{\chi}_{t}^{+}(x,\xi)-\tilde{\chi}_{t}^{-}(x,\xi)\big]\phi(x,\xi)\mathrm{d}\xi\mathrm{d}x\label{eq:right left differnt in t}\\
&=-\int_{0}^{T}\int_{\mathbb{R}}\int_{\mathbb{T}^{d}}\partial_{\xi}\phi(x,\xi)\mathbf{1}_{\{t\}}(s)\mathrm{d}q(x,\xi,s)
-\int_{0}^{T}\int_{\mathbb{T}^{d}}\phi(x,\psi(x,s))\mathbf{1}_{\{t\}}(s)\mathrm{d}\nu(x,s)\nonumber.
\end{align}
In particular, almost surely, the set of $t\in[0,T]$ such that $\tilde{\chi}_{t}^{-}\neq\tilde{\chi}_{t}^+$ is countable.
\end{prop}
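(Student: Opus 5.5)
We follow the strategy of \cite[Proposition 8]{DV10}. The idea is to read \eqref{eq:kinetic equation test with time} as saying that, for each fixed test function $\varphi\in C_c^1(\mathbb{T}^d\times(0,\infty))$, the map
\[
t\mapsto\int_{\mathbb{T}^d}\int_{\mathbb{R}}\chi(x,\xi,t)\varphi(x,\xi)\,\mathrm{d}\xi\,\mathrm{d}x
\]
is a.e.\ equal to a c\`adl\`ag-type function of bounded variation plus a continuous martingale. Its jumps then come only from atoms in time of $q$ and $\nu$.

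\textbf{Step 1 (representation for fixed $\varphi$).} We take $\phi(x,\xi,s)=\varphi(x,\xi)\alpha(s)$ in \eqref{eq:kinetic equation test with time}. Here $\alpha$ is a smooth approximation of $\mathbf{1}_{[0,t]}$, for instance $\alpha_\delta(s)=1$ on $[0,t]$, linear down to $0$ on $[t,t+\delta]$. Letting $\delta\to0$ at Lebesgue points gives, a.s.\ for a.e.\ $t$,
\[
\langle\chi(t),\varphi\rangle=\langle\bar\chi(u_{\mathrm{init}}),\varphi\rangle+\int_0^t h_\varphi(s)\,\mathrm{d}s-\int_{[0,t]}\int\partial_\xi\varphi\,\mathrm{d}q-\int_{[0,t]}\int_{\mathbb{T}^d}\varphi(x,\psi)\,\mathrm{d}\nu+M_\varphi(t).
\]
In this identity:
\begin{itemize}
\item $h_\varphi\in L^1(0,T)$ collects the diffusion, Stratonovich-correction and $g$ terms. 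It is integrable by (iv), (v), (vi) and the compact support of $\varphi$ in $\xi$ (which truncates $u$ to $[1/K,K]$).
\item $M_\varphi$ is the continuous It\^o integral, whose integrand is square-integrable by (iv) and (v).
\item $q$ is locally finite on $\operatorname{supp}\varphi$, and $\nu$ is finite by (i).
\end{itemize}
The right-hand side is therefore defined for \emph{every} $t$. It has left and right limits everywhere. Its jump at $t$ equals
\[
-\int\partial_\xi\varphi\,\mathbf{1}_{\{t\}}\,\mathrm{d}q-\int\varphi(x,\psi)\,\mathbf{1}_{\{t\}}\,\mathrm{d}\nu,
\]
because the Lebesgue integral and $M_\varphi$ are continuous.

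\textbf{Step 2 (countable dense family and identification of the limits).} We fix a countable set $\{\varphi_n\}$ dense in $C_c^1(\mathbb{T}^d\times(0,\infty))$ and a common full-measure event on which Step 1 holds for all $n$. For each $t$ we take sequences $t_k\downarrow t$ (respectively $\uparrow t$) of good times. The functions $\chi(t_k)$ take values in $[0,1]$, so they are weak-$*$ precompact in $L^\infty(\mathbb{T}^d\times\mathbb{R})$. A subsequential limit $\tilde\chi_t^\pm$ is then determined, on $\{\varphi_n\}$ and hence by density on all $\varphi$, by the one-sided limits from Step 1. So the limit is unique and the whole family converges; the same argument gives convergence along arbitrary $\epsilon\to0$ for every $\varphi$.

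\textbf{Step 3 (the limits are kinetic functions).} This is the main obstacle, since a weak-$*$ limit of kinetic functions is a priori only a Young-measure type object $\mathbf{1}_{\xi>0}\nu_{x}(\xi,\infty)$. Following \cite{DV10}, we write $\tilde\chi^\pm_t(x,\xi)=\mu^\pm_{x}((\xi,\infty))$ for a Young measure $\mu^\pm_x$. To show that $\mu^\pm_x$ is a Dirac mass we use the uniform bound $u\le\psi\le\|\psi\|_\infty$ from (i). This rules out mass escaping to infinity, so $\mu^\pm_x$ is a probability measure on $[0,\|\psi\|_\infty]$. It remains to show $\tilde\chi(1-\tilde\chi)=0$.

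\begin{itemize}
\item For the right limit, one route is the argument of \cite{DV10} via $L^1$-contraction/strong compactness of time averages.
\item Alternatively, we first test with $\varphi$ depending on $\xi$ to obtain $\int\tilde\chi^\pm(x,\xi)\varphi(x,\xi)\,\mathrm{d}\xi\,\mathrm{d}x$. We then use Lebesgue-point strong convergence in $L^1_{\mathrm{loc}}$ of the time averages $\frac1\epsilon\int_t^{t+\epsilon}u\,\mathrm{d}s$, which holds at a.e.\ $t$, and extend to every $t$ via the monotone-in-$t$ structure of the jump formula.
\end{itemize}

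If this fails, we would settle for $\tilde\chi^\pm$ being generalized kinetic functions, which suffices for the later doubling argument.

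\textbf{Conclusion.} Identity \eqref{eq:right left differnt in t} is the jump formula from Step 1. The set of $t$ with $\tilde\chi^-_t\ne\tilde\chi^+_t$ is contained in the set of atoms in time of the finite measures $\nu$ and $q|_{\mathbb{T}^d\times[1/K,K]\times[0,T]}$, $K\in\mathbb{N}$. This is a countable union of countable sets, hence countable.
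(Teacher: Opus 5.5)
Your proposal follows the paper's proof, which simply defers to \cite[Proposition 8]{DV10}. The steps match: a continuous-plus-BV representation of $t\mapsto\int\chi(t)\varphi$ on a countable dense family, weak-$*$ compactness (tightness is immediate here from $u\le\psi\le M$), identification of the one-sided limits, and the jump formula and countability read off from the time-atoms of $q$ and $\nu$.

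As in \cite{DV10}, ``kinetic function'' for $\tilde\chi_t^{\pm}$ is meant in the generalized (Young-measure) sense. So your fallback in Step 3 is exactly what is proved and used later, and you should drop the attempted upgrade to indicator functions. Your second route gives no information at the countably many atom times, which are the only times where it matters. Your first route would be circular, because the paper's contraction argument in Theorem \ref{thm:stability L1} is built on this proposition.
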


Based on the above proposition, we define the predictable function $\chi^{\pm}$ by
\[
\chi^{\pm}(t)=\begin{cases}
\tilde{\chi}_{t}^{\pm},&t\in[0,T] \text{ such that }\tilde{\chi}_{t}^{-}\neq \tilde{\chi}_{t}^{+}\\
\chi(t), &\text{others}.
\end{cases}
\]
Note that, with respect to the filtration generated by the Brownian motions, both $\chi^{+}$ and $\chi^{-}$ are predictable.
Moreover, we have $\chi=\chi^{+}=\chi^{-}$ almost everywhere with respect to the Lebesgue measure on the time interval $[0,T]$.
Hence, the choice among $\chi$, $\chi^{+}$, and $\chi^{-}$ is immaterial in integrals with respect to the Lebesgue measure, as well as in stochastic integrals.
However, for integrals with respect to the measures $\nu$ and $q$, the representative $\chi$ is not sufficient to define the integral unambiguously.
In this case, one must explicitly specify either $\chi^{+}$ or $\chi^{-}$; the two choices may, in general, lead to different values.

For $s,t\in(0,T)$ satisfying $s<t$ and $\epsilon\in(0,(T-t)\land s)$, we can take smooth test functions
in \eqref{eq:kinetic equation test with time} which converge to $\alpha_{\epsilon,s,t}(r)\phi(x,\xi)$, where $\phi\in C^{1}_c(\mathbb{T}^d\times(0,\infty))$, and $\alpha_{\epsilon,s,t}$ is defined by
\begin{equation*}
\alpha_{\epsilon,s,t}(r):=\begin{cases}
1,&s\leq r\leq t,\\
1+(r-s)/\epsilon,&s-\epsilon\leq r\leq s,\\
1-({r-t})/{\epsilon},&t\leq r\leq t+\epsilon,\\
0,&r\geq t+\epsilon \ \text{ or  }\ r\leq s-\epsilon.
\end{cases}
\end{equation*}
Then, taking the limit $\epsilon\to0$ and using the Lebesgue dominated convergence theorem, we get
\begin{align}
& \int_{\mathbb{R}}\int_{\mathbb{T}^d}\chi^{+}(x,\xi,t)\phi(x,\xi)\mathrm{d}x\mathrm{d}\xi-\int_{\mathbb{R}}\int_{\mathbb{T}^d}\chi^{-}(x,\xi,s)\phi(x,\xi)\mathrm{d}x\mathrm{d}\xi
\label{eq:kinetic equation chi+}\\
 & =-\int_{s}^{t}\int_{\mathbb{T}^{d}}\Phi^{\prime}(u)\nabla u\cdot(\nabla\phi)(x,u)\mathrm{d}x\mathrm{d}\tau\nonumber \\
 & \quad-\frac{1}{2}\int_{s}^{t}\int_{\mathbb{T}^{d}}\big(F_{1}(x)[\sigma^{\prime}(u)]^{2}\nabla u+\sigma(u)\sigma^{\prime}(u)F_{2}(x)\big)\cdot(\nabla\phi)(x,u)\mathrm{d}x\mathrm{d}\tau\nonumber\\
 & \quad-\int_{[s,t]}\int_{\mathbb{R}}\int_{\mathbb{T}^{d}}\partial_{\xi}\phi(x,\xi)\mathrm{d}q(x,\xi,s)-\int_{s}^{t}\int_{\mathbb{T}^d}\phi(x,u)\nabla\cdot g(u)\mathrm{d}x\mathrm{d}\tau\nonumber \\
 & \quad+\frac{1}{2}\int_{s}^{t}\int_{\mathbb{T}^{d}}\Big(\sigma(u)\sigma^{\prime}(u)\nabla u\cdot F_{2}(x)+\sigma^{2}(u)F_{3}(x)\Big)(\partial_{\xi}\phi)(x,u)\mathrm{d}x\mathrm{d}\tau\nonumber \\
 & \quad-\int_{[s,t]}\int_{\mathbb{T}^{d}}\phi(x,\psi)\mathrm{d}\nu(x,\tau)-\int_{s}^{t}\int_{\mathbb{T}^{d}}\phi(x,u)\nabla\cdot(\sigma(u)f_{k})\mathrm{d}x\mathrm{d}B_{\tau}^{k}.\nonumber
\end{align}

\section{Uniqueness of stochastic kinetic solutions}
\label{sec:unique}
In this section, we establish the uniqueness of stochastic kinetic solutions to the obstacle problem for \eqref{eq:main}.
Our overall strategy is inspired by the uniqueness argument developed in \cite{fehrman2024well} for the generalized Dean--Kawasaki equation.
A key point of our result is that this argument can be extended to the obstacle problem under exactly the same assumptions on the coefficients $\Phi$, $\sigma$ and $g$, without introducing any additional conditions to accommodate the obstacle constraint.

The assumptions used below are the same as those in Assumption 4.1 of \cite{fehrman2024well}.
\begin{assumption}\label{assu:assu for unique}
Suppose that $\Phi$, $\sigma\in C([0,\infty))$ and $g\in C([0,\infty);\mathbb{R}^d)$ satisfy the followings.
\begin{itemize}
\item[1.] The functions $\Phi$, $\sigma\in C^{1,1}_{\mathrm{loc}}((0,\infty))$ and $g\in C^{1}_{\mathrm{loc}}((0,\infty);\mathbb{R}^d)$.
\item[2.] The function $\Phi(0)=0$ and with $\Phi'>0$ on $(0,\infty)$.
\item[3.] There exists $c\in (0,\infty)$ such that
$
\limsup_{\xi\rightarrow 0^+}{\sigma^2(\xi)}/{\xi}\leq c,
$
which implies that $\sigma(0)=0$.
\item[4.] Either that $\sigma\sigma^\prime\in C([0,\infty))$ with $(\sigma\sigma')(0)=0$ or that $\nabla\cdot F_2=0$ for $F_2$ defined by (\ref{noise}).
\item[5.] There exists $c\in [1,\infty)$ such that
\begin{align*}
\sup_{\xi'\in [0,\xi]}\sigma^2(\xi')\leq c(1+\xi+\sigma^2(\xi)) \quad {\rm{for}}\ \xi\in[0,\infty).
\end{align*}
\item[6.] There exists $c\in [1,\infty)$ such that
\begin{align*}
\sup_{\xi'\in [0,\xi]}|g(\xi')|\leq c(1+\xi+|g(\xi)|) \quad {\rm{for}}\ \xi\in[0,\infty).
\end{align*}
\end{itemize}
\end{assumption}

In the sequel, for simplicity, we write $\int_{t}$ in place of $\int_{0}^{T}\cdot\mathrm{d}t$ (and similarly for $\int_{s}$), $\int_{x}$ in place of $\int_{\mathbb{T}^{d}}\cdot\mathrm{d}x$ (and similarly for $\int_{y}$ and $\int_{z}$), $\int_{\eta}$ in place of $\int_{\mathbb{R}}\cdot\mathrm{d}\eta$ (and similarly for $\int_{\xi}$ and $\int_{\tilde{\xi}}$) and $\int_{\theta}$ in place of $\int_{0}^{1}\cdot\mathrm{d}\theta$.
However, to avoid confusion, we will use the usual notation when the integral is taken on a different domain or the measure is not Lebesgue's measure.

We first recall an integration-by-parts identity in the kinetic formulation, taken from \cite[Lemma 4.3]{fehrman2024well}, and then introduce the regularization kernels used below to handle the singular coefficients.
\begin{lem}
\label{lem:intergrating by part}Let $u\in H^{1}(\mathbb{T}^{d})$
be a nonnegative function, and $\chi$ be the kinetic function of $u$.
Then, for every $\phi\in C_{c}^{\infty}(\mathbb{T}^{d}\times(0,\infty))$,
\[
\int_{x,\xi}\nabla_{x}\phi(x,\xi)\chi(x,\xi)=-\int_{x}\phi(x,u(x))\nabla_{x}u(x).
\]
In particular, if $(u,\nu)$ is a stochastic kinetic solution in the
sense of Definition \ref{def:def of stochastic kinetic solution}, then almost surely for every $\phi\in C_{c}^{\infty}(\mathbb{T}^{d}\times(0,\infty)\times[0,T])$,
\[
\int_{x,\xi,s}\nabla_{x}\phi(x,\xi,s)\chi(x,\xi,s)=-\int_{x,s}\phi(x,u(x,s),s)\nabla_{x}u(x,s).
\]
\end{lem}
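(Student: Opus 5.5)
We prove the first identity for a fixed nonnegative $u\in H^{1}(\mathbb{T}^{d})$ by integrating out the kinetic variable. By definition $\chi(x,\xi)=\mathbf{1}_{\{0<\xi<u(x)\}}$, so for each fixed $x$,
\[
\int_{\xi}\nabla_{x}\phi(x,\xi)\chi(x,\xi)=\int_{0}^{u(x)}\nabla_{x}\phi(x,\xi)\,\mathrm{d}\xi .
\]
Introduce the antiderivative
\[
\Theta(x,r):=\int_{0}^{r}\phi(x,\xi)\,\mathrm{d}\xi ,
\]
which is smooth in $(x,r)$. Because $\phi$ has compact support in $\mathbb{T}^{d}\times(0,\infty)$, $\Theta$ is globally Lipschitz in $r$ with $\partial_r\Theta=\phi$ bounded. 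It is also bounded together with $\nabla_x\Theta$, since for large $r$ it is independent of $r$. The chain rule for compositions of $H^{1}$ functions with such Carathéodory-smooth functions then gives $\Theta(\cdot,u(\cdot))\in W^{1,1}(\mathbb{T}^{d})$, with
\[
\nabla\big[\Theta(x,u(x))\big]=(\nabla_{x}\Theta)(x,u(x))+\phi(x,u(x))\nabla u(x),\qquad (\nabla_{x}\Theta)(x,u)=\int_{0}^{u}\nabla_{x}\phi(x,\xi)\,\mathrm{d}\xi .
\]
I would justify this by approximating $u$ in $H^{1}$ by smooth nonnegative $u_n$, where the identity is classical, and passing to the limit. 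Dominated convergence handles the first term, since $\nabla_x\phi$ is bounded and compactly supported. The second term converges in $L^{1}$ because $\phi(x,u_n)\to\phi(x,u)$ a.e. with a uniform bound and $\nabla u_n\to\nabla u$ in $L^{2}$.

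Integrating over the torus kills the left-hand side, since $\int_{x}\nabla[\Theta(x,u)]=0$ by periodicity. This yields
\[
\int_{x,\xi}\nabla_x\phi\,\chi=\int_x(\nabla_x\Theta)(x,u)=-\int_x\phi(x,u)\nabla u ,
\]
which is the claim.

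For the second statement, Definition~\ref{def:def of stochastic kinetic solution}(v) only gives $(u\wedge K)\vee(1/K)\in H^{1}$, not $u\in H^{1}$. Since $\phi(\cdot,\cdot,s)$ is supported in $\{K^{-1}\le\xi\le K\}$ for some $K$ independent of $s$, we work with the truncation $u_K:=(u\wedge K)\vee(1/K)$.

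On the support of $\phi$, the kinetic functions of $u$ and $u_K$ agree, i.e. $\chi=\bar\chi(u_K,\xi)$ there, except on $\{\xi<1/K\}$, where $\phi$ vanishes anyway. Likewise $\phi(x,u,s)=\phi(x,u_K,s)$, and $\phi(x,u,s)\nabla u=\phi(x,u_K,s)\nabla u_K$ a.e., because $\nabla u_K=\nabla u$ a.e. on $\{1/K<u<K\}$ and $\phi(x,u_K,s)=0$ off the support. Note that $u_K$ is not zero near $0$, but the previous argument only uses $\Theta(x,r)=\int_0^r\phi$ and works for any $H^{1}$ function.

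Applying the first part to $u_K(\cdot,s)$ for a.e. $s$ and integrating in $s$ then gives the result. Fubini is justified since $\int_s\|\nabla u_K\|_{L^{1}}<\infty$ almost surely.

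The only delicate point is the chain rule for $\Theta(x,u)$ with $u\in H^{1}$. It is routine here because $\partial_r\Theta$ and $\nabla_x\Theta$ are bounded.
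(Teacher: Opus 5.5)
Your argument is correct. It is essentially the standard proof of this lemma, which the paper does not reprove but imports from \cite[Lemma 4.3]{fehrman2024well}: integrate out $\xi$ through the antiderivative $\Theta(x,r)=\int_0^r\phi(x,\xi)\,\mathrm{d}\xi$, use the chain rule and periodicity on the torus, and reduce the time-dependent statement to the truncations $(u\wedge K)\vee(1/K)$ from Definition~\ref{def:def of stochastic kinetic solution}~(v).

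Two points are worth stating explicitly. First, for $u\notin H^1$, the expression $\phi(x,u,s)\nabla u$ is by definition $\phi(x,u_K,s)\nabla u_K$ for any $K$ with $\mathrm{supp}\,\phi\subset\{1/K<\xi<K\}$, and it does not depend on which such $K$ is chosen. Second, the exceptional null set only involves the countably many $K\in\mathbb{N}$, so the identity holds almost surely simultaneously for all $\phi$.
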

\begin{defn}
For every $\delta,\varsigma\in(0,1)$, let $\kappa_{d}^{\varsigma}:\mathbb{T}^{d}\to[0,\infty)$ and $\kappa_{1}^{\delta}:\mathbb{R}\to[0,\infty)$ be standard convolution
kernels of scales $\varsigma$ and $\delta$ on $\mathbb{T}^{d}$ and $\mathbb{R}$ respectively, and let $\kappa^{\varsigma,\delta}$ be
\begin{align}\label{rr-1}
    \kappa^{\varsigma,\delta}(x,y,\xi,\eta):=\kappa_{d}^{\varsigma}(x-y)\kappa_{1}^{\delta}(\xi-\eta),\quad\text{for every }(x,y,\xi,\eta)\in(\mathbb{T}^{d})^{2}\times\mathbb{R}^{2}.
\end{align}
For every $\beta\in(0,1)$, let $\varphi_{\beta}\colon\mathbb{R}\to[0,1]$
be the unique non-decreasing piecewise linear function that satisfies
\[
\varphi_{\beta}(\xi)=1\text{ if }\xi\geq\beta,\quad\varphi_{\beta}(\xi)=0\text{ if }\xi\leq\frac{\beta}{2},\quad\text{and}\quad\varphi_{\beta}^{\prime}(\xi)=\frac{2}{\beta}\mathbf{1}_{\{\beta/2\leq\xi\leq\beta\}},
\]
and for every $N\in\mathbb{N}$, let $\zeta_{N}\colon\mathbb{R}\to[0,1]$
be the unique non-increasing piecewise linear function that satisfies
\[
\zeta_{N}(\xi)=0\text{ if }\xi\geq N+1,\quad\zeta_{N}(\xi)=1\text{ if }\xi\leq N,\quad\text{and}\quad\zeta_{N}^{\prime}(\xi)=-\mathbf{1}_{\{N\leq\xi\leq N+1\}}.
\]
\end{defn}

\begin{prop}
\label{prop:proposition for limit measure}Let Assumption \ref{assu:assum for F} and Assumption \ref{assu:assu for unique} hold for $\xi^F$, $\psi$, $u_{\mathrm{init}}$, $\Phi$ and $\sigma$.
Let $(u,\nu)$ be stochastic kinetic solutions of the obstacle problem for \eqref{eq:main} in the sense of Definition \ref{def:def of stochastic kinetic solution}. Then, it follows almost surely that
\[
\underset{\beta\to0}{\mathrm{lim\,inf}}\big(\beta^{-1}q(\mathbb{T}^{d}\times[\beta/2,\beta]\times[0,T))\big)=0.
\]
\end{prop}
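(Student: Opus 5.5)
We will follow the strategy of \cite[Proposition 4.5]{fehrman2024well}, but replace the conservation identity $\|u(t)\|_{L^1}=\|u_{\mathrm{init}}\|_{L^1}$ there with the mass balance (ii) of Definition \ref{def:def of stochastic kinetic solution}; the obstacle term $\nu$ then enters on both sides and cancels. The idea is to test the kinetic equation \eqref{eq:kinetic equation test with time} with functions that approximate $\phi(x,\xi,s)=\alpha(s)\zeta_N(\xi)(1-\varphi_\beta(\xi))$, i.e. cut-offs which do not depend on $x$ and concentrate near small levels. Since $\partial_\xi(1-\varphi_\beta)=-\varphi_\beta'=-\frac{2}{\beta}\mathbf 1_{[\beta/2,\beta]}$, the $q$-term in \eqref{eq:kinetic equation test with time} produces exactly $\frac{2}{\beta}\int\alpha\,\mathbf 1_{[\beta/2,\beta]}\,\mathrm dq$ (for $\beta<N$ the $\zeta_N$-part is separated from it). It therefore suffices to show that every other term tends to $0$ as $\beta\to0$, at least along a subsequence.

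\textbf{Step 1: the test with $\zeta_N\varphi_\beta$.} We first test with $\alpha(s)\zeta_N(\xi)\varphi_\beta(\xi)$, where $\alpha\in C_c^\infty([0,T))$, and subtract the result from the identity obtained with $\alpha(s)\zeta_N(\xi)$. Because $\chi$ vanishes for $\xi\le0$ and $u\ge0$, letting $\beta\to0$ in $\int\chi\,\zeta_N\varphi_\beta$ recovers $\int\chi\,\zeta_N$ by monotone convergence. In the limit $N\to\infty$, the $\zeta_N$-test gives $\int (u\wedge\cdot)$-type mass, and with (vii) and the $\zeta_N'$-terms this reproduces the left-hand side of \eqref{eq:L1 perservation}. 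The $\nu$-term yields $-\int\alpha\,\zeta_N(\psi)\varphi_\beta(\psi)\,\mathrm d\nu$. Because $\psi>0$ is not assumed, we must compare it with $-\int\alpha\,\mathrm d\nu$ from (ii). The difference is $\int\alpha(1-\varphi_\beta(\psi))\,\mathrm d\nu\to\int\alpha\mathbf 1_{\{\psi=0\}}\mathrm d\nu$. We plan to handle this as follows: on $\{\psi=0\}$ we have $u=0$, and the $\nu$-mass charged there is accounted for consistently with the value $0$. This needs care; we would try to show directly from the kinetic equation that $\nu(\{\psi=0\})$ gives no contribution, because $\phi(x,0,s)=0$ for every admissible test function.

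\textbf{Step 2: lower order terms.} The diffusion and flux terms involving $\nabla\phi$ vanish because $\phi$ is independent of $x$. The term $\phi(x,u)\nabla\cdot g(u)$ becomes $\int\nabla\cdot G_\beta(u)=0$ with $G_\beta'=\varphi_\beta\zeta_N g'$; the stochastic integral vanishes for the same divergence reason. There remains the It\^o-correction term $\frac12\int(\sigma\sigma'\nabla u\cdot F_2+\sigma^2F_3)\varphi_\beta'(u)$. We write it as $\frac{1}{\beta}\int_{\{\beta/2\le u\le\beta\}}(\ldots)$. We integrate by parts in $x$ in the $F_2$ piece, using Assumption \ref{assu:assu for unique}.4 and boundedness of $\nabla\cdot F_2$. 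We control the $F_3$ piece by $\sigma^2(\xi)\le c\xi$ from Assumption \ref{assu:assu for unique}.3. Together these bound the term by $C\int_0^T\int\mathbf 1_{\{0<u\le\beta\}}\,\mathrm dx\,\mathrm ds\to0$ by dominated convergence.

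\textbf{Step 3: conclusion.} Collecting the above, for each fixed $\alpha$ we obtain $\lim_{\beta\to0}\beta^{-1}\int\alpha\,\mathbf 1_{[\beta/2,\beta]}\mathrm dq=0$ along the limit; taking $\alpha\uparrow\mathbf 1_{[0,T)}$ and using a diagonal subsequence then yields the $\liminf$ statement almost surely. We expect the main obstacle to be Step 1: the mass balance must be matched precisely with the kinetic identity, including the behaviour of $\nu$ near $\{\psi\ \text{small}\}$. If this fails, we would instead derive the mass identity from the kinetic equation itself and compare it to (ii), which is exactly why (ii) was introduced.
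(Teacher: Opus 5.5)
Your route is the paper's: test (viii) of Definition \ref{def:def of stochastic kinetic solution} with (approximations of) a time cutoff times $\zeta_N\varphi_\beta$. The $x$-independence kills the $\nabla\phi$ terms and the $g$-term. You dispose of the It\^o-correction terms via items 3 and 4 of Assumption \ref{assu:assu for unique} as in Fehrman--Gess, use (vii) for the slab $[N,N+1]$, and balance what remains against the mass identity (ii).

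The step you flag in Step 1, however, is a genuine gap, and the fix you suggest cannot work. That admissible test functions vanish near $\xi=0$ is exactly why the restriction of $\nu$ to $\{\psi=0\}$ is invisible in (viii) while it is fully present in (ii). It is the source of the discrepancy, not a reason for it to disappear. Carried out exactly (time cutoff $\to\mathbf{1}_{[0,T)}$, then $N\to\infty$, then $\beta\to0$, in expectation), your comparison yields
\[
\lim_{\beta\to0}2\beta^{-1}\,\mathbb{E}\,q\big(\mathbb{T}^d\times[\beta/2,\beta]\times[0,T)\big)=\mathbb{E}\,\nu\big(\{(x,t)\in\mathbb{T}^d\times[0,T):\psi(x,t)=0\}\big),
\]
and the right-hand side need not vanish when $\psi$ is merely nonnegative. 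For instance, take no noise, $d=1$, $\Phi(r)=r$, $g=0$, and a continuous $\psi$ that vanishes exactly on an interval $[a,b]$ and grows steeply outside it. Mass diffusing towards $[a,b]$ is absorbed at $a$ and $b$, so $\nu$ has density $|\partial_x u|$ on $\{a,b\}\times[0,T)\subset\{\psi=0\}$. Already the lower bound (vi) then forces $\liminf_{\beta\to0}\beta^{-1}q(\mathbb{T}^1\times[\beta/2,\beta]\times[0,T))\geq\frac12\nu(\{\psi=0\})>0$.

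So this step can only be closed under an extra hypothesis, e.g.\ $\psi>0$ on $Q_T$ (or $\nu(\{\psi=0\})=0$). Then $\varphi_\beta(\psi)\to1$ pointwise and dominated convergence gives $\int\varphi_\beta(\psi)\,\mathrm{d}\nu\to\nu(\mathbb{T}^d\times[0,T))$. The paper's proof passes over the same point: it simply asserts $\lim_{\beta\to0}\lim_{N\to\infty}\mathbb{E}\int\zeta_N(\psi)\varphi_\beta(\psi)\,\mathrm{d}\nu=\mathbb{E}\,\nu(\mathbb{T}^d\times[0,T))$, which is precisely this tacit assumption. You were right to flag it.

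Three further steps fail as written.

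\emph{The stochastic integral.} It does not vanish pathwise. For $x$-independent $\phi$ one gets $\int_x\phi(u)\nabla(\sigma(u)f_k)\,\mathrm{d}x=\int_x\Theta(u)\nabla f_k\,\mathrm{d}x$ with $\Theta(r)=\int_0^r\partial_\xi\phi(\rho)\sigma(\rho)\,\mathrm{d}\rho$. This is nonzero because $f_k$ depends on $x$, which is where it differs from the $g$-term. The paper takes expectations, which removes this martingale, and then passes to the almost sure $\liminf$ by Fatou's lemma.

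\emph{The order of limits in Step 3.} Knowing $\beta^{-1}\int\alpha\mathbf{1}_{[\beta/2,\beta]}\,\mathrm{d}q\to0$ for each fixed $\alpha\in C_c^\infty([0,T))$ does not prevent $q$-mass at levels $[\beta/2,\beta]$ from concentrating near $t=T$ as $\beta\to0$. So you cannot let $\alpha\uparrow\mathbf{1}_{[0,T)}$ afterwards. The paper removes the time cutoff first, for fixed $\beta$ and $N$, via $\tilde{\alpha}_{\epsilon,T}$ and $\epsilon\to0$. This produces the left limit $\chi^-(\cdot,T)$ of Proposition \ref{prop:atom point in t} in (viii), together with the matching identity from (ii).

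\emph{Admissibility of your test functions.} $\alpha\zeta_N$ and $\alpha\zeta_N(1-\varphi_\beta)$ are not admissible in (viii), since they do not vanish near $\xi=0$. Your fallback of deriving the mass identity from the kinetic equation itself is therefore impossible, for the same reason as above. Condition (ii) is the only available substitute, which is why the paper invokes it directly.
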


\begin{proof}
For $\epsilon>0$, $N\in\mathbb{N}$, and $\beta\in(0,1)$, by taking smooth test functions in \eqref{eq:kinetic equation test with time} which converge to $\tilde{\alpha}_{\epsilon,T}\zeta_{N}\varphi_{\beta}$ where
\begin{equation}\label{r-2}
\tilde{\alpha}_{\epsilon,T}(s):=\begin{cases}
1,&0\leq s\leq T-\epsilon,\\
({T-s})/{\epsilon},&T-\epsilon\leq s\leq T,\\
0,&s\geq T,
\end{cases}
\end{equation}
taking the limit $\epsilon\to0$, and using the Lebesgue's dominated convergence theorem, we have
\begin{align}
 & \mathbb{E}\Big[2\beta^{-1}q(\mathbb{T}^{d}\times[\beta/2,\beta]\times[0,T))\Big]\label{eq:kinetic eqn} \\
 & =-\mathbb{E}\Bigg[\int_{x,\xi}\chi^{-}(x,\xi,T)\zeta_{N}(\xi)\varphi_{\beta}(\xi)\Bigg]
 +\mathbb{E}\Bigg[\int_{x,\xi}\bar{\chi}(u_{\mathrm{init}},\xi)\zeta_{N}(\xi)\varphi_{\beta}(\xi)\Bigg]\nonumber\\
 &\quad-\mathbb{E}\Bigg[\int_{x,t}\zeta_{N}(u)\varphi_{\beta}(u)\nabla\cdot g(u)\Bigg]\nonumber\\
 & \quad+\mathbb{E}\Bigg[\int_{x,t}\Big(\beta^{-1}\mathbf{1}_{\{\beta/2\leq u\leq\beta\}}-\frac{1}{2}\mathbf{1}_{\{N\leq u\leq N+1\}}\Big)\Big(\sigma(u)\sigma^{\prime}(u)\nabla u\cdot F_{2}+\sigma^{2}(u)F_{3}\Big)\Bigg]\nonumber\\
 & \quad+\mathbb{E}\Big[q(\mathbb{T}^{d}\times[N,N+1]\times[0,T))\Big]-\mathbb{E}\Bigg[\int_{[0,T)}\int_{\mathbb{T}^{d}}\zeta_{N}(\psi)\varphi_{\beta}(\psi)\mathrm{d}\nu(x,t)\Bigg].\nonumber
\end{align}
We now estimate each term on the righthand side of \eqref{eq:kinetic eqn}.
Using Assumption \ref{assu:assum for F} and Assumption \ref{assu:assu for unique} and following \cite[the proof of Proposition 4.5]{fehrman2024well},
we have
\begin{align*}
\lim_{\beta\to0}\beta^{-1}\mathbb{E}\Bigg[\int_{x,t}\mathbf{1}_{\{\beta/2\leq u\leq\beta\}}\Big(\sigma(u)\sigma^{\prime}(u)\nabla u\cdot F_{2}+\sigma^{2}(u)F_{3}\Big)\Bigg] & =0,
\end{align*}
and
\begin{align*}
 & \lim_{N\to\infty}\mathbb{E}\Bigg[\int_{x,t}\mathbf{1}_{\{N\leq u\leq N+1\}}\Big(\sigma(u)\sigma^{\prime}(u)\nabla u\cdot F_{2}+\sigma^{2}(u)F_{3}\Big)\Bigg]=0.
\end{align*}
Note that
\begin{align*}
\int_{x}\zeta_{N}(u)\varphi_{\beta}(u)\nabla\cdot g(u)=\int_{x}\nabla\cdot\int_0^u\zeta_{N}(r)\varphi_{\beta}(r) g(r)\mathrm{d}r=0.
\end{align*}
The Definition \ref{def:def of stochastic kinetic solution} (vii) yields
$
\lim_{N\to\infty}\mathbb{E}\big[q(\mathbb{T}^{d}\times[N,N+1]\times[0,T))\big]=0.
$
The remaining three terms satisfy
\begin{align*}
 & \lim_{\beta\to0}\lim_{N\to\infty}\mathbb{E}\Bigg[\int_{[0,T)}\int_{\mathbb{T}^{d}}\zeta_{N}(\psi)\varphi_{\beta}(\psi)\mathrm{d}\nu(x,t)\Bigg]\\
 & +\lim_{\beta\to0}\lim_{N\to\infty}\mathbb{E}\Bigg[\int_{x,\xi}\chi^{-}(x,\xi,T)\zeta_{N}(\xi)\varphi_{\beta}(\xi)-\int_{x,\xi}\bar{\chi}(u_{\mathrm{init}},\xi)\zeta_{N}(\xi)\varphi_{\beta}(\xi)\Bigg]\\
 & =\mathbb{E}\big[\nu(\mathbb{T}^d\times [0,T))\big]+\mathbb{E}\Bigg[\int_{x,\xi} \big(\chi^{-}(x,\xi,T)-\bar{\chi}(u_{\mathrm{init}},\xi)\big)\Bigg].
\end{align*}
Taking smooth test functions in \eqref{eq:L1 perservation} which converge to $\tilde{\alpha}_{\epsilon,T}$, using the fact that $\chi$ is the kinetic function of $u$, and taking the limit $\epsilon\to 0$, by the dominated convergence theorem, we reach
\begin{align*}
    \int_{x,\xi}\chi^{-}(x,\xi,T)+\nu(\mathbb{T}^d\times [0,T))=\int_{x,\xi}\bar{\chi}(u_{\mathrm{init}},\xi),
\end{align*}
which implies that
\begin{align*}
 & \lim_{\beta\to0}\lim_{N\to\infty}\mathbb{E}\Bigg[\int_{[0,T)}\int_{\mathbb{T}^{d}}\zeta_{N}(\psi)\varphi_{\beta}(\psi)\mathrm{d}\nu(x,t)\Bigg]\\
 & +\lim_{\beta\to0}\lim_{N\to\infty}\mathbb{E}\Bigg[\int_{x,\xi}\big(\chi^{-}(x,\xi,T)-\bar{\chi}(u_{\mathrm{init}},\xi)\big)\zeta_{N}(\xi)\varphi_{\beta}(\xi)\Bigg]=0.
\end{align*}
Combining all these estimates with \eqref{eq:kinetic eqn}, we have
\begin{align*}
  \lim_{\beta\to0}\mathbb{E}\Big[2\beta^{-1}q(\mathbb{T}^{d}\times[\beta/2,\beta]\times[0,T))\Big]=0.
\end{align*}
The desired result is completed by using Fatou's lemma.
\end{proof}
\begin{thm}
\label{thm:stability L1}Let Assumption \ref{assu:assum for F} and Assumption \ref{assu:assu for unique} hold for $\xi^F, \psi, \Phi, \sigma$ and two initial functions $u_{1,\mathrm{init}}$ and $u_{2,\mathrm{init}}$.
Let $(u_{1},\nu_{1})$ and $(u_{2},\nu_{2})$ be stochastic kinetic solutions of the obstacle problem for \eqref{eq:main} in the sense of Definition \ref{def:def of stochastic kinetic solution} with initial data $u_{1,\mathrm{init}}$ and $u_{2,\mathrm{init}}$, respectively.
Let $\chi_{1}$ and $\chi_{2}$ be the kinetic functions of $u_{1}$ and $u_{2}$.
Then, almost surely, for almost every $s,t\in(0,T)$ with $s<t$,
\[
\int_{x,\xi} \big|\chi_{1}(x,\xi,t)-\chi_{2}(x,\xi,t)\big|^2\leq\int_{x,\xi} \big|\chi_{1}(x,\xi,s)-\chi_{2}(x,\xi,s)\big|^2.
\]
\end{thm}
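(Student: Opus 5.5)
The plan is to adapt the doubling-of-variables argument of \cite[Theorem 4.6]{fehrman2024well} and to isolate the new contributions of the reflection measures $\nu_1,\nu_2$. Since $\chi_i$ are $\{0,1\}$-valued, $|\chi_1-\chi_2|^2=\chi_1+\chi_2-2\chi_1\chi_2$. It therefore suffices to control
\[
\int_{x,\xi}\chi_1(t)\zeta_M\varphi_\beta+\int_{x,\xi}\chi_2(t)\zeta_M\varphi_\beta-2\int_{x,y,\xi,\eta}\chi_1^{\varsigma,\delta}(x,\xi,t)\chi_2^{\varsigma,\delta}(y,\eta,t)\kappa^{\varsigma,\delta}(x,y,\xi,\eta)\varphi_\beta(\eta)\zeta_M(\eta)
\]
and then remove the cutoffs $\varphi_\beta,\zeta_M$ and the regularization. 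Here $\chi_i^{\varsigma,\delta}$ is the convolution of $\chi_i$ with the kernels in \eqref{rr-1}.

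The linear terms are handled by \eqref{eq:kinetic equation chi+} with $\phi=\zeta_M\varphi_\beta$. For the product term I will apply It\^o's product rule to the regularized kinetic equations for $\chi_1$ in $(x,\xi)$ and $\chi_2$ in $(y,\eta)$, each obtained from \eqref{eq:kinetic equation chi+} with test functions $\kappa^{\varsigma,\delta}(x,\cdot,\xi,\cdot)$. The time-dependent measures $q_i,\nu_i$ may have atoms, so the product rule must be written with one-sided representatives. The jump of $\chi_1\chi_2$ at $t$ is split as $(\chi_1^+-\chi_1^-)\chi_2^- + \chi_1^+(\chi_2^+-\chi_2^-)$. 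Proposition \ref{prop:atom point in t} identifies each jump with the atoms of $q_i$ and $\nu_i$, so every $dq_1$, $d\nu_1$ term is paired with $\chi_2^-$ and every $dq_2$, $d\nu_2$ term with $\chi_1^+$. Because $\chi=\chi^\pm$ a.e.\ in time, all Lebesgue and stochastic integrals are unaffected.

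The terms not involving $\nu_i$ are treated exactly as in \cite{fehrman2024well}. The kinetic measures $q_i=m_i+\lambda_i$ enter only through their sign and through (vi), so the usual error term $\int\partial_\xi\kappa\,\chi_2\,dq_1$ is bounded after integrating by parts in $\eta$. The parabolic terms combine with $\delta_0(\xi-u_i)\Phi'|\nabla u_i|^2\le q_i$ into a nonpositive contribution. The noise and It\^o-correction terms, which use the $1/2$-H\"older bound on $\sigma$ and the ordering $\varsigma\to0$, then $\delta\to0$, cancel up to errors vanishing as in that paper. Cutoff errors at $\xi\approx0$ vanish along a subsequence $\beta\to0$ by Proposition \ref{prop:proposition for limit measure}, and errors at $\xi\approx M$ vanish as $M\to\infty$ by (vii).

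The main new step, and the one I expect to be hardest, is the sign of the $\nu$-terms. The contribution of $\nu_1$ is
\[
-\int_{[s,t]}\int_{\mathbb{T}^d}\Big(\zeta_M\varphi_\beta(\psi(x,r))-2\int_{y,\eta}\kappa^{\varsigma,\delta}(x,y,\psi(x,r),\eta)\chi_2^-(y,\eta,r)\zeta_M\varphi_\beta(\eta)\Big)\mathrm{d}\nu_1(x,r),
\]
and symmetrically for $\nu_2$ with $\chi_1^+$. I will use $u_2^-\le\psi$, which persists for the one-sided limits since $\chi_2^-$ is a weak limit of kinetic functions of functions bounded by $\psi$. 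It gives $\chi_2^-(y,\eta)=0$ for $\eta\ge\psi(y,r)$. With $\omega$ the modulus of continuity of $\psi$ on $Q_T$, this yields
\[
\int_\eta\kappa_1^\delta(\psi(x)-\eta)\chi_2^-(y,\eta)\le\int_{\eta<\psi(x)+\omega(\varsigma)}\kappa_1^\delta(\psi(x)-\eta)\le\tfrac12+C\omega(\varsigma)/\delta
\]
for $|x-y|\le\varsigma$. Hence the bracket is bounded below by $-C\omega(\varsigma)/\delta-C\,\mathrm{Lip}(\zeta_M\varphi_\beta)\,\delta$. Since $\mathbb{E}\nu_i(Q_T)<\infty$, the $\nu$-terms are bounded above by errors that vanish when $\varsigma\to0$ first and then $\delta\to0$ (with $\beta,M$ fixed). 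This is compatible with the order of limits in the Fehrman--Gess argument.

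Passing to the limits yields the inequality between $t$ with $\chi^+$ and $s$ with $\chi^-$. Restricting to the full-measure set of times where $\chi=\chi^\pm$ gives the statement for almost every $s<t$.
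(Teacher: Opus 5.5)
Your proposal is correct and follows essentially the same route as the paper: Fehrman--Gess doubling of variables, one-sided representatives $\chi_i^{\pm}$ in the integrals against $q_i$ and $\nu_i$, the bound $\chi_i^{\pm}\le\chi_\psi$ coming from the continuity of $\psi$, and the sign of $\partial_\eta\chi_i^{\pm}$ combined with (vi) for the kinetic-measure terms. The differences are only cosmetic. You pair $q_1,\nu_1$ with $\chi_2^-$, whereas the paper uses $\chi_2^+$; both product formulas are valid. You also bound the cross obstacle term directly by the half-mass of $\kappa_1^\delta$ plus $C\omega(\varsigma)/\delta$. The paper instead integrates by parts in $\xi$ against $\chi_\psi$ via the primitive $\llbracket\kappa_1^\delta\rrbracket$ and its oddness, which yields the same factor $\tfrac12$.
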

\begin{proof}
Based on Proposition \ref{prop:atom point in t}, we consider the case that $s$ and $t$ in the dense set
\[
\mathcal{T}:=\{\tau\in(0,T)|\tilde{\chi}_i^-(\tau)=\tilde{\chi}_i^+(\tau)=\chi_i(\tau), \text{as a distribution on }\mathbb{T}^d\times{\mathbb{R}}\text{ for }i=1,2\}
\]
in $(0,T)$. Clearly, $\chi_{i}^{+}$ is c\`{a}dl\`{a}g. Recall that $\kappa^{\varsigma,\delta}$ is defined by \eqref{rr-1}.
For every $\varsigma,\delta\in(0,1)$ and $i\in\{1,2\}$, let
\begin{align*}
\chi_{i,t}^{\varsigma,\delta}(y,\eta)&:=\int_{x,\xi}\chi_{i}(x,\xi,t)\kappa^{\varsigma,\delta}(x,y,\xi,\eta),\\
 \chi_{i,t}^{\varsigma,\delta,\pm}(y,\eta)&:=\int_{x,\xi}\chi_{i}^{\pm}(x,\xi,t)\kappa^{\varsigma,\delta}(x,y,\xi,\eta),\\
\bar{\kappa}_{i,t}^{\varsigma,\delta}(x,y,\eta)&:=\kappa^{\varsigma,\delta}(x,y,u_{i}(x,t),\eta).
\end{align*}
With the aid of the distributional equalities $\partial_{\xi}\chi_{i}=\delta_0(\xi)-\delta_0(\xi-u_i)$ and $\partial_{x_j}\chi_{i}=\delta_0(\xi-u_i)\partial_{x_j} u_i$ for $j\in\{1,2,\ldots,d\}$, by the property of $\kappa^{\varsigma,\delta}$, we reach
\begin{align}
\partial_{\eta}\chi_{i,t}^{\varsigma,\delta}(y,\eta) 
 & =-\int_{x,\xi}\chi_{i}(x,\xi,t)\partial_{\xi}\kappa^{\varsigma,\delta}(x,y,\xi,\eta)
  =\int_{x,\xi}\partial_{\xi}\chi_{i}(x,\xi,t)\kappa^{\varsigma,\delta}(x,y,\xi,\eta)\label{eq:partial eta}\\
 & =\int_{x}\kappa^{\varsigma,\delta}(x,y,0,\eta)-\int_{x}\bar{\kappa}_{i,t}^{\varsigma,\delta}(x,y,\eta),\nonumber
\end{align}
and
\begin{align*}
\partial_{y_j}\chi_{i,t}^{\varsigma,\delta}(y,\eta)
 & =-\int_{x,\xi}\chi_{i}(x,\xi,t)\partial_{x_j}\kappa^{\varsigma,\delta}(x,y,\xi,\eta) =\int_{x,\xi}\partial_{x_j}\chi_{i}(x,\xi,t)\kappa^{\varsigma,\delta}(x,y,\xi,\eta)\\
 & =\int_{x}\partial_{x_j}u_{i}(x,t)\bar{\kappa}_{i,t}^{\varsigma,\delta}(x,y,\eta).
\end{align*}
Then, by taking the test function $\phi(x,\xi)=\kappa^{\varsigma,\delta}(x,y,\xi,\eta)$ in \eqref{eq:kinetic equation chi+}, based on the definition of the set $\mathcal{T}$, there exists a subset of full probability such that, for every $i\in\{1,2\}$ and $(y,\eta)\in\mathbb{T}^{d}\times(\delta/2,\infty)$,
\begin{align*}
 & \chi_{i,t}^{\varsigma,\delta,+}(y,\eta)-\chi_{i,s}^{\varsigma,\delta}(y,\eta)\\
 & =\nabla_{y}\cdot\bigg(\int_{s}^{t}\int_{x}\Phi^{\prime}(u_{i})\nabla_x u_{i}\bar{\kappa}_{i,\tau}^{\varsigma,\delta}(x,y,\eta)\mathrm{d}\tau\bigg)\\
 & \quad+\frac{1}{2}\nabla_{y}\cdot\bigg(\int_{s}^{t}\int_{x}\big(F_{1}[\sigma^{\prime}(u_{i})]^{2}\nabla_x u_{i}+\sigma(u_{i})\sigma^{\prime}(u_{i})F_{2}\big)\bar{\kappa}_{i,\tau}^{\varsigma,\delta}(x,y,\eta)\mathrm{d}\tau\bigg)\\
 & \quad+\partial_{\eta}\bigg(\int_{s}^{t}\int_{\mathbb{R}}\int_{\mathbb{T}^{d}}\kappa^{\varsigma,\delta}(x,y,\xi,\eta)\mathrm{d}q_{i}(x,\xi,\tau)\bigg)-\int_s^t\int_x\bar\kappa_{i,\tau}^{\varsigma,\delta}(x,y,\eta)\nabla_x\cdot g(u)\mathrm{d}\tau\\
 & \quad-\frac{1}{2}\partial_{\eta}\bigg(\int_{s}^{t}\int_{x}\Big(\sigma(u_{i})\sigma^{\prime}(u_{i})\nabla_x u_{i}\cdot F_{2}+\sigma^{2}(u_{i})F_{3}\Big)\bar{\kappa}_{i,\tau}^{\varsigma,\delta}(x,y,\eta)\mathrm{d}\tau\bigg)\\
 & \quad-\int_{s}^{t}\int_{\mathbb{T}^{d}}\kappa^{\varsigma,\delta}(x,y,\psi,\eta)\mathrm{d}\nu_{i}(x,\tau)-\int_{s}^{t}\int_{x}\bar{\kappa}_{i,\tau}^{\varsigma,\delta}(x,y,\eta)\nabla_x\cdot(\sigma(u_{i})f_{k})\mathrm{d}B_{\tau}^{k},
\end{align*}
where $u_{i}=u_{i}(x,\tau)$, $\psi=\psi(x,\tau)$ and $f_{k}=f_k(x)$. Then,
for every $\varsigma,\beta\in(0,1)$, $\delta\in(0,\beta/4)$, $i\in\{1,2\}$ and $N\in\mathbb{N}$, we have
\begin{align}\label{eq:first order term}
\int_{y,\eta}\chi_{i,t}^{\varsigma,\delta,+}(y,\eta)\varphi_{\beta}(\eta)\zeta_{N}(\eta)-\int_{y,\eta}\chi_{i,s}^{\varsigma,\delta}(y,\eta)\varphi_{\beta}(\eta)\zeta_{N}(\eta) =I_{i,s,t}^{\mathrm{obs}}+I_{i,s,t}^{\mathrm{cut}}+I_{i,s,t}^{\mathrm{mart}}+I_{i,s,t}^{\mathrm{cons}},
\end{align}
where the obstacle term is
\begin{align*}
I_{i,s,t}^{\mathrm{obs}} & :=-\int_{y,\eta}\varphi_{\beta}(\eta)\zeta_{N}(\eta)\int_{s}^{t}\int_{\mathbb{T}^{d}}\kappa^{\varsigma,\delta}(x,y,\psi,\eta)\mathrm{d}\nu_{i}(x,\tau),
\end{align*}
the cutoff term is
\begin{align*}
I_{i,s,t}^{\mathrm{cut}} & :=
-\int_{y,\eta}\partial_{\eta}\big(\varphi_{\beta}(\eta)\zeta_{N}(\eta)\big)\int_{s}^{t}\int_{\mathbb{R}}\int_{\mathbb{T}^{d}}\kappa^{\varsigma,\delta}(x,y,\xi,\eta)\mathrm{d}q_{i}(x,\xi,\tau)\\
 & \quad+\frac{1}{2}\int_{y,\eta,x}\partial_{\eta}\big(\varphi_{\beta}(\eta)\zeta_{N}(\eta)\big)\int_{s}^{t}\Big(\sigma(u_{i})\sigma^{\prime}(u_{i})\nabla_x u_{i}\cdot F_{2}(x)+\sigma^{2}(u_{i})F_{3}(x)\Big)\bar{\kappa}_{i,\tau}^{\varsigma,\delta}(x,y,\eta)\mathrm{d}\tau,
\end{align*}
the martingale term is
\begin{align*}
I_{i,s,t}^{\mathrm{mart}} & :=-\int_{y,\eta,x}\varphi_{\beta}(\eta)\zeta_{N}(\eta)\int_{s}^{t}\bar{\kappa}_{i,\tau}^{\varsigma,\delta}(x,y,\eta)\nabla\cdot(\sigma(u_{i})f_{k})\mathrm{d}B_{\tau}^{k},
\end{align*}
and the conservative term is
\begin{align*}
I_{i,s,t}^{\mathrm{cons}} & :=-\int_{y,\eta}\varphi_\beta(\eta)\zeta_N(\eta)\int_s^t\int_x\bar\kappa_{i,\tau}^{\varsigma,\delta}(x,y,\eta)\nabla_x\cdot g(u)\mathrm{d}\tau.
\end{align*}
In the following, we write
 $u_{1}=u_{1}(x,\tau)$, $u_{2}=u_{2}(z,\tau)$,
$\bar{\kappa}_{1,\tau}^{\varsigma,\delta}=\bar{\kappa}_{1,\tau}^{\varsigma,\delta}(x,y,\eta)=\kappa^{\varsigma,\delta}(x,y,u_{1}(x,\tau),\eta)$,
and $\bar{\kappa}_{2,\tau}^{\varsigma,\delta}=\bar{\kappa}_{2,\tau}^{\varsigma,\delta}(z,y,\eta)=\kappa^{\varsigma,\delta}(z,y,u_{2}(z,\tau),\eta)$
for simplicity.

Using the It\^{o}'s formula, by Lemmas \ref{lem:intergrating by part}, \eqref{eq:partial eta} and integration by parts for two functions of finite variations from \cite[Chapter 0, Proposition 4.5]{revuz1999continuous}, we have almost surely
\begin{align}
 & \int_{y,\eta}\chi_{1,t}^{\varsigma,\delta,+}(y,\eta)\chi_{2,t}^{\varsigma,\delta,+}(y,\eta)\varphi_{\beta}(\eta)\zeta_{N}(\eta)-\int_{y,\eta}\chi_{1,s}^{\varsigma,\delta}(y,\eta)\chi_{2,s}^{\varsigma,\delta}(y,\eta)\varphi_{\beta}(\eta)\zeta_{N}(\eta) \label{eq:second order term}\\
 & =\int_{s}^{t}\int_{y,\eta}\Big[\chi_{2,\tau}^{\varsigma,\delta,+}(y,\eta)\mathrm{d}\chi_{1,\tau}^{\varsigma,\delta,+}(y,\eta)+\chi_{1,\tau}^{\varsigma,\delta,-}(y,\eta)\mathrm{d}\chi_{2,\tau}^{\varsigma,\delta,+}(y,\eta)+\mathrm{d}\langle\chi_{2,\cdot}^{\varsigma,\delta,+},\chi_{1,\cdot}^{\varsigma,\delta,+}\rangle_{\tau}(y,\eta)\Big]\varphi_{\beta}(\eta)\zeta_{N}(\eta).\nonumber
\end{align}
Here, $\mathrm{d}\langle\chi_{2,\cdot}^{\varsigma,\delta,+},\chi_{1,\cdot}^{\varsigma,\delta,+}\rangle_{\tau}(y,\eta)$ denotes the predictable quadratic covariation of the martingale parts of $\chi_{2}^{\varsigma,\delta,+}$ and $\chi_{1}^{\varsigma,\delta,+}$.
In particular, although the measures $q$ and $\nu$ may produce time discontinuities, the bracket term above refers only to the It\^{o} correction coming from the stochastic integral parts.

For every $\varsigma,\beta\in(0,1)$, $\delta\in(0,\beta/4)$,
and $N\in\mathbb{N}$, using the fact that $\chi_{2,\tau}^{\varsigma,\delta,+}=\chi_{2,\tau}^{\varsigma,\delta}$ almost everywhere, we have
\[
\int_{s}^{t}\int_{y,\eta}\chi_{2,\tau}^{\varsigma,\delta,+}(y,\eta)\mathrm{d}\chi_{1,\tau}^{\varsigma,\delta,+}(y,\eta)\varphi_{\beta}(\eta)\zeta_{N}(\eta)=I_{2,1,s,t}^{\mathrm{obs}}+I_{2,1,s,t}^{\mathrm{err}}+I_{2,1,s,t}^{\mathrm{meas}}+I_{2,1,s,t}^{\mathrm{cut}}+I_{2,1,s,t}^{\mathrm{mart}}+I_{2,1,s,t}^{\mathrm{cons}},
\]
where the obstacle term is
\begin{align*}
I_{2,1,s,t}^{\mathrm{obs}} & :=
-\int_{y,\eta}\int_{s}^{t}\int_{\mathbb{T}^{d}}\chi_{2,\tau}^{\varsigma,\delta,+}(y,\eta)\varphi_{\beta}(\eta)\zeta_{N}(\eta)\kappa^{\varsigma,\delta}(x,y,\psi,\eta)\mathrm{d}\nu_{1}(x,\tau),
\end{align*}
the error term is
\begin{align*}
I_{2,1,s,t}^{\mathrm{err}} & :=-\int_{y,\eta,x,z}\int_{s}^{t}\Phi^{\prime}(u_{1})\nabla_{x}u_{1}\cdot\nabla_{z}u_{2}\bar{\kappa}_{2,\tau}^{\varsigma,\delta}\bar{\kappa}_{1,\tau}^{\varsigma,\delta}\varphi_{\beta}(\eta)\zeta_{N}(\eta)\mathrm{d}\tau\\
 & \quad+\int_{y,\eta,x,z}\int_{s}^{t}|\Phi^{\prime}(u_{1})|^{\frac{1}{2}}|\Phi^{\prime}(u_{2})|^{\frac{1}{2}}\nabla_{x}u_{1}\cdot\nabla_{z}u_{2}\bar{\kappa}_{2,\tau}^{\varsigma,\delta}\bar{\kappa}_{1,\tau}^{\varsigma,\delta}\varphi_{\beta}(\eta)\zeta_{N}(\eta)\mathrm{d}\tau\\
 & \quad-\frac{1}{2}\int_{y,\eta,x,z}\int_{s}^{t}\nabla_{z}u_{2}\cdot\big(F_{1}(x)[\sigma^{\prime}(u_{1})]^{2}\nabla_{x}u_{1}+\sigma(u_{1})\sigma^{\prime}(u_{1})F_{2}(x)\big)\bar{\kappa}_{1,\tau}^{\varsigma,\delta}\bar{\kappa}_{2,\tau}^{\varsigma,\delta}\varphi_{\beta}(\eta)\zeta_{N}(\eta)\mathrm{d}\tau\\
 & \quad-\frac{1}{2}\int_{y,\eta,x,z}\int_{s}^{t}\Big(\sigma(u_{1})\sigma^{\prime}(u_{1})\nabla_{x}u_{1}\cdot F_{2}(x)+\sigma^{2}(u_{1})F_{3}(x)\Big)\bar{\kappa}_{1,\tau}^{\varsigma,\delta}\bar{\kappa}_{2,\tau}^{\varsigma,\delta}\varphi_{\beta}(\eta)\zeta_{N}(\eta)\mathrm{d}\tau,
\end{align*}
the measure term is
\begin{align*}
I_{2,1,s,t}^{\mathrm{meas}} & :=-\int_{y,\eta}\int_{s}^{t}\partial_\eta\chi_{2,\tau}^{\varsigma,\delta,+}(y,\eta)\varphi_{\beta}(\eta)\zeta_{N}(\eta)\int_{\mathbb{R}}\int_{\mathbb{T}^{d}}\kappa^{\varsigma,\delta}(x,y,\xi,\eta)\mathrm{d}q_{1}(x,\xi,\tau)\\
 & \quad-\int_{y,\eta,x,z}\int_{s}^{t}|\Phi^{\prime}(u_{1})|^{\frac{1}{2}}|\Phi^{\prime}(u_{2})|^{\frac{1}{2}}\nabla_{x}u_{1}\cdot\nabla_{z}u_{2}\bar{\kappa}_{2,\tau}^{\varsigma,\delta}\bar{\kappa}_{1,\tau}^{\varsigma,\delta}\varphi_{\beta}(\eta)\zeta_{N}(\eta)\mathrm{d}\tau,
\end{align*}
the cutoff term is
\begin{align*}
I_{2,1,s,t}^{\mathrm{cut}} & :=-\int_{y,\eta}\int_{s}^{t}\int_{\mathbb{R}}\int_{\mathbb{T}^{d}}\chi_{2,\tau}^{\varsigma,\delta,+}(y,\eta)\partial_{\eta}\big(\varphi_{\beta}(\eta)\zeta_{N}(\eta)\big)\kappa^{\varsigma,\delta}(x,y,\xi,\eta)\mathrm{d}q_{1}(x,\xi,\tau)\\
 & \quad+\frac{1}{2}\int_{y,\eta,x}\int_{s}^{t}\chi_{2,\tau}^{\varsigma,\delta}(y,\eta)\partial_{\eta}\big(\varphi_{\beta}(\eta)\zeta_{N}(\eta)\big)\Big(\sigma(u_{1})\sigma^{\prime}(u_{1})\nabla_{x}u_{1}\cdot F_{2}(x)+\sigma^{2}(u_{1})F_{3}(x)\Big)\bar{\kappa}_{1,\tau}^{\varsigma,\delta}\mathrm{d}\tau,
\end{align*}
the martingale term is
\[
I_{2,1,s,t}^{\mathrm{mart}}:=-\int_{y,\eta,x}\int_{s}^{t}\chi_{2,\tau}^{\varsigma,\delta}(y,\eta)\varphi_{\beta}(\eta)\zeta_{N}(\eta)\bar{\kappa}_{1,\tau}^{\varsigma,\delta}\nabla_{x}\cdot(\sigma(u_{1})f_{k}(x))\mathrm{d}B_{\tau}^{k},
\]
and the conservative term is
\[
I_{2,1,s,t}^{\mathrm{cons}}:=-\int_{y,\eta,x}\int_s^t\varphi_\beta(\eta)\zeta_N(\eta)\chi^{\varsigma,\delta}_{2,\tau}(y,\eta)\bar\kappa_{1,\tau}^{\varsigma,\delta}\nabla_x\cdot g(u)\mathrm{d}\tau.
\]
An analogous formula holds for the second term in \eqref{eq:second order term}, which means
\[
\int_{s}^{t}\int_{y,\eta}\chi_{1,\tau}^{\varsigma,\delta,-}(y,\eta)\mathrm{d}\chi_{2,\tau}^{\varsigma,\delta,+}(y,\eta)\varphi_{\beta}(\eta)\zeta_{N}(\eta)=I_{1,2,s,t}^{\mathrm{obs}}+I_{1,2,s,t}^{\mathrm{err}}+I_{1,2,s,t}^{\mathrm{meas}}+I_{1,2,s,t}^{\mathrm{cut}}+I_{1,2,s,t}^{\mathrm{mart}}+I_{1,2,s,t}^{\mathrm{cons}},
\]
where the integrals involving the measures $\nu_2$ and $q_2$  contain $\chi^{\varsigma,\delta,-}_{1,\tau}$ instead of $\chi^{\varsigma,\delta,+}_{1,\tau}$.

For the cross term in \eqref{eq:second order term}, using the fact that $\chi_{i,\tau}^{\varsigma,\delta,+}=\chi_{i,\tau}^{\varsigma,\delta}$ almost everywhere, $i\in\{1,2\}$, we have
\begin{align}
 & \int_{s}^{t}\int_{y,\eta}\mathrm{d}\langle\chi_{2,\cdot}^{\varsigma,\delta,+},\chi_{1,\cdot}^{\varsigma,\delta,+}\rangle_{\tau}(y,\eta)\varphi_{\beta}(\eta)\zeta_{N}(\eta)\label{eq:corss term} \\
 & =\int_{y,\eta,x,z}\int_{s}^{t}f_{k}(x)f_{k}(z)\sigma^{\prime}(u_{1})\sigma^{\prime}(u_{2})\nabla_{x}u_{1}\cdot\nabla_{z}u_{2}\varphi_{\beta}(\eta)\zeta_{N}(\eta)\bar{\kappa}_{1,\tau}^{\varsigma,\delta}\bar{\kappa}_{2,\tau}^{\varsigma,\delta}\mathrm{d}\tau\nonumber \\
 & \quad+\int_{y,\eta,x,z}\int_{s}^{t}f_{k}(x)\nabla_{z}f_{k}(z)\cdot\nabla_{x}u_{1}\sigma^{\prime}(u_{1})\sigma(u_{2})\varphi_{\beta}(\eta)\zeta_{N}(\eta)\bar{\kappa}_{1,\tau}^{\varsigma,\delta}\bar{\kappa}_{2,\tau}^{\varsigma,\delta}\mathrm{d}\tau\nonumber\\
 & \quad+\int_{y,\eta,x,z}\int_{s}^{t}f_{k}(z)\nabla_{x}f_{k}(x)\cdot\nabla_{z}u_{2}\sigma(u_{1})\sigma^{\prime}(u_{2})\varphi_{\beta}(\eta)\zeta_{N}(\eta)\bar{\kappa}_{1,\tau}^{\varsigma,\delta}\bar{\kappa}_{2,\tau}^{\varsigma,\delta}\mathrm{d}\tau\nonumber \\
 & \quad+\int_{y,\eta,x,z}\int_{s}^{t}\nabla_{x}f_{k}(x)\cdot\nabla_{z}f_{k}(z)\sigma(u_{1})\sigma(u_{2})\varphi_{\beta}(\eta)\zeta_{N}(\eta)\bar{\kappa}_{1,\tau}^{\varsigma,\delta}\bar{\kappa}_{2,\tau}^{\varsigma,\delta}\mathrm{d}\tau.\nonumber
\end{align}
Therefore, we have
\begin{align}
&\int_{y,\eta}\chi_{1,t}^{\varsigma,\delta,+}(y,\eta)\chi_{2,t}^{\varsigma,\delta,+}(y,\eta)\varphi_{\beta}(\eta)\zeta_{N}(\eta)-\int_{y,\eta}\chi_{1,s}^{\varsigma,\delta}(y,\eta)\chi_{2,s}^{\varsigma,\delta}(y,\eta)\varphi_{\beta}(\eta)\zeta_{N}(\eta)\label{eq:second term all}\\
&=I_{\mathrm{mix},s,t}^{\mathrm{obs}}+I_{\mathrm{mix},s,t}^{\mathrm{err}}+I_{\mathrm{mix},s,t}^{\mathrm{meas}}+I_{\mathrm{mix},s,t}^{\mathrm{cut}}+I_{\mathrm{mix},s,t}^{\mathrm{mart}}+I_{\mathrm{mix},s,t}^{\mathrm{cons}},\nonumber
\end{align}
where
\[
I_{\mathrm{mix},s,t}^{\mathrm{fun}}=I_{2,1,s,t}^{\mathrm{fun}}+I_{1,2,s,t}^{\mathrm{fun}},
\]
with ``fun'' taking values in $\{\mathrm{obs,meas,cut,mart,cons}\}$. For the error terms, which combine $I_{2,1,s,t}^{\mathrm{err}}$, $I_{1,2,s,t}^{\mathrm{err}}$
and \eqref{eq:corss term}, are
\begin{align*}
 & I_{\mathrm{mix},s,t}^{\mathrm{err}}\\
 & :=-\int_{y,\eta,x,z}\int_{s}^{t}\big[|\Phi^{\prime}(u_{1})|^{\frac{1}{2}}-|\Phi^{\prime}(u_{2})|^{\frac{1}{2}}\big]^{2}\nabla_{x}u_{1}\cdot\nabla_{z}u_{2}\bar{\kappa}_{2,\tau}^{\varsigma,\delta}\bar{\kappa}_{1,\tau}^{\varsigma,\delta}\varphi_{\beta}\zeta_{N}\mathrm{d}\tau\\
 & -\frac{1}{2}\int_{y,\eta,x,z}\int_{s}^{t}\nabla_{z}u_{2}\cdot\nabla_{x}u_{1}\big(F_{1}(x)[\sigma^{\prime}(u_{1})]^{2}+F_{1}(z)[\sigma^{\prime}(u_{2})]^{2}-2f_{k}(x)f_{k}(z)\sigma^{\prime}(u_{1})\sigma^{\prime}(u_{2})\big)\bar{\kappa}_{1,\tau}^{\varsigma,\delta}\bar{\kappa}_{2,\tau}^{\varsigma,\delta}\varphi_{\beta}\zeta_{N}\mathrm{d}\tau\\
 & -\frac{1}{2}\int_{y,\eta,x,z}\int_{s}^{t}\nabla_{z}u_{2}\cdot\big(F_{2}(x)\sigma(u_{1})\sigma^{\prime}(u_{1})+F_{2}(z)\sigma(u_{2})\sigma^{\prime}(u_{2})-2f_{k}(z)\nabla_{x}f_{k}(x)\sigma(u_{1})\sigma^{\prime}(u_{2})\big)\bar{\kappa}_{1,\tau}^{\varsigma,\delta}\bar{\kappa}_{2,\tau}^{\varsigma,\delta}\varphi_{\beta}\zeta_{N}\mathrm{d}\tau\\
 & -\frac{1}{2}\int_{y,\eta,x,z}\int_{s}^{t}\nabla_{x}u_{1}\cdot\big(F_{2}(x)\sigma(u_{1})\sigma^{\prime}(u_{1})+F_{2}(z)\sigma(u_{2})\sigma^{\prime}(u_{2})-2f_{k}(x)\nabla_{z}f_{k}(z)\sigma^{\prime}(u_{1})\sigma(u_{2})\big)\bar{\kappa}_{1,\tau}^{\varsigma,\delta}\bar{\kappa}_{2,\tau}^{\varsigma,\delta}\varphi_{\beta}\zeta_{N}\mathrm{d}\tau\\
 & -\frac{1}{2}\int_{y,\eta,x,z}\int_{s}^{t}\big(F_{3}(x)\sigma^{2}(u_{1})+F_{3}(z)\sigma^{2}(u_{2})-2\nabla_{x}f_{k}(x)\cdot\nabla_{z}f_{k}(z)\sigma(u_{1})\sigma(u_{2})\big)\bar{\kappa}_{1,\tau}^{\varsigma,\delta}\bar{\kappa}_{2,\tau}^{\varsigma,\delta}\varphi_{\beta}\zeta_{N}\mathrm{d}\tau.
\end{align*}
Combining \eqref{eq:first order term} and \eqref{eq:second term all}, we have
\begin{align*}
&\int_{y,\eta}\big(\chi_{1,t}^{\varsigma,\delta,+}+\chi_{2,t}^{\varsigma,\delta,+}-2\chi_{1,t}^{\varsigma,\delta,+}\chi_{2,t}^{\varsigma,\delta,+}\big)\varphi_{\beta}\zeta_{N}-\int_{y,\eta}\big(\chi_{1,s}^{\varsigma,\delta}+\chi_{2,s}^{\varsigma,\delta}-2\chi_{1,s}^{\varsigma,\delta}\chi_{2,s}^{\varsigma,\delta}\big)\varphi_{\beta}\zeta_{N}\\
 & =I_{s,t}^{\mathrm{obs}}+I_{s,t}^{\mathrm{err}}+I_{s,t}^{\mathrm{meas}}+I_{s,t}^{\mathrm{cut}}+I_{s,t}^{\mathrm{mart}}+I_{s,t}^{\mathrm{cons}},
\end{align*}
where $I_{s,t}^{\mathrm{fun}}:=I_{1,s,t}^{\mathrm{fun}}+I_{2,s,t}^{\mathrm{fun}}-2(I_{\mathrm{mix},s,t}^{\mathrm{fun}})$ with ``fun'' taking values in $\{\mathrm{obs,err,meas,cut,mart,cons}\}$, where $I_{i,s,t}^{\mathrm{err}},I_{i,s,t}^{\mathrm{meas}}:=0$ for $i\in\{1,2\}$.

The terms $I_{s,t}^{\mathrm{err}}$, $I_{s,t}^{\mathrm{cut}}$, $I_{s,t}^{\mathrm{mart}}$, and $I_{s,t}^{\mathrm{cons}}$ do not involve integration with respect to the measures $q_i$ or $\nu_i$. Hence, for these terms, the choice of the right- or left-continuous representatives is immaterial, and we use $\chi$ instead of $\chi^{+}$ or $\chi^{-}$.
Consequently, their estimates can be carried out exactly as in the proof of \cite[Theorem 4.6]{fehrman2024well}.
We thus omit the details and only list the results:
\begin{align*} \limsup_{\delta\to0}\limsup_{\varsigma\to0}|I_{s,t}^{\mathrm{err}}|=0,\quad \lim_{N\to\infty}\lim_{\beta\to0}\lim_{\delta\to0}\lim_{\varsigma\to0}I_{s,t}^{\mathrm{cut}}=0, \quad \\\lim_{N\to\infty}\lim_{\beta\to0}\lim_{\delta\to0}\lim_{\varsigma\to0}I_{s,t}^{\mathrm{mart}}=0,\quad
\lim_{N\to\infty}\lim_{\beta\to0}\lim_{\delta\to0}\lim_{\varsigma\to0}I_{s,t}^{\mathrm{mart}}=0.
\end{align*}

In the following, we will handle the remaining terms $I_{s,t}^{\mathrm{obs}}$ and $I_{s,t}^{\mathrm{meas}}$,
which involve the measure $\nu_{i}$ associated to the obstacle problem and the kinetic measure $q_i$  for $i\in\{1,2\}$.
Let us start with the obstacle term $I_{s,t}^{\mathrm{obs}}$ given by
\begin{align}
I_{s,t}^{\mathrm{obs}}&=
-\int_{y,\eta}\varphi_{\beta}(\eta)\zeta_{N}(\eta)\int_{s}^{t}\int_{\mathbb{T}^{d}}\kappa^{\varsigma,\delta}(x,y,\psi(x,\tau),\eta)\mathrm{d}\nu_{1}(x,\tau)  \label{eq:obstacle terms estimates}\\ \notag
 & \quad-\int_{y,\eta}\varphi_{\beta}(\eta)\zeta_{N}(\eta)\int_{s}^{t}\int_{\mathbb{T}^{d}}\kappa^{\varsigma,\delta}(z,y,\psi(z,\tau),\eta)\mathrm{d}\nu_{2}(z,\tau) \\
 & \quad+2\int_{y,\eta}\int_{s}^{t}\int_{\mathbb{T}^{d}}\chi_{2,\tau}^{\varsigma,\delta,+}(y,\eta)\varphi_{\beta}(\eta)\zeta_{N}(\eta)\kappa^{\varsigma,\delta}(x,y,\psi(x,\tau),\eta)\mathrm{d}\nu_{1}(x,\tau)\nonumber\\ \notag
 & \quad+2\int_{y,\eta}\int_{s}^{t}\int_{\mathbb{T}^{d}}\chi_{1,\tau}^{\varsigma,\delta,-}(y,\eta)\varphi_{\beta}(\eta)\zeta_{N}(\eta)\kappa^{\varsigma,\delta}(z,y,\psi(z,\tau),\eta)\mathrm{d}\nu_{2}(z,\tau).
\end{align}
To deal with the last two terms on the righthand side of \eqref{eq:obstacle terms estimates}, we define
\begin{equation}\label{eq:definition for chi psi}
\chi_\psi(x,\xi,\tau):=\mathbf{1}_{\{0<\xi<\psi(x,\tau)\}},\quad \chi_{\psi,\tau}^{\varsigma,\delta}(y,\eta):=\int_{x,\xi}\chi_\psi(x,\xi,\tau)\kappa^{\varsigma,\delta}(x,y,\xi,\eta).
\end{equation}
From Definition \ref{def:def of stochastic kinetic solution} (i), we have $\chi_{i}\leq\chi_\psi$ on $\mathbb{T}^d\times(0,\infty)\times[0,T)$ for $i\in\{1,2\}$.
Fix $i\in\{1,2\}$.
Note that almost surely $\chi^{+}_{i}=\chi^{-}_{i}=\chi_{i}$ almost everywhere on $[0,T]$ as a distribution in $\mathbb{T}^d\times\mathbb{R}$.
Then, for all $\tau\in(0,T)$, there exist sequences $\{\tau_k^{-}\}_{k\in{\mathbb{N}}}$ and $\{\tau_k^{+}\}_{k\in{\mathbb{N}}}$ of positive numbers satisfying $\tau_k^{-}<\tau$, $\tau_k^{+}<T-\tau$ for every $k\in\mathbb{N}$, and $\tau_k^{-},\tau_k^{+}\to0$ as $k\to\infty$ such that $\chi^{\pm}_{i}(x,\xi,\tau\pm \tau_k^{\pm})=\chi_{i}(x,\xi,\tau\pm \tau_k^{\pm})$ for every $k\in\mathbb{N}$.
Owing to the time-continuity of $\psi$ and the definition of $\chi_i^{+}$, by the Fatou lemma, we obtain almost surely for all nonnegative $\phi\in C_c^\infty(\mathbb{T}^d\times (0,\infty))$,
\begin{align*}
\int_{x,\xi}(\chi_{\psi}(x,\xi,\tau)-\chi_{i}^{\pm}(x,\xi,\tau))\phi(x,\xi)&\geq \liminf_{k\to\infty}\int_{x,\xi}(\chi_{\psi}(x,\xi,\tau\pm \tau_k^{\pm})-\chi_{i}(x,\xi,\tau\pm \tau_k^{\pm}))\phi(x,\xi)\geq 0,
\end{align*}
By taking  $\phi(x,\xi)=\kappa^{\varsigma,\delta}(x,\xi,y,\eta)$, we have almost surely
\begin{equation}\label{eq:chi leq chi psi}
\chi_{i,\tau}^{\varsigma,\delta,\pm}(y,\eta)\leq\chi_{\psi,\tau}^{\varsigma,\delta}(y,\eta),\quad\text{for }(y,\eta,\tau)\in\mathbb{T}^d\times(0,\infty)\times(0,T).
\end{equation}
We further define
\begin{align}\label{eq:initegrable}
\llbracket f\rrbracket(r):=\int_{0}^{r}f(\tilde{r})\mathrm{d}\tilde{r},\quad\text{for each }f\in L^{1}(\mathbb{R}).
\end{align}
Then, we have
\begin{equation}\label{rr-2}
\kappa_{1}^{\delta}(\xi-\eta)=\partial_{\xi}\llbracket\kappa_{1}^{\delta}\rrbracket(\xi-\eta).
\end{equation}
With the aid of \eqref{eq:chi leq chi psi} and \eqref{rr-2}, when $\delta$ is small enough, the third
term on the righthand side of \eqref{eq:obstacle terms estimates} can be estimated as
\begin{align}
 & \int_{y,\eta}\int_{s}^{t}\int_{\mathbb{T}^{d}}\chi_{2,\tau}^{\varsigma,\delta,+}(y,\eta)\varphi_{\beta}(\eta)\zeta_{N}(\eta)\kappa^{\varsigma,\delta}(x,y,\psi,\eta)\mathrm{d}\nu_{1}(x,\tau)\label{eq:second order obstacle term estimates} \\
 &\leq \int_{y,\eta}\int_{s}^{t}\int_{\mathbb{T}^{d}}\chi_{\psi,\tau}^{\varsigma,\delta}(y,\eta)\varphi_{\beta}(\eta)\zeta_{N}(\eta)\kappa^{\varsigma,\delta}(x,y,\psi,\eta)\mathrm{d}\nu_{1}(x,\tau)\nonumber \\
 & =\int_{y,\eta,z,\xi}\int_{s}^{t}\int_{\mathbb{T}^{d}}\varphi_{\beta}(\eta)\zeta_{N}(\eta)\chi_{\psi}(z,\xi,\tau)\kappa_{d}^{\varsigma}(z-y)\kappa_{1}^{\delta}(\xi-\eta)\kappa_{d}^{\varsigma}(x-y)\kappa_{1}^{\delta}(\psi(x,\tau)-\eta)\mathrm{d}\nu_{1}(x,\tau)\nonumber \\
 & =\int_{y,\eta,z,\xi}\int_{s}^{t}\int_{\mathbb{T}^{d}}\varphi_{\beta}(\eta)\zeta_{N}(\eta)\chi_{\psi}(z,\xi,\tau)\kappa_{d}^{\varsigma}(z-y)\partial_{\xi}\llbracket\kappa_{1}^{\delta}\rrbracket(\xi-\eta)\kappa_{d}^{\varsigma}(x-y)\kappa_{1}^{\delta}(\psi(x,\tau)-\eta)\mathrm{d}\nu_{1}(x,\tau)\nonumber \\
 & =-\int_{y,\eta,z,\xi}\int_{s}^{t}\int_{\mathbb{T}^{d}}\varphi_{\beta}(\eta)\zeta_{N}(\eta)\partial_{\xi}\chi_{\psi}(z,\xi,\tau)\kappa_{d}^{\varsigma}(z-y)\llbracket\kappa_{1}^{\delta}\rrbracket(\xi-\eta)\kappa_{d}^{\varsigma}(x-y)\kappa_{1}^{\delta}(\psi(x,\tau)-\eta)\mathrm{d}\nu_{1}(x,\tau)\nonumber\\
 & =-\int_{y,\eta}\int_{s}^{t}\int_{\mathbb{T}^{d}}\varphi_{\beta}(\eta)\zeta_{N}(\eta)\llbracket\kappa_{1}^{\delta}\rrbracket(-\eta)\kappa_{d}^{\varsigma}(x-y)\kappa_{1}^{\delta}(\psi(x,\tau)-\eta)\mathrm{d}\nu_{1}(x,\tau)\nonumber\\
 & \quad+\int_{y,\eta,z}\int_{s}^{t}\int_{\mathbb{T}^{d}}\varphi_{\beta}(\eta)\zeta_{N}(\eta)\kappa_{d}^{\varsigma}(z-y)\llbracket\kappa_{1}^{\delta}\rrbracket(\psi(z,\tau)-\eta)\kappa_{d}^{\varsigma}(x-y)\kappa_{1}^{\delta}(\psi(x,\tau)-\eta)\mathrm{d}\nu_{1}(x,\tau)\nonumber \\
 & =\frac{1}{2}\int_{\eta}\int_{s}^{t}\int_{\mathbb{T}^{d}}\varphi_{\beta}(\eta)\zeta_{N}(\eta)\kappa_{1}^{\delta}(\psi(x,\tau)-\eta)\mathrm{d}\nu_{1}(x,\tau)\nonumber \\
 & \quad+\int_{y,\eta,z}\int_{s}^{t}\int_{\mathbb{T}^{d}}\varphi_{\beta}(\eta)\zeta_{N}(\eta)\kappa_{d}^{\varsigma}(z-y)\llbracket\kappa_{1}^{\delta}\rrbracket(\psi(z,\tau)-\eta)\kappa_{d}^{\varsigma}(x-y)\kappa_{1}^{\delta}(\psi(x,\tau)-\eta)\mathrm{d}\nu_{1}(x,\tau),\nonumber
\end{align}
where the last inequality is deduced from the fact that
\[
\llbracket\kappa_{1}^{\delta}\rrbracket(r)=\begin{cases}
{1}/{2}, & \text{if }r\geq{\delta}/{2};\\
-{1}/{2}, & \text{if }r\leq-{\delta}/{2}.
\end{cases}
\]
Furthermore, it follows from the support of $\kappa_{1}^{\delta}$ and the continuity of $\varphi_{\beta}$ and $\zeta_{N}$ that, for fixed $\beta\in(0,1)$ and $N\in\mathbb{N}$, we have
\begin{align*}
 & \int_{y,\eta,z}\int_{s}^{t}\int_{\mathbb{T}^{d}}\bigg[\kappa_{d}^{\varsigma}(z-y)\llbracket\kappa_{1}^{\delta}\rrbracket(\psi(z,\tau)-\eta)\kappa_{d}^{\varsigma}(x-y)\kappa_{1}^{\delta}(\psi(x,\tau)-\eta)\\
 & \quad\cdot\big|\varphi_{\beta}(\eta)\zeta_{N}(\eta)-\varphi_{\beta}(\psi(x,\tau))\zeta_{N}(\psi(x,\tau))\big|\bigg]\mathrm{d}\nu_{1}(x,\tau)\\
 & \leq C(\beta)\delta\nu_{1}(Q_{T}).
\end{align*}
Therefore, letting $\tilde{\eta}=\eta-\psi(x,\tau)$ for each $(x,\tau)\in Q_{T}$, we have
\begin{align*}
 & \int_{y,\eta,z}\int_{s}^{t}\int_{\mathbb{T}^{d}}\varphi_{\beta}(\eta)\zeta_{N}(\eta)\kappa_{d}^{\varsigma}(z-y)\llbracket\kappa_{1}^{\delta}\rrbracket(\psi(z,\tau)-\eta)\kappa_{d}^{\varsigma}(x-y)\kappa_{1}^{\delta}(\psi(x,\tau)-\eta)\mathrm{d}\nu_{1}(x,\tau)\\
 & \leq\int_{y,\eta,z}\int_{s}^{t}\int_{\mathbb{T}^{d}}\varphi_{\beta}(\psi(x,\tau))\zeta_{N}(\psi(x,\tau))\kappa_{d}^{\varsigma}(z-y)\llbracket\kappa_{1}^{\delta}\rrbracket(\psi(z,\tau)-\eta)\kappa_{d}^{\varsigma}(x-y)\kappa_{1}^{\delta}(\psi(x,\tau)-\eta)\mathrm{d}\nu_{1}(x,\tau)\\
 & \quad+C(\beta)\delta\nu_{1}(Q_{T})\\
 & =\int_{y,\tilde{\eta},z}\int_{s}^{t}\int_{\mathbb{T}^{d}}\bigg[\varphi_{\beta}(\psi(x,\tau))\zeta_{N}(\psi(x,\tau))\kappa_{d}^{\varsigma}(z-y)\llbracket\kappa_{1}^{\delta}\rrbracket(\psi(z,\tau)-\psi(x,\tau)-\tilde{\eta})\\
 & \quad\quad\cdot\kappa_{d}^{\varsigma}(x-y)\kappa_{1}^{\delta}(-\tilde{\eta})\bigg]\mathrm{d}\nu_{1}(x,\tau)+C(\beta)\delta\nu_{1}(Q_{T}).
\end{align*}
Then, taking the limit supremum $\varsigma\to0$ on \eqref{eq:second order obstacle term estimates}, by the time-continuity property of $\psi$, we have
\begin{align*}
 & \limsup_{\varsigma\to0}\Bigg(\int_{y,\eta}\int_{s}^{t}\int_{\mathbb{T}^{d}}\chi_{2,\tau}^{\varsigma,\delta,+}(y,\eta)\varphi_{\beta}(\eta)\zeta_{N}(\eta)\kappa^{\varsigma,\delta}(x,y,\psi,\eta)\mathrm{d}\nu_{1}(x,\tau)\Bigg)\\
 & \leq\frac{1}{2}\int_{\eta}\int_{s}^{t}\int_{\mathbb{T}^{d}}\varphi_{\beta}(\eta)\zeta_{N}(\eta)\kappa_{1}^{\delta}(\psi(x,\tau)-\eta)\mathrm{d}\nu_{1}(x,\tau)\\
 & \quad+\int_{\tilde{\eta}}\llbracket\kappa_{1}^{\delta}\rrbracket(-\tilde{\eta})\kappa_{1}^{\delta}(-\tilde{\eta})\cdot\int_{s}^{t}\int_{\mathbb{T}^{d}}\varphi_{\beta}(\psi(x,\tau))\zeta_{N}(\psi(x,\tau))\mathrm{d}\nu_{1}(x,\tau)+C(\beta)\delta\nu_{1}(Q_{T}).
\end{align*}
Since $\kappa_{1}^{\delta}$ is an even function and $\llbracket\kappa_{1}^{\delta}\rrbracket$ is an odd function, it follows that
\[\int_{\tilde{\eta}}\llbracket\kappa_{1}^{\delta}\rrbracket(-\tilde{\eta})\kappa_{1}^{\delta}(-\tilde{\eta})=0.\]
Consequently, we reach
\begin{align}
 & \limsup_{\varsigma\to0}\Bigg(\int_{y,\eta}\int_{s}^{t}\int_{\mathbb{T}^{d}}\chi_{2,\tau}^{\varsigma,\delta,+}(y,\eta)\varphi_{\beta}(\eta)\zeta_{N}(\eta)\kappa^{\varsigma,\delta}(x,y,\psi,\eta)\mathrm{d}\nu_{1}(x,\tau)\Bigg)\label{rr-3}\\
 \notag
 & \leq\frac{1}{2}\int_{\eta}\int_{s}^{t}\int_{\mathbb{T}^{d}}\varphi_{\beta}(\eta)\zeta_{N}(\eta)\kappa_{1}^{\delta}(\psi(x,\tau)-\eta)\mathrm{d}\nu_{1}(x,\tau)+C(\beta)\delta\nu_{1}(Q_{T}).
\end{align}
Proceed similarly as above, the last term on the righthand side of \eqref{eq:obstacle terms estimates} can be estimated as
\begin{align}\label{rr-4}
 & \limsup_{\varsigma\to0}\Bigg(\int_{y,\eta}\int_{s}^{t}\int_{\mathbb{T}^{d}}\chi_{1,\tau}^{\varsigma,\delta,-}(y,\eta)\varphi_{\beta}(\eta)\zeta_{N}(\eta)\kappa^{\varsigma,\delta}(z,y,\psi,\eta)\mathrm{d}\nu_{2}(z,\tau)\Bigg)\\
 \notag
 & \leq\frac{1}{2}\int_{\eta}\int_{s}^{t}\int_{\mathbb{T}^{d}}\varphi_{\beta}(\eta)\zeta_{N}(\eta)\kappa_{1}^{\delta}(\psi(z,\tau)-\eta)\mathrm{d}\nu_{2}(z,\tau)+C(\beta)\delta\nu_{2}(Q_{T}).
\end{align}
Combining \eqref{rr-3} with \eqref{rr-4}, and letting $\varsigma\to0$ on \eqref{eq:obstacle terms estimates}, we get
\[
\limsup_{\varsigma\to0}I_{s,t}^{\mathrm{obs}}\leq 2C(\beta)\delta[\nu_{1}(Q_{T})+\nu_{2}(Q_{T})],
\]
and then, it yields almost surely
\[
\lim_{\delta\to0}\limsup_{\varsigma\to0}I_{s,t}^{\mathrm{obs}}\leq0.
\]
Regarding the measure term $I^{\mathrm{meas}}_{s,t}$, it is written as
\begin{align*}
I_{s,t}^{\mathrm{meas}} & =2\int_{y,\eta}\int_{s}^{t}\partial_\eta\chi_{2,\tau}^{\varsigma,\delta,+}(y,\eta)\varphi_{\beta}(\eta)\zeta_{N}(\eta)\int_{\mathbb{R}}\int_{\mathbb{T}^{d}}\kappa^{\varsigma,\delta}(x,y,\xi,\eta)\mathrm{d}q_{1}(x,\xi,\tau)\\
& \quad+2\int_{y,\eta}\int_{s}^{t}\partial_\eta\chi_{1,\tau}^{\varsigma,\delta,-}(y,\eta)\varphi_{\beta}(\eta)\zeta_{N}(\eta)\int_{\mathbb{R}}\int_{\mathbb{T}^{d}}\kappa^{\varsigma,\delta}(z,y,\xi,\eta)\mathrm{d}q_{2}(z,\xi,\tau)\\
 & \quad+4\int_{y,\eta,x,z}\int_{s}^{t}|\Phi^{\prime}(u_{1})|^{\frac{1}{2}}|\Phi^{\prime}(u_{2})|^{\frac{1}{2}}\nabla_{x}u_{1}\cdot\nabla_{z}u_{2}\bar{\kappa}_{2,\tau}^{\varsigma,\delta}\bar{\kappa}_{1,\tau}^{\varsigma,\delta}\varphi_{\beta}(\eta)\zeta_{N}(\eta)\mathrm{d}\tau.
\end{align*}
We first prove a claim that as distribution almost surely
\begin{equation}\label{eq:negative for partial xi chi}
\partial_\xi \chi^{\pm}_{i}\leq0,\quad\text{on }\mathbb{T}^d\times(0,\infty)\times(0,T).
\end{equation}
Analogously to the proof of \eqref{eq:chi leq chi psi}, we will use the fact that $\chi_{i}^{\pm}(x,\xi,\tau)=\lim_{k\to\infty} \chi_{i}(x,\xi,\tau\pm \tau_k^{\pm})$. Since $\chi_i\in \{0,1\}$ and $\partial_\xi\chi_i\leq0$ on $\mathbb{T}^d\times(0,\infty)\times(0,T)$, we apply the dominated convergence theorem to deduce that for all non-negative functions $\alpha\in C_c^\infty((0,\infty))$ and $\phi\in C^\infty(\mathbb{T}^d)$,
\begin{align*}
\int_{x,\xi}\chi_{i}^{\pm}(x,\xi,\tau)\phi(x)\alpha^\prime(\xi)
&=\int_{x,\xi}\lim_{k\to\infty}\chi_{i}(x,\xi,\tau\pm \tau_k^{\pm})\phi(x)\alpha^\prime(\xi)\\
&=\int_{x}\lim_{k\to\infty}\int_{\xi}\chi_{i}(x,\xi,\tau\pm \tau_k^{\pm})\phi(x)\alpha^\prime(\xi)\\
&=\int_{x}\lim_{k\to\infty}\int_{\xi}\Big(-\partial_{\xi}\chi_{i}(x,\xi,\tau\pm \tau_k^{\pm})\Big)\phi(x)\alpha(\xi)\\
&\geq 0.
\end{align*}
Since linear combinations of functions of the form $\alpha(\xi)\phi(x)$ are dense in the set of non-negative functions in $C_c^\infty(\mathbb{T}^d\times(0,\infty))$, the claim \eqref{eq:negative for partial xi chi} follows by integration by parts.
Based on \eqref{eq:negative for partial xi chi}, the definition of $\chi_{i,\tau}^{\varsigma,\delta,\pm}$, and the non-negativity of $\kappa^{\varsigma,\delta}$, we have almost surely
\begin{equation}\label{eq:negative partial eta chi}
\partial_\eta\chi_{i,\tau}^{\varsigma,\delta,\pm}(y,\eta)\leq0,\quad \text{for }(y,\eta,\tau)\in\mathbb{T}^d\times(0,\infty)\times(0,T).
\end{equation}

In view of $\chi^{\varsigma,\delta,\pm}_{i,\tau}=\chi^{\varsigma,\delta}_{i,\tau}$ for $i\in\{1,2\}$ and almost all $\tau\in[0,T]$, using \eqref{eq:negative partial eta chi}, Definition \ref{def:def of stochastic kinetic solution} (vi) and \eqref{eq:partial eta}, for $\delta\in (0,{\beta}/{4})$, we have almost surely
\begin{align*}
I_{s,t}^{\mathrm{meas}} 
&\leq2\int_{x,y,\eta}\int_{s}^{t}\partial_\eta\chi_{2,\tau}^{\varsigma,\delta}(y,\eta)\varphi_{\beta}(\eta)\zeta_{N}(\eta)\bar{\kappa}_{1,\tau}^{\varsigma,\delta}\Phi^{\prime}(u_1)|\nabla_x u_1|^{2}\mathrm{d}\tau\\
& \quad+2\int_{z,y,\eta}\int_{s}^{t}\partial_\eta\chi_{1,\tau}^{\varsigma,\delta}(y,\eta)\varphi_{\beta}(\eta)\zeta_{N}(\eta)\bar{\kappa}_{2,\tau}^{\varsigma,\delta}\Phi^{\prime}(u_2)|\nabla_z u_2|^{2}\mathrm{d}\tau\\
 & \quad+4\int_{y,\eta,x,z}\int_{s}^{t}|\Phi^{\prime}(u_{1})|^{\frac{1}{2}}|\Phi^{\prime}(u_{2})|^{\frac{1}{2}}\nabla_{x}u_{1}\cdot\nabla_{z}u_{2}\bar{\kappa}_{2,\tau}^{\varsigma,\delta}\bar{\kappa}_{1,\tau}^{\varsigma,\delta}\varphi_{\beta}(\eta)\zeta_{N}(\eta)\mathrm{d}\tau\\
 &=-2\int_{x,y,\eta,z}\int_{s}^{t}\varphi_{\beta}(\eta)\zeta_{N}(\eta)\bar{\kappa}_{1,\tau}^{\varsigma,\delta}\bar{\kappa}_{2,\tau}^{\varsigma,\delta}\Phi^{\prime}(u_1)|\nabla_x u_1|^{2}\mathrm{d}\tau\\
 & \quad-2\int_{z,y,\eta,x}\int_{s}^{t}\varphi_{\beta}(\eta)\zeta_{N}(\eta)\bar{\kappa}_{1,\tau}^{\varsigma,\delta}\bar{\kappa}_{2,\tau}^{\varsigma,\delta}\Phi^{\prime}(u_2)|\nabla_z u_2|^{2}\mathrm{d}\tau\\
  & \quad+4\int_{y,\eta,x,z}\int_{s}^{t}|\Phi^{\prime}(u_{1})|^{\frac{1}{2}}|\Phi^{\prime}(u_{2})|^{\frac{1}{2}}\nabla_{x}u_{1}\cdot\nabla_{z}u_{2}\bar{\kappa}_{2,\tau}^{\varsigma,\delta}\bar{\kappa}_{1,\tau}^{\varsigma,\delta}\varphi_{\beta}(\eta)\zeta_{N}(\eta)\mathrm{d}\tau\\
& \leq0.
\end{align*}
Based on all the previous estimates, we conclude that
\begin{align*}
&\int_{x,\xi}\big(\chi_{1,t}^{+}-\chi_{2,t}^{+}\big)^{2}- \int_{x,\xi}\big(\chi_{1,s}-\chi_{2,s}\big)^{2}\\
& =\lim_{N\to \infty}\Bigg(\lim_{\beta\to0}\bigg(\lim_{\delta\to0}\Big(\limsup_{\varsigma\to0}\Big(I_{s,t}^{\mathrm{obs}}+I_{s,t}^{\mathrm{err}}+I_{s,t}^{\mathrm{meas}}+I_{s,t}^{\mathrm{cut}}+I_{s,t}^{\mathrm{mart}}+I_{s,t}^{\mathrm{cons}}\Big)\Big)\bigg)\Bigg)\leq0.
\end{align*}
Using the fact that $t\in\mathcal{T}$, we have $\chi^+_{i,t}=\chi_{i,t}$ for $i\in\{1,2\}$, which completes the proof.
\end{proof}
\begin{rem}\label{rem:measure 0,right initial}
Note that $\chi^{-}(x,\xi,0):=\chi(x,\xi,0)=\bar{\chi}(u_{\mathrm{init}},\xi)$. Using \cite[Proposition 8]{DV10} and \eqref{eq:right left differnt in t}, we have
\begin{align*}
&\int_{\mathbb{T}^d}\int_{\mathbb{R}}\big[\tilde{\chi}^{+}(x,\xi,0)-\tilde{\chi}(x,\xi,0)\big]\phi(x,\xi)\mathrm{d}\xi\mathrm{d}x\nonumber\\
&=-\int_{\mathbb{R}}\int_{\mathbb{T}^{d}}\partial_{\xi}\phi(x,\xi)\mathrm{d}q(x,\xi,\{0\}) -\int_{\mathbb{T}^{d}}\phi(x,\psi(x,0))\mathrm{d}\nu(x,\{0\}).
\end{align*}
Intuitively, one expects that $\nu(x,\{0\})=0$.
Indeed, by the penalization method, the penalization term at the initial time is
$
\varepsilon^{-1}(u_{\mathrm{init}}-\psi(\cdot,0))^{+},
$
which vanishes under the compatibility assumption on the initial data.
However, since the penalization term converges only weakly to $\nu$ in the sense of measures as $\varepsilon\to0$, this property alone does not  directly yield $\nu(x,\{0\})=0$.
Consequently, the identity $\chi^{+}(x,\xi,0)=\chi(x,\xi,0)$ cannot be immediately deduced from \eqref{eq:kinetic equation test with time}.
We thus invoke condition (iii) in Definition \ref{def:def of stochastic kinetic solution}, which ensures that the solution satisfies the initial condition in the time-averaged $L^1$ sense.
\end{rem}
\begin{cor}\label{cor:unique with inital}
Let Assumption \ref{assu:assum for F} and Assumption \ref{assu:assu for unique} hold for two initial functions $u_{1,\mathrm{init}}$ and $u_{2,\mathrm{init}}$.
Let $(u_{1},\nu_{1})$ and $(u_{2},\nu_{2})$ be stochastic kinetic solutions of the obstacle problem for \eqref{eq:main} in the
sense of Definition \ref{def:def of stochastic kinetic solution} with initial data $u_{1,\mathrm{init}}$ and $u_{2,\mathrm{init}}$, respectively.
Let $\chi_{1}$ and $\chi_{2}$ be the kinetic functions of $u_{1}$ and $u_{2}$.
Then, we have, almost surely, for almost every $t\in(0,T)$,
\begin{align}
\|u_1(t)-u_{2}(t)\|_{L^1(\mathbb{T}^d)}
\leq\|u_{1,\mathrm{init}}-u_{2,\mathrm{init}}\|_{L^1(\mathbb{T}^d)}.
\end{align}
\end{cor}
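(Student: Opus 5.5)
The plan is to pass $s\to0$ in Theorem \ref{thm:stability L1} and then identify the kinetic quantity with the $L^1$ distance. For nonnegative $u_1,u_2$ the kinetic functions take values in $\{0,1\}$, so
\[
\int_{x,\xi}|\chi_1(x,\xi,t)-\chi_2(x,\xi,t)|^2=\int_{x,\xi}|\bar\chi(u_1(x,t),\xi)-\bar\chi(u_2(x,t),\xi)|=\|u_1(t)-u_2(t)\|_{L^1(\mathbb{T}^d)} .
\]
Hence Theorem \ref{thm:stability L1} gives, almost surely, $\|u_1(t)-u_2(t)\|_{L^1}\le\|u_1(s)-u_2(s)\|_{L^1}$ for almost every pair $s<t$ in $(0,T)$. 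More precisely, it holds for all $s,t$ in a full-measure set $\mathcal{T}'\subset(0,T)$.

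Fix $t$ in this full-measure set. For $\tau\in(0,t)$, average the inequality over $s\in(0,\tau)$, which is legitimate since almost every $s$ is admissible:
\[
\|u_1(t)-u_2(t)\|_{L^1}\le\frac1\tau\int_0^\tau\|u_1(s)-u_2(s)\|_{L^1}\,\mathrm{d}s .
\]
By the triangle inequality, the right-hand side is bounded by
\[
\|u_{1,\mathrm{init}}-u_{2,\mathrm{init}}\|_{L^1}+\sum_{i=1}^2\frac1\tau\int_0^\tau\int_{\mathbb{T}^d}|u_i(x,s)-u_{i,\mathrm{init}}(x)|\,\mathrm{d}x\,\mathrm{d}s .
\]
Letting $\tau\to0$ and using condition (iii) of Definition \ref{def:def of stochastic kinetic solution}, namely \eqref{eq:limit for uinit}, for each $i$ makes the last two terms vanish almost surely. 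This yields the claim for almost every $t$.

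The main point needing care is the measure-theoretic bookkeeping. The null set in Theorem \ref{thm:stability L1} is a set of pairs $(s,t)$, so I would first check that the statement actually holds for $s,t$ in a common full-measure set. The proof there takes $s,t\in\mathcal{T}$, the complement of a countable set, so this is fine. Only then can one fix $t$ and average in $s$.

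One should also note that the identity between the squared kinetic difference and the $L^1$ norm uses $\chi_i=\bar\chi(u_i,\cdot)$ at the given time. This holds for almost every $t$, and holds at every $t\in\mathcal{T}$. Condition (iii) is precisely what replaces the missing time continuity at $0$ discussed in Remark \ref{rem:measure 0,right initial}, so no information on $\nu_i(\{0\})$ or $q_i(\{0\})$ is required.
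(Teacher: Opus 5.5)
Your proposal is correct and follows the paper's argument. You convert the kinetic $L^2$ distance into $\|u_1-u_2\|_{L^1(\mathbb{T}^d)}$, average the inequality of Theorem \ref{thm:stability L1} over $s\in(0,\tau)$, and let $\tau\to0$ via condition (iii) of Definition \ref{def:def of stochastic kinetic solution}; your explicit triangle-inequality step supplies the detail the paper leaves implicit. Your bookkeeping remark is also right, with one small precision: the set $\mathcal{T}$ in that proof is of full measure rather than literally co-countable, since it also requires $\chi_i(\tau)$ to coincide with the one-sided limits, and full measure is all the averaging in $s$ needs.
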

\begin{proof}
Using Theorem \ref{thm:stability L1} and the definition of $\chi$, we have, almost surely, for almost every $s,t\in(0,T)$ satisfying $s<t$,
\[
\int_{x} \big|u_{1}(x,t)-u_{2}(x,t)\big|\leq\int_{x} \big|u_{1}(x,s)-u_{2}(x,s)\big|.
\]
For each $\tau\in(0,t)$, by averaging over $s\in(0,\tau)$, we have
\[
\int_{x} \big|u_{1}(x,t)-u_{2}(x,t)\big|\leq\frac{1}{\tau}\int_0^{\tau}\int_{x} \big|u_{1}(x,s)-u_{2}(x,s)\big|\mathrm{d}s.
\]
Taking the limit $\tau\to0$ and using Definition \ref{def:def of stochastic kinetic solution} (iii), the proof is completed.
\end{proof}

\begin{thm}
\label{thm:uniqueness}Let Assumption \ref{assu:assum for F} and Assumption \ref{assu:assu for unique} hold.  Then the obstacle
problem for \eqref{eq:main} admits at most one stochastic kinetic solution under Definition \ref{def:def of stochastic kinetic solution}.
\end{thm}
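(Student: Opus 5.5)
Uniqueness of $u$ follows directly from Corollary \ref{cor:unique with inital}; the only real work is showing that $\nu$ is then also unique.

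Let $(u_1,\nu_1)$ and $(u_2,\nu_2)$ be two stochastic kinetic solutions with the same initial datum $u_{\mathrm{init}}$. Corollary \ref{cor:unique with inital} gives, almost surely, $\|u_1(t)-u_2(t)\|_{L^1(\mathbb{T}^d)}=0$ for almost every $t\in(0,T)$. Since both are $L^1(\mathbb{T}^d)$-valued predictable processes, $u_1=u_2$ as elements of $L^1(\Omega\times[0,T];L^1(\mathbb{T}^d))$. Consequently $\chi_1=\chi_2$ almost everywhere on $\mathbb{T}^d\times\mathbb{R}\times[0,T]$, almost surely.

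To identify $\nu$, I would subtract the two kinetic equations \eqref{eq:kinetic equation test with time}. Every term that depends only on $u$ (the flux terms, the It\^o correction terms, the $g$-term, the stochastic integral, and the initial term) coincides and cancels. What remains is, almost surely, for all $\phi\in C_c^1(\mathbb{T}^d\times(0,\infty)\times[0,T))$,
\[
\int_0^T\!\!\int_{\mathbb{R}}\int_{\mathbb{T}^d}\partial_\xi\phi\,\mathrm{d}(q_1-q_2)+\int_0^T\!\!\int_{\mathbb{T}^d}\phi(x,\psi,s)\,\mathrm{d}(\nu_1-\nu_2)=0.
\]
Choose $\phi(x,\xi,s)=\alpha(x,s)\zeta_N(\xi)\varphi_\beta(\xi)$ after smoothing, with $\alpha\in C_c^1(\mathbb{T}^d\times[0,T))$. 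Then $\partial_\xi\phi$ is supported in $\{\beta/2\le\xi\le\beta\}\cup\{N\le\xi\le N+1\}$ and bounded by $C\|\alpha\|_\infty(\beta^{-1}\mathbf 1_{[\beta/2,\beta]}+\mathbf 1_{[N,N+1]})$. The $q_i$-contributions are therefore controlled by $\beta^{-1}q_i(\mathbb{T}^d\times[\beta/2,\beta]\times[0,T))+q_i(\mathbb{T}^d\times[N,N+1]\times[0,T))$.

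Proposition \ref{prop:proposition for limit measure} makes the first piece vanish along a subsequence $\beta\to0$. Definition \ref{def:def of stochastic kinetic solution}(vii) makes the second vanish along a subsequence $N\to\infty$, after passing from convergence in expectation to almost sure convergence. On the $\nu$ side, $\zeta_N(\psi)\varphi_\beta(\psi)\to\mathbf 1_{\{\psi>0\}}$, and dominated convergence applies because $\nu_i(Q_T)<\infty$. This yields $\int\alpha\mathbf 1_{\{\psi>0\}}\,\mathrm{d}\nu_1=\int\alpha\mathbf 1_{\{\psi>0\}}\,\mathrm{d}\nu_2$.

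To cover the set $\{\psi=0\}$, I would use the mass identity \eqref{eq:L1 perservation}, but localized in $x$. Test the kinetic equation with $\phi=\alpha(x,s)\zeta_N(\xi)$ only, without the cutoff near $0$; this is legitimate because $\bar\chi$ vanishes for $\xi\le0$. Then $\partial_\xi\phi$ touches only the top layer, whose $q_i$-contribution vanishes as $N\to\infty$. The $\nu$-term becomes $\int\alpha\zeta_N(\psi)\,\mathrm{d}(\nu_1-\nu_2)\to\int\alpha\,\mathrm{d}(\nu_1-\nu_2)$, which gives $\nu_1=\nu_2$ directly. Justifying this test function at $\xi\to0$ is the main obstacle. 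I would approximate with $\varphi_\beta$ and observe that $\varphi_\beta(\psi)\to\mathbf 1_{\{\psi>0\}}$ while $\nu_i$ gives the set $\{\psi=0\}$ the mass determined by \eqref{eq:L1 perservation}. Failing that, I would argue that on $\{\psi=0\}$ we have $u_i=0$, and the mass balance \eqref{eq:L1 perservation} together with the already established equality on $\{\psi>0\}$ forces the remaining total masses, and then their localizations, to agree. Since test functions $\alpha$ separate measures on $Q_T$, this concludes $(u_1,\nu_1)=(u_2,\nu_2)$.
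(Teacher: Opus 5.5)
Up to the identity $\int\alpha\mathbf{1}_{\{\psi>0\}}\,\mathrm{d}\nu_1=\int\alpha\mathbf{1}_{\{\psi>0\}}\,\mathrm{d}\nu_2$, your argument is the paper's. The paper works in expectation against $\mathbf{1}_B\rho(x)\alpha(s)\zeta_N\varphi_\beta$. If you work pathwise, note that the $\liminf$ in Proposition~\ref{prop:proposition for limit measure} gives a subsequence for each $q_i$ separately. To get a common one, use the convergence $\beta^{-1}\mathbb{E}\,q_i(\mathbb{T}^d\times[\beta/2,\beta]\times[0,T))\to0$ established in its proof.

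The genuine gap is the step on $\{\psi=0\}$, and neither of your routes closes it. Your first route tests \eqref{eq:kinetic equation test with time} with $\alpha(x,s)\zeta_N(\xi)$. But (viii) only admits $\phi$ compactly supported in $\xi\in(0,\infty)$, and every such $\phi$ has $\phi(x,\psi,s)=0$ on $\{\psi=0\}$. So the kinetic equation carries no information about the restriction of $\nu_i$ to $\{\psi=0\}$. Approximating by $\alpha\zeta_N\varphi_\beta$ merely trades that missing piece for the term $2\beta^{-1}\int\alpha\mathbf{1}_{[\beta/2,\beta]}(\xi)\,\mathrm{d}q_i$. Your second route uses \eqref{eq:L1 perservation}, but that identity is tested only with $\phi=\phi(s)$. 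With $u_1=u_2$ it yields equal time marginals of $\nu_1,\nu_2$, hence of their restrictions to $\{\psi=0\}$, and nothing more. Definition~\ref{def:def of stochastic kinetic solution} contains no spatially localized mass balance, and $u_i=0$ on $\{\psi=0\}$ says nothing about where $\nu_i$ puts its mass there. So ``then their localizations agree'' is exactly the unproved claim.

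In fact this cannot be repaired from Definition~\ref{def:def of stochastic kinetic solution} as stated. Take $d=1$, $\sigma\equiv0$, $g\equiv0$, $\Phi(r)=r$, and $u_{\mathrm{init}}=0$ on $[0.4,0.6]$. Let $\psi$ vanish exactly on $[0.4,0.6]$ and be steep enough outside that the constraint is active only there, so the obstacle solution is the Dirichlet heat flow on the complement. Then $\nu=|\partial_xu(0.4^-,t)|\,\delta_{0.4}(\mathrm{d}x)\mathrm{d}t+|\partial_xu(0.6^+,t)|\,\delta_{0.6}(\mathrm{d}x)\mathrm{d}t$ and $q=\delta_0(\xi-u)|\partial_xu|^2$ satisfy (i)--(viii). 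So does the pair obtained by moving, at each time, all of $\nu$ to $x=0.5$, with the same $u$ and the same $q$.

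The paper's proof passes over this by letting $\zeta_N(\psi)\varphi_\beta(\psi)\to1$ under dominated convergence. The same step appears in the proof of Proposition~\ref{prop:proposition for limit measure}, where $\lim\mathbb{E}\int\zeta_N(\psi)\varphi_\beta(\psi)\,\mathrm{d}\nu$ is identified with $\mathbb{E}\,\nu(\mathbb{T}^d\times[0,T))$. Indeed, testing \eqref{eq:kinetic equation test with time} with $\alpha(s)\zeta_N(\xi)\varphi_\beta(\xi)$ and subtracting \eqref{eq:L1 perservation} gives
\[
\mathbb{E}\int\alpha\mathbf{1}_{\{\psi=0\}}\,\mathrm{d}\nu=\lim_{\beta\to0}2\beta^{-1}\mathbb{E}\int\alpha(s)\mathbf{1}_{[\beta/2,\beta]}(\xi)\,\mathrm{d}q(x,\xi,s).
\]
So the vanishing that proof claims is equivalent to $\nu(\{\psi=0\})=0$. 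This fails in the example, where $2\beta^{-1}q(\mathbb{T}\times[\beta/2,\beta]\times[0,T))\to\nu(Q_T)>0$.

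What your argument actually proves is $\nu_1=\nu_2$ on $\{\psi>0\}$ together with equal time marginals on $\{\psi=0\}$. Full uniqueness of $\nu$, in your proof and in the paper's, goes through under an extra hypothesis such as $\inf_{Q_T}\psi>0$, since then $\varphi_\beta(\psi)\equiv1$ for small $\beta$.
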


\begin{proof}
Let $(u_{1},\nu_{1})$ and $(u_{2},\nu_{2})$ be stochastic kinetic solutions
of the obstacle problem for \eqref{eq:main} in the sense of Definition \ref{def:def of stochastic kinetic solution} with the same initial
data $u_{\mathrm{init}}$. From Corollary \ref{cor:unique with inital}, we have
\begin{align}\label{eq: equ for u1 and u2}
u_{1}=u_{2},\quad\text{a.s. on }\Omega\times Q_{T}.
\end{align}
We now show that there exists a full-measure set $\tilde{\Omega}\subset\Omega$ such that $\nu_{1}(\omega)=\nu_{2}(\omega)$ as Radon measures on $Q_{T}$, for every $\omega\in\tilde{\Omega}$.
Since each $(u_i,\nu_i)$ is a stochastic kinetic solution, for $N\in\mathbb{N}$, $\beta\in(0,1)$, $\rho\in C^{\infty}(\mathbb{T}^{d})$, $\alpha\in C_c^{\infty}([0,T))$, and $B\in\mathcal{F}$, we take test functions in \eqref{eq:kinetic equation test with time} converging to $\zeta_{N}\varphi_{\beta}\rho\alpha$, using the dominated convergence theorem and the fact \eqref{eq: equ for u1 and u2}, we have
\begin{equation}\label{eq:equal for nu q lambda}
\mathcal{G}_t(q_1,\nu_1)=\mathcal{G}_t(q_2,\nu_2),
\end{equation}
where
\begin{align}
 \mathcal{G}_t(q_i,\nu_i)&:= -2\beta^{-1}\mathbb{E}\Bigg[\mathbf{1}_{B}\int_{0}^{T}\int_{\beta/2}^{\beta}\int_{\mathbb{T}^{d}}\rho(x)\alpha(s)\mathrm{d}q_{i}\Bigg]+\mathbb{E}\Bigg[\mathbf{1}_{B}\int_{0}^{T}\int_{N}^{N+1}\int_{\mathbb{T}^{d}}\rho(x)\alpha(s)\mathrm{d}q_{i}\Bigg]\nonumber\\
 & \quad-\mathbb{E}\Bigg[\mathbf{1}_{B}\int_{0}^{T}\int_{\mathbb{T}^{d}}\zeta_{N}(\psi)\varphi_{\beta}(\psi)\rho(x)\alpha(s)\mathrm{d}\nu_{i}\Bigg]\nonumber.
\end{align}
We now consider the limit of function $\mathcal{G}$  when taking $\beta\to0$ and $N\to\infty$.
Using Definition \ref{def:def of stochastic kinetic solution} (vii) and Proposition \ref{prop:proposition for limit measure}, we have
\begin{align}
 & \lim_{\beta\to0}2\beta^{-1}\mathbb{E}\Bigg[\mathbf{1}_{B}\int_{0}^{T}\int_{\beta/2}^{\beta}\int_{\mathbb{T}^{d}}\rho(x)\alpha(s)\mathrm{d}q_{i}\Bigg]+\lim_{N\to\infty}\mathbb{E}\Bigg[\mathbf{1}_{B}\int_{0}^{T}\int_{N}^{N+1}\int_{\mathbb{T}^{d}}\rho(x)\alpha(s)\mathrm{d}q_{i}\Bigg]\label{eq:estimate of q for equ nu}\\
 & \leq C\lim_{\beta\to0}\beta^{-1}\mathbb{E}\Big[q_{i}(\mathbb{T}^{d}\times[\beta/2,\beta]\times[0,T))\Big]+C\lim_{N\to\infty}\mathbb{E}\Big[q_{i}(\mathbb{T}^{d}\times[N,N+1]\times[0,T))\Big]\nonumber\\
 & =0.\nonumber
\end{align}
Therefore, taking the limits $\beta\to0$ and $N\to\infty$ in \eqref{eq:equal for nu q lambda}, by \eqref{eq:estimate of q for equ nu} and the Lebesgue's dominated convergence theorem, we have
\begin{align}\notag
\mathbb{E}\Bigg[\mathbf{1}_{B}\int_{0}^{T}\int_{\mathbb{T}^{d}}\rho(x)\alpha(s)\mathrm{d}\nu_{1}\Bigg]=\mathbb{E}\Bigg[\mathbf{1}_{B}\int_{0}^{T}\int_{\mathbb{T}^{d}}\rho(x)\alpha(s)\mathrm{d}\nu_{1}\Bigg].
\end{align}
Due to the arbitrariness of functions $\rho$, $\alpha$ and the set $B$, we have almost surely
\[
\nu_{1}=\nu_{2}
\]
as a Radon measure on $Q_{T}$, which completes the proof.
\end{proof}

\section{Existence of stochastic kinetic solutions}
\label{sec:exist}
In this section, we will construct a stochastic kinetic solution of the obstacle problem for \eqref{eq:main}.
The main strategy is to approximate the obstacle problem by a family of penalized equations.
More precisely, we introduce the following penalized equation:
\begin{align}
\mathrm{d}u_{\varepsilon} & =\Big[\Delta\Phi(u_{\varepsilon})+\frac{1}{2}\nabla\cdot(F_{1}[\sigma^{\prime}(u_{\varepsilon})]^{2}\nabla u_{\varepsilon}+\sigma(u_{\varepsilon})\sigma^{\prime}(u_{\varepsilon})F_{2})-\nabla\cdot g(u_\varepsilon) \label{eq:penalized eq}\\
 & \quad-\frac{1}{\varepsilon}(u_{\varepsilon}-\psi)^{+}\Big]\mathrm{d}t-\nabla\cdot(\sigma(u_{\varepsilon})\mathrm{d}\xi^{F}),\nonumber
\end{align}
for $\varepsilon\in(0,1)$.
\begin{defn}
\label{def:def of stochastic kinetic solution without obstacle} For fixed $\varepsilon\in(0,1)$, a stochastic kinetic
solution of \eqref{eq:penalized eq} is a nonnegative, almost surely continuous $L^{1}(\mathbb{T}^{d})$-valued $\mathcal{F}_{t}$-predictable function $u_\varepsilon\in L^{1}(\Omega\times[0,T];L^{1}(\mathbb{T}^{d}))$ satisfying following properties.
\begin{itemize}
\item[(i)] Preservation of mass: almost surely for every $\phi\in C_c^{\infty}([0,T))$,
\begin{align}\label{eq:L1 perservation without obstacle}
    -\int_{0}^{T}\int_{\mathbb{T}^d} u_\varepsilon\partial_s\phi(s)\mathrm{d}x\mathrm{d}s+\int_{0}^{T}\int_{\mathbb{T}^d}\phi(s)\varepsilon^{-1}(u_\varepsilon-\psi)^{+}\mathrm{d}x\mathrm{d}s=\int_{\mathbb{T}^d} u_{\mathrm{init}}\phi(0)\mathrm{d}x
\end{align}
\item[(ii)] $\sigma(u_\varepsilon)\in L^{2}(\Omega;L^{2}(Q_T))$,\quad $g(u_\varepsilon)\in L^{1}(\Omega;L^{1}(Q_T;\mathbb{R}^d))$.
\item[(iii)] For every $K\in\mathbb{N}$,
\[
[(u_\varepsilon\land K)\lor(1/K)]\in L^{2}(\Omega\times[0,T];H^{1}(\mathbb{T}^{d}))).
\]
Furthermore, there exists a kinetic measure $m_\varepsilon$ satisfying the following properties.
\item[(iv)] Almost surely as a nonnegative measure, $m_\varepsilon$ satisfies
\[
\delta_{0}(\xi-u_\varepsilon)\Phi^{\prime}(\xi)|\nabla u_\varepsilon|^{2}\leq m_\varepsilon\quad\text{on}\quad\mathbb{T}^{d}\times(0,\infty)\times[0,T].
\]
\item[(v)] (Vanishing)
\[
\lim_{N\to\infty}\mathbb{E}\big[m_\varepsilon(\mathbb{T}^{d}\times[N,N+1]\times[0,T))\big]=0.
\]
\item[(vi)] Preservation of energy: for every $\phi\in C_c^{\infty}([0,T))$,
\begin{align}
&-\mathbb{E}\int_0^T\int_{\mathbb{T}^d}u_\varepsilon^2(x,s)\partial_s\phi(s)\mathrm{d}x\mathrm{d}s+\frac{2}{\varepsilon}\mathbb{E}\int_0^T\int_{\mathbb{T}^d}\phi(s)\psi(x,s)(u_\varepsilon-\psi)^+\mathrm{d}x\mathrm{d}s\label{eq:energy preservation for penalty solution}\\
&+\frac{2}{\varepsilon}\mathbb{E}\int_0^T\int_{\mathbb{T}^d}\phi(s)|(u_\varepsilon-\psi)^+|^2\mathrm{d}x\mathrm{d}s+2\mathbb{E}\int_0^T\phi(s)\mathrm{d}m_\varepsilon(\mathbb{T}^d,[0,\infty),s)\nonumber\\
&=\mathbb{E}\int_{\mathbb{T}^d}u^2_{\mathrm{init}}(x)\phi(0)\mathrm{d}x-\frac{1}{2}\mathbb{E}\int_{0}^{T}\int_{\mathbb{T}^d}\phi(s)\sigma^2(u_{\varepsilon})(\nabla_{x}\cdot F_{2}-2F_3)\mathrm{d}x\mathrm{d}s.\nonumber
\end{align}
\item[(vii)] For every $\phi\in C_{c}^{1}(\mathbb{T}^{d}\times(0,\infty)\times[0,T))$, the kinetic function $\chi_\varepsilon$ of $u_\varepsilon$ almost surely satisfies
\begin{align}
& -\int_{0}^{T}\int_{\mathbb{R}}\int_{\mathbb{T}^d}\chi_\varepsilon(x,\xi,s)\partial_s\phi(x,\xi,s)\mathrm{d}x\mathrm{d}\xi\mathrm{d}s\label{eq:kinetic equation test with time without obstacle}\\
 & =\int_{\mathbb{R}}\int_{\mathbb{T}^{d}}\bar{\chi}(u_{\mathrm{init}},\xi)\phi(x,\xi,0)\mathrm{d}x\mathrm{d}\xi-\int_{0}^{T}\int_{\mathbb{T}^{d}}\Phi^{\prime}(u_\varepsilon)\nabla_x u_\varepsilon\cdot(\nabla_x\phi)(x,u_\varepsilon,s)\mathrm{d}x\mathrm{d}s\nonumber \\
 & \quad-\frac{1}{2}\int_{0}^{T}\int_{\mathbb{T}^{d}}\big(F_{1}(x)[\sigma^{\prime}(u_\varepsilon)]^{2}\nabla_x u_\varepsilon+\sigma(u_\varepsilon)\sigma^{\prime}(u_\varepsilon)F_{2}(x)\big)\cdot(\nabla_x\phi)(x,u_\varepsilon,s)\mathrm{d}x\mathrm{d}s\nonumber\\
 & \quad-\int_{0}^{T}\int_{\mathbb{R}}\int_{\mathbb{T}^{d}}\partial_{\xi}\phi(x,\xi,s)\mathrm{d}m_\varepsilon(x,\xi,s)-\int_0^{T}\int_{\mathbb{T}^d}\phi(x,u_\varepsilon,s)\nabla_x\cdot g(u_\varepsilon)\mathrm{d}x\mathrm{d}s\nonumber \\
 & \quad+\frac{1}{2}\int_{0}^{T}\int_{\mathbb{T}^{d}}\Big(\sigma(u_\varepsilon)\sigma^{\prime}(u_\varepsilon)\nabla_x u_\varepsilon\cdot F_{2}(x)+\sigma^{2}(u_\varepsilon)F_{3}(x)\Big)(\partial_{\xi}\phi)(x,u_\varepsilon,s)\mathrm{d}x\mathrm{d}s\nonumber \\
 & \quad-\int_{0}^{T}\int_{\mathbb{T}^{d}}\phi(x,u_\varepsilon,s)\frac{1}{\varepsilon}(u_\varepsilon-\psi)^+\mathrm{d}x\mathrm{d}s-\int_{0}^{T}\int_{\mathbb{T}^{d}}\phi(x,u_\varepsilon,s)\nabla_x\cdot(\sigma(u_\varepsilon)f_{k})\mathrm{d}x\mathrm{d}B_{s}^{k}.\nonumber
\end{align}
\end{itemize}
\end{defn}
\begin{rem}\label{rem:finite for kinetic measure}
Actually, since we consider the upper obstacle case with the constraint $u\leq\psi$, the kinetic measure $m_\varepsilon$ is finite due to \cite[Remark 5.26]{fehrman2024well} and the fact that $0\leq u_{\mathrm{init}}\leq \psi\leq M$.
For the lower obstacle case with the constraint $u\geq\psi$ (instead of $u\leq\psi$), we can first deal with the initial data $u_{\mathrm{init}}\in L^p(\Omega;L^p(\mathbb{T}^d))\cap L^{m+p-1}(\Omega;L^1(\mathbb{T}^d))$ to guarantee the finiteness of the kinetic measure $m_\varepsilon$. Then, following the proof of \cite[Corollary 5.28]{fehrman2024well}, we can generalize the initial data to $u_{\mathrm{init}}\in L^1(\Omega;L^1(\mathbb{T}^d))$.
\end{rem}
Due to the degeneracy of the equation and the lower regularity of $\sigma$,  we first deal with the approximating penalized equations:
\begin{align}
\mathrm{d}u_{\alpha,n,\varepsilon} & =\Big[\Delta\Phi(u_{\alpha,n,\varepsilon})+\alpha\Delta u_{\alpha,n,\varepsilon}+\frac{1}{2}\nabla_x\cdot(F_{1}[\sigma_n^{\prime}(u_{\alpha,n,\varepsilon})]^{2}\nabla_x u_{\alpha,n,\varepsilon}+\sigma_n(u_{\alpha,n,\varepsilon})\sigma_n^{\prime}(u_{\alpha,n,\varepsilon})F_{2})\nonumber \\
 & \quad-\nabla_x\cdot g(u_{\alpha,n,\varepsilon})-\frac{1}{\varepsilon}(u_{\alpha,n,\varepsilon}-\psi)^{+}\Big]\mathrm{d}t-\nabla_x\cdot(\sigma_{n}(u_{\alpha,n,\varepsilon})\mathrm{d}\xi^{F}),\label{eq:appro penalized eq}
\end{align}
for $n\in\mathbb{N}$ and $\alpha\in(0,1)$, where $\sigma_n\in C^\infty(\mathbb{R})$ is a smooth approximation of $\sigma$ in $C^1_{\mathrm{loc}}((0,\infty))$ and satisfies Assumption \ref{assu:assu for existence} below uniformly in $n\in\mathbb{N}$.
\begin{defn}
\label{def:Weak solution approximating penalized}A solution to the approximating penalized equation \eqref{eq:appro penalized eq} with initial data $u_{\alpha,n,\varepsilon}(0)=u_{\mathrm{init}}$ is a continuous $L^{2}(\mathbb{T}^{d})$-valued, $\mathcal{F}_{t}$-predictable process such that almost surely $u_{\alpha,n,\varepsilon}$ and $\llbracket\sqrt{\Phi^{\prime}}\rrbracket(u_{\alpha,n,\varepsilon})$ are in $L^{2}(0,T;H^{1}(\mathbb{T}^{d}))$ ($\llbracket\cdot\rrbracket$ is defined by \eqref{eq:initegrable}), and such that for all $\phi\in C^{\infty}(\mathbb{T}^{d})$, almost surely for every $t\in[0,T]$,
\begin{align}
\int_{x}u_{\alpha,n,\varepsilon}(x,t)\phi & =\int_{x}u_{\mathrm{init}}\phi-\int_{0}^{t}\int_{x}\Phi^{\prime}(u_{\alpha,n,\varepsilon})\nabla_{x}u_{\alpha,n,\varepsilon}\cdot\nabla_{x}\phi\mathrm{d}s \label{eq: weak solution for nondegenerate equ}\\
 & \quad-\alpha\int_{0}^{t}\int_{x}\nabla_{x}u_{\alpha,n,\varepsilon}\cdot\nabla_{x}\phi\mathrm{d}s+\int_0^t\int_x\nabla_x\phi \cdot g(u_{\alpha,n,\varepsilon})\mathrm{d}s\nonumber\\
 & \quad-\frac{1}{2}\int_{0}^{t}\int_{x}(F_{1}[\sigma_{n}^{\prime}(u_{\alpha,n,\varepsilon})]^{2}\nabla_{x}u_{\alpha,n,\varepsilon}+\sigma_{n}(u_{\alpha,n,\varepsilon})\sigma_{n}^{\prime}(u_{\alpha,n,\varepsilon})F_{2})\cdot\nabla_{x}\phi\mathrm{d}s\nonumber \\
 & \quad-\frac{1}{\varepsilon}\int_{0}^{t}\int_{x}(u_{\alpha,n,\varepsilon}-\psi)^{+}\phi\mathrm{d}s+\int_{0}^{t}\int_{x}\sigma_{n}(u_{\alpha,n,\varepsilon})\nabla_{x}\phi\cdot\mathrm{d}\xi^{F}.\nonumber
\end{align}
\end{defn}
The following assumptions on $\Phi$, $\sigma$ and $g$ are exactly those in \cite[Assumption 4.1 and Assumption 5.2]{fehrman2024well}.
We emphasize that no additional structural condition is imposed in order to treat the obstacle constraint.
Although these assumptions are technical in form, they cover the main model equations of interest.
In particular, they allow for degenerate nonlinear diffusion of porous-medium or fast-diffusion type, such as $\Phi(\xi)=\xi^{m}$, for $m>0$, together with $\sigma=\Phi^{1/2}$.
Thus, our existence result applies to the same broad class of generalized Dean--Kawasaki equations as in \cite{fehrman2024well}, while extending the theory to the obstacle setting.
\begin{assumption}\label{assu:assu for existence}
Let $\Phi,\sigma\in C([0,\infty))\cap C_{\mathrm{loc}}^1((0,\infty))$, $g\in C([0,\infty);\mathbb{R}^d)\cap C^{1}_{\mathrm{loc}}((0,\infty);\mathbb{R}^d)$, and $p\in[2,\infty)$ satisfy the followings.
\begin{itemize}
\item[1.] $\Phi(0)=\sigma(0)=0$ and $\Phi^\prime>0$ on $(0,\infty)$.
\item[2.] There exist $m\in[1,\infty)$ and $c\in(0,\infty)$ such that
\[
\Phi(\xi)\leq c(1+\xi^m)\quad {\rm{for}}\ \xi\in[0,\infty).
\]
\item[3.] There exists $c\in(0,\infty)$ such that
\[
|g(\xi)|+\Phi^\prime(\xi)\leq c(1+\xi+\big|\llbracket |\cdot|^{(p-2)/2}\sqrt{\Phi^\prime}\rrbracket(\xi)\big|^2)\quad {\rm{for}}\ \xi\in(0,\infty).
\]
\item[4.] Either there exists $c\in(0,\infty)$ and $\theta\in[0,1/2]$ such that
\begin{equation*}
\xi^{-\frac{p-2}{2}}|\Phi^\prime(\xi)|^{-\frac{1}{2}}\leq c\xi^\theta\quad {\rm{for}}\ \xi\in(0,\infty),
\end{equation*}
or there exists $c\in(0,\infty)$ and $\tilde{p}\in[1,\infty)$ such that
\begin{equation*}
|\xi-\tilde{\xi}|^{\tilde{p}}\leq c\Big|\llbracket |\cdot|^{(p-2)/2}\sqrt{\Phi^\prime}\rrbracket(\xi)-\llbracket |\cdot|^{(p-2)/2}\sqrt{\Phi^\prime}\rrbracket(\tilde{\xi})\Big|^2\quad {\rm{for}}\ \xi,\tilde{\xi}\in[0,\infty).
\end{equation*}
\item[5.] There exists $c\in(0,\infty)$ such that
\begin{align*}
\sigma^2(\xi)&\leq c(1+\xi+\big|\llbracket\sqrt{\Phi^\prime}\rrbracket(\xi)\big|^2)\quad {{and}}\\
\xi^{p-2}\sigma^2(\xi)&\leq c(1+\xi+\big|\llbracket |\cdot|^{(p-2)/2}\sqrt{\Phi^\prime}\rrbracket(\xi)\big|^2)\quad {\rm{for}}\ \xi\in[0,\infty).
\end{align*}
\item[6.] Either $\nabla\cdot F_2=0$ or there exists $c\in(0,\infty)$ such that
\begin{equation*}
\big|\llbracket|\cdot|^{p-2}\sigma\sigma^{\prime}\rrbracket(\xi)\big|\leq c(1+\xi+\big|\llbracket |\cdot|^{(p-2)/2}\sqrt{\Phi^\prime}\rrbracket(\xi)\big|^2)\quad {\rm{for}}\ \xi\in[0,\infty).
\end{equation*}
\item[7.] For every $\delta\in(0,1)$, there exists $c_\delta\in(0,\infty)$ such that
\begin{equation*}
\frac{|\sigma^\prime(\xi)|^4}{\Phi^\prime(\xi)}+|\sigma\sigma^\prime(\xi)|+\Phi^\prime(\xi)\leq c_\delta(1+\xi+\big|\llbracket |\cdot|^{(p-2)/2}\sqrt{\Phi^\prime}\rrbracket(\xi)\big|^2)\quad {\rm{for}}\ \xi\in(\delta,\infty).
\end{equation*}
\item[8.] There exists $c\in (0,\infty)$ such that
$$
\limsup_{\xi\rightarrow 0^+}\frac{\sigma^2(\xi)}{\xi}\leq c.
$$
\end{itemize}
\end{assumption}
\begin{rem}

The existence assumptions in Assumption \ref{assu:assu for existence} do not explicitly include the additional tail assumptions used for uniqueness, namely conditions 5 and 6 of Assumption \ref{assu:assu for unique}.
However, these assumptions are used in the uniqueness argument only to guarantee the corresponding tail estimates; see \cite[Remark 4.2]{fehrman2024well}.
In the present application, we do not need these tail estimates for arbitrary admissible functions. It is enough to verify them for the approximating solutions constructed in the existence proof and for their limits. This can be done directly from the a priori estimates under Assumption \ref{assu:assu for existence}.

Specifically, by Chebyshev's inequality, if
\[
u\in L^\infty(0,T;L^1(\mathbb T^d)),\qquad \sigma(u)\in L^2(0,T;L^2(\mathbb T^d)),
\]
then
\begin{align}\label{eq:tail condition for sigma}
\lim_{M\to\infty}\Big(\sup_{\xi\in[M,(M+1)\land u]}|\sigma(\xi)|\mathbf{1}_{\{u>M\}}\Big)=0
\quad\text{strongly in }L^2(Q_T).
\end{align}
Similarly, if
\[
u\in L^\infty(0,T;L^1(\mathbb T^d)),\qquad g(u)\in L^1(0,T;L^1(\mathbb T^d)),
\]
then
\begin{align}\label{eq:tail condition for g}
\lim_{M\to\infty}\Big(\sup_{\xi\in[M,(M+1)\land u]}|g(\xi)|\mathbf{1}_{\{u>M\}}\Big)=0
\quad\text{strongly in }L^1(Q_T).
\end{align}

These estimates are available under the existence hypotheses (Assumption \ref{assu:assu for existence}). Indeed, the a priori estimates from Lemma \ref{lem:priori-estimates} yield, almost surely,
\[
    u_{\alpha,n,\varepsilon}\in L^\infty(0,T;L^1(\mathbb T^d)),
    \qquad
    \llbracket\sqrt{\Phi^\prime}\rrbracket(u_{\alpha,n,\varepsilon})\in L^2(Q_T),
    \qquad
    \llbracket|\cdot|^{(p-2)/2}\sqrt{\Phi^\prime}\rrbracket(u_{\alpha,n,\varepsilon})\in L^2(Q_T).
\]
Together with the growth bounds
\[
    \sigma^2(\xi)\le C\bigl(1+\xi+\big|\llbracket\sqrt{\Phi^\prime}\rrbracket(\xi)\big|^2\bigr),
    \qquad
    |g(\xi)|\le c\bigl(1+\xi+\big|\llbracket |\cdot|^{(p-2)/2}\sqrt{\Phi^\prime}\rrbracket(\xi)\big|^2\bigr),
\]
and the monotonicity of
\[
\llbracket\sqrt{\Phi^\prime}\rrbracket
\quad\text{and}\quad
\llbracket |\cdot|^{(p-2)/2}\sqrt{\Phi^\prime}\rrbracket,
\]
we obtain \eqref{eq:tail condition for sigma} and \eqref{eq:tail condition for g} for $u_{\alpha,n,\varepsilon}$, since the right-hand sides have vanishing $L^1$ tails.

The same conclusion holds after passing to the limits $\alpha\to0$, $n\to\infty$, and $\varepsilon\to0$. Hence the estimates actually used in the uniqueness proof are valid under Assumption \ref{assu:assu for existence}. Consequently, the uniqueness result applies to the solutions constructed under the existence hypotheses; see the proof of Theorem \ref{thm:existence}, even though these hypotheses do not explicitly contain the additional uniqueness assumptions.

\end{rem}
\begin{prop}
(Non-negativity) \label{prop:(Nonnegativity)}Let Assumption \ref{assu:assum for F} and Assumption \ref{assu:assu for existence} hold for some $p\in[2,\infty)$.
For $n\in\mathbb{N}$ and $\alpha,\varepsilon\in(0,1)$, let $u_{\alpha,n,\varepsilon}$ be a solution of \eqref{eq:appro penalized eq} with initial data $u_{\mathrm{init}}$, then we have almost surely
\[
u_{\alpha,n,\varepsilon}\geq0,\quad a.e.\  (x,t)\in Q_T.
\]
\end{prop}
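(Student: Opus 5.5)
We prove the claim by testing the equation against a smooth approximation of the negative part of the solution and applying It\^o's formula. Write $u=u_{\alpha,n,\varepsilon}$. For $\delta\in(0,1)$ let $S_\delta\in C^2(\mathbb{R})$ be convex with $S_\delta(r)=0$ for $r\geq0$, $S_\delta''(r)=\delta^{-1}\mathbf{1}_{\{-\delta<r<0\}}$ (smoothed if needed), so that $S_\delta'(r)\to -\mathbf{1}_{\{r<0\}}$ and $S_\delta(r)\to r^-$ as $\delta\to0$. Because $\alpha>0$, the equation \eqref{eq:appro penalized eq} is nondegenerate and $u\in L^2(0,T;H^1(\mathbb{T}^d))$. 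We can therefore apply the It\^o formula for $\int_x S_\delta(u(x,t))\,\mathrm{d}x$, in the Krylov form for $H^1$-valued weak solutions, to \eqref{eq: weak solution for nondegenerate equ}, after extending $\Phi,\sigma_n,g$ to $(-\infty,0)$ in the natural way (e.g.\ $\Phi(r)=\sigma_n(r)=0$ and $g(r)=g(0)$ there, or simply noting that the coefficients are evaluated only where $S_\delta''\neq0$).

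We expect every term to vanish or be signed once $\delta\to0$:
\begin{itemize}
\item[(a)] The initial term vanishes because $u_{\mathrm{init}}\geq0$.
\item[(b)] The diffusion terms give $-\int S_\delta''(u)(\Phi'(u)+\alpha+\tfrac12F_1[\sigma_n'(u)]^2)|\nabla u|^2\leq -\alpha\int S_\delta''(u)|\nabla u|^2$, which is favorable.
\item[(c)] The penalty term is $-\varepsilon^{-1}\int S_\delta'(u)(u-\psi)^+$. It vanishes identically because $S_\delta'(u)\neq0$ only where $u<0\leq\psi$, and there $(u-\psi)^+=0$.
\item[(d)] The drift $g$ contributes $\int S_\delta''(u)\nabla u\cdot g(u)$. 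Since $g$ is continuous at $0$, this is bounded by $\frac{\alpha}{4}\int S_\delta''|\nabla u|^2+C_\alpha\int S_\delta''(u)|g(u)|^2$. The last integral is $O(1)$ on the strip $\{-\delta<u<0\}$ times $\delta^{-1}|\{-\delta<u<0\}|$, which we handle below. Better, we write it as $\int\nabla\cdot G_\delta(u)=0$ with $G_\delta(r)=\int_0^r S_\delta''(s)g(s)\,\mathrm{d}s$, so it vanishes exactly.
\item[(e)] The stochastic integral and the It\^o correction involve $\sigma_n(u)$, $\sigma_n'(u)$ on $\{u<0\}$. We use $\sigma(0)=0$ (Assumption~\ref{assu:assu for existence}.1) together with the smooth approximation $\sigma_n$, which we choose with $\sigma_n=0$ on $(-\infty,0]$, or at least $|\sigma_n(r)|\leq C_n|r|$ near $0$. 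The correction $\frac12\int S_\delta''(u)\sum_k|\nabla(\sigma_n(u)f_k)|^2$ then partially cancels with the Stratonovich term $\frac12\nabla\cdot(F_1[\sigma_n']^2\nabla u+\sigma_n\sigma_n'F_2)$, as in the derivation of $\mathrm{d}S(u_\varepsilon)$ in Section~\ref{sec:Pre}. What remains is $-\frac12\int S_\delta''(u)\sigma_n(u)\nabla\sigma_n(u)\cdot F_2+\frac12\int S_\delta''\sigma_n^2F_3$. Both are bounded by $C_n\int S_\delta''(u)(|u|^2+|u||\nabla u|)\lesssim C_n\delta\,|\{-\delta<u<0\}|+\frac{\alpha}{4}\int S_\delta''|\nabla u|^2+C\int S_\delta''u^2$, and all of these are $O(\delta)$.
\end{itemize}

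Taking expectations removes the martingale term, which is a true martingale because $\sigma_n$ has linear growth and $u\in L^2(\Omega;L^2(0,T;H^1))$. This gives $\mathbb{E}\int_x S_\delta(u(t))\leq C_{n,\alpha}\delta$. Letting $\delta\to0$ yields $\mathbb{E}\int_x u^-(t)=0$ for every $t$, hence $u\geq0$ a.e.\ almost surely. The main obstacle is step (e): rigorously justifying the It\^o formula and the cancellation between the Stratonovich correction and the quadratic variation on the negative set. We would handle this by working with the equivalent It\^o form, where the combination is exactly the one computed for $\mathrm{d}S(u_\varepsilon)$, and by choosing the extension of $\sigma_n$ to vanish on $(-\infty,0]$, so that all noise contributions are identically zero on $\{u<0\}$.
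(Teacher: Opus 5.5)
Your proposal is correct and follows essentially the same route as the paper. The paper applies It\^o's formula to $\int_x\eta_\delta(-u)$ with $\eta_\delta''$ a mollifier supported in $(0,\delta)$; the initial and penalty terms vanish because $u_{\mathrm{init}}\geq0$ and $\psi\geq0$, the $g$-term vanishes as an exact divergence, and the noise terms vanish identically because $\sigma_n=0$ on $(-\infty,0]$. Your fallback estimate under the weaker condition $|\sigma_n(r)|\leq C_n|r|$ also works. One small slip there: the $F_2$-term left after the It\^o--Stratonovich cancellation carries a plus sign, which is harmless since you bound it in absolute value.
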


\begin{proof}
For the sake of simplicity, denote by $u=u_{\alpha,n,\varepsilon}$.
Let $u^-$ be the negative part of $u$, i.e. $u^-=\max(-u,0)$. We aim to show that $u^-=0$, almost surely on $\Omega\times Q_T$. We introduce a non-negative smooth function $\rho:\mathbb{R}\to[0,\infty)$ such that $\text{supp}\ \rho \subset(0,1)$, $\rho\leq2$ and $\int_\mathbb{R}\rho(r)\mathrm{d}r=1$.
For any $\delta>0$, we define the standard mollifier $\rho_\delta(r):=\delta^{-1}\rho(\delta^{-1}r)$ and a function $\eta_{\delta}\in C^2(\mathbb{R})$ satisfying
$$
\eta_\delta(0)=\eta_{\delta}^\prime(0)=0,\quad \eta_\delta^{\prime\prime}(r)=\rho_\delta(r).
$$
Applying the  It\^{o}'s formula (cf. \cite[Theorem 3.1]{krylov2013relatively}), we have almost surely for every $t\in[0,T]$,
\begin{align*}
 & \int_{x}\eta_{\delta}(-u(x,t))\\
 & =\int_{x}\eta_{\delta}(-u_{\mathrm{init}})-\int_{0}^{t}\int_{x}\eta_{\delta}^{\prime\prime}(-u)\Phi^{\prime}(u)\nabla_{x}u\cdot\nabla_{x}u\mathrm{d}s\\
 & \quad-\alpha\int_{0}^{t}\int_{x}\eta_{\delta}^{\prime\prime}(-u)\nabla_{x}u\cdot\nabla_{x}u\mathrm{d}s\\
 & \quad-\frac{1}{2}\int_{0}^{t}\int_{x}\eta_{\delta}^{\prime\prime}(-u)(F_{1}[\sigma_{n}^{\prime}(u)]^{2}\nabla_{x}u+\sigma_{n}(u)\sigma_{n}^{\prime}(u)F_{2})\cdot\nabla_{x}u\mathrm{d}s\\
 &\quad+\int_{0}^{t}\int_{x}\eta_{\delta}^{\prime\prime}(-u)\nabla_{x}u\cdot g(u)\mathrm{d}s\\
 & \quad +\frac{1}{\varepsilon}\int_{0}^{t}\int_{x}(u-\psi)^{+}\eta_{\delta}^{\prime}(-u)\mathrm{d}s+\int_{0}^{t}\int_{x}\sigma_{n}(u)\eta_{\delta}^{\prime\prime}(-u)\nabla_{x}u\cdot\mathrm{d}\xi^{F}\\
 & \quad+\frac{1}{2}\int_{0}^{t}\sum_{k=1}^{\infty}\int_{x}\eta_{\delta}^{\prime\prime}(-u)|\sigma_{n}^{\prime}(u)\nabla_{x}uf_{k}+\sigma_{n}(u)\nabla_{x}f_{k}|^{2}\mathrm{d}s.
\end{align*}

Since $\sigma_n(r)=0$ when $r\leq0$ and $\text{supp}\ \eta_\delta^{\prime\prime}\subset(0,\infty)$, we have
\begin{align*}
&-\frac{1}{2}\int_x\eta_{\delta}^{\prime\prime}(-u)(F_{1}[\sigma_{n}^{\prime}(u)]^{2}\nabla_{x}u+\sigma_{n}(u)\sigma_{n}^{\prime}(u)F_{2})\cdot\nabla_{x}u\\
&+\int_x\frac{1}{2}\sum_{k=1}^{\infty}\eta_\delta^{\prime\prime}(-u)|\sigma_{n}^{\prime}(u)\nabla_{x}uf_{k}+\sigma_{n}(u)\nabla_{x}f_{k}|^{2}
\\
&=\frac{1}{2}\int_x F_2\cdot\nabla_x\int_0^u\eta_\delta^{\prime\prime}(-r)\sigma_{n}(r)\sigma_n^\prime(r)\mathrm{d}r +\frac{1}{2}\int_x\eta_\delta^{\prime\prime}(-u)|\sigma_{n}(u)|^2F_3=0.
\end{align*}
Note that
\[
\int_{x}\eta_{\delta}^{\prime\prime}(-u)\nabla_{x}u\cdot g(u)\mathrm{d}s=\int_{x}\nabla_x\cdot\int_0^{u}\eta_{\delta}^{\prime\prime}(-r)g(r)\mathrm{d}r\mathrm{d}s=0.
\]
Moreover, using the non-negativity of $\psi$ and $u_{\mathrm{init}}$, together with $\text{supp}\ \eta_\delta, \text{supp}\ \eta^\prime_\delta\subset (0,\infty)$, we obtain almost surely
\[
\frac{1}{\varepsilon}\int_{0}^{t}\int_{x}(u-\psi)^{+}\eta_{\delta}^{\prime}(-u)\mathrm{d}s=0,\quad\text{and}\quad
\eta_{\delta}(-u_{\mathrm{init}})=0, \quad{\text{on}\ \ }\mathbb{T}^d.
\]
Combining the previous estimates, and taking the expectation, we obtain
\[
\mathbb{E}\int_{x}\eta_\delta(-u(x,t))\leq0,\quad\text{a.e. }t\in[0,T].
\]
Taking the limits $\delta\to 0$ and using the Lebesgue dominated theorem, we get the desired result.
\end{proof}

\begin{lem}
\label{lem:priori-estimates}(A priori estimates) Let Assumption \ref{assu:assum for F} and Assumption \ref{assu:assu for existence} hold for some $p\in[2,\infty)$.
For any $n\in\mathbb{N}$ and $\alpha,\varepsilon\in(0,1)$, let $u_{\alpha,n,\varepsilon}$ be a solution of \eqref{eq:appro penalized eq}, then we have
\begin{description}
  \item[(1)] almost surely for every $t\in[0,T]$,
\begin{equation}
\Vert u_{\alpha,n,\varepsilon}(t)\Vert_{L^{1}(\mathbb{T}^{d})}=\Vert u_{\mathrm{init}}\Vert_{L^{1}(\mathbb{T}^{d})}-\int^t_0\Vert\varepsilon^{-1}(u_{\alpha,n,\varepsilon}-\psi)^{+}\Vert_{L^{1}(\mathbb{T}^{d})}\mathrm{d}s,\label{eq:L1 identity}
\end{equation}
  \item[(2)] for any $\phi\in C_c^\infty([0,T))$,
\begin{align}
 & -\mathbb{E}\int_{0}^{T}\int_{x}\partial_t\phi(t)|u_{\alpha,n,\varepsilon}|^2\mathrm{d}t \label{eq: energy perservation for appro solution}\\
 & =\mathbb{E}\int_{x}\phi(0)|u_{\mathrm{init}}|^{2}-2\mathbb{E}\int_{0}^{T}\int_{x}\phi(t)\big(\Phi^{\prime}(u_{\alpha,n,\varepsilon})|\nabla_{x}u_{\alpha,n,\varepsilon}|^2+\alpha|\nabla_{x}u_{\alpha,n,\varepsilon}|^2\big)\mathrm{d}t \nonumber\\
 & \quad-\frac{2}{\varepsilon}\mathbb{E}\int_{0}^{T}\int_{x}\phi(t)|(u_{\alpha,n,\varepsilon}-\psi)^{+}|^{2}\mathrm{d}t-\frac{1}{2}\mathbb{E}\int_{0}^{T}\int_{x}\phi(t)\sigma^2_{n}(u_{\alpha,n,\varepsilon})(\nabla_{x}\cdot F_{2}-2F_3)\mathrm{d}t\nonumber \\
 & \quad-\frac{2}{\varepsilon}\mathbb{E}\int_{0}^{T}\int_{x}\phi(t)(u_{\alpha,n,\varepsilon}-\psi)^{+}\psi\mathrm{d}t,\nonumber
\end{align}
  \item[(3)] there exists a constant $C$ independent of $n$, $\alpha$, and $\varepsilon$ such that
\begin{align}
 & \sup_{t\in[0,T]}\mathbb{E}\Vert u_{\alpha,n,\varepsilon}(t)\Vert_{L^{p}(\mathbb{T}^{d})}^{p}+\mathbb{E}\Vert|u_{\alpha,n,\varepsilon}+1|^{\frac{p-2}{2}}\nabla_{x}\llbracket\sqrt{\Phi^{\prime}}\rrbracket(u_{\alpha,n,\varepsilon})\Vert_{L^{2}(Q_{T})}^{2}\label{eq=00FF1Apriori p} \\
 & +\alpha\mathbb{E}\Vert|u_{\alpha,n,\varepsilon}+1|^{\frac{p-2}{2}}\nabla_{x}u_{\alpha,n,\varepsilon}\Vert_{L^{2}(Q_{T})}^{2} +\varepsilon^{-1}\mathbb{E}\Vert|u_{\alpha,n,\varepsilon}+1|^{\frac{p-2}{2}}(u_{\alpha,n,\varepsilon}-\psi)^{+}\Vert_{L^{2}(Q_{T})}^{2}\nonumber\\
 & +\mathbb{E}\Vert|u_{\alpha,n,\varepsilon}+1|^{p-2}\varepsilon^{-1}(u_{\alpha,n,\varepsilon}-\psi)^{+}\Vert_{L^{1}(Q_{T})}\nonumber \\
 & \leq C\Big(1+\mathbb{E}\Vert u_{\mathrm{init}}\Vert_{L^{p}(\mathbb{T}^{d})}^{p}+\mathbb{E}\Vert u_{\mathrm{init}}\Vert_{L^{1}(\mathbb{T}^{d})}^{m+p-1}\Big).\nonumber
\end{align}
\end{description}
The estimate (\ref{eq=00FF1Apriori p}) also holds when $p=2$, that is
\begin{align}
 & \sup_{t\in[0,T]}\mathbb{E}\Vert u_{\alpha,n,\varepsilon}(t)\Vert_{L^{2}(\mathbb{T}^{d})}^{2}+\mathbb{E}\Vert\nabla_{x}\llbracket\sqrt{\Phi^{\prime}}\rrbracket(u_{\alpha,n,\varepsilon})\Vert_{L^{2}(Q_{T})}^{2}+\alpha\mathbb{E}\Vert\nabla_x u_{\alpha,n,\varepsilon}\Vert_{L^{2}(Q_{T})}^{2}\label{eq:priori} \\
 & +\varepsilon^{-1}\mathbb{E}\Vert(u_{\alpha,n,\varepsilon}-\psi)^{+}\Vert_{L^{2}(Q_{T})}^{2}+\mathbb{E}\Vert\varepsilon^{-1}(u_{\alpha,n,\varepsilon}-\psi)^{+}\Vert_{L^{1}(Q_{T})}\nonumber\\
 & \leq C\Big(1+\mathbb{E}\Vert u_{\mathrm{init}}\Vert_{L^{2}(\mathbb{T}^{d})}^{2}+\mathbb{E}\Vert u_{\mathrm{init}}\Vert_{L^{1}(\mathbb{T}^{d})}^{m+1}\Big).\nonumber
\end{align}
\end{lem}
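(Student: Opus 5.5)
The plan is to prove the three parts in order, writing $u=u_{\alpha,n,\varepsilon}$. Equation \eqref{eq:appro penalized eq} is nondegenerate thanks to the term $\alpha\Delta u$ and has smooth $\sigma_n$, so It\^o's formula (in the form of \cite[Theorem 3.1]{krylov2013relatively}, as in Proposition \ref{prop:(Nonnegativity)}) applies to functionals $\int_x S(u)$ with $S\in C^2$ of controlled growth.

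\textbf{Part (1).} Take $\phi\equiv1$ in \eqref{eq: weak solution for nondegenerate equ}. All divergence-form terms and the stochastic integral vanish because $\nabla_x\phi=0$. What remains is $\int_x u(t)=\int_x u_{\mathrm{init}}-\varepsilon^{-1}\int_0^t\int_x(u-\psi)^+$. Since $u\ge0$ by Proposition \ref{prop:(Nonnegativity)}, the left side equals $\|u(t)\|_{L^1}$, which gives \eqref{eq:L1 identity}.

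\textbf{Part (2).} Apply It\^o's formula with $S(r)=r^2$ and then multiply by $\phi(t)$ (equivalently, integrate by parts in time).
\begin{itemize}
\item The diffusion terms give $-2\int\Phi'(u)|\nabla u|^2$ and $-2\alpha\int|\nabla u|^2$.
\item The $g$ term vanishes, since $2\int_x u\,\nabla\cdot g(u)=-2\int_x\nabla\cdot G(u)=0$ with $G'(r)=g(r)$.
\item The Stratonovich correction and the It\^o correction combine, exactly as in \cite{fehrman2024well}, into $-\tfrac12\int\sigma_n^2(u)(\nabla\cdot F_2-2F_3)$. Here the $F_1[\sigma_n']^2|\nabla u|^2$ parts cancel, and the mixed term is rewritten via $\sigma_n\sigma_n'\nabla u=\tfrac12\nabla\sigma_n^2(u)$ and integrated by parts.
\item The penalty gives $-2\varepsilon^{-1}\int u(u-\psi)^+$. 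Writing $u=(u-\psi)^++\psi$ on the support of $(u-\psi)^+$, this splits into the two stated terms.
\item The martingale has zero expectation once $\mathbb{E}\int|\sigma_n(u)\nabla u|^2<\infty$, which holds for the regularized problem.
\end{itemize}
Taking expectations yields \eqref{eq: energy perservation for appro solution}.

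\textbf{Part (3).} Apply It\^o's formula to $S(r)=\int_0^r\int_0^{s}(|\tau|+1)^{p-2}\mathrm{d}\tau\,\mathrm{d}s$, which satisfies $S''(r)=(r+1)^{p-2}$ and $S(r)\sim r^p$. Each term contributes as follows.
\begin{itemize}
\item The diffusion term produces $\int (u+1)^{p-2}\Phi'(u)|\nabla u|^2=\|(u+1)^{\frac{p-2}{2}}\nabla\llbracket\sqrt{\Phi'}\rrbracket(u)\|^2$, and similarly the $\alpha$ term.
\item The penalty produces $\varepsilon^{-1}\int S'(u)(u-\psi)^+$. Splitting $S'(u)=S'(u)-S'(\psi)+S'(\psi)$ and using $S'(u)-S'(\psi)\ge(u-\psi)^+$ together with monotonicity of $S'$ gives the $\varepsilon^{-1}\|(u+1)^{\frac{p-2}{2}}(u-\psi)^+\|^2$ term. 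The $L^1$ weighted term $(u+1)^{p-2}\varepsilon^{-1}(u-\psi)^+$ follows from $S'(u)\gtrsim (u+1)^{p-2}u-C$ combined with part (1).
\item The noise terms give $\tfrac12\int S''(u)\sigma_n^2F_3$ and a $\nabla\cdot F_2$ term. These are bounded using Assumption \ref{assu:assu for existence}(5),(6) by $C(1+\|u\|_{L^1}+\|\llbracket|\cdot|^{\frac{p-2}{2}}\sqrt{\Phi'}\rrbracket(u)\|_{L^2}^2)$.
\end{itemize}
Estimating the last quantity is the main obstacle. It needs a Poincar\'e/interpolation step, following \cite[Proposition 5.6]{fehrman2024well}. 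One controls $\llbracket|\cdot|^{\frac{p-2}{2}}\sqrt{\Phi'}\rrbracket(u)$ by its mean plus its gradient. The mean is bounded through $\|u\|_{L^1}\le\|u_{\mathrm{init}}\|_{L^1}$ from part (1), together with the growth $\Phi\le c(1+\xi^m)$. This is where the $\|u_{\mathrm{init}}\|_{L^1}^{m+p-1}$ term comes from. The gradient part is absorbed into the dissipation via Young's inequality.

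Taking expectations, applying Gronwall, and then taking $\sup_t$ gives \eqref{eq=00FF1Apriori p}. The case $p=2$, \eqref{eq:priori}, is the same computation with $S(r)=r^2/2$. Its penalty $L^1$ term comes directly from part (1), since $\mathbb{E}\|u_{\mathrm{init}}\|_{L^1}$ bounds it.
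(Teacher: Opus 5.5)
Your plan follows the paper's argument. Part (1) uses $\phi\equiv1$ and nonnegativity. Part (2) applies It\^o to $u^2$ with the It\^o/Stratonovich cancellation and the split $u=(u-\psi)^++\psi$. Part (3) applies It\^o to a convex function with second derivative $(r+1)^{p-2}$, then uses Lemma~\ref{lem:estimates of sigma u}-type bounds on the noise terms, \eqref{eq:L1 identity}, and the Fehrman--Gess interpolation lemma. However, two steps in (3) do not give what you claim as written.

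First, the inequality $S'(u)-S'(\psi)\ge(u-\psi)^+$ only produces the unweighted term $\varepsilon^{-1}\|(u-\psi)^+\|_{L^2(Q_T)}^2$. To get the weight, use instead $S'(u)=\frac{(u+1)^{p-1}-1}{p-1}\ge\frac{1}{p-1}(u+1)^{p-2}u\ge\frac{1}{p-1}(u+1)^{p-2}(u-\psi)^+$, which holds since $\psi\ge0$. The paper avoids this by working with $|u+1|^p$, whose derivative is proportional to $(u+1)^{p-2}(u+1)\ge(u+1)^{p-2}\big((u-\psi)^++1\big)$ on $\{u>\psi\}$; this gives both weighted penalty terms at once. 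With your normalization $S'(0)=0$, the weighted $L^1$ term must instead be recovered from (1) via $(u+1)^{p-2}(u-\psi)^+\le(u+1)^{p-2}u\,(u-\psi)^++2^{p-2}(u-\psi)^+$, and that route does work.

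Second, the absorption step cannot be carried out by ``mean plus gradient.'' Write $\Theta:=\llbracket|\cdot|^{(p-2)/2}\sqrt{\Phi'}\rrbracket$. There are two problems:
\begin{itemize}
\item Poincar\'e puts a fixed constant in front of $\|\nabla\Theta(u)\|_{L^2}^2$. This cannot be absorbed into the dissipation, because the constants coming from $\nabla\cdot F_2$, $F_3$ and Assumption~\ref{assu:assu for existence} need not be small.
\item $\int_x\Theta(u)$ is not controlled by $\|u\|_{L^1}$ in general. Cauchy--Schwarz only gives $\Theta(\xi)^2\le\frac{\xi^{p-1}}{p-1}\Phi(\xi)\lesssim\xi^{p-1}(1+\xi^m)$, which is superlinear.
\end{itemize}
What is actually needed is the bound $\|\Theta(u)\|_{L^2}^2\le\varsigma\|\nabla\Theta(u)\|_{L^2}^2+C(\varsigma)\big(1+\|u\|_{L^1}^{m+p-1}\big)$ for every $\varsigma>0$. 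This is what the paper invokes as \cite[Lemma 5.4]{fehrman2021wellarxiv}. It comes from a genuine interpolation/Sobolev argument using a low norm of $\Theta(u)$, for instance $\int_x\Theta(u)^{2/(m+p-1)}\lesssim 1+\|u\|_{L^1}$, and this is where the exponent $m+p-1$ originates. With these two corrections your proof goes through.
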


\begin{proof}
First, taking the test function $\phi\equiv1$ in \eqref{eq: weak solution for nondegenerate equ} and using the non-negativity of $u_{\alpha,n,\varepsilon}$ from Proposition \ref{prop:(Nonnegativity)},
we obtain \eqref{eq:L1 identity}.

For \eqref{eq: energy perservation for appro solution}, using the It\^{o}'s formula (cf. \cite[Theorem 3.1]{krylov2013relatively}), we have almost surely for every $t\in[0,T]$ and $\phi\in C_c^\infty([0,T))$,
\begin{align}
 & -\int_{0}^{T}\partial_t\phi(t)|u_{\alpha,n,\varepsilon}|^2\mathrm{d}t\label{eq: energy perservation for appro eqn} \\
 & =\int_{x}\phi(0)|u_{\mathrm{init}}|^{2}-2\int_{0}^{T}\int_{x}\phi(t)\big(\Phi^{\prime}(u_{\alpha,n,\varepsilon})|\nabla_{x}u_{\alpha,n,\varepsilon}|^2+\alpha|\nabla_{x}u_{\alpha,n,\varepsilon}|^2\big)\mathrm{d}t \nonumber\\
 & \quad-\int_{0}^{T}\int_{x}\phi(t)(F_{1}[\sigma_{n}^{\prime}(u_{\alpha,n,\varepsilon})]^{2}\nabla_{x}u_{\alpha,n,\varepsilon}+\sigma_{n}(u_{\alpha,n,\varepsilon})\sigma_{n}^{\prime}(u_{\alpha,n,\varepsilon})F_{2})\cdot\nabla_{x}u_{\alpha,n,\varepsilon}\mathrm{d}t\nonumber \\
 &\quad+2\int_{0}^{T}\int_{x}\phi(t)\nabla_x u_{\alpha,n,\varepsilon}\cdot g(u_{\alpha,n,\varepsilon})\mathrm{d}t\nonumber\\
 & \quad-\frac{2}{\varepsilon}\int_{0}^{T}\int_{x}\phi(t)(u_{\alpha,n,\varepsilon}-\psi)^{+}u_{\alpha,n,\varepsilon}\mathrm{d}t +2\int_{0}^{T}\int_{x}\phi(t)\sigma_{n}(u_{\alpha,n,\varepsilon})\nabla_{x}u_{\alpha,n,\varepsilon}\cdot\mathrm{d}\xi^{F}\nonumber \\
 & \quad+\int_{0}^{T}\sum_{k=1}^{\infty}\int_{x}\phi(t)|\sigma_{n}^{\prime}(u_{\alpha,n,\varepsilon})\nabla_{x}u_{\alpha,n,\varepsilon}f_{k}+\sigma_{n}(u_{\alpha,n,\varepsilon})\nabla_{x}f_{k}|^{2}\mathrm{d}t.\nonumber
\end{align}
Note that
\[
\int_x\nabla_x u_{\alpha,n,\varepsilon}\cdot g(u_{\alpha,n,\varepsilon})=\int_x\nabla_x\cdot\llbracket g\rrbracket(u_{\alpha,n,\varepsilon})=0,
\]
and the obstacle term can be written as
\begin{align*}
 & -\frac{2}{\varepsilon}\int_{0}^{T}\int_{x}\phi(t)(u_{\alpha,n,\varepsilon}-\psi)^{+}u_{\alpha,n,\varepsilon}\mathrm{d}t\\
 & =-\frac{2}{\varepsilon}\int_{0}^{T}\int_{x}\phi(t)|(u_{\alpha,n,\varepsilon}-\psi)^{+}|^{2}\mathrm{d}t-\frac{2}{\varepsilon}\int_{0}^{T}\int_{x}\phi(t)(u_{\alpha,n,\varepsilon}-\psi)^{+}\psi\mathrm{d}t.
\end{align*}
Moreover, with the definitions of $F_i$ given by (\ref{noise}), it holds that
\begin{align*}
 & \sum_{k=1}^{\infty}\int_{x}|\sigma_{n}^{\prime}(u_{\alpha,n,\varepsilon})\nabla_{x}u_{\alpha,n,\varepsilon}f_{k}+\sigma_{n}(u_{\alpha,n,\varepsilon})\nabla_{x}f_{k}|^{2}\\
 & =\int_{x}|\sigma_{n}^{\prime}(u_{\alpha,n,\varepsilon})\nabla_{x}u_{\alpha,n,\varepsilon}|^{2}F_{1}+2\int_{x}\sigma_{n}^{\prime}(u_{\alpha,n,\varepsilon})\sigma_{n}(u_{\alpha,n,\varepsilon})\nabla_{x}u_{\alpha,n,\varepsilon}\cdot F_{2} +\int_{x}|\sigma_{n}(u_{\alpha,n,\varepsilon})|^{2}F_{3},
\end{align*}
 we then have the cancellation between the It\^{o} and It\^{o}-to-Stratonovich corrections.
Then, taking expectation and using the definition of $\llbracket\sqrt{\Phi^{\prime}}\rrbracket$, we have \eqref{eq: energy perservation for appro solution}.

Regarding \eqref{eq=00FF1Apriori p}, applying the It\^{o}'s formula (cf. \cite[Theorem 3.1]{krylov2013relatively}), and using the cancellation between the It\^{o} and It\^{o}-to-Stratonovich corrections, we have almost surely,
\begin{align}
 & \frac{1}{p(p-1)}\int_{x}|u_{\alpha,n,\varepsilon}(x,T)+1|^{p}\label{eq: weak solution for nondegenerate equ-1-1} \\
 & =\int_{x}|u_{\mathrm{init}}+1|^{p}-\int_{0}^{T}\int_{x}|u_{\alpha,n,\varepsilon}+1|^{p-2}\Phi^{\prime}(u_{\alpha,n,\varepsilon})\nabla_{x}u_{\alpha,n,\varepsilon}\cdot\nabla_{x}u_{\alpha,n,\varepsilon}\mathrm{d}s\nonumber \\
 & \quad-\alpha\int_{0}^{T}\int_{x}|u_{\alpha,n,\varepsilon}+1|^{p-2}\nabla_{x}u_{\alpha,n,\varepsilon}\cdot\nabla_{x}u_{\alpha,n,\varepsilon}\mathrm{d}s+\int_0^{T}\int_{x}|u_{\alpha,n,\varepsilon}+1|^{p-2}\nabla_{x}u_{\alpha,n,\varepsilon}\cdot g(u_{\alpha,n,\varepsilon})\mathrm{d}s\nonumber\\
 & \quad-\frac{p-1}{\varepsilon}\int_{0}^{T}\int_{x}|u_{\alpha,n,\varepsilon}+1|^{p-2}(u_{\alpha,n,\varepsilon}-\psi)^{+}(u_{\alpha,n,\varepsilon}+1)\mathrm{d}s\nonumber
  \\ \notag
 & \quad-\frac{1}{2}\int_{0}^{T}\int_{x}\llbracket|\cdot+1|^{p-2}\sigma_{n}^{\prime}\sigma_{n}\rrbracket(u_{\alpha,n,\varepsilon})\nabla_{x}\cdot F_{2}\mathrm{d}s+\frac{1}{2}\int_{0}^{T}\int_{x}|u_{\alpha,n,\varepsilon}+1|^{p-2}|\sigma_{n}(u_{\alpha,n,\varepsilon})|^{2}F_{3}\mathrm{d}s\\
\nonumber
& \quad +\int_{0}^{T}\int_{x}|u_{\alpha,n,\varepsilon}+1|^{p-2}\sigma_{n}(u_{\alpha,n,\varepsilon})\nabla_{x}u_{\alpha,n,\varepsilon}\cdot\mathrm{d}\xi^{F}.
\end{align}
Note that
\[
\int_{x}|u_{\alpha,n,\varepsilon}+1|^{p-2}\nabla_{x}u_{\alpha,n,\varepsilon}\cdot g(u_{\alpha,n,\varepsilon})=\int_{x}\nabla_x\cdot\int_0^{u_{\alpha,n,\varepsilon}}|r+1|^{p-2} g(r)\mathrm{d}r=0,
\]
and
\[
|u_{\alpha,n,\varepsilon}+1|^{p-2}|\nabla_x\llbracket\sqrt{\Phi^\prime}\rrbracket(u_{\alpha,n,\varepsilon})|^2=|\nabla_x\llbracket|\cdot+1|^{(p-2)/2}\sqrt{\Phi^\prime}\rrbracket(u_{\alpha,n,\varepsilon})|^2.
\]
By using Assumption \ref{assu:assu for existence} for $\sigma$ (when $p=2$, we only use Assumption \ref{assu:assu for existence} 6), Lemma \ref{lem:estimates of sigma u}, Assumption \ref{assu:assum for F} for $F_i$, and \cite[Lemma 5.4]{fehrman2021wellarxiv}, we obtain for any
$\varsigma\in(0,1)$,
\begin{align*}
&-\frac{1}{2}\int_{0}^{T}\int_{x}\llbracket|\cdot+1|^{p-2}\sigma_{n}^{\prime}\sigma_{n}\rrbracket(u_{\alpha,n,\varepsilon})\nabla_{x}\cdot F_{2}\mathrm{d}s+\frac{1}{2}\int_{0}^{T}\int_{x}|u_{\alpha,n,\varepsilon}+1|^{p-2}|\sigma_{n}(u_{\alpha,n,\varepsilon})|^{2}F_{3}\mathrm{d}s\\
 & \leq C\int_{0}^{T}\int_{x}(1+|u_{\alpha,n,\varepsilon}|^{p-1}+\llbracket|\cdot+1|^{(p-2)/2}\sqrt{\Phi^{\prime}}\rrbracket^{2}(u_{\alpha,n,\varepsilon}))\mathrm{d}s\\
& \leq C(\varsigma)\int_0^T\big(1+\Vert u_{\alpha,n,\varepsilon}\Vert_{L^{p-1}(\mathbb{T}^d)}^{p-1}+\Vert u_{\alpha,n,\varepsilon}\Vert_{L^1(\mathbb{T}^d)}^{m+p-1})+\varsigma\int_0^T\int_x |u_{\alpha,n,\varepsilon}+1|^{p-2}|\nabla_{x}\llbracket\sqrt{\Phi^{\prime}}\rrbracket(u_{\alpha,n,\varepsilon})|^{2}\mathrm{d}s.
\end{align*}
For the obstacle term, it follows from $\psi\geq 0$ that
\begin{align*}
 & -\frac{p-1}{\varepsilon}\int_{0}^{T}\int_{x}|u_{\alpha,n,\varepsilon}+1|^{p-2}(u_{\alpha,n,\varepsilon}-\psi)^{+}(u_{\alpha,n,\varepsilon}+1)\mathrm{d}s\\
 & \leq-\frac{p-1}{\varepsilon}\int_{0}^{T}\int_{x}|u_{\alpha,n,\varepsilon}+1|^{p-2}|(u_{\alpha,n,\varepsilon}-\psi)^{+}|^{2}\mathrm{d}s\\
 & \quad-\frac{p-1}{\varepsilon}\int_{0}^{T}\int_{x}|u_{\alpha,n,\varepsilon}+1|^{p-2}(u_{\alpha,n,\varepsilon}-\psi)^{+}\mathrm{d}s.
\end{align*}
Based on the above results, by taking expectation and using the definition of $\llbracket\sqrt{\Phi^{\prime}}\rrbracket$ and \eqref{eq:L1 identity}, we reach
\begin{align*}
 & \mathbb{E}\Vert u_{\alpha,n,\varepsilon}(t)\Vert_{L^{p}(\mathbb{T}^{d})}^{p}+\mathbb{E}\int_{0}^{t}\int_{x}|u_{\alpha,n,\varepsilon}+1|^{p-2}|\nabla_{x}\llbracket\sqrt{\Phi^{\prime}}\rrbracket(u_{\alpha,n,\varepsilon})|^{2}\mathrm{d}s\\
 & +\alpha\mathbb{E}\int_{0}^{t}\int_{x}|u_{\alpha,n,\varepsilon}+1|^{p-2}|\nabla_{x}u_{\alpha,n,\varepsilon}|^{2}\mathrm{d}s+\mathbb{E}\int_{0}^{t}\int_{x}|u_{\alpha,n,\varepsilon}+1|^{p-2}\frac{1}{\varepsilon}|(u_{\alpha,n,\varepsilon}-\psi)^{+}|^{2}\mathrm{d}s\\
 & +\mathbb{E}\int_{0}^{t}\int_{x}|u_{\alpha,n,\varepsilon}+1|^{p-2}\frac{1}{\varepsilon}(u_{\alpha,n,\varepsilon}-\psi)^{+}\mathrm{d}s\\
 & \leq C\Big(1+\mathbb{E}\Vert u_{\mathrm{init}}\Vert_{L^{p}(\mathbb{T}^{d})}^{p}+\mathbb{E}\Vert u_{\mathrm{init}}\Vert_{L^{1}(\mathbb{T}^{d})}^{m+p-1}\Big).
\end{align*}
Then, by taking the supreme up to a time $T$, we complete the proof
of \eqref{eq=00FF1Apriori p}.
For \eqref{eq:priori}, the proof is similar, and we omit it here.

\end{proof}
\begin{rem}
\label{rem:nabla p-2 bracket}Combining \eqref{eq=00FF1Apriori p}, \eqref{eq:priori},  and \cite[Remark 3.1]{dareiotis2019entropy},
we have
\begin{align*}
 & \mathbb{E}\Vert\nabla_{x}\llbracket|\cdot|^{\frac{p-2}{2}}\sqrt{\Phi^{\prime}}\rrbracket(u_{\alpha,n,\varepsilon})\Vert_{L^{2}(Q_{T})}^{2}\\
 & =\mathbb{E}\Vert|u_{\alpha,n,\varepsilon}|^{\frac{p-2}{2}}\nabla_{x}\llbracket\sqrt{\Phi^{\prime}}\rrbracket(u_{\alpha,n,\varepsilon})\Vert_{L^{2}(Q_{T})}^{2}\\
 & \leq C\mathbb{E}\Vert|u_{\alpha,n,\varepsilon}+1|^{\frac{p-2}{2}}\nabla_{x}\llbracket\sqrt{\Phi^{\prime}}\rrbracket(u_{\alpha,n,\varepsilon})\Vert_{L^{2}(Q_{T})}^{2}+C\mathbb{E}\Vert\nabla_{x}\llbracket\sqrt{\Phi^{\prime}}\rrbracket(u_{\alpha,n,\varepsilon})\Vert_{L^{2}(Q_{T})}^{2}\\
 & \leq C\Big(1+\mathbb{E}\Vert u_{\mathrm{init}}(t)\Vert_{L^{p}(\mathbb{T}^{d})}^{p}+C\mathbb{E}\Vert u_{\mathrm{init}}\Vert_{L^{1}(\mathbb{T}^{d})}^{m+p-1}\Big).
\end{align*}
\end{rem}

\begin{cor}\label{cor-1}
For the lower obstacle problem, in which we require $u\geq\psi$ on $Q_T$ rather than $u\leq\psi$, the corresponding approximating penalized equation \eqref{eq:appro penalized eq} becomes
\begin{align}
\mathrm{d}u_{\alpha,n,\varepsilon} & =\Big[\Delta\Phi(u_{\alpha,n,\varepsilon})+\alpha\Delta u_{\alpha,n,\varepsilon}+\frac{1}{2}\nabla\cdot(F_{1}[\sigma_n^{\prime}(u_{\alpha,n,\varepsilon})]^{2}\nabla u_{\alpha,n,\varepsilon}+\sigma_n(u_{\alpha,n,\varepsilon})\sigma_n^{\prime}(u_{\alpha,n,\varepsilon})F_{2})\nonumber \\
 & \quad-\nabla \cdot g(u_{\alpha,n,\varepsilon})+\frac{1}{\varepsilon}(u_{\alpha,n,\varepsilon}-\psi)^{-}\Big]\mathrm{d}t-\nabla\cdot(\sigma_{n}(u_{\alpha,n,\varepsilon})\mathrm{d}\xi^{F}).\label{eq:appro penalized eq lower obstacle}
\end{align}
A priori estimates for the solution $u_{\alpha,n,\varepsilon}$ to \eqref{eq:appro penalized eq lower obstacle} remain valid under additional assumptions that $|\sigma(r)|\leq C(1+|r|)$ and $p=2$ using Gronwall inequality instead of \cite[Lemma 5.4]{fehrman2021wellarxiv}.
This includes the important Dean--Kawasaki equation
\begin{align*}
\mathrm{d}u & =\Delta\Phi(u)\mathrm{d}t+\nabla\cdot\sqrt{\Phi(u)}\circ\mathrm{d}\xi^{F},
\end{align*}
provided that $|\Phi(r)|\leq C(1+|r|^{2})$ for all $r\in\mathbb{R}$, and thus in particular covers the case $\Phi(r)=r$.
\end{cor}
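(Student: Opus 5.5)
The plan is to repeat the proof of Lemma \ref{lem:priori-estimates} for \eqref{eq:appro penalized eq lower obstacle}. The only changes are the sign of the penalty and the way the noise correction terms are absorbed.

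The first step is non-negativity, following Proposition \ref{prop:(Nonnegativity)}. In the Itô formula for $\eta_\delta(-u)$ the penalty now contributes $-\varepsilon^{-1}\int(u-\psi)^-\eta_\delta'(-u)\le0$, so it has the favourable sign and the conclusion $u\ge0$ is unchanged. Next, testing with $\phi\equiv1$ gives the $L^1$ balance
\[
\|u(t)\|_{L^1}=\|u_{\mathrm{init}}\|_{L^1}+\int_0^t\|\varepsilon^{-1}(u-\psi)^-\|_{L^1}\,\mathrm{d}s .
\]
Mass now increases, so we lose the a priori $L^1$ bound used through \cite[Lemma 5.4]{fehrman2021wellarxiv}.

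The second step is the $L^2$ energy identity with $p=2$. Itô's formula for $\|u\|_{L^2}^2$ gives the penalty term $\frac{2}{\varepsilon}\int (u-\psi)^-u$. Since $(u-\psi)^-=(\psi-u)^+$, on its support
\[
(u-\psi)^-u=(u-\psi)^-\psi-|(u-\psi)^-|^2 .
\]
The penalty therefore splits into $-\frac{2}{\varepsilon}\int|(u-\psi)^-|^2$, which is a good term, plus $\frac2\varepsilon\int(u-\psi)^-\psi$. On $\{u<\psi\}$ we have $0\le(u-\psi)^-\le\psi$, so Young's inequality gives
\[
\frac2\varepsilon(u-\psi)^-\psi\le \frac1\varepsilon|(u-\psi)^-|^2+\frac1\varepsilon\psi^2 .
\]
This bound is not uniform in $\varepsilon$. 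I would instead bound the mass gain in terms of the good quadratic term: by Cauchy--Schwarz, $\varepsilon^{-1}\int(u-\psi)^-$ is controlled by $\varepsilon^{-1}\|(u-\psi)^-\|_{L^2}$. I would try to close this by comparison with the unpenalized solution started from $\max(u_{\mathrm{init}},\psi)$, or, more simply, by testing with $(u-\psi)^-$-type quantities as in \cite{du2024well}.

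The noise term is handled using $|\sigma(r)|\le C(1+|r|)$ and $p=2$. The Itô--Stratonovich cancellation leaves only $-\frac12\int\sigma_n^2(u)(\nabla\cdot F_2-2F_3)$, which is bounded by $C\int(1+u^2)$ because $F_i$ and $\nabla\cdot F_2$ are bounded. Gronwall's inequality then closes
\[
\sup_t\mathbb{E}\|u\|_{L^2}^2+\mathbb{E}\|\nabla\llbracket\sqrt{\Phi'}\rrbracket(u)\|_{L^2(Q_T)}^2+\alpha\mathbb{E}\|\nabla u\|^2_{L^2(Q_T)}+\varepsilon^{-1}\mathbb{E}\|(u-\psi)^-\|_{L^2(Q_T)}^2 .
\]
This replaces the interpolation against $\llbracket\sqrt{\Phi'}\rrbracket$ used before. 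The $L^1$ bound on $\varepsilon^{-1}(u-\psi)^-$ then follows from the $L^1$ balance and the $L^2$ bound on $u(T)$.

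Finally, the Dean--Kawasaki case is checked directly. Here $\sigma=\sqrt{\Phi}$ and $\Phi(r)\le C(1+r^2)$ give $|\sigma(r)|\le C(1+|r|)$. The remaining conditions of Assumption \ref{assu:assu for existence} with $p=2$ hold as in \cite{fehrman2024well}, in particular for $\Phi(r)=r$.

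With the estimates in hand, the uniqueness argument of Section \ref{sec:unique} and the limit passage go through verbatim after exchanging $\psi$ and $u$ in the defect measure and flipping the sign of $\nu$. The comparison lemma becomes monotone increasing in $\varepsilon^{-1}$, and since the solutions are now bounded below by $0$ and above uniformly in $L^2$, the monotone limit still exists.

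The main obstacle I expect is the uniform-in-$\varepsilon$ control of $\frac1\varepsilon\int(u-\psi)^-\psi$ in the energy identity. The first attempt would be to absorb it through the $L^1$ balance and Gronwall, using that $\psi$ is bounded on the compact set $Q_T$.
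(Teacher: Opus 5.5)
Your plan follows the route the paper indicates; the paper gives no argument beyond the sentence in the statement. The lower penalty has the favourable sign in the non-negativity argument. The mass is now non-decreasing, so the power $\|u\|_{L^1}^{m+1}$ produced by the interpolation of \cite{fehrman2021wellarxiv} is no longer controlled by the initial mass. Linear growth of $\sigma_n$ (uniformly in $n$) bounds the It\^o correction by $C(1+\|u\|_{L^2}^2)$, so Gronwall can take its place, and the Dean--Kawasaki check is fine.

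The gap is that the one genuinely new step is left open: uniform-in-$\varepsilon$ control of the positive term $\frac{2}{\varepsilon}\int_0^t\int(u-\psi)^-\psi$. Your Gronwall paragraph is conditional on it. Three of the four remedies you float would fail:
\begin{itemize}
\item The good term $\frac{2}{\varepsilon}\|(u-\psi)^-\|_{L^2}^2$ only controls $\varepsilon^{-1/2}\|(u-\psi)^-\|_{L^2}$, so Cauchy--Schwarz leaves an uncompensated factor $\varepsilon^{-1/2}$.
\item Testing with $(u-\psi)^-$ as in \cite{du2024well} needs a chain rule for $\psi$ in time, which a merely continuous barrier does not provide.
\item Comparison with an unpenalized solution only bounds $u$ from below, because the lower penalty is a non-negative source.
\end{itemize}

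The idea you list last is the right one, and it closes in one line. Work pathwise and use $u\ge0$, $0\le\psi\le M$, $|\mathbb{T}^d|=1$ and the mass balance $\varepsilon^{-1}\int_0^t\|(u-\psi)^-\|_{L^1(\mathbb{T}^d)}\,\mathrm{d}s=\|u(t)\|_{L^1(\mathbb{T}^d)}-\|u_{\mathrm{init}}\|_{L^1(\mathbb{T}^d)}$. Then
\[
\frac{2}{\varepsilon}\int_0^t\int_{\mathbb{T}^d}(u-\psi)^-\psi\,\mathrm{d}x\,\mathrm{d}s\le 2M\|u(t)\|_{L^1(\mathbb{T}^d)}\le 2M\|u(t)\|_{L^2(\mathbb{T}^d)}\le\tfrac12\|u(t)\|_{L^2(\mathbb{T}^d)}^2+2M^2 .
\]
Absorb $\tfrac12\|u(t)\|_{L^2}^2$ into the left-hand side of the pathwise It\^o identity. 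Then take expectations, so the martingale term drops, and apply Gronwall to $t\mapsto\mathbb{E}\|u(t)\|_{L^2}^2$. This gives the lower-obstacle analogue of \eqref{eq:priori}, with a constant depending on $M$ and $T$ and without the $\|u_{\mathrm{init}}\|_{L^1}^{m+1}$ term. The $L^1(Q_T)$ bound on $\varepsilon^{-1}(u-\psi)^-$ then follows from the mass balance at $t=T$, as you say.

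Your closing paragraph goes beyond the corollary, and ``verbatim'' is too strong there. The upper-obstacle arguments repeatedly use $u\le\psi\le M$, for instance in Remark~\ref{rem:finite for kinetic measure} and Lemma~\ref{lem:converge to u init}, and this bound is lost for the lower problem.
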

To verify that the kinetic measure vanishes at infinity, we need the following a priori estimates.
\begin{lem}
\label{lem:(priori-estimates-for kinetic measure}(A priori estimates
for the kinetic measure) Let Assumption \ref{assu:assu for existence} and Assumption \ref{assu:assum for F} hold.
For any $n\in\mathbb{N}$
and $\alpha,\varepsilon\in(0,1)$, let $u_{\alpha,n,\varepsilon}$
be a solution to \eqref{eq:appro penalized eq}, then for all $N_{1},N_{2}\in(0,\infty)$
such that $N_{1}<N_{2}$, there exists a constant $C$ independent
of $n$, $\alpha$, $\varepsilon$, $N_{1}$ and $N_{2}$ such that
\begin{align}
 & \mathbb{E}\int_{x,t}\mathbf{1}_{\{u_{\alpha,n,\varepsilon}\in(N_{1},N_{2})\}}\Phi^{\prime}(u_{\alpha,n,\varepsilon})|\nabla_{x}u_{\alpha,n,\varepsilon}|^2+\alpha\mathbb{E}\int_{x,t}\mathbf{1}_{\{u_{\alpha,n,\varepsilon}\in(N_{1},N_{2})\}}|\nabla_{x}u_{\alpha,n,\varepsilon}|^2\label{eq:priori kinetic}\\
 & +\frac{1}{\varepsilon}\mathbb{E}\int_{x,t}\int_\theta|(u_{\alpha,n,\varepsilon}-\psi)^+|^2 \mathbf{1}_{\{u_{\alpha,n,\varepsilon}\in(\theta^{-1}(N_{1}-\psi)+\psi,\theta^{-1}(N_{2}-\psi)+\psi)\}}\nonumber\\
 & \leq C(N_2-N_1)\mathbb{E}\int_x (u_{\mathrm{init}}-N_1)^{+}
+C\mathbb{E}\int_{x,t}\mathbf{1}_{\{u_{\alpha,n,\varepsilon}\geq N_{1}\}}|\sigma_{n}(u_{\alpha,n,\varepsilon}\land N_{2})|^{2}.\nonumber
\end{align}
In particular, we have
\begin{align}
 & \lim_{N\rightarrow \infty}\Big[\mathbb{E}\int_{x,t}\mathbf{1}_{\{u_{\alpha,n,\varepsilon}\in(N,N+1)\}}\Phi^{\prime}(u_{\alpha,n,\varepsilon})|\nabla_{x}u_{\alpha,n,\varepsilon}|^2 +\alpha\mathbb{E}\int_{x,t}\mathbf{1}_{\{u_{\alpha,n,\varepsilon}\in(N,N+1)\}}|\nabla_{x}u_{\alpha,n,\varepsilon}|^2\nonumber\\
 & +\frac{1}{\varepsilon}\mathbb{E}\int_{x,t}\int_\theta|(u_{\alpha,n,\varepsilon}-\psi)^+|^2 \mathbf{1}_{\{u_{\alpha,n,\varepsilon}\in(\theta^{-1}(N-\psi)+\psi,\theta^{-1}(N+1-\psi)+\psi)\}}\label{eq:priori kinetic-1}\Big]=0.
 \end{align}
\end{lem}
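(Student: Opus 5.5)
We will prove \eqref{eq:priori kinetic} by applying It\^o's formula to a truncated entropy. Abbreviate $u=u_{\alpha,n,\varepsilon}$ and take
\[
S(r):=\int_0^r (\tilde r\wedge N_2-N_1)^+\,\mathrm{d}\tilde r .
\]
This choice gives $S''=\mathbf{1}_{(N_1,N_2)}$ and $0\le S'\le N_2-N_1$, and it yields $S(r)\le (N_2-N_1)(r-N_1)^+$. After a standard smoothing of $S''$, the It\^o formula of \cite[Theorem 3.1]{krylov2013relatively}, applied as in Lemma \ref{lem:priori-estimates}, gives the following almost surely:
\begin{align*}
\int_x S(u(t))&=\int_x S(u_{\mathrm{init}})-\int_0^t\int_x \mathbf{1}_{\{u\in(N_1,N_2)\}}\big(\Phi'(u)+\alpha\big)|\nabla u|^2\\
&\quad-\frac1\varepsilon\int_0^t\int_x S'(u)(u-\psi)^+ +\text{(noise terms)}+\text{martingale}.
\end{align*}
The $g$-term drops out because $\int_x S''(u)\nabla u\cdot g(u)=\int_x\nabla\cdot\llbracket S''g\rrbracket(u)=0$.

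The noise terms are handled as in the proof of Proposition \ref{prop:(Nonnegativity)}. The It\^o correction cancels the $F_1[\sigma_n']^2$ part. The remainder is
\[
\tfrac12\int_x S''(u)\sigma_n^2(u)F_3-\tfrac12\int_x\llbracket S''\sigma_n\sigma_n'\rrbracket(u)\nabla\cdot F_2 .
\]
The first piece is bounded by $C\mathbf{1}_{\{u\ge N_1\}}\sigma_n^2(u\wedge N_2)$. For the second, $\llbracket \mathbf{1}_{(N_1,N_2)}\sigma_n\sigma_n'\rrbracket(u)=\tfrac12\mathbf{1}_{\{u\ge N_1\}}\big(\sigma_n^2(u\wedge N_2)-\sigma_n^2(N_1)\big)$, so it is bounded by the same quantity. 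Here we use the boundedness of $F_3$ and $\nabla\cdot F_2$ from Assumption \ref{assu:assum for F}. After taking expectations the martingale vanishes, and we also use $S\ge0$ at time $t$ and $S(u_{\mathrm{init}})\le (N_2-N_1)(u_{\mathrm{init}}-N_1)^+$.

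The key step is the obstacle term, and I expect it to be the only genuinely new point. As in Section \ref{sec-kinetic}, write
\[
S'(u)(u-\psi)^+=S'(\psi)(u-\psi)^+ +\int_\theta S''\big(\psi+\theta(u-\psi)\big)\,[(u-\psi)^+]^2 .
\]
The first term is nonnegative and can be discarded. In the second term, $S''(\psi+\theta(u-\psi))=1$ exactly when $\psi+\theta(u-\psi)\in(N_1,N_2)$. For $\theta>0$ and $u>\psi$ this is equivalent to $u\in\big(\theta^{-1}(N_1-\psi)+\psi,\ \theta^{-1}(N_2-\psi)+\psi\big)$, which is precisely the third term on the left of \eqref{eq:priori kinetic}. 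The one thing to check carefully is that the set where $\psi\ge N_1$ causes no sign problem: there $S'(\psi)\ge0$ still holds, so the discarded term remains nonnegative. This gives \eqref{eq:priori kinetic}.

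For \eqref{eq:priori kinetic-1}, take $N_1=N$ and $N_2=N+1$. By Lemma \ref{lem:priori-estimates}, $\mathbb{E}\|u_{\mathrm{init}}\|_{L^1}<\infty$, so $\mathbb{E}\int_x(u_{\mathrm{init}}-N)^+\to0$ by dominated convergence. For the remaining term, Assumption \ref{assu:assu for existence}.5 gives
\[
\sigma_n^2(u\wedge(N+1))\le C\big(1+u+\llbracket\sqrt{\Phi'}\rrbracket^2(u)\big)
\]
uniformly in $n$, using the monotonicity of $\llbracket\sqrt{\Phi'}\rrbracket$. This dominating function is integrable by \eqref{eq:priori} combined with \cite[Lemma 5.4]{fehrman2021wellarxiv} (a Sobolev bound for $\llbracket\sqrt{\Phi'}\rrbracket(u)$ in $L^2$). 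Since $\mathbb{P}\otimes\mathrm{d}x\mathrm{d}t(u\ge N)\le C/N$, the integral over $\{u\ge N\}$ tends to zero as $N\to\infty$ by uniform integrability for fixed $n,\alpha,\varepsilon$. This proves \eqref{eq:priori kinetic-1}.
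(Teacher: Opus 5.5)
Your argument follows the paper's proof step by step. You use the same $S$ with $S''=\mathbf 1_{(N_1,N_2)}$ and the same splitting $S'(u)(u-\psi)^+=S'(\psi)(u-\psi)^++\int_\theta S''(\psi+\theta(u-\psi))[(u-\psi)^+]^2$, dropping the first term because $S'\ge0$. You also use the same identity $\llbracket\mathbf 1_{(N_1,N_2)}\sigma_n\sigma_n'\rrbracket(u)=\tfrac12\bigl(\sigma_n^2(N_1\vee(u\wedge N_2))-\sigma_n^2(N_1)\bigr)$ and the bound $S(r)\le(N_2-N_1)(r-N_1)^+$. Your dominated-convergence proof of \eqref{eq:priori kinetic-1} spells out a step the paper leaves implicit.

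One step does not hold as written, and the paper passes over it in the same way. After integrating by parts, the $F_2$-term is $-\tfrac14\int_x\mathbf 1_{\{u\ge N_1\}}\bigl(\sigma_n^2(u\wedge N_2)-\sigma_n^2(N_1)\bigr)\nabla\cdot F_2$. Since $\nabla\cdot F_2$ has no sign, the piece $\tfrac14\|\nabla\cdot F_2\|_{\infty}\,\sigma_n^2(N_1)\mathbf 1_{\{u\ge N_1\}}$ must be controlled. It is bounded by $C\sigma_n^2(u\wedge N_2)\mathbf 1_{\{u\ge N_1\}}$ only under extra structure, e.g.\ $\sigma_n^2$ nondecreasing on $[N_1,N_2]$, or $\nabla\cdot F_2=0$. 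So your sentence ``it is bounded by the same quantity'' needs a fix. There are two options:
\begin{itemize}
\item keep $\mathbf 1_{\{u\ge N_1\}}\sup_{\xi\in[N_1,u\wedge N_2]}\sigma_n^2(\xi)$ on the right-hand side; or
\item use condition 5 of Assumption \ref{assu:assu for existence}, which holds uniformly in $n$, together with the monotonicity of $\llbracket\sqrt{\Phi'}\rrbracket$, to get $\sigma_n^2(N_1)\mathbf 1_{\{u\ge N_1\}}\le c\,\mathbf 1_{\{u\ge N_1\}}\bigl(1+u+\llbracket\sqrt{\Phi'}\rrbracket^2(u)\bigr)$.
\end{itemize}
Either way, the extra term has exactly the form your dominated-convergence argument already handles. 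Hence \eqref{eq:priori kinetic-1} is unaffected, and only the precise right-hand side of \eqref{eq:priori kinetic} changes.
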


\begin{proof}
Applying the It\^{o}'s formula (cf. \cite[Theorem 3.1]{krylov2013relatively}
or \cite[Proposition 5.7]{fehrman2024well}) to the function $S_{N}:[0,\infty)\to[0,\infty)$
satisfying $S_{N}^{\prime\prime}(\xi)=\mathbf{1}_{\{\xi\in(N_{1},N_{2})\}}$,
$S^{\prime}(0)=0$ and $S(0)=0$, by using the cancellation between the It\^{o} and It\^{o}-to-Stratonovich corrections, we have
\begin{align*}
 & \mathbb{E}\int_{x}S_{N}(u_{\alpha,n,\varepsilon}(x,T))\\
 & =\mathbb{E}\int_{x}S_{N}(u_{\mathrm{init}})-\mathbb{E}\int_{x,t}\mathbf{1}_{\{u_{\alpha,n,\varepsilon}\in(N_{1},N_{2})\}}\Phi^{\prime}(u_{\alpha,n,\varepsilon})|\nabla_{x}u_{\alpha,n,\varepsilon}|^2\\
 & \quad-\alpha\mathbb{E}\int_{x,t}\mathbf{1}_{\{u_{\alpha,n,\varepsilon}\in(N_{1},N_{2})\}}|\nabla_{x}u_{\alpha,n,\varepsilon}|^2+\mathbb{E}\int_{x,t}S_N^{\prime\prime}(u_{\alpha,n,\varepsilon})\nabla_x u_{\alpha,n,\varepsilon}\cdot g(u_{\alpha,n,\varepsilon})\\
 & \quad+\frac{1}{2}\mathbb{E}\int_{x,t}\mathbf{1}_{\{u_{\alpha,n,\varepsilon}\in(N_{1},N_{2})\}}[\sigma_{n}(u_{\alpha,n,\varepsilon})\sigma_{n}^{\prime}(u_{\alpha,n,\varepsilon})\nabla_{x}u_{\alpha,n,\varepsilon}F_2+\sigma^2_{n}(u_{\alpha,n,\varepsilon})F_3]\\
 & \quad-\frac{1}{\varepsilon}\mathbb{E}\int_{x,t}(u_{\alpha,n,\varepsilon}-\psi)^{+}S_{N}^{\prime}(u_{\alpha,n,\varepsilon}).
\end{align*}
Note that
\[
\int_{x,t}S_N^{\prime\prime}(u_{\alpha,n,\varepsilon})\nabla_x u_{\alpha,n,\varepsilon}\cdot g(u_{\alpha,n,\varepsilon})=\int_{x,t}\nabla_x\cdot\int_0^{u_{\alpha,n,\varepsilon}}S_N^{\prime\prime}(r)\cdot g(r)\mathrm{d}r=0.
\]
Since $S^\prime_N\geq0$, the obstacle term can be estimated as
\begin{align*}
 & -\frac{1}{\varepsilon}\int_{x,t}(u_{\alpha,n,\varepsilon}-\psi)^{+}S_{N}^{\prime}(u_{\alpha,n,\varepsilon})\\
 &=-\frac{1}{\varepsilon}\int_{x,t}(u_{\alpha,n,\varepsilon}-\psi)^{+}S_{N}^{\prime}(\psi)-\frac{1}{\varepsilon}\int_{x,t}(u_{\alpha,n,\varepsilon}-\psi)^{+}\big[S_{N}^{\prime}(u_{\alpha,n,\varepsilon})-S_N^\prime(\psi)\big]\\
 &\leq -\frac{1}{\varepsilon}\int_{x,t,\theta}|(u_{\alpha,n,\varepsilon}-\psi)^+|^2 S_N^{\prime\prime}(\psi+\theta (u_{\alpha,n,\varepsilon}-\psi))\\
  &= -\frac{1}{\varepsilon}\int_{x,t,\theta}|(u_{\alpha,n,\varepsilon}-\psi)^+|^2 \mathbf{1}_{\{u_{\alpha,n,\varepsilon}\in(\theta^{-1}(N_{1}-\psi)+\psi,\theta^{-1}(N_{2}-\psi)+\psi)\}}.
 \end{align*}
This, together with the distributional equality
\[
\mathbf{1}_{\{u_{\alpha,n,\varepsilon}\in(N_{1},N_{2})\}}\sigma_{n}(u_{\alpha,n,\varepsilon})\sigma_{n}^{\prime}(u_{\alpha,n,\varepsilon})\cdot\nabla_{x}u_{\alpha,n,\varepsilon}=\frac{1}{2}\nabla_{x}\Big(\sigma_{n}^{2}(N_{1}\lor(u_{\alpha,n,\varepsilon}\land N_{2}))-\sigma_{n}^{2}(N_{1})\Big),
\]
yields
\begin{align*}
 & \mathbb{E}\int_{x}S_{N}(u_{\alpha,n,\varepsilon}(x,T))+\frac{1}{\varepsilon}\mathbb{E}\int_{x,t}\int_\theta|(u_{\alpha,n,\varepsilon}-\psi)^+|^2 \mathbf{1}_{\{u_{\alpha,n,\varepsilon}\in(\theta^{-1}(N_{1}-\psi)+\psi,\theta^{-1}(N_{2}-\psi)+\psi)\}}\\
 & +\mathbb{E}\int_{x,t}\mathbf{1}_{\{u_{\alpha,n,\varepsilon}\in(N_{1},N_{2})\}}\Phi^{\prime}(u_{\alpha,n,\varepsilon})|\nabla_{x}u_{\alpha,n,\varepsilon}|^2+\alpha\mathbb{E}\int_{x,t}\mathbf{1}_{\{u_{\alpha,n,\varepsilon}\in(N_{1},N_{2})\}}|\nabla_{x}u_{\alpha,n,\varepsilon}|^2\\
 & \leq\mathbb{E}\int_{x}S_{N}(u_{\mathrm{init}})-\frac{1}{4}\int_{x,t}\Big(\sigma_{n}^{2}(N_{1}\lor(u_{\alpha,n,\varepsilon}\land N_{2}))-\sigma_{n}^{2}(N_{1})\Big)\nabla_{x}\cdot F_{2}\\
 & \quad+\frac{1}{2}\int_{x,t}\mathbf{1}_{\{u_{\alpha,n,\varepsilon}\in(N_{1},N_{2})\}}|\sigma_{n}(u_{\alpha,n,\varepsilon})|^{2}F_{3}.
\end{align*}
Owing to $S_N(\xi)\leq (N_2-N_1)(\xi-N_1)^+$,
the boundedness of $\nabla_{x}\cdot F_{2}$ and $F_{3}$, and $u_{\mathrm{init}}\leq\psi(\cdot,0)\leq M$,
we obtain \eqref{eq:priori kinetic}, which completes the proof.
\end{proof}
For each fixed $\varepsilon\in(0,1)$, the penalty term $\varepsilon^{-1}(u_{\alpha,n,\varepsilon}-\psi)^+$ is a Lipschitz term with linear growth on $\mathbb R_+$, and hence fits into the framework of \cite[Section 6]{fehrman2024well}.
We now introduce the following two existence theorems from \cite[Proposition 5.17 and Theorem 6.12]{fehrman2024well}, which will be adapted to our setting.
\begin{lem}
\label{lem:(Existence-of-solution for approximated smooth }(Existence of solutions to the approximated penalized equation)
Let Assumption \ref{assu:assu for existence} and Assumption \ref{assu:assum for F} hold.
For $n\in\mathbb{N}$ and $\alpha,\varepsilon\in(0,1)$, there exists a solution $u_{\alpha,n,\varepsilon}$ of \eqref{eq:appro penalized eq}, defined on the original probability space $\Omega$, in the sense of Definition \ref{def:Weak solution approximating penalized}.
Furthermore, let $\chi_{\alpha,n,\varepsilon}(x,\xi,t):=\mathbf{1}_{\{0<\xi<u_{\alpha,n,\varepsilon}(x,t)\}}$ be the kinetic function of $u_{\alpha,n,\varepsilon}$.
Then, $u_{\alpha,n,\varepsilon}$ satisfies that, almost surely for every $\phi\in C_{c}^{\infty}(\mathbb{T}^{d}\times(0,\infty)\times[0,T))$,
\begin{align}
& -\int_{0}^{T}\int_{\mathbb{R}}\int_{\mathbb{T}^d}\chi_{\alpha,n,\varepsilon}(x,\xi,s)\partial_s\phi(x,\xi,s)\mathrm{d}x\mathrm{d}\xi\mathrm{d}s\label{eq:kinetic for approximated penalty}\\
 & =\int_{\mathbb{R}}\int_{\mathbb{T}^{d}}\bar{\chi}(u_{\mathrm{init}},\xi)\phi(x,\xi,0)\mathrm{d}x\mathrm{d}\xi-\int_{0}^{T}\int_{\mathbb{T}^{d}}\Phi^{\prime}(u_{\alpha,n,\varepsilon})\nabla_x u_{\alpha,n,\varepsilon}\cdot(\nabla_x\phi)(x,u_{\alpha,n,\varepsilon},s)\mathrm{d}x\mathrm{d}s\nonumber \\
  & \quad-\alpha\int_{0}^{T}\int_{\mathbb{T}^d}\nabla_{x}u_{\alpha,n,\varepsilon}\cdot(\nabla_{x}\phi)(x,u_{\alpha,n,\varepsilon},s)\mathrm{d}x\mathrm{d}s\nonumber\\
 & \quad-\frac{1}{2}\int_{0}^{T}\int_{\mathbb{T}^{d}}\big(F_{1}(x)[\sigma_n^{\prime}(u_{\alpha,n,\varepsilon})]^{2}\nabla_x u_{\alpha,n,\varepsilon}+\sigma_n(u_{\alpha,n,\varepsilon})\sigma_n^{\prime}(u_{\alpha,n,\varepsilon})F_{2}(x)\big)\cdot(\nabla_x\phi)(x,u_{\alpha,n,\varepsilon},s)\mathrm{d}x\mathrm{d}s\nonumber\\
 & \quad-\int_{0}^{T}\int_{\mathbb{R}}\int_{\mathbb{T}^{d}}\partial_{\xi}\phi(x,\xi,s)\mathrm{d}m_{\alpha,n,\varepsilon}(x,\xi,s)-\int_0^{T}\int_{\mathbb{T}^d}\phi(x,u_{\alpha,n,\varepsilon},s)\nabla_x\cdot g(u_{\alpha,n,\varepsilon})\mathrm{d}x\mathrm{d}s\nonumber \\
 & \quad+\frac{1}{2}\int_{0}^{T}\int_{\mathbb{T}^{d}}\Big(\sigma_n(u_{\alpha,n,\varepsilon})\sigma_n^{\prime}(u_{\alpha,n,\varepsilon})\nabla_x u_{\alpha,n,\varepsilon}\cdot F_{2}(x)+\sigma_n^{2}(u_{\alpha,n,\varepsilon})F_{3}(x)\Big)(\partial_{\xi}\phi)(x,u_{\alpha,n,\varepsilon},s)\mathrm{d}x\mathrm{d}s\nonumber \\
 & \quad-\int_{0}^{T}\int_{\mathbb{T}^{d}}\phi(x,u_{\alpha,n,\varepsilon},s)\frac{1}{\varepsilon}(u_{\alpha,n,\varepsilon}-\psi)^+\mathrm{d}x\mathrm{d}s-\int_{0}^{T}\int_{\mathbb{T}^{d}}\phi(x,u_{\alpha,n,\varepsilon},s)\nabla_x\cdot(\sigma_n(u_{\alpha,n,\varepsilon})f_{k})\mathrm{d}x\mathrm{d}B_{s}^{k},\nonumber
\end{align}
where the kinetic measure $m_{\alpha,n,\varepsilon}$ is
\begin{align}\notag
m_{\alpha,n,\varepsilon}&:=\delta_{0}(\xi-u_{\alpha,n,\varepsilon})\Big(\Phi^{\prime}(\xi)|\nabla u_{\alpha,n,\varepsilon}|^{2}+\alpha|\nabla u_{\alpha,n,\varepsilon}|^{2}\Big).
\end{align}
It follows from  (\ref{eq:priori}) that the $\{m_{\alpha,n,\varepsilon}\}_{(\alpha,n)\in(0,1)\times\mathbb{N}}$ are finite kinetic measures.
\end{lem}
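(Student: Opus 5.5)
The lemma is essentially \cite[Proposition 5.17 and Theorem 6.12]{fehrman2024well} applied with the extra zeroth-order term $-\varepsilon^{-1}(u-\psi)^+$. The plan has two parts: construct the nondegenerate regularized solution, then derive its kinetic formulation by It\^o's formula.

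\textbf{Existence.} For fixed $n,\alpha,\varepsilon$, equation \eqref{eq:appro penalized eq} is uniformly parabolic, since $\Phi'+\alpha\ge\alpha$. Moreover, $\sigma_n$ is smooth. The map $r\mapsto\varepsilon^{-1}(r-\psi(x,t))^+$ is globally Lipschitz with constant $\varepsilon^{-1}$ and has linear growth. Because $\psi$ is continuous on the compact set $\overline{Q_T}$, this map is bounded by $\varepsilon^{-1}|r|$ uniformly in $(x,t)$.

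I would first truncate $\Phi'$, $\sigma_n$ and $g$ at large values of $u$, so that all coefficients are bounded and Lipschitz. Existence of a variational (Krylov--Rozovskii) solution then follows from the monotonicity/coercivity theory in \cite[Section 6]{fehrman2024well}. The penalty is monotone decreasing in $u$, so it only helps the monotonicity condition.

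Next I remove the truncation. This relies on the $\varepsilon$-uniform estimates of Lemma \ref{lem:priori-estimates} and the nonnegativity of Proposition \ref{prop:(Nonnegativity)}; their formal proofs apply verbatim to the truncated equation. With a stopping-time/localization argument in $L^p$, these estimates let the truncation level go to infinity.

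Pathwise uniqueness for the nondegenerate equation holds by an $L^1$ or $L^2$ contraction, again using monotonicity of the penalty. By Gy\"ongy--Krylov, uniqueness upgrades the solution to one on the original probability space. The regularity requirements of Definition \ref{def:Weak solution approximating penalized} are then read off from \eqref{eq:priori}: the bound on $\alpha\|\nabla u\|^2$ gives the $H^1$ control, and the same estimate bounds $\nabla\llbracket\sqrt{\Phi'}\rrbracket(u)$.

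\textbf{Kinetic formulation.} Fix $\phi\in C_c^\infty(\mathbb{T}^d\times(0,\infty)\times[0,T))$. I apply It\^o's formula (\cite[Theorem 3.1]{krylov2013relatively}) to
\[
\int_x\Theta(x,u,t),\qquad \Theta(x,r,t)=\int_0^r\phi(x,\xi,t)\,\mathrm d\xi .
\]
This is exactly the computation done formally in Section \ref{sec:Pre}, now rigorous because $u\in L^2(0,T;H^1)$. The terms are converted as follows.
\begin{itemize}
\item The second-order terms produce the flux terms tested against $(\nabla_x\phi)(x,u)$. They also produce $\partial_\xi\phi(x,u)\big(\Phi'(u)+\alpha\big)|\nabla u|^2$, which is the pairing of $\partial_\xi\phi$ with $m_{\alpha,n,\varepsilon}$.
\item The It\^o correction combines with the Stratonovich drift to give the $\sigma_n\sigma_n'F_2$ and $\sigma_n^2F_3$ terms.
\item The penalty gives $-\phi(x,u)\varepsilon^{-1}(u-\psi)^+$.
\item The martingale term gives the stated stochastic integral.
\end{itemize}
The time derivative is absorbed by integrating by parts in $t$, using $\phi(\cdot,\cdot,T)=0$ and $\Theta(x,u,t)=\int_\xi\phi\,\chi$.

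Finiteness of $m_{\alpha,n,\varepsilon}$ follows from \eqref{eq:priori}, because the $\alpha$-term and the bound $\Phi'(u)|\nabla u|^2\le|\nabla\llbracket\sqrt{\Phi'}\rrbracket(u)|^2$ are both integrable. Predictability is clear from the construction.

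\textbf{Main obstacle.} The delicate step is removing the truncation: the a priori bounds must hold uniformly for the truncated coefficients. Assumption \ref{assu:assu for existence} is stated only for the untruncated $\sigma$, so I would truncate in a way that preserves the growth conditions, for example by truncating $\sigma_n$ at levels consistent with Assumption \ref{assu:assu for existence}.5, as done in \cite{fehrman2024well}. The penalty adds no difficulty here, since its contribution to every estimate has a favorable sign.
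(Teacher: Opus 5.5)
Your kinetic formulation is correct: It\^o's formula for $\int_x\Theta(x,u,t)$, with $\Theta(x,r,t)=\int_0^r\phi(x,\xi,t)\,\mathrm{d}\xi$, together with the cancellation of the It\^o correction against the Stratonovich drift, is exactly the computation behind \eqref{eq:kinetic for approximated penalty}. The gap is in the existence step.

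Truncating the coefficients does not make the equation monotone. It stays quasilinear, with diffusion $\nabla\cdot\big((\Phi'(u)+\alpha+\tfrac12F_1[\sigma_n'(u)]^2)\nabla u\big)$ and gradient noise $\nabla\cdot(\sigma_n(u)f_k)$. In the triple $H^1\subset L^2\subset H^{-1}$, which is the one in which the penalty ``only helps'', the Krylov--Rozovskii weak monotonicity condition fails.
\begin{itemize}
\item The drift part $2\langle A(u)-A(v),u-v\rangle$ contains $-2\int(\Phi'(u)-\Phi'(v))\nabla v\cdot\nabla(u-v)$.
\item After the It\^o--Stratonovich cancellation, the noise leaves $\int F_1(\sigma_n'(u)-\sigma_n'(v))^2\nabla u\cdot\nabla v$.
\item Neither term can be bounded by $C\|u-v\|_{L^2}^2$ uniformly over $u,v\in H^1$ without $L^\infty$ control of the gradients. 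This holds even when $\Phi'$ and $\sigma_n'$ are bounded and Lipschitz.
\end{itemize}
For the same reason, an $L^2$ contraction does not give pathwise uniqueness; you would need an $L^1$ argument (Kato-type or kinetic doubling). A variational approach could be rescued in the triple $L^2\subset H^{-1}\subset(L^2)^*$, where $(\sigma_n(u)-\sigma_n(v))^2\le(u-v)\int_v^u(\sigma_n')^2$ lets the Stratonovich correction absorb the noise. There, however, the penalty is no longer monotone for the $H^{-1}$ inner product and has to be absorbed by the $\alpha$-dissipation. The $L^2$-continuity and $L^2(0,T;H^1)$ regularity required by Definition \ref{def:Weak solution approximating penalized} would then need a separate argument.

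The second problem is removing the truncation. A pointwise truncation of $\Phi'$, $\sigma_n$, $g$ above a level $R$ agrees with the original equation only where $u\le R$, and no pathwise $L^\infty$ bound on $u$ is available here. Lemma \ref{lem:priori-estimates} only controls $L^p$ norms and gradients in expectation, so a stopping time on an $L^p$ norm does not localize anything. Letting $R\to\infty$ therefore needs a compactness argument: tightness of the laws, Skorokhod--Jakubowski representation, and identification of the nonlinear terms. That produces only a probabilistically weak solution, and pathwise uniqueness plus Gy\"ongy--Krylov are what bring you back to the original $\Omega$. Your appeal to Gy\"ongy--Krylov presupposes such a weak solution, but your plan never constructs one. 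The paper supplies this compactness step with a Galerkin scheme and a smooth approximation of $\Phi$, as in \cite[Proposition 5.4]{dareiotis2020nonlinear}, rather than with monotone-operator theory.
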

\begin{proof}
The proof is based on the standard Galerkin method (see e.g. \cite[Proposition 5.4]{dareiotis2020nonlinear}) and a smooth approximation of nonlinear function $\Phi$.
\end{proof}

\begin{lem}
\label{lem:(Existence-of-solution for penalized eqn }(Existence of
solutions to the penalized equation)\label{lem: existence of penalized eqn}
Let Assumption \ref{assu:assu for existence} and Assumption \ref{assu:assum for F} hold for some $p\in [2,\infty)$.
For fixed $\varepsilon\in(0,1)$, as $\alpha\to0$ and $n\to\infty$,
there exists a subsequence of $\{u_{\alpha,n,\varepsilon}\}_{(\alpha,n)\in(0,1)\times\mathbb{N}}$ that converges almost surely on the original probability space $(\Omega,\mathcal{F},\mathbb{P})$ in $L^{1}(\mathbb{T}^{d}\times[0,T])$ to some limit $u_{\varepsilon}$.
This limit $u_{\varepsilon}$ is a stochastic kinetic equation
of \eqref{eq:penalized eq} in the sense of Definition \ref{def:def of stochastic kinetic solution without obstacle}, meaning that the kinetic function $\chi_{\varepsilon}(x,\xi,t):=\mathbf{1}_{\{0<\xi<u_{\varepsilon}(x,t)\}}$ satisfies almost surely, for every $\phi\in C_{c}^{\infty}(\mathbb{T}^{d}\times(0,\infty)\times[0,T))$,
\begin{align}
& -\int_{0}^{T}\int_{\mathbb{R}}\int_{\mathbb{T}^d}\chi_{\varepsilon}(x,\xi,s)\partial_s\phi(x,\xi,s)\mathrm{d}x\mathrm{d}\xi\mathrm{d}s\label{eq:kinetic for penalty}\\
 & =\int_{\mathbb{R}}\int_{\mathbb{T}^{d}}\bar{\chi}(u_{\mathrm{init}},\xi)\phi(x,\xi,0)\mathrm{d}x\mathrm{d}\xi-\int_{0}^{T}\int_{\mathbb{T}^{d}}\Phi^{\prime}(u_{\varepsilon})\nabla_x u_{\varepsilon}\cdot(\nabla_x\phi)(x,u_{\varepsilon},s)\mathrm{d}x\mathrm{d}s\nonumber \\
 & \quad-\frac{1}{2}\int_{0}^{T}\int_{\mathbb{T}^{d}}\big(F_{1}(x)[\sigma^{\prime}(u_{\varepsilon})]^{2}\nabla_x u_\varepsilon+\sigma(u_{\varepsilon})\sigma^{\prime}(u_{\varepsilon})F_{2}(x)\big)\cdot(\nabla_x\phi)(x,u_{\varepsilon},s)\mathrm{d}x\mathrm{d}s\nonumber\\
 & \quad-\int_{0}^{T}\int_{\mathbb{R}}\int_{\mathbb{T}^{d}}\partial_{\xi}\phi(x,\xi,s)\mathrm{d}m_\varepsilon(x,\xi,s)-\int_0^{T}\int_{\mathbb{T}^d}\phi(x,u_\varepsilon,s)\nabla_x \cdot g(u_\varepsilon)\mathrm{d}x\mathrm{d}s\nonumber \\
 & \quad+\frac{1}{2}\int_{0}^{T}\int_{\mathbb{T}^{d}}\Big(\sigma(u_{\varepsilon})\sigma^{\prime}(u_{\varepsilon})\nabla_x u_\varepsilon\cdot F_{2}(x)+\sigma^{2}(u_{\varepsilon})F_{3}(x)\Big)(\partial_{\xi}\phi)(x,u_{\varepsilon},s)\mathrm{d}x\mathrm{d}s\nonumber \\
 & \quad-\int_{0}^{T}\int_{\mathbb{T}^{d}}\phi(x,u_{\varepsilon},s)\frac{1}{\varepsilon}(u_{\varepsilon}-\psi)^+\mathrm{d}x\mathrm{d}s-\int_{0}^{T}\int_{\mathbb{T}^{d}}\phi(x,u_{\varepsilon},s)\nabla_x\cdot(\sigma(u_{\varepsilon})f_{k})\mathrm{d}x\mathrm{d}B_{s}^{k},\nonumber
\end{align}
where the kinetic measure $m_{\varepsilon}\geq \delta_{0}(\xi-u_{\varepsilon})\Phi^{\prime}(\xi)|\nabla u_{\varepsilon}|^{2}$ is finite. Moreover, the following a priori estimates hold.
  \begin{align}
 & \sup_{t\in[0,T]}\mathbb{E}\Vert u_{\varepsilon}(t)\Vert_{L^{p}(\mathbb{T}^{d})}^{p}+\mathbb{E}\Vert|u_{\varepsilon}+1|^{\frac{p-2}{2}}\nabla_{x}\llbracket\sqrt{\Phi^{\prime}}\rrbracket(u_{\varepsilon})\Vert_{L^{2}(Q_{T})}^{2}\label{eq=00FF1Apriori p varepsilon} \\
 & +\mathbb{E}\Vert|u_{\varepsilon}+1|^{\frac{p-2}{2}}\varepsilon^{-1}(u_{\varepsilon}-\psi)^{+}\Vert_{L^{2}(Q_{T})}^{2}+\mathbb{E}\Vert|u_{\varepsilon}+1|^{p-2}\varepsilon^{-1}(u_{\varepsilon}-\psi)^{+}\Vert_{L^{1}(Q_{T})} \nonumber\\
 & \leq C\Big(1+\mathbb{E}\Vert u_{\mathrm{init}}\Vert_{L^{p}(\mathbb{T}^{d})}^{p}+\mathbb{E}\Vert u_{\mathrm{init}}\Vert_{L^{1}(\mathbb{T}^{d})}^{m+p-1}\Big),\nonumber
\end{align}
  \begin{align}
 & \sup_{t\in[0,T]}\mathbb{E}\Vert u_{\varepsilon}(t)\Vert_{L^{2}(\mathbb{T}^{d})}^{2}+\mathbb{E}\Vert\nabla_{x}\llbracket\sqrt{\Phi^{\prime}}\rrbracket(u_{\varepsilon})\Vert_{L^{2}(Q_{T})}^{2}+\mathbb{E}m_\varepsilon(\mathbb{T}^d\times\mathbb{R}\times[0,T]) \label{eq:priori for u epsilon}\\
 & \quad+\varepsilon^{-1}\mathbb{E}\Vert(u_{\varepsilon}-\psi)^{+}\Vert_{L^{2}(Q_{T})}^{2}+\mathbb{E}\Vert\varepsilon^{-1}(u_{\varepsilon}-\psi)^{+}\Vert_{L^{1}(Q_{T})}\nonumber\\
 & \leq C\Big(1+\mathbb{E}\Vert u_{\mathrm{init}}\Vert_{L^{2}(\mathbb{T}^{d})}^{2}+\mathbb{E}\Vert u_{\mathrm{init}}\Vert_{L^{1}(\mathbb{T}^{d})}^{m+1}\Big),\nonumber
\end{align}
and
  \begin{align}
 & \mathbb{E}m_\varepsilon(\mathbb{T}^d\times(N_1,N_1+1)\times[0,T])+\frac{1}{\varepsilon}\mathbb{E}\int_{x,t}\int_\theta|(u_{\varepsilon}-\psi)^+|^2 \mathbf{1}_{\{u_{\varepsilon}\in(\theta^{-1}(N_{1}-\psi)+\psi,\theta^{-1}(N_{2}-\psi)+\psi)\}}  \nonumber\\
 &\leq C(N_2-N_1)\mathbb
 {E}\int_x(u_{\mathrm{init}}-N_1)^++C\mathbb{E}\int_{x,t}\mathbf{1}_{\{u_{\varepsilon}\geq N_{1}\}}|\sigma(u_{\varepsilon}\land N_2)|^{2}.\label{eq:penalty kinetic priori esti}
 \end{align}
\end{lem}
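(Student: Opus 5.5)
The strategy is to take the limit $\alpha\to0$, $n\to\infty$ in the approximating penalized equations \eqref{eq:appro penalized eq} for fixed $\varepsilon$. For fixed $\varepsilon$ the penalty $\varepsilon^{-1}(r-\psi)^+$ is Lipschitz with linear growth in $r$, and continuous in $(x,t)$. It therefore enters \cite[Section 6]{fehrman2024well} as an additional lower-order, zeroth-order source term, and the argument there can be followed with this modification.

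\textbf{Step 1: uniform estimates.} First I will collect estimates uniform in $(\alpha,n)$ from Lemma \ref{lem:priori-estimates}, Remark \ref{rem:nabla p-2 bracket} and Lemma \ref{lem:(priori-estimates-for kinetic measure}:
\begin{itemize}
\item $L^\infty_tL^p$ bounds on $u_{\alpha,n,\varepsilon}$;
\item $L^2$ bounds on $\nabla\llbracket\sqrt{\Phi'}\rrbracket(u_{\alpha,n,\varepsilon})$ and $\nabla\llbracket|\cdot|^{(p-2)/2}\sqrt{\Phi'}\rrbracket(u_{\alpha,n,\varepsilon})$;
\item bounds on $m_{\alpha,n,\varepsilon}(\mathbb T^d\times\mathbb R\times[0,T])$, from the $p=2$ energy estimate \eqref{eq:priori};
\item vanishing of the kinetic measures at infinity, from \eqref{eq:priori kinetic-1}.
\end{itemize}

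\textbf{Step 2: compactness and strong convergence.} Next I will derive a time-regularity estimate. Test the weak formulation \eqref{eq: weak solution for nondegenerate equ} against smooth $\phi$, and apply it to truncations $\llbracket\sqrt{\Phi'}\rrbracket(u)\wedge K$ or to $u$ itself in a negative Sobolev space. The penalty contributes an $L^1$-in-time term bounded by $\varepsilon^{-1}\|(u-\psi)^+\|_{L^1}$, which is finite for fixed $\varepsilon$. Combined with the spatial gradient bounds on nonlinear functions of $u$ (Assumption \ref{assu:assu for existence}.4 controls the modulus of $\llbracket\cdot\rrbracket^{-1}$), this gives tightness in $L^1(Q_T)$ via an Aubin--Lions/Kolmogorov-type criterion, as in \cite[Proposition 5.17]{fehrman2024well}.

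To stay on the original probability space, I will apply Skorokhod representation and then the Gyöngy--Krylov criterion. The latter requires pathwise uniqueness for the limit, which follows from the $L^1$-contraction for kinetic solutions of \eqref{eq:penalized eq}. That contraction is a simpler version of Theorem \ref{thm:stability L1}: the penalty term is monotone, $(r-\psi)^+$ is nondecreasing, so it contributes a nonpositive term under doubling of variables. This yields a subsequence converging a.s. in $L^1(\mathbb T^d\times[0,T])$ to some $u_\varepsilon$.

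\textbf{Step 3: identifying the limit equation.} For the limit kinetic equation \eqref{eq:kinetic for penalty}:
\begin{itemize}
\item The kinetic measures $m_{\alpha,n,\varepsilon}$ converge weakly (along a subsequence) to some $m_\varepsilon$. Weak lower semicontinuity of $\int\delta_0(\xi-u)\Phi'(\xi)|\nabla u|^2$ gives $m_\varepsilon\geq\delta_0(\xi-u_\varepsilon)\Phi'|\nabla u_\varepsilon|^2$.
\item The penalty term $\phi(x,u)\varepsilon^{-1}(u-\psi)^+$ converges strongly by dominated convergence, using the $L^p$ bounds.
\item The stochastic integrals and the $\sigma_n$-terms converge as in \cite[Theorem 6.12]{fehrman2024well}, using $\sigma_n\to\sigma$ in $C^1_{\mathrm{loc}}((0,\infty))$ and the support of $\phi$ away from $\xi=0$.
\end{itemize}

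\textbf{Step 4: remaining properties.} Mass preservation \eqref{eq:L1 perservation without obstacle} passes to the limit from \eqref{eq:L1 identity}. The energy identity \eqref{eq:energy preservation for penalty solution} follows from \eqref{eq: energy perservation for appro solution}; here the inequality obtained by lower semicontinuity must be upgraded to an equality. I will do this by testing the limit kinetic equation with $\xi$, as done in \cite{fehrman2024well} using finiteness of $m_\varepsilon$ (Remark \ref{rem:finite for kinetic measure}). The estimates \eqref{eq=00FF1Apriori p varepsilon}, \eqref{eq:priori for u epsilon} and \eqref{eq:penalty kinetic priori esti} then follow by lower semicontinuity and Fatou.

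\textbf{Main obstacle.} I expect the hardest step to be the time-continuity and compactness argument in Step 2. The issue is the degenerate, possibly fast, diffusion together with the $1/2$-Hölder $\sigma$; the penalty itself adds only a harmless bounded-variation-in-time term for fixed $\varepsilon$.
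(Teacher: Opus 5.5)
Your overall architecture matches the paper's proof and \cite[Section 6]{fehrman2024well}. That covers the uniform bounds, tightness in $L^1(Q_T)$, and Skorokhod plus Gy\"ongy--Krylov, with pathwise uniqueness from the $L^1$-contraction (where the monotone penalty only helps, as in Lemma \ref{lem:(Comparison-theorem)} with $\varepsilon_1=\varepsilon_2$). It also covers identification of the limit and lower semicontinuity for the estimates.

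\textbf{The gap is in your tightness mechanism.} You cannot get a time-regularity bound for $u_{\alpha,n,\varepsilon}$ itself in a negative Sobolev space that is uniform in $n$. The same holds for the upper truncation $\llbracket\sqrt{\Phi'}\rrbracket(u)\wedge K$.
\begin{itemize}
\item The obstruction is the It\^o--Stratonovich correction $\frac12\nabla\cdot\big(F_1[\sigma_n'(u)]^2\nabla u+\sigma_n\sigma_n'(u)F_2\big)$ in \eqref{eq: weak solution for nondegenerate equ}, which becomes singular at $u=0$ as $n\to\infty$.
\item For $\Phi(r)=r$ and $\sigma=\sqrt{\cdot}$, its first part is $\frac18\nabla\cdot(F_1\nabla\log u)$. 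The a priori estimates only control $\nabla\llbracket\sqrt{\Phi'}\rrbracket(u)$, here just $\nabla u\in L^2$, so $[\sigma_n'(u)]^2\nabla u$ is not bounded in $L^1$ uniformly in $n$.
\item Integrating by parts once more does not help, since $\llbracket(\sigma')^2\rrbracket(u)=\int_0^u\frac{\mathrm{d}r}{4r}$ diverges.
\item Truncating from above does nothing here. This is exactly the difficulty you list as the main obstacle, and your sketch does not resolve it.
\end{itemize}
The paper states this explicitly: the singularity at zero in the It\^o correction prevents a direct time-regularity estimate for $u_{\alpha,n,\varepsilon}$.

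\textbf{The missing idea is a cutoff away from zero,} as in \cite[Proposition 5.17, Definition 5.19, Lemma 5.20]{fehrman2024well}.
\begin{itemize}
\item Apply It\^o's formula to $\Psi_\delta(u_{\alpha,n,\varepsilon})$, where $\Psi_\delta$ is smooth, vanishes near $0$, and satisfies $\sup_r|\Psi_\delta(r)-r|\lesssim\delta$.
\item On the supports of $\Psi_\delta'$ and $\Psi_\delta''$, the solution is bounded below by a multiple of $\delta$. There, Assumption \ref{assu:assu for existence}.7 controls $|\sigma_n'|^4/\Phi'$ and $|\sigma_n\sigma_n'|$.
\item Hence $[\sigma_n'(u)]^2\nabla u=\big([\sigma_n'(u)]^2/\sqrt{\Phi'(u)}\big)\nabla\llbracket\sqrt{\Phi'}\rrbracket(u)$ is bounded in $L^1$ on that set. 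This gives a uniform $W^{\beta,1}(0,T;H^{-s})$ bound for $\Psi_\delta(u_{\alpha,n,\varepsilon})$.
\item With the spatial bounds and Aubin--Lions--Simon, the laws of $\Psi_\delta(u_{\alpha,n,\varepsilon})$ are tight for each fixed $\delta$.
\item Tightness of $u_{\alpha,n,\varepsilon}$ itself follows because the metric built from $\{\Psi_\delta\}_\delta$ generates the strong topology of $L^1(Q_T)$. Indeed, $\|u-v\|_{L^1}\le\|\Psi_\delta(u)-\Psi_\delta(v)\|_{L^1}+C\delta$.
\end{itemize}

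\textbf{Energy identity: a different but workable route.} You test the limit kinetic equation with $\xi$. The paper instead passes to the limit directly in \eqref{eq: energy perservation for appro solution}, using the weak convergence of $m_{\alpha,n,\varepsilon}$ and Vitali's theorem for $\sigma_n^2(u_{\alpha,n,\varepsilon})$. Your route works because finiteness of $\mathbb{E}\,m_\varepsilon$ gives $\liminf_N N\,\mathbb{E}\,m_\varepsilon(\mathbb{T}^d\times[N,N+1]\times[0,T])=0$. That is what removing the cutoff $\zeta_N$ needs, and it is the argument for \eqref{eq:limit for N q} run in reverse.
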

\begin{proof}
For fixed $\varepsilon\in(0,1)$, the penalty term $\varepsilon^{-1}(u_{\alpha,n,\varepsilon}-\psi)^+$ of \eqref{eq:penalized eq} is a Lipschitz function satisfying $|\varepsilon^{-1}(\xi-\psi)^+|\leq c\xi$ for $\xi\in\mathbb{R}^+$. Hence, we can adapt the approach from \cite[Section 6]{fehrman2024well} to establish  the existence of stochastic kinetic solutions to our equation. In the sequel, we outline the main details.

The core step in constructing a solution is to prove the tightness of the laws of $\{u_{\alpha,n,\varepsilon}\}_{\alpha,n}$ on the space $L^1(0,T;L^1(\mathbb{T}^d))$.
As discussed in \cite{fehrman2024well}, the singularity at zero in the It\^{o} correction term prevents direct time regularity estimate for $u_{\alpha,n,\varepsilon}$.
Instead, by introducing a function $\Psi_{\delta}$ keeping the solution away from zero, the tightness of the laws of $\{\Psi_\delta(u_{\alpha,n,\varepsilon})\}_{\alpha,n}$ on $L^1(0,T;L^1(\mathbb{T}^d))$ is established by stable $W^{\beta,1}(0,T;H^{-s}(\mathbb{T}^{d})-$estimate and the Aubins-Lions-Simon lemma (see e.g., \cite{simon1986compact}), Lemma \ref{lem:priori-estimates} and \cite[Lemma 5.11]{fehrman2021wellarxiv}.
This implies the tightness of the laws of $\{u_{\alpha,n,\varepsilon}\}_{\alpha,n}$ on a new metric on $L^1(0,T;L^1(\mathbb{T}^d))$ defined via the nonlinear function $\Psi_\delta$ (see \cite[Definition 5.19]{fehrman2024well}).
Furthermore, by \cite[Lemma 5.20]{fehrman2024well}, this new metric topology is equal to the strong topology on $L^1(0,T;L^1(\mathbb{T}^d))$, which yields  the tightness of the laws of $u_{\alpha,n,\varepsilon}$ on $L^1(0,T;L^1(\mathbb{T}^d))$ equipped with the strong topology.
Applying Prokhorov's theorem, Skorokhod representation theorem, and Gy\"{o}ngy-Krylov method, we extract a (non-relabeled) subsequence of $u_{\alpha,n,\varepsilon}$ that converges to $u_\varepsilon$ in $L^1(0,T;L^1(\mathbb{T}^d))$ as $\alpha\to0$ and $n\to\infty$.
The desired estimates (\ref{eq=00FF1Apriori p varepsilon})-(\ref{eq:penalty kinetic priori esti}) and $m_{\varepsilon}\geq \delta_{0}(\xi-u_{\varepsilon})\Phi^{\prime}(\xi)|\nabla u_{\varepsilon}|^{2}$ follow from Lemma \ref{lem:priori-estimates}, Lemma \ref{lem:(priori-estimates-for kinetic measure}, the weak lower-semicontinuity of the Sobolev norms, and the strong convergence of $u_{\alpha,n,\varepsilon}$ to $u_\varepsilon$.
By passing to a further non-relabeled subsequence, we obtain the almost sure convergence of $u_{\alpha,n,\varepsilon}$. Testing \eqref{eq:L1 identity} against $\phi\in C_c^\infty([0,T))$, by integration by parts in time, and then passing to the limit, it yields \eqref{eq:L1 perservation without obstacle} in Definition \ref{def:def of stochastic kinetic solution without obstacle}.
By the almost sure convergence $u_{\alpha,n,\varepsilon}\to u_\varepsilon$ on $\Omega\times Q_T$, the convergence
$\sigma_k\to\sigma$ in $C^1_{\mathrm{loc}}((0,\infty))$,
and the uniform (in $n$) version of condition 8 in Assumption \ref{assu:assu for existence}, we have $\sigma_n^2(u_{\alpha,n,\varepsilon})\to \sigma^2(u_\varepsilon)$ a.e. on $\Omega\times Q_T$.
Moreover, the uniform (in $n$) condition 5 in Assumption \ref{assu:assu for existence}, together with the a priori estimates \eqref{eq=00FF1Apriori p varepsilon} with $p$ sufficiently large yields, for some $\eta>0$,
\[
\sup_{\alpha, n}\mathbb E\int_{Q_T}|\sigma_n^2(u_{\alpha,n,\varepsilon})|^{1+\eta}\mathrm{d}x\mathrm{d}t<\infty.
\]
Hence $\{\sigma_n^2(u_{\alpha,n})\}_{\alpha,n}$ is uniformly integrable in $L^1(\Omega\times Q_T)$. By Vitali's theorem, we have $
\sigma_n^2(u_{\alpha,n,\varepsilon})\to \sigma^2(u_\varepsilon)$ strongly in $L^1(\Omega\times Q_T)$.
Combining the almost sure convergence of $u_{\alpha,n,\varepsilon}$ and the weak convergence $m_{\alpha,n,\varepsilon}$, we have the preservation of energy \eqref{eq:energy preservation for penalty solution}. Since the proofs parallel those of \cite[Proposition 5.17 and Theorem 6.12]{fehrman2024well}, we omit them for brevity.
\end{proof}

Next, we aim to pass the limit $\varepsilon\to 0$. Since the Lipschitz constant of the penalization term ${(u_\varepsilon - \psi)^+}/{\varepsilon}$ diverges to infinity as $\varepsilon\to 0$,  a comparison theorem is further required to validate the convergence of $u_\varepsilon$. This result not only demonstrates that $u_\varepsilon$ is monotone in $\varepsilon$, but also ensures the uniqueness of the solution for each fixed $\varepsilon > 0$.
\begin{lem}
(Monotonicity)\label{lem:(Comparison-theorem)} Let Assumption \ref{assu:assu for existence} and Assumption \ref{assu:assum for F} hold.
For any $n\in\mathbb{N}$
and $0<\varepsilon_{1}\leq\varepsilon_{2}<1$, let $u_{\varepsilon_{1}}$
and $u_{\varepsilon_{2}}$ be solutions of \eqref{eq:penalized eq} in the sense of Definition \ref{def:def of stochastic kinetic solution without obstacle}
with penalized coefficients $\varepsilon_{1}$ and $\varepsilon_{2}$,
respectively. Then, we have almost surely for $(x,t)\in Q_{T}$,
\[
u_{\varepsilon_{1}}\leq u_{\varepsilon_{2}}.
\]
\end{lem}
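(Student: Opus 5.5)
We will prove the comparison by a kinetic doubling-of-variables argument in the spirit of Theorem \ref{thm:stability L1}, now with a one-sided quantity. Let $\chi_1,\chi_2$ be the kinetic functions of $u_{\varepsilon_1},u_{\varepsilon_2}$. Since $\bar\chi(u_1,\xi)(1-\bar\chi(u_2,\xi))=\mathbf 1_{\{u_2\le\xi<u_1\}}$, we have
\[
\int_{x,\xi}\chi_1(1-\chi_2)=\int_x (u_{\varepsilon_1}-u_{\varepsilon_2})^+ .
\]
We therefore aim to show that $t\mapsto \mathbb E\int_{x,\xi}\chi_1(1-\chi_2)(t)$ is nonincreasing. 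It vanishes at $t=0$ because both solutions start from $u_{\mathrm{init}}$, so it will vanish for all $t$. Here both $u_{\varepsilon_i}$ are continuous in time (Definition \ref{def:def of stochastic kinetic solution without obstacle}) and the penalty is absolutely continuous in time. Hence no $\chi^\pm$ representatives are needed, which makes this easier than Theorem \ref{thm:stability L1}.

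\textbf{Step 1 (regularized product).} For $\chi_{i,t}^{\varsigma,\delta}$ defined as in Theorem \ref{thm:stability L1}, we apply It\^o's formula to
\[
\int_{y,\eta}\chi^{\varsigma,\delta}_{1,t}\bigl(1-\chi^{\varsigma,\delta}_{2,t}\bigr)\varphi_\beta\zeta_N ,
\]
using the kinetic equation \eqref{eq:kinetic equation test with time without obstacle} for each solution. The diffusion, error, cutoff, martingale and conservative terms, and the measure terms built from $m_{\varepsilon_i}$, are exactly those in the proof of Theorem \ref{thm:stability L1} and in \cite[Theorem 4.6]{fehrman2024well}. We will treat them identically:
\begin{itemize}
\item the error terms vanish as $\varsigma\to0$ and then $\delta\to0$;
\item the measure terms are nonpositive, by $\partial_\eta\chi^{\varsigma,\delta}\le0$ and property (iv);
\item the cutoff terms vanish as $\beta\to0$, $N\to\infty$, using the vanishing of $m_{\varepsilon_i}$ at infinity (property (v)) and the analogue of Proposition \ref{prop:proposition for limit measure} near $\xi=0$ for the penalized solutions.
\end{itemize}
The analogue near $\xi=0$ is obtained from the mass identity \eqref{eq:L1 perservation without obstacle} in the same way as Proposition \ref{prop:proposition for limit measure}.

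\textbf{Step 2 (penalty terms, the key new point).} The penalty contributes
\[
-\frac1{\varepsilon_1}\int \bar\kappa^{\varsigma,\delta}_{1}(u_{\varepsilon_1}-\psi)^+\bigl(1-\chi^{\varsigma,\delta}_{2}\bigr)\varphi_\beta\zeta_N
+\frac1{\varepsilon_2}\int \bar\kappa^{\varsigma,\delta}_{2}(u_{\varepsilon_2}-\psi)^+\chi^{\varsigma,\delta}_{1}\varphi_\beta\zeta_N .
\]
As $\varsigma,\delta\to0$, these tend to
\[
\int_{x,t}\Bigl[-\varepsilon_1^{-1}(u_{\varepsilon_1}-\psi)^+\mathbf 1_{\{u_{\varepsilon_1}>u_{\varepsilon_2}\}}+\varepsilon_2^{-1}(u_{\varepsilon_2}-\psi)^+\mathbf 1_{\{u_{\varepsilon_2}<u_{\varepsilon_1}\}}\Bigr]\varphi_\beta\zeta_N(\cdot).
\]
This convergence uses the $L^1$ bounds \eqref{eq:priori for u epsilon} and the fact that the diagonal set $\{u_{\varepsilon_1}=u_{\varepsilon_2}\}$ contributes nothing. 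On $\{u_{\varepsilon_1}>u_{\varepsilon_2}\}$ we have $(u_{\varepsilon_1}-\psi)^+\ge(u_{\varepsilon_2}-\psi)^+$, and $1/\varepsilon_1\ge1/\varepsilon_2$. So the integrand is pointwise $\le0$ and the penalty contribution has the favourable sign.

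\textbf{Step 3 (conclusion).} Taking expectations and the limits in the order $\varsigma\to0$, $\delta\to0$, $\beta\to0$, $N\to\infty$ gives
\[
\mathbb E\int_x (u_{\varepsilon_1}-u_{\varepsilon_2})^+(t)\le \mathbb E\int_x (u_{\mathrm{init}}-u_{\mathrm{init}})^+=0
\]
for all $t$. Hence $u_{\varepsilon_1}\le u_{\varepsilon_2}$ almost surely, a.e. in $Q_T$.

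The main obstacle is Step 2: passing to the limit in the penalty terms paired with the regularized kinetic functions near the diagonal $\xi=\eta$. The only available control on the penalty is in $L^1$, which is weak. We will first exploit the pointwise monotonicity of $r\mapsto\varepsilon^{-1}(r-\psi)^+$ already at the regularized level, aiming to bound the sum by an error term of order $\delta$ times the $L^1$ norm of the penalty, uniformly in $\varsigma$. If that fails, we will instead argue at the approximating level $u_{\alpha,n,\varepsilon}$, where an $L^1$-contraction for the nondegenerate equation is standard, and then pass to the limit using Lemma \ref{lem: existence of penalized eqn}.
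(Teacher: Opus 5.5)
Your proposal is correct and follows essentially the paper's argument: the identity $\int_x(u_{\varepsilon_1}-u_{\varepsilon_2})^+=\int_{x,\xi}\chi_1(1-\chi_2)$, the regularized doubling of variables with the same split into obstacle, error, measure, cutoff, martingale and conservative terms, and the penalty terms closed via the explicit limit of the double convolution together with $\varepsilon_1\le\varepsilon_2$ and the monotonicity of $r\mapsto(r-\psi)^+$ (you work in expectation whereas the paper argues almost surely, which is immaterial).

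A few small corrections. The diagonal $\{u_{\varepsilon_1}=u_{\varepsilon_2}\}$ is not negligible: it carries weight $\tfrac12$ and contributes $\tfrac12(\varepsilon_2^{-1}-\varepsilon_1^{-1})(u_{\varepsilon_1}-\psi)^+\varphi_\beta\zeta_N(u_{\varepsilon_1})\le0$, which is harmless. The pointwise sign on $\{u_{\varepsilon_1}>u_{\varepsilon_2}\}$ holds only after the cutoffs are removed, so send $\beta\to0$, $N\to\infty$ by dominated convergence first, as the paper does. The difficulty you anticipate in Step 2 does not arise: for fixed $\varepsilon_i$ the penalty is an integrable function multiplied by factors bounded by $1$, so dominated convergence applies directly.
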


\begin{proof}
For simplicity, we set $u_{i}:=u_{\varepsilon_{i}}$ and $m_{i}:=m_{\varepsilon_{i}}$,
$i\in\{1,2\}$. Let $\chi_{i}$ be the kinetic function of $u_{i}$, $i=1,2$.
The proof relies on the following identity:
\begin{align*}
\int_{x}(u_{1}-u_{2})^{+}(t) & =\frac{1}{2}\int_{x}|u_{1}-u_{2}|(t)+\frac{1}{2}\int_{x}(u_{1}-u_{2})(t)\\
 & =\frac{1}{2}\int_{x,\xi}|\chi_{1}-\chi_{2}|^{2}(t)+\frac{1}{2}\int_{x,\xi}(\chi_{1}-\chi_{2})(t)\\
 & =\int_{x,\xi}\chi_{1}(1-\chi_{2})(t).
\end{align*}
We now estimate each term separately. The notations $\chi_{i,t}^{\varsigma,\delta}(y,\eta)$ and $\bar{\kappa}_{i,t}^{\varsigma,\delta}(x,y,\eta)$ are adopted from Theorem \ref{thm:stability L1}.
By taking the test function in \eqref{eq:kinetic equation test with time without obstacle}
which converges to $\alpha_{\epsilon,t}(\tau)\phi(x,\xi)$, where $\phi(x,\xi)=\kappa^{\varsigma,\delta}(x,y,\xi,\eta)$, and $\alpha_{\epsilon,t}$ is defined by
\begin{equation*}
\alpha_{\epsilon,t}(\tau):=\begin{cases}
1,&\tau\leq t,\\
1-({\tau-t})/{\epsilon},&t\leq \tau\leq t+\epsilon,\\
0,&\tau\geq t+\epsilon.
\end{cases}
\end{equation*}
Then, taking the limit $\epsilon\to0$ and using the Lebesgue dominated convergence theorem, with the continuity of $u_i$ in $L^1(\mathbb{T}^d)$, there exists a subset of full probability
such that, for every $i\in\{1,2\}$, $(y,\eta)\in\mathbb{T}^{d}\times(\delta/2,\infty)$,
and $t\in[0,T]$,
\begin{align*}
  \left.\chi_{i,s}^{\varsigma,\delta}(y,\eta)\right|_{s=0}^{t}
 & =\nabla_{y}\bigg(\int_{0}^{t}\int_{x}\Phi^{\prime}(u_{i})\nabla_{x}u_{i}\bar{\kappa}_{i,s}^{\varsigma,\delta}(x,y,\eta)\mathrm{d}s\bigg)\\
 & \quad+\frac{1}{2}\nabla_{y}\bigg(\int_{0}^{t}\int_{x}\big(F_{1}[\sigma^{\prime}(u_{i})]^{2}\nabla_{x}u_{i}+\sigma(u_{i})\sigma^{\prime}(u_{i})F_{2}\big)\bar{\kappa}_{i,s}^{\varsigma,\delta}(x,y,\eta)\mathrm{d}s\bigg)\\
 & \quad+\partial_{\eta}\bigg(\int_{0}^{t}\int_{\mathbb{R}}\int_{\mathbb{T}^{d}}\kappa^{\varsigma,\delta}(x,y,\xi,\eta)\mathrm{d}m_{i}(x,\xi,s)\bigg)\\
 &\quad-\int_0^{t}\int_x\bar{\kappa}_{i,s}^{\varsigma,\delta}(x,y,\eta)\nabla_x\cdot g(u_i)\mathrm{d}s\\
 & \quad-\frac{1}{2}\partial_{\eta}\bigg(\int_{0}^{t}\int_{x}\Big(\sigma(u_{i})\sigma^{\prime}(u_{i})\nabla_{x}u_{i}\cdot F_{2}+\sigma^{2}(u_{i})F_{3}\Big)\bar{\kappa}_{i,s}^{\varsigma,\delta}(x,y,\eta)\mathrm{d}s\bigg)\\
 & \quad-\frac{1}{\varepsilon_i}\int_{0}^{t}\int_{x}(u_{i}-\psi)^{+}\bar{\kappa}_{i,s}^{\varsigma,\delta}(x,y,\eta)\mathrm{d}s-\int_{0}^{t}\int_{x}\bar{\kappa}_{i,s}^{\varsigma,\delta}(x,y,\eta)\nabla_x\cdot(\sigma(u_{i})f_{k})\mathrm{d}B_{s}^{k},
\end{align*}
where $u_{i}=u_{i}(x,s)$, $\psi=\psi(x,s)$ and $f_{k}=f_{k}(x)$.
Then, for every $\varsigma,\beta\in(0,1)$, $\delta\in(0,\beta/4)$,
$t\in[0,T]$, and $N\in\mathbb{N}$, we have
\begin{align}
\int_{y,\eta}\chi_{1,s}^{\varsigma,\delta}(y,\eta)\varphi_{\beta}(\eta)\zeta_{N}(\eta)\Big|_{s=0}^{t} & =I_{1,t}^{\mathrm{obs}}+I_{1,t}^{\mathrm{cut}}+I_{1,t}^{\mathrm{mart}}+I_{1,t}^{\mathrm{cons}},\label{eq:first order term-1}
\end{align}
where the obstacle term is given by
\begin{align*}
I_{1,t}^{\mathrm{obs}} & :=-\frac{1}{\varepsilon_{1}}\int_{y,\eta,x}\varphi_{\beta}(\eta)\zeta_{N}(\eta)\int_{0}^{t}(u_{1}-\psi)^{+}\bar{\kappa}_{1,s}^{\varsigma,\delta}(x,y,\eta)\mathrm{d}s,
\end{align*}
the cut term is given by
\begin{align*}
I_{1,t}^{\mathrm{cut}} & :=-\int_{y,\eta}\partial_{\eta}\big(\varphi_{\beta}(\eta)\zeta_{N}(\eta)\big)\int_{0}^{t}\int_{\mathbb{R}}\int_{\mathbb{T}^{d}}\kappa^{\varsigma,\delta}(x,y,\xi,\eta)\mathrm{d}m_{1}(x,\xi,s)\\
 & \quad+\frac{1}{2}\int_{y,\eta,x}\partial_{\eta}\big(\varphi_{\beta}(\eta)\zeta_{N}(\eta)\big)\int_{0}^{t}\Big(\sigma(u_{1})\sigma^{\prime}(u_{1})\nabla_{x}u_{1}\cdot F_{2}+\sigma^{2}(u_{1})F_{3}\Big)\bar{\kappa}_{1,s}^{\varsigma,\delta}(x,y,\eta)\mathrm{d}s,
\end{align*}
the martingale term is given by
\begin{align*}
I_{1,t}^{\mathrm{mart}} & :=-\int_{y,\eta,x}\varphi_{\beta}(\eta)\zeta_{N}(\eta)\int_{0}^{t}\bar{\kappa}_{1,s}^{\varsigma,\delta}(x,y,\eta)\nabla_{x}\cdot(\sigma(u_{1})f_{k})\mathrm{d}B_{s}^{k},
\end{align*}
and the conservative term is given by
\begin{align*}
I_{1,t}^{\mathrm{cons}} & :=-\int_{y,\eta,x}\varphi_{\beta}(\eta)\zeta_{N}(\eta)\int_{0}^{t}\bar{\kappa}_{1,s}^{\varsigma,\delta}(x,y,\eta)\nabla_{x}\cdot g(u_{1})\mathrm{d}s,
\end{align*}
where $u_{1}=u_{1}(x,s)$, $\psi=\psi(x,s)$
and $f_{k}=f_k(x)$. Moreover, using Lemma \ref{lem:intergrating by part}, \eqref{eq:partial eta} and It\^o's formula, we have
\begin{align}
 & \int_{y,\eta}\left.\chi_{1,s}^{\varsigma,\delta}(y,\eta)\chi_{2,s}^{\varsigma,\delta}(y,\eta)\varphi_{\beta}(\eta)\zeta_{N}(\eta)\right|_{s=0}^{t}\label{eq:second order term-1} \\
 & =\int_{0}^{t}\int_{y,\eta}\Big[\chi_{2,s}^{\varsigma,\delta}(y,\eta)\mathrm{d}\chi_{1,s}^{\varsigma,\delta}(y,\eta)+\chi_{1,s}^{\varsigma,\delta}(y,\eta)\mathrm{d}\chi_{2,s}^{\varsigma,\delta}(y,\eta)+\mathrm{d}\langle\chi_{2,\cdot}^{\varsigma,\delta},\chi_{1,\cdot}^{\varsigma,\delta}\rangle_{s}(y,\eta)\Big]\varphi_{\beta}(\eta)\zeta_{N}(\eta).\nonumber
\end{align}
For every $\varsigma,\beta\in(0,1)$, $\delta\in(0,\beta/4)$, $t\in[0,T]$
and $N\in\mathbb{N}$, we have
\[
\int_{0}^{t}\int_{y,\eta}\chi_{2,s}^{\varsigma,\delta}(y,\eta)\mathrm{d}\chi_{1,s}^{\varsigma,\delta}(y,\eta)\varphi_{\beta}(\eta)\zeta_{N}(\eta)=I_{2,1,t}^{\mathrm{obs}}+I_{2,1,t}^{\mathrm{err}}+I_{2,1,t}^{\mathrm{meas}}+I_{2,1,t}^{\mathrm{cut}}+I_{2,1,t}^{\mathrm{mart}}+I_{2,1,t}^{\mathrm{cons}},
\]
where the obstacle term is given by
\begin{align*}
I_{2,1,t}^{\mathrm{obs}} & :=-\frac{1}{\varepsilon_{1}}\int_{y,\eta,x}\int_{0}^{t}\chi_{2,s}^{\varsigma,\delta}(y,\eta)\varphi_{\beta}(\eta)\zeta_{N}(\eta)(u_{1}-\psi)^{+}\bar{\kappa}_{1,s}^{\varsigma,\delta}\mathrm{d}s,
\end{align*}
the error term is given by
\begin{align*}
I_{2,1,t}^{\mathrm{err}} & :=-\int_{y,\eta,x,z}\int_{0}^{t}\Phi^{\prime}(u_{1})\nabla_{x}u_{1}\cdot\nabla_{z}u_{2}\bar{\kappa}_{2,s}^{\varsigma,\delta}\bar{\kappa}_{1,s}^{\varsigma,\delta}\varphi_{\beta}(\eta)\zeta_{N}(\eta)\mathrm{d}s\\
 & \quad+\int_{y,\eta,x,z}\int_{0}^{t}|\Phi^{\prime}(u_{1})|^{\frac{1}{2}}|\Phi^{\prime}(u_{2})|^{\frac{1}{2}}\nabla_{x}u_{1}\cdot\nabla_{z}u_{2}\bar{\kappa}_{2,s}^{\varsigma,\delta}\bar{\kappa}_{1,s}^{\varsigma,\delta}\varphi_{\beta}(\eta)\zeta_{N}(\eta)\mathrm{d}s\\
 & \quad-\frac{1}{2}\int_{y,\eta,x,z}\int_{0}^{t}\nabla_{z}u_{2}\cdot\big(F_{1}[\sigma^{\prime}(u_{1})]^{2}\nabla_{x}u_{1}+\sigma(u_{1})\sigma^{\prime}(u_{1})F_{2}\big)\bar{\kappa}_{1,s}^{\varsigma,\delta}\bar{\kappa}_{2,s}^{\varsigma,\delta}\varphi_{\beta}(\eta)\zeta_{N}(\eta)\mathrm{d}s\\
 & \quad-\frac{1}{2}\int_{y,\eta,x,z}\int_{0}^{t}\Big(\sigma(u_{1})\sigma^{\prime}(u_{1})\nabla_{x}u_{1}\cdot F_{2}+\sigma^{2}(u_{1})F_{3}\Big)\bar{\kappa}_{1,s}^{\varsigma,\delta}\bar{\kappa}_{2,s}^{\varsigma,\delta}\varphi_{\beta}(\eta)\zeta_{N}(\eta)\mathrm{d}s,
\end{align*}
the measure term is given by
\begin{align*}
I_{2,1,t}^{\mathrm{meas}} & :=\int_{y,\eta}\int_{0}^{t}\bar{\kappa}_{2,s}^{\varsigma,\delta}\varphi_{\beta}(\eta)\zeta_{N}(\eta)\int_{\mathbb{R}}\int_{\mathbb{T}^{d}}\kappa^{\varsigma,\delta}(x,y,\xi,\eta)\mathrm{d}m_{1}(x,\xi,s)\\
 & \quad-\int_{y,\eta,x,z}\int_{0}^{t}|\Phi^{\prime}(u_{1})|^{\frac{1}{2}}|\Phi^{\prime}(u_{2})|^{\frac{1}{2}}\nabla_{x}u_{1}\cdot\nabla_{z}u_{2}\bar{\kappa}_{2,s}^{\varsigma,\delta}\bar{\kappa}_{1,s}^{\varsigma,\delta}\varphi_{\beta}(\eta)\zeta_{N}(\eta)\mathrm{d}s,
\end{align*}
the cutoff term is given by
\begin{align*}
I_{2,1,t}^{\mathrm{cut}} & :=-\int_{y,\eta}\int_{0}^{t}\int_{\mathbb{R}}\int_{\mathbb{T}^{d}}\chi_{2,s}^{\varsigma,\delta}(y,\eta)\partial_{\eta}\big(\varphi_{\beta}(\eta)\zeta_{N}(\eta)\big)\kappa^{\varsigma,\delta}(x,y,\xi,\eta)\mathrm{d}m_{1}(x,\xi,s)\\
 & \quad+\frac{1}{2}\int_{y,\eta,x}\int_{0}^{t}\chi_{2,s}^{\varsigma,\delta}(y,\eta)\partial_{\eta}\big(\varphi_{\beta}(\eta)\zeta_{N}(\eta)\big)\Big(\sigma(u_{1})\sigma^{\prime}(u_{1})\nabla_{x}u_{1}\cdot F_{2}+\sigma^{2}(u_{1})F_{3}\Big)\bar{\kappa}_{1,s}^{\varsigma,\delta}\mathrm{d}s,
\end{align*}
the martingale term is given by
\[
I_{2,1,t}^{\mathrm{mart}}:=-\int_{y,\eta,x}\int_{0}^{t}\chi_{2,s}^{\varsigma,\delta}(y,\eta)\varphi_{\beta}(\eta)\zeta_{N}(\eta)\bar{\kappa}_{1,s}^{\varsigma,\delta}\nabla_{x}\cdot(\sigma(u_{1})f_{k})\mathrm{d}B_{s}^{k},
\]
and the conservative term is given by
\[
I_{2,1,t}^{\mathrm{cons}}:=-\int_{y,\eta,x}\int_{0}^{t}\chi_{2,s}^{\varsigma,\delta}(y,\eta)\varphi_{\beta}(\eta)\zeta_{N}(\eta)\bar{\kappa}_{1,s}^{\varsigma,\delta}\nabla_{x} \cdot g(u_{1})\mathrm{d}s,
\]
where we used the notations $u_{1}=u_{1}(x,s)$, $u_{2}=u_{2}(z,s)$,
$\bar{\kappa}_{1,s}^{\varsigma,\delta}=\bar{\kappa}_{1,s}^{\varsigma,\delta}(x,y,\eta)$,
and $\bar{\kappa}_{2,s}^{\varsigma,\delta}=\bar{\kappa}_{2,s}^{\varsigma,\delta}(z,y,\eta)$. A similar equation holds for the term
$
\int_{0}^{t}\int_{y,\eta}\chi_{1,s}^{\varsigma,\delta}(y,\eta)\mathrm{d}\chi_{2,s}^{\varsigma,\delta}(y,\eta)\varphi_{\beta}(\eta)\zeta_{N}(\eta).
$

For the cross term in \eqref{eq:second order term-1}, we have
\begin{align}
 & \int_{0}^{t}\int_{y,\eta}\mathrm{d}\langle\chi_{2,\cdot}^{\varsigma,\delta},\chi_{1,\cdot}^{\varsigma,\delta}\rangle_{s}(y,\eta)\varphi_{\beta}(\eta)\zeta_{N}(\eta) \label{eq:corss term-1}\\
 & =\int_{y,\eta,x,z}\int_{0}^{t}f_{k}(x)f_{k}(z)\sigma^{\prime}(u_{1})\sigma^{\prime}(u_{2})\nabla_{x}u_{1}\cdot\nabla_{z}u_{2}\varphi_{\beta}(\eta)\zeta_{N}(\eta)\bar{\kappa}_{1,s}^{\varsigma,\delta}\bar{\kappa}_{2,s}^{\varsigma,\delta}\mathrm{d}s\nonumber \\
 & \quad+\int_{y,\eta,x,z}\int_{0}^{t}f_{k}(x)\nabla_{z}f_{k}(z)\cdot\nabla_{x}u_{1}\sigma^{\prime}(u_{1})\sigma(u_{2})\varphi_{\beta}(\eta)\zeta_{N}(\eta)\bar{\kappa}_{1,s}^{\varsigma,\delta}\bar{\kappa}_{2,s}^{\varsigma,\delta}\mathrm{d}s\nonumber\\
 & \quad+\int_{y,\eta,x,z}\int_{0}^{t}f_{k}(z)\nabla_{x}f_{k}(x)\cdot\nabla_{z}u_{2}\sigma(u_{1})\sigma^{\prime}(u_{2})\varphi_{\beta}(\eta)\zeta_{N}(\eta)\bar{\kappa}_{1,s}^{\varsigma,\delta}\bar{\kappa}_{2,s}^{\varsigma,\delta}\mathrm{d}s\nonumber \\
 & \quad+\int_{y,\eta,x,z}\int_{0}^{t}\nabla_{x}f_{k}(x)\cdot\nabla_{z}f_{k}(z)\sigma(u_{1})\sigma(u_{2})\varphi_{\beta}(\eta)\zeta_{N}(\eta)\bar{\kappa}_{1,s}^{\varsigma,\delta}\bar{\kappa}_{2,s}^{\varsigma,\delta}\mathrm{d}s.\nonumber
\end{align}
Therefore,
we reach
\begin{equation}
\int_{y,\eta}\left.\chi_{1,s}^{\varsigma,\delta}(y,\eta)\chi_{2,s}^{\varsigma,\delta}(y,\eta)\varphi_{\beta}(\eta)\zeta_{N}(\eta)\right|_{s=0}^{t}=I_{\mathrm{mix},t}^{\mathrm{obs}}+I_{\mathrm{mix},t}^{\mathrm{err}}+I_{\mathrm{mix},t}^{\mathrm{meas}}+I_{\mathrm{mix},t}^{\mathrm{cut}}+I_{\mathrm{mix},t}^{\mathrm{mart}}+I_{\mathrm{mix},t}^{\mathrm{cons}},\label
{eq:second term all-1}
\end{equation}
where
\[
I_{\mathrm{mix},t}^{\mathrm{fun}}=I_{2,1,t}^{\mathrm{fun}}+I_{1,2,t}^{\mathrm{fun}},
\]
for ``fun'' taking values in $\{\mathrm{obs,meas,cut,mart,cons}\}$. For the error
terms, which combine $I_{2,1,t}^{\mathrm{err}}$, $I_{1,2,t}^{\mathrm{err}}$
and \eqref{eq:corss term-1}, are
\begin{align*}
 & I_{\mathrm{mix},t}^{\mathrm{err}}\\
 & :=-\int_{y,\eta,x,z}\int_{0}^{t}\big[|\Phi^{\prime}(u_{1})|^{\frac{1}{2}}-|\Phi^{\prime}(u_{2})|^{\frac{1}{2}}\big]^{2}\nabla_{x}u_{1}\cdot\nabla_{z}u_{2}\bar{\kappa}_{2,s}^{\varsigma,\delta}\bar{\kappa}_{1,s}^{\varsigma,\delta}\varphi_{\beta}\zeta_{N}\mathrm{d}s\\
 & \quad-\frac{1}{2}\int_{y,\eta,x,z}\int_{0}^{t}\nabla_{z}u_{2}\cdot\nabla_{x}u_{1}\big(F_{1}(x)[\sigma^{\prime}(u_{1})]^{2}+F_{2}(z)[\sigma^{\prime}(u_{2})]^{2}\\
 & \qquad\qquad\qquad\qquad-2f_{k}(x)f_{k}(z)\sigma^{\prime}(u_{1})\sigma^{\prime}(u_{2})\big)\bar{\kappa}_{1,s}^{\varsigma,\delta}\bar{\kappa}_{2,s}^{\varsigma,\delta}\varphi_{\beta}\zeta_{N}\mathrm{d}s\\
 & \quad-\frac{1}{2}\int_{y,\eta,x,z}\int_{0}^{t}\nabla_{z}u_{2}\cdot\big(F_{2}(x)\sigma(u_{1})\sigma^{\prime}(u_{1})+F_{2}(z)\sigma(u_{2})\sigma^{\prime}(u_{2})\\
 & \qquad\qquad\qquad\qquad-2f_{k}(z)\nabla_{x}f_{k}(x)\sigma(u_{1})\sigma^{\prime}(u_{2})\big)\bar{\kappa}_{1,s}^{\varsigma,\delta}\bar{\kappa}_{2,s}^{\varsigma,\delta}\varphi_{\beta}\zeta_{N}\mathrm{d}s\\
 & \quad-\frac{1}{2}\int_{y,\eta,x,z}\int_{0}^{t}\nabla_{x}u_{1}\cdot\big(F_{2}(x)\sigma(u_{1})\sigma^{\prime}(u_{1})+F_{2}(z)\sigma(u_{2})\sigma^{\prime}(u_{2})\\
 & \qquad\qquad\qquad\qquad-2f_{k}(x)\nabla_{z}f_{k}(z)\sigma^{\prime}(u_{1})\sigma(u_{2})\big)\bar{\kappa}_{1,s}^{\varsigma,\delta}\bar{\kappa}_{2,s}^{\varsigma,\delta}\varphi_{\beta}\zeta_{N}\mathrm{d}s\\
 & \quad-\frac{1}{2}\int_{y,\eta,x,z}\int_{0}^{t}\big(F_{3}(x)\sigma^{2}(u_{1})+F_{3}(z)\sigma^{2}(u_{2})-2\nabla_{x}f_{k}(x)\cdot\nabla_{z}f_{k}(z)\sigma(u_{1})\sigma(u_{2})\big)\bar{\kappa}_{1,s}^{\varsigma,\delta}\bar{\kappa}_{2,s}^{\varsigma,\delta}\varphi_{\beta}\zeta_{N}\mathrm{d}s.
\end{align*}
Combining (\ref{eq:first order term-1}) with (\ref{eq:second term all-1}), we arrive at
\begin{align*}
 & \int_{y,\eta}\left.\big(\chi_{1,s}^{\varsigma,\delta}(y,\eta)-\chi_{1,s}^{\varsigma,\delta}(y,\eta)\chi_{2,s}^{\varsigma,\delta}(y,\eta)\big)\varphi_{\beta}(\eta)\zeta_{N}(\eta)\right|_{s=0}^{t}\\
 & =I_{t}^{\mathrm{obs}}+I_{t}^{\mathrm{err}}+I_{t}^{\mathrm{meas}}+I_{t}^{\mathrm{cut}}+I_{t}^{\mathrm{mart}}+I_{t}^{\mathrm{cons}},
\end{align*}
where $I_{t}^{\mathrm{fun}}=I_{1,t}^{\mathrm{fun}}-I_{\mathrm{mix},t}^{\mathrm{fun}}$
for ``fun'' taking values in \{obs, err, meas, cut, mart, cons\} with defining
$I_{i,t}^{\mathrm{err}},I_{i,t}^{\mathrm{meas}}:=0$ for $i\in\{1,2\}$.

We first estimate the obstacle term $I_{t}^{\mathrm{obs}}$. By
$\varepsilon_{1}\leq\varepsilon_{2}$ and the definition of the function
$\kappa^{\varsigma,\delta}$, we have
\begin{align}
I_{t}^{\mathrm{obs}} & =-\frac{1}{\varepsilon_{1}}\int_{y,\eta,x}\int_{0}^{t}\big(1-\chi_{2,s}^{\varsigma,\delta}(y,\eta)\big)\varphi_{\beta}(\eta)\zeta_{N}(\eta)(u_{1}-\psi(x,s))^{+}\kappa^{\varsigma,\delta}(x,y,u_{1},\eta)\mathrm{d}s\label{eq:I^obs}\\
 & \quad+\frac{1}{\varepsilon_{2}}\int_{y,\eta,z}\int_{0}^{t}\chi_{1,s}^{\varsigma,\delta}(y,\eta)\varphi_{\beta}(\eta)\zeta_{N}(\eta)(u_{2}-\psi(z,s))^{+}\kappa^{\varsigma,\delta}(z,y,u_{2},\eta)\mathrm{d}s\nonumber \\
 & \leq-\frac{1}{\varepsilon_{1}}\int_{y,\eta,x}\int_{0}^{t}\big(1-\chi_{2,s}^{\varsigma,\delta}(y,\eta)\big)\varphi_{\beta}(u_{1})\zeta_{N}(u_{1})(u_{1}-\psi(x,s))^{+}\kappa^{\varsigma,\delta}(x,y,u_{1},\eta)\mathrm{d}s\nonumber \\
 & \quad+\frac{1}{\varepsilon_{1}}\int_{y,\eta,z}\int_{0}^{t}\chi_{1,s}^{\varsigma,\delta}(y,\eta)\varphi_{\beta}(u_{2})\zeta_{N}(u_{2})(u_{2}-\psi(z,s))^{+}\kappa^{\varsigma,\delta}(z,y,u_{2},\eta)\mathrm{d}s\nonumber \\
 & \quad+\frac{C(\beta,N)\delta}{\varepsilon_{1}},\nonumber
\end{align}
where $u_1=u_1(x,s)$ and $u_2=u_2(z,s)$.
Note that for $i\in\{1,2\}$, when $u_{i}(x,s)>2\delta$, it holds that
\begin{align}
\int_{\eta,\tilde{\xi}}\kappa_{1}^{\delta}(\xi-\eta)\kappa_{1}^{\delta}(\eta-\tilde{\xi})\chi_{i}(x,\tilde{\xi},s) & =\begin{cases}
0, & \text{if }\xi\leq-2\delta\text{ or }\xi\geq u_{i}(x,s)+2\delta;\\
1/2, & \text{if }\xi=0\text{ or }\xi=u_{i}(x,s);\\
1, & \text{if }2\delta<\xi<u_{i}(x,s)-2\delta.
\end{cases}\label{eq: limit of conv}
\end{align}
Then, we have
\begin{align*}
 & \lim_{\delta\to0}\int_{\eta}\kappa_{1}^{\delta}(u_{1}(x,s)-\eta)\bigg(1-\int_{\tilde{\xi}}\kappa_{1}^{\delta}(\eta-\tilde{\xi})\chi_{2}(x,\tilde{\xi},s)\bigg)\varphi_{\beta}(u_{1}(x,s))\\
 & =(1-\mathbf{1}_{\{u_{1}<u_{2}\}}-\frac{1}{2}\cdot\mathbf{1}_{\{u_{1}=u_{2}\}})\varphi_{\beta}(u_{1}(x,s))\\
 & =(\mathbf{1}_{\{u_{2}<u_{1}\}}+\frac{1}{2}\cdot\mathbf{1}_{\{u_{1}=u_{2}\}})\varphi_{\beta}(u_{1}(x,s)),
\end{align*}
and
\begin{align*}
 & \lim_{\delta\to0}\int_{\eta}\kappa_{1}^{\delta}(u_{2}(x,s)-\eta)\int_{\tilde{\xi}}\kappa_{1}^{\delta}(\eta-\tilde{\xi})\chi_{1}(x,\tilde{\xi},s)\varphi_{\beta}(u_{2}(x,s))\\
 & =(\mathbf{1}_{\{u_{2}<u_{1}\}}+\frac{1}{2}\cdot\mathbf{1}_{\{u_{1}=u_{2}\}})\varphi_{\beta}(u_{2}(x,s)).
\end{align*}
Therefore, by taking the limits $\varsigma\to0$ and then $\delta\to0$
to \eqref{eq:I^obs}, with the continuity of translations in $L^{1}$,
we have
\begin{align*}
\lim_{\delta\to0}\lim_{\varsigma\to0}I_{t}^{\mathrm{obs}} & \leq\frac{1}{\varepsilon_{1}}\int_{x}\int_{0}^{t}(\mathbf{1}_{\{u_{2}<u_{1}\}}+\frac{1}{2}\cdot\mathbf{1}_{\{u_{1}=u_{2}\}})\\
 & \quad\quad\cdot\big[(u_{2}(x,s)-\psi(x,s))^{+}\varphi_{\beta}(u_{2})\zeta_{N}(u_{2})-(u_{1}(x,s)-\psi(x,s))^{+}\varphi_{\beta}(u_{1})\zeta_{N}(u_{1})\big]\mathrm{d}s.
\end{align*}
Taking $\beta\to0$, $N\to\infty$, using the dominated convergence theorem and the fact that
$
(u_{2}(x,s)-\psi(x,s))^{+}-(u_{1}(x,s)-\psi(x,s))^{+}\leq(u_{2}(x,s)-u_{1}(x,s))^{+}$ when $\psi\geq0$, we have
\begin{align*}
\lim_{N\to\infty}\lim_{\beta\to0}\lim_{\delta\to0}\lim_{\varsigma\to0}I_{t}^{\mathrm{obs}} & \leq\frac{1}{\varepsilon_{1}}\int_{x}\int_{0}^{t}(u_{2}(x,s)-u_{1}(x,s))^{+}\cdot(\mathbf{1}_{\{u_{2}<u_{1}\}}+\frac{1}{2}\cdot\mathbf{1}_{\{u_{1}=u_{2}\}})\mathrm{d}s\leq0.
\end{align*}
For the cut term $I_{t}^{\mathrm{cut}}$, it takes the form
\begin{align*}
I_{t}^{\mathrm{cut}} & =-\int_{y,\eta}\int_{0}^{t}\int_{\mathbb{R}}\int_{\mathbb{T}^{d}}(1-\chi_{2,s}^{\varsigma,\delta}(y,\eta))\partial_{\eta}\big(\varphi_{\beta}(\eta)\zeta_{N}(\eta)\big)\kappa^{\varsigma,\delta}(x,y,\xi,\eta)\mathrm{d}m_{1}(x,\xi,s)\\
 & \quad+\frac{1}{2}\int_{y,\eta,x}\int_{0}^{t}(1-\chi_{2,s}^{\varsigma,\delta}(y,\eta))\partial_{\eta}\big(\varphi_{\beta}(\eta)\zeta_{N}(\eta)\big)\Big(\sigma(u_{1})\sigma^{\prime}(u_{1})\nabla_{x}u_{1}\cdot F_{2}+\sigma^{2}(u_{1})F_{3}\Big)\bar{\kappa}_{1,s}^{\varsigma,\delta}\mathrm{d}s\\
 & \quad+\int_{y,\eta}\int_{0}^{t}\int_{\mathbb{R}}\int_{\mathbb{T}^{d}}\chi_{1,s}^{\varsigma,\delta}(y,\eta)\partial_{\eta}\big(\varphi_{\beta}(\eta)\zeta_{N}(\eta)\big)\kappa^{\varsigma,\delta}(z,y,\tilde{\xi},\eta)\mathrm{d}m_{2}(z,\tilde{\xi},s)\\
 & \quad-\frac{1}{2}\int_{y,\eta,x}\int_{0}^{t}\chi_{1,s}^{\varsigma,\delta}(y,\eta)\partial_{\eta}\big(\varphi_{\beta}(\eta)\zeta_{N}(\eta)\big)\Big(\sigma(u_{2})\sigma^{\prime}(u_{2})\nabla_{z}u_{2}\cdot F_{2}+\sigma^{2}(u_{2})F_{3}\Big)\bar{\kappa}_{2,s}^{\varsigma,\delta}\mathrm{d}s.
\end{align*}
With the aid of Definition \ref{def:def of stochastic kinetic solution without obstacle} (v) and a property analogous to Proposition \ref{prop:proposition for limit measure}, we have almost surely
\begin{align*}
 & \limsup_{N\to\infty,\beta\to0}\limsup_{\varsigma,\delta\to0}\Bigg|\int_{y,\eta}\int_{0}^{t}\int_{\mathbb{R}}\int_{\mathbb{T}^{d}}(1-\chi_{2,s}^{\varsigma,\delta}(y,\eta))\partial_{\eta}\big(\varphi_{\beta}(\eta)\zeta_{N}(\eta)\big)\kappa^{\varsigma,\delta}(x,y,\xi,\eta)\mathrm{d}m_{1}(x,\xi,s)\Bigg|\\
 & \leq\limsup_{N\to\infty,\beta\to0}C\big(\beta^{-1}m_{1}(\mathbb{T}^{d}\times[\beta/2,\beta]\times[0,T])+m_{1}(\mathbb{T}^{d}\times[N,N+1]\times[0,T])\big)=0,
\end{align*}
and a similar result holds for the term involving $m_{2}$.
For the terms involving
$F_{3}$, based on the boundedness of $F_{3}$, Assumption \ref{assu:assu for existence} and the $L^{2}$-integrability of $\sigma^{2}(u_{1})$,
we have almost surely
\begin{align*}
 & \limsup_{N\to\infty,\beta\to0}\limsup_{\varsigma,\delta\to0}\bigg|\int_{y,\eta,x}\int_{0}^{t}(1-\chi_{2,s}^{\varsigma,\delta}(y,\eta))\partial_{\eta}\big(\varphi_{\beta}(\eta)\zeta_{N}(\eta)\big)\sigma^{2}(u_{1})F_{3}\bar{\kappa}_{1,s}^{\varsigma,\delta}\mathrm{d}s\bigg|\\
&+\limsup_{N\to\infty,\beta\to0}\limsup_{\varsigma,\delta\to0}\bigg|\int_{y,\eta,x}\int_{0}^{t}\chi_{1,s}^{\varsigma,\delta}(y,\eta)\partial_{\eta}\big(\varphi_{\beta}(\eta)\zeta_{N}(\eta)\big)\sigma^{2}(u_{2})F_{3}\bar{\kappa}_{2,s}^{\varsigma,\delta}\mathrm{d}s\bigg|\\
 & \leq C\limsup_{N\to\infty,\beta\to0}\bigg[\bigg|\beta^{-1}\int_{0}^{t}\int_{x}\mathbf{1}_{\{\beta/2\leq u_{1}\leq\beta\}}\sigma^{2}(u_{1})F_{3}\mathrm{d}s\bigg|+\bigg|\int_{0}^{t}\int_{x}\mathbf{1}_{\{N\leq u_{1}\leq N+1\}}\sigma^{2}(u_{1})F_{3}\mathrm{d}s\bigg|\bigg]\\
 & =0.
\end{align*}
For terms involving $F_{2}$, based on the definition of the function
$\kappa^{\varsigma,\delta}$, we have
\begin{align*}
 & \frac{1}{2}\int_{y,\eta,x}\int_{0}^{t}(1-\chi_{2,s}^{\varsigma,\delta}(y,\eta))\partial_{\eta}\big(\varphi_{\beta}(\eta)\zeta_{N}(\eta)\big)\sigma(u_{1})\sigma^{\prime}(u_{1})\nabla_{x}u_{1}\cdot F_{2}\bar{\kappa}_{1,s}^{\varsigma,\delta}\mathrm{d}s\\
 & -\frac{1}{2}\int_{y,\eta,x}\int_{0}^{t}\chi_{1,s}^{\varsigma,\delta}(y,\eta)\partial_{\eta}\big(\varphi_{\beta}(\eta)\zeta_{N}(\eta)\big)\sigma(u_{2})\sigma^{\prime}(u_{2})\nabla_{z}u_{2}\cdot F_{2}\bar{\kappa}_{2,s}^{\varsigma,\delta}\mathrm{d}s\\
 & =\frac{1}{2}\int_{y,\eta,x}\int_{0}^{t}(1-\chi_{2,s}^{\varsigma,\delta}(y,\eta))\big(\varphi_{\beta}^{\prime}(u_{1})+\zeta_{N}^{\prime}(u_{1})\big)\sigma(u_{1})\sigma^{\prime}(u_{1})\nabla_{x}u_{1}\cdot F_{2}\bar{\kappa}_{1,s}^{\varsigma,\delta}\mathrm{d}s\\
 & \quad-\frac{1}{2}\int_{y,\eta,x}\int_{0}^{t}\chi_{1,s}^{\varsigma,\delta}(y,\eta)\big(\varphi_{\beta}^{\prime}(u_{2})+\zeta_{N}^{\prime}(u_{2})\big)\sigma(u_{2})\sigma^{\prime}(u_{2})\nabla_{z}u_{2}\cdot F_{2}\bar{\kappa}_{2,s}^{\varsigma,\delta}\mathrm{d}s+C(\beta,N)\delta\\
 =:& I_{t}^{\mathrm{cut},1}+C(\beta,N)\delta.
\end{align*}
Then, using \eqref{eq: limit of conv} and Stampacchia's lemma \cite[Chapter 5, Exercises 17 and 18]{evans1998partial}, we have
\begin{align*}
  \lim_{\varsigma,\delta\to0}I_{t}^{\mathrm{cut},1}
 & =\frac{1}{2}\int_{0}^{t}\int_{x}\mathbf{1}_{\{u_{1}>u_{2}\}}\big(\varphi_{\beta}^{\prime}(u_{1})+\zeta_{N}^{\prime}(u_{1})\big)\sigma(u_{1})\sigma^{\prime}(u_{1})\nabla_{x}u_{1}\cdot F_{2}\mathrm{d}s\\
 & \quad-\frac{1}{2}\int_{0}^{t}\int_{x}\mathbf{1}_{\{u_{1}>u_{2}\}}\big(\varphi_{\beta}^{\prime}(u_{2})+\zeta_{N}^{\prime}(u_{2})\big)\sigma(u_{2})\sigma^{\prime}(u_{2})\nabla_{x}u_{2}\cdot F_{2}\mathrm{d}s\\
 & =\frac{1}{4\beta}\int_{0}^{t}\int_{x}(\mathrm{sgn}(u_1-u_2)+1)\nabla_{x}\big(\sigma^{2}((u_{1}\land\beta)\vee\beta/2)-\sigma^{2}(\beta/2)\big)\cdot F_{2}\mathrm{d}s\\
 & \quad-\frac{1}{4\beta}\int_{0}^{t}\int_{x}(\mathrm{sgn}(u_1-u_2)+1)\nabla_{x}\big(\sigma^{2}((u_{2}\land\beta)\vee\beta/2)-\sigma^{2}(\beta/2)\big)\cdot F_{2}\mathrm{d}s\\
 & \quad+\frac{1}{8}\int_{0}^{t}\int_{x}(\mathrm{sgn}(u_1-u_2)+1)\nabla_{x}\big(\sigma^{2}((u_{1}\land(N+1))\vee N)-\sigma^{2}(N)\big)\cdot F_{2}\mathrm{d}s\\
 & \quad-\frac{1}{8}\int_{0}^{t}\int_{x}(\mathrm{sgn}(u_1-u_2)+1)\nabla_{x}\big(\sigma^{2}((u_{2}\land(N+1))\vee N)-\sigma^{2}(N)\big)\cdot F_{2}\mathrm{d}s.
\end{align*}
Then, based on Assumption \ref{assu:assu for existence} for $\sigma$ and Assumption \ref{assu:assum for F} for $F_2$,
taking the limits $\beta\to0$, $N\to\infty$, and combining
the estimates for the terms involving $m_{1}$, $m_{2}$ and $F_{3}$, we
have
\[
\lim_{N\to\infty}\lim_{\beta\to0}\lim_{\delta\to0}\lim_{\varsigma\to0}I_{t}^{\mathrm{cut}}=0.
\]
For the martingale term $I_{t}^{\mathrm{mart}}$ and conservative term $I_{t}^{\mathrm{cons}}$, we have
\begin{align*}
I_{t}^{\mathrm{mart}} & =\int_{y,\eta,x}\int_{0}^{t}(\chi_{2,s}^{\varsigma,\delta}(y,\eta)-1)\varphi_{\beta}(\eta)\zeta_{N}(\eta)\bar{\kappa}_{1,s}^{\varsigma,\delta}(x,y,\eta)\nabla_{x}\cdot(\sigma(u_{1})f_{k})\mathrm{d}B_{s}^{k}\\
 & \quad+\int_{y,\eta,z}\int_{0}^{t}\chi_{1,s}^{\varsigma,\delta}(y,\eta)\varphi_{\beta}(\eta)\zeta_{N}(\eta)\bar{\kappa}_{2,s}^{\varsigma,\delta}(z,y,\eta)\nabla_{z}\cdot(\sigma(u_{2})f_{k})\mathrm{d}B_{s}^{k},
\end{align*}
and
\begin{align*}
I_{t}^{\mathrm{cons}} & =\int_{y,\eta,x}\int_{0}^{t}(\chi_{2,s}^{\varsigma,\delta}(y,\eta)-1)\varphi_{\beta}(\eta)\zeta_{N}(\eta)\bar{\kappa}_{1,s}^{\varsigma,\delta}(x,y,\eta)\nabla_{x}\cdot g(u_{1})\mathrm{d}s\\
 & \quad+\int_{y,\eta,z}\int_{0}^{t}\chi_{1,s}^{\varsigma,\delta}(y,\eta)\varphi_{\beta}(\eta)\zeta_{N}(\eta)\bar{\kappa}_{2,s}^{\varsigma,\delta}(z,y,\eta)\nabla_{z}\cdot g(u_{2})\mathrm{d}s.
\end{align*}
Following a similar procedure in estimating $I_{t}^{\mathrm{cut}}$, we can replace $\varphi_{\beta}(\eta)\zeta_{N}(\eta)$
with $\varphi_{\beta}(u_{i})\zeta_{N}(u_{i})$ and apply \eqref{eq: limit of conv}
and Stampacchia's lemma \cite[Chapter 5, Exercises 17, 18]{evans1998partial} to
obtain
\begin{align*}
\lim_{\delta\to0}\lim_{\varsigma\to0}I_{t}^{\mathrm{mart}} & =-\frac{1}{2}\int_{x,\eta}\int_{0}^{t}(\mathrm{sgn}(u_1-u_2)+1)\varphi_{\beta}(u_{1})\zeta_{N}(u_{1})\nabla_{x}\cdot(\sigma(u_{1})f_{k})\mathrm{d}B_{s}^{k}\\
 & \quad+\frac{1}{2}\int_{x,\eta}\int_{0}^{t}(\mathrm{sgn}(u_1-u_2)+1)\varphi_{\beta}(u_{2})\zeta_{N}(u_{2})\nabla_{x}\cdot(\sigma(u_{2})f_{k})\mathrm{d}B_{s}^{k},
\end{align*}
and
\begin{align*}
\lim_{\delta\to0}\lim_{\varsigma\to0}I_{t}^{\mathrm{cons}} & =-\frac{1}{2}\int_{x,\eta}\int_{0}^{t}(\mathrm{sgn}(u_1-u_2)+1)\varphi_{\beta}(u_{1})\zeta_{N}(u_{1})\nabla_{x}\cdot g(u_{1})\mathrm{d}s\\
 & \quad+\frac{1}{2}\int_{x,\eta}\int_{0}^{t}(\mathrm{sgn}(u_1-u_2)+1)\varphi_{\beta}(u_{2})\zeta_{N}(u_{2})\nabla_{x}\cdot g(u_{2})\mathrm{d}s,
\end{align*}
Following \cite[the proof of Theorem 4.6]{fehrman2024well}, using the $L^2$ integrability of $\sigma(u_i)$ and $L^1$ integrability of $g(u_i)$, with dominated convergence theorem, we obtain almost surely for every $t\in [0,T]$,
\[
\lim_{N\to\infty}\lim_{\beta\to0}\lim_{\delta\to0}\lim_{\varsigma\to0}I_{t}^{\mathrm{mart}}=0,\quad\text{and}\quad\lim_{N\to\infty}\lim_{\beta\to0}\lim_{\delta\to0}\lim_{\varsigma\to0}I_{t}^{\mathrm{cons}}=0.
\]

For the estimates of error term $I_{t}^{\mathrm{err}}$ and measure
term $I_{t}^{\mathrm{meas}}$, they can be done by a similar method to \cite[the proof of Theorem 4.6]{fehrman2024well}. Then, we reach a.s. for every $t\in [0,T]$,
\begin{align*}
  \lim_{\delta\to0}\lim_{\varsigma\to0}I_{i,t}^{\mathrm{err}}=0,\quad\text{and}\quad
  I_{i,t}^{\mathrm{meas}}\leq0.
\end{align*}
 Combining
all the above estimates, we arrive at
\begin{align*}
 & \int_{x,\xi}\chi_{1}(x,\xi,s)(1-\chi_{2}(x,\xi,s))\Big|_{s=0}^{t}\\
 & \lim_{N\to\infty}\lim_{\beta\to0}\lim_{\delta\to0}\lim_{\varsigma\to0}\int_{y,\eta}\left.\big(\chi_{1,s}^{\varsigma,\delta}(y,\eta)-\chi_{1,s}^{\varsigma,\delta}(y,\eta)\chi_{2,s}^{\varsigma,\delta}(y,\eta)\big)\varphi_{\beta}(\eta)\zeta_{N}(\eta)\right|_{s=0}^{t}\\
 & =\lim_{N\to\infty}\lim_{\beta\to0}\lim_{\delta\to0}\lim_{\varsigma\to0}\big(I_{t}^{\mathrm{obs}}+I_{t}^{\mathrm{err}}+I_{t}^{\mathrm{meas}}+I_{t}^{\mathrm{cut}}+I_{t}^{\mathrm{mart}}+I_{t}^{\mathrm{cons}}\big)\leq0.
\end{align*}
Therefore, we have
\[
\int_{x}(u_{1}-u_{2})^{+}(t)=\int_{x,\xi}\chi_{1}(x,\xi,t)(1-\chi_{2}(x,\xi,t))\leq0,
\]
which completes the proof.
\end{proof}

\begin{thm}\label{thm:existence}
(Existence of solutions to the obstacle problem for \eqref{eq:main})
Suppose that Assumption \ref{assu:assu for existence} and Assumption \ref{assu:assum for F} hold.
Then, there exists a stochastic kinetic solution $(u,\nu)$ of \eqref{eq:main}
in the sense of Definition \ref{def:def of stochastic kinetic solution}.
\end{thm}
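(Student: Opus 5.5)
The proof will pass to the limit $\varepsilon\to0$ in the penalized solutions $u_\varepsilon$ given by Lemma \ref{lem:(Existence-of-solution for penalized eqn }.

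\textbf{Step 1: convergence of $u_\varepsilon$.} By Lemma \ref{lem:(Comparison-theorem)}, $u_\varepsilon$ is almost surely nonincreasing as $\varepsilon\downarrow0$. By Proposition \ref{prop:(Nonnegativity)} (which passes to the limit in $\alpha,n$) it is nonnegative. Hence $u_\varepsilon\downarrow u$ pointwise a.e. on $\Omega\times Q_T$, without any tightness argument. The bound \eqref{eq:priori for u epsilon} gives $\mathbb{E}\|(u_\varepsilon-\psi)^+\|_{L^2(Q_T)}^2\le C\varepsilon$, so $u\le\psi$. The uniform $L^p$ bounds \eqref{eq=00FF1Apriori p varepsilon} with $p>2$, together with Vitali's theorem, upgrade the convergence to $L^1(\Omega\times Q_T)$ and give $\sigma^2(u_\varepsilon)\to\sigma^2(u)$ and $g(u_\varepsilon)\to g(u)$ in $L^1$. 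Weak lower semicontinuity yields (iv), (v) and $\nabla\llbracket\sqrt{\Phi'}\rrbracket(u)\in L^2$. Predictability of $u$ is inherited as a pointwise limit.

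\textbf{Step 2: compactness of the measures.} The measures $\nu_\varepsilon=\varepsilon^{-1}(u_\varepsilon-\psi)^+\,\mathrm{d}x\mathrm{d}t$, $m_\varepsilon$ and $\lambda_\varepsilon$ (defined in Section \ref{sec:Pre}) are bounded in expectation in total variation by \eqref{eq:priori for u epsilon}. Using the $L^1$ identity \eqref{eq:L1 perservation without obstacle}, $\nu_\varepsilon(Q_T)$ is even almost surely bounded by $\|u_{\mathrm{init}}\|_{L^1}$. I plan to follow \cite{fehrman2024well} and take weak-$*$ limits in $L^2(\Omega;\mathcal M)$ (or in $L^1(\Omega)$-weak topology against $C_c$ test functions). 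This produces $\nu$, $m$, $\lambda$ along a subsequence, and I set $q=m+\lambda$. Predictability follows because weak limits of predictable processes in $L^2(\Omega\times[0,T])$ are predictable. Condition (vi) follows from lower semicontinuity of $\int\delta_0(\xi-u)\Phi'|\nabla u|^2\phi$, as in \cite{fehrman2024well}. Condition (vii) follows from \eqref{eq:penalty kinetic priori esti}, whose right-hand side is uniform in $\varepsilon$ and vanishes as $N\to\infty$ because $u_{\mathrm{init}}\le\psi\le M$ and $\sigma(u_\varepsilon)$ is uniformly integrable. The bound for $\lambda_\varepsilon$ on $[N,N+1]$ comes from the same estimate, since its $\theta$-integral term is exactly $\lambda_\varepsilon(\mathbb T^d\times(N_1,N_2)\times[0,T])$.

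\textbf{Step 3: identify the limit equation.} In \eqref{eq:kinetic for penalty} I rewrite the penalty term using the identity \eqref{eq:lambda-epsilon-action}:
\[
\int\phi(x,u_\varepsilon,s)\nu_\varepsilon=\int\phi(x,\psi,s)\nu_\varepsilon+\int\partial_\xi\phi\,\mathrm{d}\lambda_\varepsilon .
\]
Since $\phi(x,\psi(x,s),s)$ is continuous on $Q_T$ ($\psi$ is continuous by Assumption \ref{assu:assum for F}), the first term converges to $\int\phi(x,\psi,s)\,\mathrm{d}\nu$. The second term merges with $m_\varepsilon$ into $q_\varepsilon$. The remaining terms are handled exactly as in \cite{fehrman2024well}: the gradient terms converge because $\phi$ is compactly supported in $\xi>0$, so $u_\varepsilon$ is truncated away from $0$ and $\infty$, where $H^1$ bounds hold; the stochastic integral converges in probability through $L^2$ convergence of its integrand. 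This gives (viii). Passing to the limit in \eqref{eq:L1 perservation without obstacle} gives (ii).

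\textbf{Main obstacle: the initial condition (iii).} Since $\nu$ may charge $\{t=0\}$ a priori, I would argue with the $L^1$ identity. For each fixed $\varepsilon$, $u_\varepsilon$ is continuous in $L^1$ with $u_\varepsilon(0)=u_{\mathrm{init}}$, and $u\le u_\varepsilon$. Hence
\[
\int|u-u_{\mathrm{init}}|\le\int(u_\varepsilon-u)+\int|u_\varepsilon-u_{\mathrm{init}}|.
\]
For the first term, mass conservation and monotonicity give
\[
\int_x(u_\varepsilon-u)(t)=\nu((0,t])-\nu_\varepsilon((0,t])\le\nu([0,t])\ \text{ in the time-averaged sense.}
\]
So it suffices to prove $\nu(\mathbb T^d\times\{0\})=0$, which is the estimate \eqref{eq:limit for N q} alluded to earlier. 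I would obtain it from
\[
\limsup_{\varepsilon\to0}\mathbb{E}\,\nu_\varepsilon([0,\tau]\times\mathbb T^d)\le\mathbb{E}\int_x\big(u_{\mathrm{init}}-u_\varepsilon(\tau)\big)^+\ \text{-type bounds}.
\]
The idea is to compare $u_\varepsilon$ with the solution of the unpenalized problem, which stays below $\psi+o(1)$ for short times by continuity of $\psi$ and $u_{\mathrm{init}}\le\psi(0)$. Concretely, I would test the penalized energy identity \eqref{eq:energy preservation for penalty solution} on $[0,\tau]$ and use that $\int\psi\,\nu_\varepsilon$ is small when $\tau$ is small uniformly in $\varepsilon$. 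Making this uniformity rigorous is the delicate point. Once $\nu(\{0\})=0$ holds, averaging over $[0,\tau]$ and sending $\tau\to0$ yields (iii).
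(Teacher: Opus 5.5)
Your Steps 1--3 take a different but workable route from the paper. Monotonicity already gives $u_\varepsilon\downarrow u$ on the original probability space, and $\nu$, $m$, $\lambda$ enter (ii) and (viii) only linearly. So weak-$*$ limits of the measures in $L^2(\Omega;\mathcal M)$, as in Debussche--Vovelle, are enough. This bypasses the Jakubowski--Skorokhod representation, the Gy\"ongy--Krylov argument, and with them the use of Theorem \ref{thm:uniqueness} inside the existence proof. Your reduction of (iii) to $\nu(\mathbb T^d\times\{0\})=0$, via $u\le u_\varepsilon$, the $L^1$-continuity of $u_\varepsilon$ and the two mass identities, is also correct.

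\textbf{The gap.} You never prove $\nu(\mathbb T^d\times\{0\})=0$, and this is the whole difficulty: weak-$*$ convergence does not prevent $\nu_\varepsilon$ from concentrating mass at $t=0$ (this is the point of Remark \ref{rem:measure 0,right initial}). None of your suggestions closes it.
\begin{itemize}
\item Since $\nu_\varepsilon(\mathbb T^d\times[0,\tau])=\int_x u_{\mathrm{init}}-\int_x u_\varepsilon(\tau)$ exactly, the bound you display is a tautology. What is needed is smallness uniform in $\varepsilon$.
\item A bound of the form ``the unpenalized solution stays below $\psi+o(1)$ for short times'' is not available. With conservative multiplicative noise there is no pointwise comparison with continuous data, solutions are only $L^1$-continuous in time, and $u_{\mathrm{init}}$ may equal $\psi(\cdot,0)$ on a set of positive measure.
\item The penalized energy identity \eqref{eq:energy preservation for penalty solution} on $[0,\tau]$ only bounds $2\mathbb E\int_0^\tau\!\int\psi\,\mathrm d\nu_\varepsilon$ by $\mathbb E\|u_{\mathrm{init}}\|_{L^2}^2-\mathbb E\|u_\varepsilon(\tau)\|_{L^2}^2+o(1)$. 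You would therefore need $\mathbb E\|u_\varepsilon(\tau)\|_{L^2}^2\ge\mathbb E\|u_{\mathrm{init}}\|_{L^2}^2-o(1)$ uniformly in $\varepsilon$, which is again the continuity at $t=0$ you are after. Moreover, since $\psi$ may vanish, smallness of $\int\psi\,\mathrm d\nu_\varepsilon$ would not even control $\nu_\varepsilon$.
\end{itemize}
Also, \eqref{eq:limit for N q} is not this statement; it says $\liminf_{N\to\infty}Nq(\mathbb T^d\times[N,N+1]\times[0,T/2])=0$.

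\textbf{The missing idea.} Read the initial trace off the limit kinetic equation, using the sign of the obstacle term. Formally, test the jump relation \eqref{eq:right left differnt in t} at $t=0$ (where $\chi(0)=\bar\chi(u_{\mathrm{init}},\cdot)$) with $2(\xi-u_{\mathrm{init}}(x))$. For the right trace $u(0^+)$ this gives
\[
\int_{\mathbb T^d}|u(0^+)-u_{\mathrm{init}}|^2\,\mathrm dx=-2q(\mathbb T^d\times(0,\infty)\times\{0\})-2\int_{\mathbb T^d}\big(\psi(x,0)-u_{\mathrm{init}}(x)\big)\,\nu(\mathrm dx,\{0\})\le0,
\]
because $u_{\mathrm{init}}\le\psi(\cdot,0)$.

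Lemma \ref{lem:converge to u init} makes this rigorous:
\begin{itemize}
\item Test (viii) with $\gamma(t)\varphi_\beta(\xi)\zeta_N(\xi)(\xi-u_{\mathrm{init}}(y))\kappa_d^\varsigma(x-y)$, where $\gamma$ is a cutoff of slope $-1/\tau$ near $0$, and use $|u-r|^2-r^2=2\int(\xi-r)\chi\,\mathrm d\xi$.
\item Bound the $\nu$-term by the modulus of continuity of $\psi$ times $\nu(Q_T)$, since $\psi(x,t)-u_{\mathrm{init}}(y)\ge\psi(x,t)-\psi(y,0)$.
\item Discard the bulk $q$-term by its sign.
\item Control the cutoff errors with Proposition \ref{prop:proposition for limit measure} and \eqref{eq:limit for N q}.
\end{itemize}
The estimate \eqref{eq:limit for N q} itself comes from the energy identity \eqref{eq:preservation of energy in def}, obtained by passing \eqref{eq:energy preservation for penalty solution} to the limit. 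That passage is linear in $(\nu_\varepsilon,q_\varepsilon)$ and stated in expectation, so it fits your weak-$*$ framework as well. It yields (iii) directly, without the detour through $\nu(\mathbb T^d\times\{0\})=0$.
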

\begin{rem}
The proof of Theorem \ref{thm:existence} is different from \cite[the proof of Theorem 2.4]{du2024well}, owing to a crucial observation: when passing to the limit $\varepsilon\to0$ in \eqref{eq:kinetic for penalty}, it is infeasible to take a test function that is independent of $\xi$ by approximation as the terms involving $\sigma^\prime$ may become unbounded.
Consequently, it is difficult to eliminate the contribution of the kinetic measure $m_\varepsilon$ and to prove the almost sure convergence of the penalty term $\varepsilon^{-1}(u_\varepsilon-\psi)^+$.
Instead,
within the current framework, we can derive the tightness of the laws of the penalization term $\varepsilon^{-1}(u_\varepsilon-\psi)^+$ in the space of Radon measures, by using estimate \eqref{eq:priori for u epsilon} and Prokhorov's theorem.
This necessitates an additional step in the proof, that is, we have to employ the Skorokhod representation theorem to work on a new probability space $\tilde{\Omega}$, where we can obtain the almost sure convergence of both the penalization term and kinetic measure as $\varepsilon\to0$.
Combining this with the uniqueness result (Theorem \ref{thm:uniqueness}) and Gy\"{o}ngy-Krylov method, we then conclude that $(u_\varepsilon,{\varepsilon}^{-1}(u_\varepsilon-\psi)^+)$ converges in probability on the original probability space $\Omega$.
\end{rem}
\begin{proof}[Proof of Theorem \ref{thm:existence}]
Let $\{\varepsilon_{l}\}_{l\in\mathbb{N}}$ be a monotonic sequence
decreasing to $0$.
According to Lemma \ref{lem:(Existence-of-solution for penalized eqn }, the penalized equation \eqref{eq:penalized eq} with penalization coefficient $\varepsilon_{l}$ has a stochastic kinetic solution $u_{\varepsilon_{l}}$ in the sense of Definition \ref{def:Weak solution approximating penalized}.
By Lemma \ref{lem:(Comparison-theorem)} and Proposition \ref{prop:(Nonnegativity)}, the sequence $\{u_{\varepsilon_{l}}\}_{l\in\mathbb{N}}$ is almost surely decreasing as $l\to\infty$, and thus admits a pointwise limit $u$ that is almost surely non-negative. Moreover, combining estimate \eqref{eq:priori for u epsilon} with the dominated convergence theorem, we obtain strong convergence of the sequence in $L^{2}(\Omega\times[0,T];L^{2}(\mathbb{T}^{d}))$.
Meanwhile, \eqref{eq:priori for u epsilon} also indicates
\begin{align}
 & \mathbb{E}\Vert(u_{\varepsilon_l}-\psi)^{+}\Vert_{L^{1}(Q_{T})}\leq C\varepsilon_l\Big(1+\mathbb{E}\Vert u_{\mathrm{init}}\Vert_{L^{2}(\mathbb{T}^{d})}^{2}+\mathbb{E}\Vert u_{\mathrm{init}}\Vert_{L^{1}(\mathbb{T}^{d})}^{m+1}\Big)\to0,\quad\text{when }l\to\infty,\label{eq:priori-1}
\end{align}
and the weak convergence of $\llbracket\sqrt{\Phi^{\prime}}\rrbracket(u_{\varepsilon_{l}})$
in $L^{2}(\Omega\times[0,T];H^{1}(\mathbb{T}^d))$ by taking a subsequence.
Using a test function argument (see \cite[the proof of Theorem 2.7]{dareiotis2020nonlinear}) and the pointwise convergence of $u_{\varepsilon_{l}}$, the weak limit of $\llbracket\sqrt{\Phi^{\prime}}\rrbracket(u_{\varepsilon_{l}})$ is
$\llbracket\sqrt{\Phi^{\prime}}\rrbracket(u)$ when $l\to\infty$, and we have
\begin{align*}
 & \sup_{t\in[0,T]}\mathbb{E}\Vert u(t)\Vert_{L^{2}(\mathbb{T}^{d})}^{2}+\mathbb{E}\Vert\nabla_{x}\llbracket\sqrt{\Phi^{\prime}}\rrbracket(u)\Vert_{L^{2}(Q_{T})}^{2}\leq C\Big(1+\mathbb{E}\Vert u_{\mathrm{init}}\Vert_{L^{2}(\mathbb{T}^{d})}^{2}+\mathbb{E}\Vert u_{\mathrm{init}}\Vert_{L^{1}(\mathbb{T}^{d})}^{m+1}\Big),
\end{align*}
and
\begin{align*}
 & \sup_{t\in[0,T]}\mathbb{E}\Vert u(t)\Vert_{L^{p}(\mathbb{T}^{d})}^{p}\leq C\Big(1+\mathbb{E}\Vert u_{\mathrm{init}}\Vert_{L^{p}(\mathbb{T}^{d})}^{p}+\mathbb{E}\Vert u_{\mathrm{init}}\Vert_{L^{1}(\mathbb{T}^{d})}^{m+p-1}\Big).
\end{align*}
Define the measures $\mathrm{d}\nu_{l}(x,t):=\varepsilon_{l}^{-1}(u_{\varepsilon_{l}}-\psi)^{+}\mathrm{d}x\mathrm{d}t$ and
\[
\mathrm{d}\lambda_{\varepsilon_l}(x,\xi,t):=\int_{0}^{1}\frac{1}{\varepsilon_l}[(u_{\varepsilon_l}-\psi)^{+}]^{2}\delta_{0}(\xi-(\psi+\theta(u_{\varepsilon_l}-\psi)))\mathrm{d}\theta \mathrm{d}x\mathrm{d}\xi\mathrm{d}t.
\]
Since for every $\phi\in C_c^\infty(\mathbb{T}^d\times(0,\infty)\times[0,T))$ and $\varepsilon_{l}\in(0,1)$,
\begin{align*}
 & (\phi(x,u_{\varepsilon_l},t)-\phi(x,\psi,t))\frac{1}{\varepsilon_l}(u_{\varepsilon_l}-\psi)^{+}=\int_{0}^{1}\partial_\xi \phi(x,\psi+\theta(u_{\varepsilon_l}-\psi),t)\mathrm{d}\theta\frac{1}{\varepsilon}[(u_{\varepsilon_l}-\psi)^{+}]^{2},
\end{align*}
we have
\begin{align*}
&\int_0^{T}\int_{\mathbb{T}^d}\phi(x,u_{\varepsilon_l},t)d\nu_l(x,\xi,t)\\
&=
\int_0^{T}\int_{\mathbb{T}^d}\phi(x,\psi,t)d\nu_l(x,\xi,t)
+\int_0^{T}\int_\mathbb{R}\int_{\mathbb{T}^d}\partial_\xi\phi(x,\xi,t)\mathrm{d}\lambda_{\varepsilon_l}(x,\xi,t).
\end{align*}
Define the measure $q_l:=m_{\varepsilon_l}+\lambda_{\varepsilon_l}$.
Since the measure $\lambda_{\varepsilon_l}$ is nonnegative, the kinetic measure $q_l$ also satisfies
\begin{equation}\label{eq:kinetic for ql}
\delta_{0}(\xi-u_{\varepsilon_l})\Phi^{\prime}(\xi)|\nabla u_{\varepsilon_l}|^{2}\leq m_{\varepsilon_l}\leq q_l\quad\text{on}\quad\mathbb{T}^{d}\times(0,\infty)\times[0,T].
\end{equation}
Using \eqref{eq:priori for u epsilon} and \eqref{eq:penalty kinetic priori esti}, we have
\begin{align}
&\mathbb{E}q_{l}(\mathbb{T}^d\times\mathbb{R}\times[0,T])+\mathbb{E}\Vert\varepsilon^{-1}(u_{\varepsilon}-\psi)^{+}\Vert_{L^{1}(Q_{T})}\leq C\Big(1+\mathbb{E}\Vert u_{\mathrm{init}}\Vert_{L^{2}(\mathbb{T}^{d})}^{2}+\mathbb{E}\Vert u_{\mathrm{init}}\Vert_{L^{1}(\mathbb{T}^{d})}^{m+1}\Big),\label{eq:priori for q}
\end{align}
and
\begin{align}
 & \mathbb{E}q_l(\mathbb{T}^d\times(N_1,N_2)\times[0,T])  \label{eq:kinetic priori esti}\\
 &\leq C(N_2-N_1)\mathbb
 {E}\int_x(u_{\mathrm{init}}-N_1)^++C\mathbb{E}\int_{x,t}\mathbf{1}_{\{u_{\varepsilon_l}\geq N_{1}\}}|\sigma(u_{\varepsilon_l}\land N_2)|^{2}.\nonumber
 \end{align}
Now, we focus on the limits of $\nu_{l}$ and $q_{l}$.
Let $s>d/2+1$.
For every $\phi\in C_c^\infty(\mathbb{T}^d\times(0,\infty)\times[0,T))$ and $l\in\mathbb{N}$, we define
\begin{align}
M_{\phi,l}(\tau):=
& \int_{0}^{\tau}\int_{\mathbb{R}}\int_{\mathbb{T}^d}\chi_{\varepsilon_l}(x,\xi,t)\partial_t\phi(x,\xi,t)\mathrm{d}x\mathrm{d}\xi\mathrm{d}t\label{eq:Def for stochastic M}\\
 & +\int_{\mathbb{R}}\int_{\mathbb{T}^{d}}\bar{\chi}(u_{\mathrm{init}},\xi)\phi(x,\xi,0)\mathrm{d}x\mathrm{d}\xi-\int_{0}^{\tau}\int_{\mathbb{T}^{d}}\Phi^{\prime}(u_{\varepsilon_l})\nabla_x u_{\varepsilon_l}\cdot(\nabla_x\phi)(x,u_{\varepsilon_l},t)\mathrm{d}x\mathrm{d}t\nonumber \\
 & \quad-\frac{1}{2}\int_{0}^{\tau}\int_{\mathbb{T}^{d}}\big(F_{1}(x)[\sigma^{\prime}(u_{\varepsilon_l})]^{2}\nabla_x u_{\varepsilon_l}+\sigma(u_{\varepsilon_l})\sigma^{\prime}(u_{\varepsilon_l})F_{2}(x)\big)\cdot(\nabla_x\phi)(x,u_{\varepsilon_l},t)\mathrm{d}x\mathrm{d}t\nonumber\\
 &\quad-\int_0^{\tau}\int_{\mathbb{T}^d}\phi(x,u_{\varepsilon_l},t)\nabla_x\cdot g(u_{\varepsilon_l})\mathrm{d}x\mathrm{d}t\nonumber\\
 & \quad-\int_{0}^{\tau}\int_{\mathbb{R}}\int_{\mathbb{T}^{d}}\partial_{\xi}\phi(x,\xi,t)\mathrm{d}q_l(x,\xi,t)-\int_{0}^{\tau}\int_{\mathbb{T}^{d}}\phi(x,\psi,t)\mathrm{d}\nu_l(x,t)\nonumber \\
 & \quad+\frac{1}{2}\int_{0}^{\tau}\int_{\mathbb{T}^{d}}\Big(\sigma(u_{\varepsilon_l})\sigma^{\prime}(u_{\varepsilon_l})\nabla_x u_{\varepsilon_l}\cdot F_{2}(x)+\sigma^{2}(u_{\varepsilon_l})F_{3}(x)\Big)(\partial_{\xi}\phi)(x,u_{\varepsilon_l},t)\mathrm{d}x\mathrm{d}t.\nonumber
\end{align}
Let $\{\phi_j\}_{j\in\mathbb{N}}$ be a countable sequence which is dense in $C_c^\infty(\mathbb{T}^d\times(0,\infty)\times[0,T))$ in the strong $H^s(\mathbb{T}^d\times(0,\infty)\times[0,T))$-topology. We define the random variables
\[
X_l:=(u_{\varepsilon_l},\nabla\llbracket\sqrt{\Phi^\prime}\rrbracket(u_{\varepsilon_l}),q_l,\nu_l,\{M_{\phi_j,l}\}_{j\in\mathbb{N}}),
\]
taking values in the space
\[
\bar{X}:=L^1(Q_{T})\times L^2(Q_{T};\mathbb{R}^d)\times\mathcal{M}(\mathbb{T}^d\times\mathbb{R}\times[0,T])\times\mathcal{M}(Q_T)\times C([0,T])^{\mathbb{N}},
\]
with the product metric topology induced by the strong topology in $L^1(Q_T)$, the weak topology in $L^2(Q_T;\mathbb{R}^d)$, the weak topology in the space of nonnegative Borel measures $\mathcal{M}(\mathbb{T}^d\times\mathbb{R}\times[0,T])$, the weak topology in the space of nonnegative Borel measures $\mathcal{M}(Q_T)$, and the topology of component-wise convergence in the strong norm in $C([0,T])^{\mathbb{N}}$ induced by the metric
\[
d_C(\{f_j\}_{j\in\mathbb{N}},\{\tilde{f}_j\}_{j\in\mathbb{N}})=\sum_{j=1}^\infty 2^{-j}\frac{\Vert f_j-\tilde{f}_j\Vert_{C([0,T])}}{1+\Vert f_j-\tilde{f}_j\Vert_{C([0,T])}}.
\]
Based on the almost sure convergence of $u_{\varepsilon_l}$ and \eqref{eq:priori for u epsilon}, the laws of $X_l$ are tight on $\bar{X}$.

In the following, our aim is to use \cite[Lemma 1.1]{gyongy2022existence} to give the convergence in probability of $\{\nu_l\}_{l\in\mathbb{N}}$.
Let $\{l_h\}_{h\in\mathbb{N}}$ and $\{\bar{l}_h\}_{h\in\mathbb{N}}$ be two subsequences satisfying $l_h,\bar{l}_h\to\infty$ as $h\to\infty$.
Using Prokhorov's theorem (see e.g. \cite[Chapter 1, Theorem 5.1]{billingsley1999convergence}), there exists a (non-relabelled) subsequence $\{X_{l_h},X_{\bar{l}_h}\}$ such that the laws of $(X_{l_h},X_{\bar{l}_h},B,u_{\mathrm{init}})$ on $\bar{Y}=\bar{X}\times\bar{X}\times C([0,T])^{\mathbb{N}}\times L^1(\mathbb{T}^d)$ are weak convergent, where $B=\{B^k\}_{k\in\mathbb{N}}$.
Note that the separability of the space $\bar{X}$ implies that of the space $\bar{Y}$.
Using Jakubowski-Skorokhod representation theorem (see e.g. \cite[Theorem 2]{jakubowski1998almost}), there exist $\bar{Y}$-valued random variables $(\tilde{Y},\tilde{Z},\tilde{\beta},\tilde{u}_{\mathrm{init}})$ and $(\tilde{Y}_h,\tilde{Z}_h,\tilde{\beta}_h,\tilde{u}_{\mathrm{init},h})$,
 ${h\in\mathbb{N}}$, in a probability space $(\tilde\Omega,\tilde{\mathcal{F}},\tilde{\mathbb{P}})$ such that for every $h\in\mathbb{N}$, as random variables in $\bar{Y}$,
\begin{equation}\label{eq:equal for distribution}
(\tilde{Y}_h,\tilde{Z}_h,\tilde{\beta}_h,\tilde{u}_{\mathrm{init},h})\overset{d}{=}(X_{l_h},X_{\bar{l}_h},B,u_{\mathrm{init}}),
\end{equation}
and as $h\to\infty$, $\tilde{\mathbb{P}}$-almost surely,
\begin{equation}\label{eq:converge of rv}
\tilde{Y}_h\to\tilde{Y},\quad\tilde{Z}_h\to\tilde{Z},\quad\tilde{\beta}_h\to\tilde{\beta},\quad\text{and}\quad\tilde{u}_{\mathrm{init},h}\to\tilde{u}_{\mathrm{init}}\quad  \text{in } \bar{X}\text{, }C([0,T];\mathbb{R}^{2h})\text{ and }L^1(\mathbb{T}^d).
\end{equation}

Based on \eqref{eq:equal for distribution}, for each $h\in\mathbb{N}$, there exist $\tilde{u}_h\in L^1(\tilde{\Omega}\times Q_T)$, $\tilde{G}_h\in L^2(\tilde{\Omega}\times[0,T];L^2(\mathbb{T}^d;\mathbb{R}^d))$, $\tilde{q}_h\in L^1(\tilde{\Omega};\mathcal{M}(\mathbb{T}^d\times{\mathbb{R}}\times[0,T]))$, $\tilde{\nu}_h\in L^1(\tilde{\Omega};\mathcal{M}(Q_T))$, and $\{\tilde{M}_{\phi_j,h}\}_{j\in\mathbb{N}}\in L^1(\tilde{\Omega};C([0,T])^{\mathbb{N}})$ such that
\[
\tilde{Y}_h=(\tilde{u}_h,\tilde{G}_h,\tilde{q}_h,\tilde{\nu}_h,\{\tilde{M}_{\phi_j,h}\}_{j\in\mathbb{N}}).
\]
Further, owing to \eqref{eq:equal for distribution}, equality in law is preserved after applying continuous functions to the corresponding random variables.
We approximate the nonlinear functions appearing in
\[
\nabla\llbracket\sqrt{\Phi'}\rrbracket(u_{\varepsilon_l}),\qquad
\mathrm{d}\nu_l=\varepsilon_l^{-1}(u_{\varepsilon_l}-\psi)^+\mathrm{d}x\mathrm{d}t,
\qquad\text{and}\quad
M_{\phi_j,l}
\]
by smooth continuous functions of $u_{\varepsilon_l}$.
Using the test function argument again, see \cite[the proof of Theorem 5.25]{fehrman2024well}, we identify the corresponding limits. Namely, $\tilde{\mathbb P}$-almost surely,
\[
\tilde G_h=\nabla\llbracket\sqrt{\Phi'}\rrbracket(\tilde u_h),
\qquad
\mathrm{d}\tilde\nu_h=\varepsilon_{l_h}^{-1}(\tilde u_h-\psi)^+\mathrm{d}x\mathrm{d}t,
\]
and, for every $j\in\mathbb N$, the continuous path $\tilde M_{\phi_j,h}$ is given in terms of $\tilde u_h$ and $\tilde u_{\mathrm{init},h}$ by the same formula as in \eqref{eq:Def for stochastic M}.
Similarly, using \eqref{eq:equal for distribution}, \eqref{eq:kinetic for ql}, and \eqref{eq:kinetic priori esti}, the measure $\tilde{q}_h$ is $\tilde{\mathbb{P}}$-almost surely a kinetic measure for $\tilde{u}_h$ that satisfies Definition \ref{def:def of stochastic kinetic solution without obstacle} (iv) and (v).
Combining \eqref{eq:converge of rv}, there exists $\tilde{u}\in L^1(\tilde{\Omega}\times Q_T)$ such that
\[
\tilde{Y}=(\tilde{u},\nabla\llbracket\sqrt{\Phi^\prime}\rrbracket(\tilde{u}),\tilde{q},\tilde{\nu},\{\tilde{M}_{\phi_j}\}_{j\in\mathbb{N}}).
\]
where continuous paths $\{\tilde{M}_{\phi_j}\}_{j\in\mathbb{N}}$ are defined by $\tilde{u}$ and $\tilde{u}_{\mathrm{init}}$ as in \eqref{eq:Def for stochastic M}.

Now, we verify that $(\tilde{u},\tilde{\nu})$ and $\tilde{q}$ satisfy Definition \ref{def:def of stochastic kinetic solution}. Note that
\begin{align*}
&\tilde{\mathbb{E}}\Vert(\tilde{u}_h-\psi)^+\Vert_{L^1(Q_T)}=\mathbb{E}\Vert(u_{\varepsilon_{l_h}}-\psi)^+\Vert_{L^1(Q_T)}\\
&\quad\leq C\varepsilon_{l_h}(1+\mathbb{E}\Vert u_{\mathrm{init}}\Vert_{L^2(\mathbb{T}^d)}^2+\mathbb{E}\Vert u_{\mathrm{init}}\Vert_{L^1(\mathbb{T}^d)}^{m+1} )\to0,\quad \text{as }h\to\infty,
\end{align*}
using \eqref{eq:L1 perservation without obstacle}, \eqref{eq:priori for q}, \eqref{eq:equal for distribution}, the strong convergence of $\tilde{u}_h$ to $\tilde{u}$ in $L^1(Q_T)$, and the weak convergence of $\tilde{\nu}_h$ to $\tilde{\nu}$ in $\mathcal{M}(Q_T)$, we obtain Definition \ref{def:def of stochastic kinetic solution} (i) and (ii).
Combining the $L^2$-integrability of $\nabla\llbracket\sqrt{\Phi^\prime}\rrbracket(\tilde{u})$ and Assumption \ref{assu:assu for existence} for $\sigma$ and $g$, we obtain Definition \ref{def:def of stochastic kinetic solution} (iv) and (v).

For the measure $\tilde{q}$, using \eqref{eq:equal for distribution}, \eqref{eq:converge of rv}, and \eqref{eq:priori for q}, it is a kinetic measure on $\mathbb{T}^d\times\mathbb{R}\times[0,T]$.
The Definition \ref{def:def of stochastic kinetic solution} (vi) which means almost surely as a measure
\[
\delta_0(\xi-\tilde{u})\Phi^\prime(\xi)|\nabla\tilde{u}|^2=\delta_0(\xi-\tilde{u})|\nabla\llbracket\sqrt{\Phi^\prime}\rrbracket(\tilde{u})|^2\leq\tilde{q}\quad\text{ on }\mathbb{T}^d\times(0,\infty)\times[0,T],
\]
follows from the weak lower-semicontinuity of the Sobolev norm, the strong convergence of $\tilde{u}_h$ to $\tilde{u}$ in $L^1(0,T;L^1(\mathbb{T}^d))$, and the weak convergence of  $\nabla\llbracket\sqrt{\Phi^\prime}\rrbracket(\tilde{u}_h)$ to $\nabla\llbracket\sqrt{\Phi^\prime}\rrbracket(\tilde{u})$ in $L^2(0,T;L^2(\mathbb{T}^d;\mathbb{R}^d))$.
Combining \eqref{eq:kinetic priori esti}, the kinetic measure $\tilde{q}$ satisfies Definition \ref{def:def of stochastic kinetic solution} (vii).

In order to verify Definition \ref{def:def of stochastic kinetic solution} (viii), it requires to clarify the specific form of $\{\tilde{M}_{\phi_j}\}_{j\in\mathbb{N}}$. Denote $\tilde{\beta}_h=\{\tilde\beta_{j,h}\}_{j\in\mathbb{N}}$ and $\tilde{\beta}=\{\tilde\beta_{j}\}_{j\in\mathbb{N}}$.
Then, for every $s\leq t\in[0,T]$, $i,j\in\mathbb{N}$, and continuous function $V:\bar{Y}\to\mathbb{R}$,
\begin{align}
&\tilde{\mathbb{E}}\Big[V(\tilde{Y}_h|_{[0,s]},\tilde{Z}_h|_{[0,s]},\tilde{\beta}_h|_{[0,s]})(\tilde{\beta}_{j,h}(t)-\tilde{\beta}_{j,h}(s))\Big]\nonumber\\
&=\mathbb{E}\Big[V(X_{l_h}|_{[0,s]},X_{\bar{l}_h}|_{[0,s]},B|_{[0,s]})(B^j_t-B^{j}_s)\Big]=0,\nonumber
\end{align}
and
\begin{align}
&\tilde{\mathbb{E}}\Big[V(\tilde{Y}_h|_{[0,s]},\tilde{Z}_h|_{[0,s]},\tilde{\beta}_h|_{[0,s]})\big(\tilde{\beta}_{i,h}(t)\tilde{\beta}_{j,h}(t)-\tilde{\beta}_{i,h}(s)\tilde{\beta}_{j,h}(s)-\delta_{ij}(t-s)\big)\Big]\nonumber\\
&=\mathbb{E}\Big[V(X_{l_h}|_{[0,s]},X_{\bar{l}_h}|_{[0,s]},B|_{[0,s]})(B^i_tB^j_t-B^i_sB^{j}_s-\delta_{ij}(t-s))\Big]=0.\nonumber
\end{align}
Taking the limit $h\to\infty$, we have
\begin{align*}
&\tilde{\mathbb{E}}\Big[V(\tilde{Y}|_{[0,s]},\tilde{Z}|_{[0,s]},\tilde{\beta}|_{[0,s]})(\tilde{\beta}_{j}(t)-\tilde{\beta}_{j}(s))\Big]=0,
\end{align*}
and
\begin{align*}
&\tilde{\mathbb{E}}\Big[V(\tilde{Y}|_{[0,s]},\tilde{Z}|_{[0,s]},\tilde{\beta}|_{[0,s]})(\tilde{\beta}_{i}(t)\tilde{\beta}_{j}(t)-\tilde{\beta}_{i}(s)\tilde{\beta}_{j}(s)-\delta_{ij}(t-s))\Big]=0.
\end{align*}
Using L\'{e}vy's characterization theorem (see \cite[Theorem 3.16]{karatzas1988brownian}), for each $j\in\mathbb{N}$, the path $\tilde{\beta}_j$ is a one-dimensional $\tilde{\mathcal{F}}_t$-Brownian motion, where $(\tilde{\mathcal{F}}_t)_{t\in[0,T]}$ is the augmented filtration of
\[
\mathcal{G}_t:=\sigma(\tilde{Y}|_{[0,s]},\tilde{Z}|_{[0,s]},\tilde{\beta}|_{[0,s]};s\leq t).
\]
Therefore, $\tilde{\beta}$ is a Brownian motion with respect to the complete filtration $\tilde{\mathcal{F}}_t$.

Similarly, note that
\begin{align}
&\tilde{\mathbb{E}}\Big[V(\tilde{Y}_h|_{[0,s]},\tilde{Z}_h|_{[0,s]},\tilde{\beta}_h|_{[0,s]})\big(\tilde{M}_{\phi_j,h}(t)-\tilde{M}_{\phi_j,h}(s)\big)\Big]\nonumber\\
&=\mathbb{E}\Big[V(X_{l_h}|_{[0,s]},X_{\bar{l}_h}|_{[0,s]},B|_{[0,s]})(M_{\phi_j,l_h}(t)-M_{\phi_j,l_h}(s))\Big]=0,\nonumber
\end{align}
\begin{align}
&\tilde{\mathbb{E}}\bigg[V(\tilde{Y}_h|_{[0,s]},\tilde{Z}_h|_{[0,s]},\tilde{\beta}_h|_{[0,s]})\nonumber\\
&\quad\quad\cdot\bigg(\tilde{M}_{\phi_j,h}(t)\tilde{\beta}_{i,h}(t)-\tilde{M}_{\phi_j,h}(s)\tilde{\beta}_{i,h}(s)-\int_s^t\int_x\phi_j(x,\tilde{u}_h,\tau)\nabla\cdot(\sigma(\tilde{u}_h)f_i)\mathrm{d}\tau \bigg)\bigg]\nonumber\\
&=\mathbb{E}\bigg[V(X_{l_h}|_{[0,s]},X_{\bar{l}_h}|_{[0,s]},B|_{[0,s]})\nonumber\\
&\quad\quad\cdot\bigg(M_{\phi_j,l_h}(t)B^i_t-M_{\phi_j,l_h}(s)B^{i}_s-\int_s^t\int_x\phi_j(x,\tilde{u}_h,\tau)\nabla\cdot(\sigma(\tilde{u}_h)f_i)\mathrm{d}\tau\bigg)\bigg]=0,\nonumber
\end{align}
and
\begin{align}
&\tilde{\mathbb{E}}\bigg[V(\tilde{Y}_h|_{[0,s]},\tilde{Z}_h|_{[0,s]},\tilde{\beta}_h|_{[0,s]})\nonumber\\
&\quad\quad\cdot\bigg(|\tilde{M}_{\phi_j,h}(t)|^2-|\tilde{M}_{\phi_j,h}(s)|^2-\int_s^t\sum_{i=1}^\infty\Big(\int_x\phi_j(x,\tilde{u}_h,\tau)\nabla\cdot(\sigma(\tilde{u}_h)f_i)\Big)^2\mathrm{d}\tau \bigg)\bigg]\nonumber\\
&=\mathbb{E}\bigg[V(X_{l_h}|_{[0,s]},X_{\bar{l}_h}|_{[0,s]},B|_{[0,s]})\nonumber\\
&\quad\quad\cdot\bigg(|M_{\phi_j,l_h}(t)|^2-|M_{\phi_j,l_h}(s)|^2-\int_s^t\sum_{i=1}^\infty\Big(\int_x\phi_j(x,u_h,\tau)\nabla\cdot(\sigma(u_h)f_i)\Big)^2\mathrm{d}\tau\bigg)\bigg]=0.\nonumber
\end{align}
Taking the limit $h\to\infty$, based on the continuity of the process in time and the uniform integrability, we have
for every $i,j\in\mathbb{N}$, three processes
\begin{align*}
&\tilde{M}_{\phi_j}(t),\\
&\tilde{M}_{\phi_j}(t)\tilde{\beta}_i(t)-\int_0^t\int_x\phi_j(x,\tilde{u},\tau)\nabla\cdot(\sigma(\tilde{u})f_i)\mathrm{d}\tau,\\
&|\tilde{M}_{\phi_j}(t)|^2-\int_0^t\sum_{i=1}^\infty\Big(\int_x\phi_j(x,\tilde{u},\tau)\nabla\cdot(\sigma(\tilde{u})f_i)\Big)^2\mathrm{d}\tau
\end{align*}
are all continuous $\tilde{\mathcal{F}}_t$-martingale. Since $\phi_j\in C_c^\infty(\mathbb{T}^d\times(0,\infty)\times[0,T))$, using \cite[Proposition A.1]{hofmanova2013degenerate}, we have
\begin{equation}\label{eq:form of M}
\tilde{M}_{\phi_j}(t)=\int_0^t\int_x\phi_j(x,\tilde{u},\tau)\nabla\cdot(\sigma(\tilde{u})\mathrm{d}\tilde{\xi}^F),
\end{equation}
where $\tilde{\xi}^F=\sum_{i=1}^{\infty}f_i(x)\tilde{\beta}_i(t)$.
This indicates that $(\tilde{u},\tilde{\nu},\tilde{q})$ satisfies \eqref{eq:kinetic equation test with time}, $\tilde{\mathbb{P}}$-almost surely.
Moreover, since $(\tilde{u}_h,\tilde{\nu}_h,\tilde{q}_h)$ satisfies \eqref{eq:energy preservation for penalty solution}, it follows from \eqref{eq:equal for distribution}, the almost sure convergence of $\tilde{u}_h$ (a non-relabelled subsequence), and the weak convergences of $\tilde{q}_h$ and $\tilde{\nu}_h$ that $(\tilde{u},\tilde{\nu},\tilde{q})$ satisfies \eqref{eq:preservation of energy in def} for any $\phi\in C_c^\infty([0,T))$, with the expectation taken with respect to
\(\tilde{\mathbb P}\), namely under \(\tilde{\mathbb E}\).
Then, by Lemma \ref{lem:converge to u init}, we obtain Definition \ref{def:def of stochastic kinetic solution} (iii).
Therefore, we conclude that $(\tilde{u},\tilde{\nu})$ is a stochastic kinetic solution under Definition \ref{def:def of stochastic kinetic solution} with $\tilde{q},\tilde{u}_{\mathrm{init}}$, Brownian motion $\tilde{\beta}$ and filtration $(\tilde{\mathcal{F}}_t)_{t\in[0,T]}$ in $(\tilde{\Omega},\tilde{\mathcal{F}},\tilde{\mathbb{P}})$.

Then, by Lemma \ref{lem:converge to u init}, we obtain Definition
\ref{def:def of stochastic kinetic solution} (iii). Therefore,
\((\tilde u,\tilde\nu)\) is a stochastic kinetic solution in the sense of
Definition \ref{def:def of stochastic kinetic solution}, with kinetic measure
\(\tilde q\), initial datum \(\tilde u_{\mathrm{init}}\), Brownian motion
\(\tilde\beta\), and filtration \((\tilde{\mathcal F}_t)_{t\in[0,T]}\) on the
stochastic basis
\[
(\tilde\Omega,\tilde{\mathcal F},(\tilde{\mathcal F}_t)_{t\in[0,T]},
\tilde{\mathbb P}).
\]

Following the above arguments, we have
\[
\tilde{Z}=(\bar{u},\nabla\llbracket\sqrt{\Phi^\prime}\rrbracket(\bar{u}),\bar{q},\bar{\nu},\{\bar{M}_{\phi_j}\}_{j\in\mathbb{N}}),
\]
and $(\bar{u},\bar{\nu})$ is a stochastic kinetic solution in the sense of Definition \ref{def:def of stochastic kinetic solution} with $\bar{q},\tilde{u}_{\mathrm{init}}$ and Brownian motion $\tilde{\beta}$ and filtration $(\tilde{\mathcal{F}}_t)_{t\in[0,T]}$ in $(\tilde{\Omega},\tilde{\mathcal{F}},\tilde{\mathbb{P}})$.
It follows from Corollary \ref{cor:unique with inital} and Theorem \ref{thm:uniqueness} that $(\tilde{u},\tilde{\nu})=(\bar{u},\bar{\nu})$ in $L^1(Q_T)\times\mathcal{M}(Q_T)$.
Applying \cite[Lemma 1.1]{gyongy2022existence}, there exists a non-relabelled subsequence $\{u_{\varepsilon_l},\nu_l\}$ in the original probability space $(\Omega,\mathcal{F},\mathbb{P})$ such that $\{u_{\varepsilon_l},\nu_l\}$ almost surely converges to $\{u,\nu\}$ in $L^1(Q_T)\times\mathcal{M}(Q_T)$ as $l\to\infty$.
Moreover, since almost surely for every $\phi\in C_{c}^{\infty}(\mathbb{T}^{d}\times(0,\infty)\times[0,T))$,
\begin{align}
& -\int_{0}^{T}\int_{\mathbb{R}}\int_{\mathbb{T}^d}\chi_{\varepsilon_l}(x,\xi,s)\partial_s\phi(x,\xi,s)\mathrm{d}x\mathrm{d}\xi\mathrm{d}s\label{eq:kinetic for penalty ql}\\
 & =\int_{\mathbb{R}}\int_{\mathbb{T}^{d}}\bar{\chi}(u_{\mathrm{init}},\xi)\phi(x,\xi,0)\mathrm{d}x\mathrm{d}\xi-\int_{0}^{T}\int_{\mathbb{T}^{d}}\Phi^{\prime}(u_{\varepsilon_l})\nabla_x u_{\varepsilon_l}\cdot(\nabla_x\phi)(x,u_{\varepsilon_l},s)\mathrm{d}x\mathrm{d}s\nonumber \\
 & \quad-\frac{1}{2}\int_{0}^{T}\int_{\mathbb{T}^{d}}\big(F_{1}(x)[\sigma^{\prime}(u_{\varepsilon_l})]^{2}\nabla_x u_{\varepsilon_l}+\sigma(u_{\varepsilon_l})\sigma^{\prime}(u_{\varepsilon_l})F_{2}(x)\big)\cdot(\nabla_x\phi)(x,u_{\varepsilon_l},s)\mathrm{d}x\mathrm{d}s\nonumber\\
 & \quad-\int_{0}^{T}\int_{\mathbb{R}}\int_{\mathbb{T}^{d}}\partial_{\xi}\phi(x,\xi,s)\mathrm{d}q_l(x,\xi,s)-\int_{0}^{T}\int_{\mathbb{T}^{d}}\phi(x,u_{\varepsilon_l},s)\nabla_x\cdot g(u_{\varepsilon_l})\mathrm{d}x\mathrm{d}s \nonumber\\
 & \quad+\frac{1}{2}\int_{0}^{T}\int_{\mathbb{T}^{d}}\Big(\sigma(u_{\varepsilon_l})\sigma^{\prime}(u_{\varepsilon_l})\nabla_x u_{\varepsilon_l}\cdot F_{2}(x)+\sigma^{2}(u_{\varepsilon_l})F_{3}(x)\Big)(\partial_{\xi}\phi)(x,u_{\varepsilon_l},s)\mathrm{d}x\mathrm{d}s\nonumber \\
 & \quad-\int_{0}^{T}\int_{\mathbb{T}^{d}}\phi(x,\psi,s)\mathrm{d}\nu_l(x,s)-\int_{0}^{T}\int_{\mathbb{T}^{d}}\phi(x,u_{\varepsilon_l},s)\nabla_x\cdot(\sigma(u_{\varepsilon_l})f_{k})\mathrm{d}x\mathrm{d}B_{s}^{k},\nonumber
\end{align}
the almost sure convergence of $q_l$ is a consequence of the almost sure convergence of other terms in \eqref{eq:kinetic for penalty ql} based on the convergence of $u_{\varepsilon_l}$ and $\nu_l$. Then,
one can verify that $(u,\nu)$ satisfies Definition \ref{def:def of stochastic kinetic solution} following the steps for $(\tilde{u},\tilde{\nu})$ in $(\tilde{\Omega},\tilde{\mathcal{F}},\tilde{\mathbb{P}})$, which completes the proof.
\end{proof}

\begin{rem}
Our result can be extended to a general equation
\begin{equation*}
\mathrm{d}u=\Delta\Phi(u)\mathrm{d}t-\nabla\cdot\big(\sigma(u)\circ \mathrm{d}{\xi}^{F}+g(u)\mathrm{d}t\big)+\phi(u)\mathrm{d}\xi^G+\lambda(u)\mathrm{d}t
\end{equation*}
where $\phi$, $\lambda$ and $\xi^G$ satisfy \cite[Assumption 6.1 and Assumption 6.3]{fehrman2024well}
.
\end{rem}

\begin{rem}
When $u$ is continuous or quasi-continuous, as in \cite{denis2014obstacle}, our definition of a solution to the obstacle problem automatically yields the Skorokhod condition. Indeed, one can directly apply It\^{o}'s formula to $\int_x u^2$ and compare the result with \eqref{eq:preservation of energy in def}. Together with the fact that $u\leq\psi$ almost surely on $\Omega\times Q_T$, this implies the Skorokhod condition.
\end{rem}

\begin{rem}
 Although the proof of existence is focused on the upper obstacle problem, a corresponding existence result is also valid for the lower obstacle problem under the additional assumptions $|\sigma(r)|\leq C(1+|r|)$ and $p=2$. The proof sketch is outlined as follows.
We first establish a counterpart monotonicity lemma via arguments similar to those for Lemma \ref{lem:(Comparison-theorem)}, which means that for any $0 < \varepsilon_1 \leq \varepsilon_2 $, we have almost surely $u_{\varepsilon_1} \geq u_{\varepsilon_2}$.
Passing to the limit as $\varepsilon \to 0$, and leveraging the a priori estimates derived from Corollary \ref{cor-1}, we then adopt the proof strategy of \cite[the proof of Theorem 2.4]{du2024well} to verify the existence of the almost sure limit function $u\in L^1(Q_T)$.
Finally, following the proof of Theorem \ref{thm:existence}, we obtain the existence of a solution $ (u, \nu)$ to the lower obstacle problem.
\end{rem}

\begin{appendix}
\section{Auxiliary lemmas\label{sec:Auxiliary}}
The first result concerns the estimates of $\sigma$, which is utilized to derive the a priori estimates \eqref{eq=00FF1Apriori p}.
\begin{lem}\label{lem:estimates of sigma u}
Let Assumption \ref{assu:assu for existence} hold for some $p\in[2,\infty)$, then we have either $\nabla\cdot F_2=0$ or there exists $c\in(0,\infty)$ such that for $\xi\in[0,\infty)$,
\[
\bigg|\int_0^{\xi}|r+1|^{p-2}\sigma^\prime(r)\sigma(r)\,\mathrm{d}r\bigg|\leq C\Bigg(1+\xi+\bigg|\int_0^{\xi}|r+1|^{(p-2)/2}\sqrt{\Phi^\prime(r)}\mathrm{d}r\bigg|^2\Bigg).
\]
\end{lem}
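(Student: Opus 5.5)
Since Assumption \ref{assu:assu for existence}.6 states a bound with weight $|\cdot|^{p-2}$ while the lemma asks for the weight $|\cdot+1|^{p-2}$, the plan is to pass from one weight to the other. The second alternative (the bound on $\llbracket|\cdot|^{p-2}\sigma\sigma'\rrbracket$) is the only nontrivial case; if $\nabla\cdot F_2=0$ there is nothing to prove. Write $A(\xi):=\int_0^\xi |r|^{(p-2)/2}\sqrt{\Phi'(r)}\,\mathrm{d}r$ and $B(\xi):=\int_0^\xi |r+1|^{(p-2)/2}\sqrt{\Phi'(r)}\,\mathrm{d}r$. Since $r\geq0$ gives $|r+1|\geq |r|$, we have $A\leq B$. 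Every bound of the form $C(1+\xi+A^2)$ therefore implies $C(1+\xi+B^2)$.

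\textbf{Splitting the integral.} Let $G(\xi):=\int_0^\xi|r+1|^{p-2}\sigma\sigma'\,\mathrm{d}r$, and split at $r=1$.
\begin{itemize}
\item On $[0,\xi\wedge1]$, note that $|r+1|^{p-2}$ is smooth and bounded there. Integration by parts against $(\sigma^2)'/2=\sigma\sigma'$ gives
\[
\tfrac12|r+1|^{p-2}\sigma^2\Big|_0^{\xi\wedge1}-\tfrac{p-2}{2}\int_0^{\xi\wedge1}|r+1|^{p-3}\sigma^2\,\mathrm{d}r .
\]
By Assumption \ref{assu:assu for existence}.8 and continuity of $\sigma$ on $[0,1]$, $\sigma^2$ is bounded on $[0,1]$, so this piece is bounded by a constant.
\item On $[1,\xi]$ with $\xi>1$, set $w(r):=|r+1|^{p-2}/r^{p-2}=(1+1/r)^{p-2}$. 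This is smooth, decreasing, and bounded by $2^{p-2}$, with $|w'(r)|\leq Cr^{-2}$. Let $H(r):=\llbracket|\cdot|^{p-2}\sigma\sigma'\rrbracket(r)$. Integrating by parts,
\[
\int_1^\xi w\,H'\,\mathrm{d}r = wH\Big|_1^\xi-\int_1^\xi w'(r)H(r)\,\mathrm{d}r .
\]
\end{itemize}

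\textbf{Bounding the boundary and remainder terms.} By Assumption \ref{assu:assu for existence}.6, $|H(r)|\leq c(1+r+A(r)^2)$. The boundary terms are therefore bounded by $C(1+\xi+A(\xi)^2)$. For the remainder, use the bound $|H(r)|\leq c(1+r+A(r)^2)$ together with the monotonicity of $A$ (it is nondecreasing because $\Phi'>0$):
\[
\int_1^\xi |w'|\,|H|\,\mathrm{d}r\leq C\int_1^\xi r^{-2}\big(1+r+A(\xi)^2\big)\,\mathrm{d}r\leq C\big(1+\log\xi+A(\xi)^2\big)\leq C\big(1+\xi+A(\xi)^2\big).
\]
Combining both pieces and then using $A\leq B$ gives the claimed estimate.

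\textbf{Expected obstacle.} The delicate point is the behaviour near $r=0$. There $|r|^{p-2}$ vanishes while $|r+1|^{p-2}$ does not, so the two weights cannot be compared multiplicatively, and this is why the split at $r=1$ is needed. In addition, $\sigma'$ may be singular at $0$ (as for $\sqrt r$), so one should avoid any bound involving $|\sigma\sigma'|$ pointwise and work only with $\sigma^2$ via integration by parts, as above. The case $p=2$ is trivial, because then $w\equiv1$ and the two weights coincide.
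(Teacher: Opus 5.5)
Your argument is correct, but on $[1,\xi]$ it takes a different route from the paper's proof.

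The paper integrates by parts once over all of $[0,\xi]$, trading $\sigma\sigma'$ for $\sigma^2$. It then:
\begin{itemize}
\item bounds the endpoint term $(\xi^{p-2}+1)\sigma^2(\xi)$ using both inequalities of Assumption 5;
\item compares weights pointwise, $(r+1)^{p-3}\le c\,r^{p-3}$ on $[1,\xi]$, which is legitimate because $\sigma^2\ge 0$;
\item integrates $\int_0^\xi r^{p-3}\sigma^2\,\mathrm{d}r$ by parts a second time to return to $\llbracket|\cdot|^{p-2}\sigma\sigma'\rrbracket(\xi)$, which Assumption 6 controls.
\end{itemize}

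You instead keep $\sigma\sigma'$ throughout. You absorb the weight mismatch into $w=(1+1/r)^{p-2}$ and do a single integration by parts against the antiderivative $H$.

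What your version buys is economy of hypotheses. Apart from continuity of $\sigma$ on $[0,1]$, you use only Assumption 6 and never the growth bound on $\xi^{p-2}\sigma^2(\xi)$ from Assumption 5. The price is that you must integrate the Assumption 6 bound along $[1,\xi]$. That is why you need the decay $|w'|\le Cr^{-2}$ and the monotonicity of $A$, which together produce the harmless $\log\xi$ term.

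The paper evaluates everything at the endpoint $\xi$, so it needs neither decay of a weight derivative nor monotonicity of $A$. Its cost is the extra growth condition in Assumption 5 and reliance on the positivity of $\sigma^2$ to compare weights pointwise.
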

\begin{proof}
The case $p=2$ is a direct conclusion of Assumption \ref{assu:assu for existence} assertion 5, and we deal with the case $p>2$. Note that for $\alpha\in(-1,\infty)$, we have $(x+1)^{\alpha}\leq c x^{\alpha}$ holds for all $x\in[1,\infty)$ and some $c\in(0,\infty)$.
Integrating by part and using Assumption \ref{assu:assu for existence} assertion 5, we have
\begin{align}\label{eq:integrating by part sigma}
&\bigg|\int_0^{\xi}|r+1|^{p-2}\sigma^\prime(r)\sigma(r)\,\mathrm{d}r\bigg|\\
&=\Bigg|\frac{1}{2}|r+1|^{p-2}\sigma^2(r)\Big|_0^{\xi}-\frac{p-2}{2}\int_0^\xi|r+1|^{p-3}\sigma^2(r)\mathrm{d}r\Bigg|\nonumber\\
&\leq C\bigg((\xi^{p-2}+1)\sigma^2(\xi)+\int_0^{\xi\wedge1}|r+1|^{p-3}\sigma^2(r)\mathrm{d}r+\int_{\xi\wedge1}^\xi r^{p-3}\sigma^2(r)\mathrm{d}r\bigg)\nonumber\\
&\leq C\bigg(1+\xi+\bigg|\int_0^\xi |r+1|^{(p-2)/2}\sqrt{\Phi^\prime(r)}\mathrm{d}r\bigg|^2+\int_{\xi\wedge1}^\xi r^{p-3}\sigma^2(r)\mathrm{d}r\bigg).\nonumber
\end{align}
For the last term on the right hand side of \eqref{eq:integrating by part sigma}, using Assumption \ref{assu:assu for existence} assertion 5 and assertion 6, we have
\begin{align}
\int_{\xi\wedge1}^\xi r^{p-3}\sigma^{2}(r)\mathrm{d}r&\leq \int_{0}^\xi r^{p-3}\sigma^{2}(r)\mathrm{d}r\label{eq:estimates of r^p-3}\\
&=\frac{1}{p-2}r^{p-2}\sigma^2(r)\bigg|_0^\xi-\frac{2}{p-2}\int_0^\xi r^{p-2}\sigma^\prime(r)\sigma(r)\mathrm{d}r\nonumber\\
&\leq C\Big(\xi^{p-2}\sigma^{2}(\xi)+|\llbracket|\cdot|^{p-2}\sigma\sigma^\prime\rrbracket(\xi)|\Big)\nonumber\\
&\leq C\bigg(1+\xi+\bigg|\int_0^\xi r^{(p-2)/2}\sqrt{\Phi^\prime(r)}\mathrm{d}r\bigg|^2\bigg).\nonumber
\end{align}
Combining \eqref{eq:integrating by part sigma} and \eqref{eq:estimates of r^p-3}, we complete the proof.
\end{proof}

To establish the final existence result, namely Theorem \ref{thm:existence}, we need a lemma to verify that the limiting solution satisfies the initial condition (iii) in Definition \ref{def:def of stochastic kinetic solution}.
\begin{lem}\label{lem:converge to u init}
Let Assumption \ref{assu:assum for F} and Assumption \ref{assu:assu for unique} hold for $\xi^F, \psi, u_{\text{init}},\Phi,\sigma$.
Suppose that the pair $(u,\nu)$ fulfills all items in Definition \ref{def:def of stochastic kinetic solution} except the initial condition (iii), and satisfies the following preservation of energy for all $\phi\in C_c^\infty([0,T))$:
\begin{align}
&-\mathbb{E}\int_0^T\int_{\mathbb{T}^d}u^2(x,s)\partial_s\phi(s)\mathrm{d}x\mathrm{d}s+2\mathbb{E}\int_0^T\int_{\mathbb{T}^d}\phi(s)\psi(x,s)\mathrm{d}\nu(x,s)\label{eq:preservation of energy in def}\\
&\ +2\mathbb{E}\int_0^T\phi(s)\mathrm{d}q(\mathbb{T}^d,[0,\infty),s)+\frac{1}{2}\mathbb{E}\int_{0}^{T}\int_{\mathbb{T}^d}\phi(t)\sigma^2(u)(\nabla_{x}\cdot F_{2}-2F_3)\mathrm{d}x\mathrm{d}s\nonumber\\
&=\mathbb{E}\int_{\mathbb{T}^d}u^2_{\mathrm{init}}(x)\phi(0)\mathrm{d}x.\nonumber
\end{align}
Then, we have almost surely
\begin{align}\label{r-4}
\lim_{\tau\to0}\frac{1}{\tau}\int_0^{\tau}\int_{x}|u(x,t)-u_{\mathrm{init}}(x)|^2\mathrm{d}t=0,
\end{align}
which can be viewed as a strong trace result with respect to the initial data.
\end{lem}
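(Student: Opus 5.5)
The plan is to expand the square,
\[
\frac1\tau\int_0^\tau\!\int_x|u-u_{\mathrm{init}}|^2\,\mathrm{d}t
=\frac1\tau\int_0^\tau\!\int_x u^2\,\mathrm{d}t-\frac2\tau\int_0^\tau\!\int_x u\,u_{\mathrm{init}}\,\mathrm{d}t+\int_x u_{\mathrm{init}}^2,
\]
and to prove, almost surely, two things: (a) $\limsup_{\tau\to0}\frac1\tau\int_0^\tau\int_x u^2\le\int_x u_{\mathrm{init}}^2$; (b) $\frac1\tau\int_0^\tau\int_x u\,u_{\mathrm{init}}\to\int_x u_{\mathrm{init}}^2$. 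Together these give \eqref{r-4}. Since $0\le u_{\mathrm{init}}\le\psi(\cdot,0)$ and $\psi$ is bounded, all quantities involving $u_{\mathrm{init}}$ are finite.

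\emph{Weak part (b).} For $\rho\in C^\infty(\mathbb{T}^d)$ and $\theta\in C_c^\infty((0,\infty))$, I take test functions in \eqref{eq:kinetic equation test with time} converging to $\tilde\alpha_\tau(t)\rho(x)\theta(\xi)$, where $\tilde\alpha_\tau$ equals $1$ on $[0,\tau]$ and is linear down to $0$ on $[\tau,2\tau]$. The left side becomes the time average of $\int_{x,\xi}\chi\rho\theta$ over $[\tau,2\tau]$. On the right side:
\begin{itemize}
\item the Lebesgue-time integrals are $o(1)$ by integrability (Definition \ref{def:def of stochastic kinetic solution} (iv),(v));
\item the stochastic integral is a continuous process vanishing at $0$;
\item the $q$- and $\nu$-contributions are bounded by $C\,q(\mathbb{T}^d\times\mathrm{supp}\,\theta\times[0,2\tau])$ and $C\,\nu(\mathbb{T}^d\times[0,2\tau])$.
\end{itemize}
The two measure bounds are the delicate point, since a priori the measures may charge $\{t=0\}$. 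I will handle this jointly with (a): the energy argument below yields $\limsup\le$ at the level of $u^2$, while the mass identity \eqref{eq:L1 perservation} with $\phi=\tilde\alpha_\tau$ shows that $\nu(\mathbb{T}^d\times\{0\})$ equals the $L^1$ defect at $0^+$. Combining the two via a uniform-convexity argument should force the atoms at $0$ to vanish. Once this is done, $\frac1\tau\int_0^\tau u\,\mathrm{d}t\rightharpoonup u_{\mathrm{init}}$ weakly in $L^2$ on a dense class. Using the $L^2$ bound from (a), I extend this to the test function $u_{\mathrm{init}}$ itself, via $\int_x u\,u_{\mathrm{init}}=\int_{x,\xi}\chi\,\mathbf 1_{\{\xi<u_{\mathrm{init}}\}}\cdot(\text{kernel})$ approximated by smooth $\rho\theta$.

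\emph{Energy part (a).} I test \eqref{eq:kinetic equation test with time} pathwise with $\phi=\tilde\alpha_\tau(t)\,2\xi\,\zeta_N(\xi)\varphi_\beta(\xi)$. Then $-\int\partial_\xi\phi\,\mathrm{d}q$ contributes $-2\int\zeta_N\varphi_\beta\,\mathrm{d}q\le0$ plus cutoff errors, and $-\int\phi(x,\psi)\,\mathrm{d}\nu=-2\int\psi\zeta_N(\psi)\varphi_\beta(\psi)\,\mathrm{d}\nu\le0$. Both signs are favourable, so these terms can simply be dropped in an upper bound. The diffusion terms integrate to zero by Lemma \ref{lem:intergrating by part} and the $\nabla\cdot g$ term vanishes as in Proposition \ref{prop:proposition for limit measure}. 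The $\sigma$ and $F_2,F_3$ terms are time integrals of $L^1$ functions, hence $O$ of an integral over $[0,2\tau]$. The cutoff errors as $\beta\to0$, $N\to\infty$ are controlled by Proposition \ref{prop:proposition for limit measure} and Definition \ref{def:def of stochastic kinetic solution} (vii), the latter along a subsequence almost surely. This produces the pathwise inequality
\[
\frac1\tau\int_\tau^{2\tau}\!\int_x u^2\le\int_x u_{\mathrm{init}}^2+o(1)+\sup_{t\le2\tau}|M_t|,
\]
with $M$ continuous and $M_0=0$. Summing over dyadic blocks $[2^{-k-1}\tau,2^{-k}\tau]$ gives (a). The hypothesis \eqref{eq:preservation of energy in def} is used to justify removing the cutoffs: it guarantees that the expected totals of $q$ and $\psi\,\nu$ are finite, so dominated convergence applies.

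I expect the main obstacle to be the possible atoms of $q$ and $\nu$ at $t=0$. Here I would first try the route indicated above: use the energy identity \eqref{eq:preservation of energy in def} in expectation against the pathwise inequality to show that the positive atom terms must have zero expectation, and hence vanish almost surely.
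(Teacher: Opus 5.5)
Your split into (a) and (b) differs from the paper's route, and it breaks down at exactly the step you defer: the atoms $\nu_0:=\nu(\cdot\times\{0\})$ and $q_0:=q(\cdot\times\{0\})$. Carry out your test in (b) with $\rho(x)\varphi_\beta(\xi)\zeta_N(\xi)$ and remove the cutoffs using Proposition \ref{prop:proposition for limit measure} and (vii). You then find that $\bar u_\tau:=\frac1\tau\int_\tau^{2\tau}u\,\mathrm{d}t$ converges weakly to $u_{\mathrm{init}}-g$, where $g:=\nu_0|_{\{\psi(\cdot,0)>0\}}\geq0$. So the $\nu$-atom enters the cross term with the unfavourable sign, and your decomposition forces you to prove $g=0$ outright. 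You only assert that a convexity argument ``should'' do this, and as written it cannot. In (a) you discard precisely the terms $-2\int\psi(\cdot,0)\,\mathrm{d}\nu_0-2q_0(\mathbb{T}^d\times(0,\infty))$ that carry the information. With only $\limsup\leq\|u_{\mathrm{init}}\|_{L^2}^2$ you get $\|u_{\mathrm{init}}-g\|_{L^2}^2\leq\|u_{\mathrm{init}}\|_{L^2}^2$, which many $g\neq0$ satisfy (e.g.\ $g=u_{\mathrm{init}}$).

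The mechanism in your last paragraph does not help either. \eqref{eq:preservation of energy in def} allows $\phi(0)\neq0$ and integrates $\nu$ and $q$ over $[0,T)$, so it already contains the atoms. It is just the expectation of your $\xi^2$-test and is consistent with any atoms. A telltale sign is that you use $u_{\mathrm{init}}\leq\psi(\cdot,0)$ only for finiteness. Yet the conclusion fails whenever $u_{\mathrm{init}}>\psi(\cdot,0)$ on a set of positive measure (because $u\leq\psi$ and $\psi$ is continuous), while the remaining hypotheses can accommodate such a jump through atoms at $t=0$. So compatibility has to be used structurally.

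Your convexity idea can be repaired. Keep the atoms in (a), i.e.\ $\limsup_{\tau\to0}\frac1\tau\int_\tau^{2\tau}\|u\|_{L^2}^2\leq\|u_{\mathrm{init}}\|_{L^2}^2-2\int\psi(\cdot,0)\,\mathrm{d}\nu_0-2q_0(\mathbb{T}^d\times(0,\infty))$. Combine this with Jensen and weak lower semicontinuity, $\|u_{\mathrm{init}}-g\|_{L^2}^2\leq\liminf_{\tau\to0}\frac1\tau\int_\tau^{2\tau}\|u\|_{L^2}^2$. This gives $\|g\|_{L^2}^2+2\int(\psi(\cdot,0)-u_{\mathrm{init}})g\,\mathrm{d}x+2q_0(\mathbb{T}^d\times(0,\infty))\leq0$. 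Compatibility then forces $g=0$ and $q_0=0$, and weak convergence plus the norm bound yields the strong trace.

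The paper avoids separating the square altogether. It tests with $\gamma(t)\varphi_\beta\zeta_N(\xi)(\xi-u_{\mathrm{init}}(y))\kappa^\varsigma_d(x-y)$, so the $q$-contribution carries the favourable sign coming from the convexity of $\xi\mapsto|\xi-r|^2$. In the $\nu$-contribution one has $-(\psi(x,t)-u_{\mathrm{init}}(y))\leq\psi(y,0)-\psi(x,t)$, which is small by continuity of $\psi$. That is where compatibility enters, and the atoms never need to be shown to vanish.

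Finally, the large-$\xi$ cutoff in (a) produces $2(N+1)\,q(\mathbb{T}^d\times[N,N+1]\times[0,2\tau])$. Condition (vii) alone does not control this, because of the factor $N$. This is exactly what \eqref{eq:limit for N q}, derived in the paper from the energy hypothesis, supplies.
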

\begin{proof}
Based on the definition of $\kappa^\varsigma_d$, we have
\begin{align}
&\frac{1}{\tau}\int_0^\tau\int_{x}|u(x,t)-u_{\text{init}}(x)|^2\mathrm{d}t\label{eq:split the term for uinit}\\
&\leq\frac{2}{\tau}\int_0^\tau\int_{x,y}\int_{\mathbb{T}^d}|u(x,t)-u_{\text{init}}(y)|^2\kappa_{d}^{\varsigma}(x-y)\mathrm{d}t+2\int_{x,y}|u_{\text{init}}(x)-u_{\text{init}}(y)|^2\kappa_{d}^{\varsigma}(x-y).\nonumber
\end{align}
Taking a decreasing non-negative function $\gamma\in C_c^1([0,T))$ such that
\[
\gamma(0)=2,\quad\gamma\leq2\mathbf{1}_{[0,3\tau]},\quad \partial_t\gamma(t)\in[-1/\tau,0],
\]
and
\begin{equation*}
\quad\partial_t\gamma(t)=
\begin{cases}
-1/\tau,&  \text{when  } t\in[0,\tau];\\
0,&\text{when  }t\in[3\tau,\infty).
\end{cases}
\end{equation*}
It follows from Assumption \ref{assu:assum for F} that there exists some $M>0$ such that $u\leq\psi\leq M$ on $Q_T$, it implies that $\chi(x,\xi,t)=0$ for any $(x,\xi,t)\in\mathbb{T}^d\times[M,\infty)\times[0,T]$. Note that
\begin{align}\label{r-5}
|u(x,t)-r|^2-r^2=2\int_{\mathbb{R}}(\xi-r)\chi(x,\xi,t)\mathrm{d}\xi\quad{\text{for }}(x,t,r)\in Q_T\times{\mathbb{R}},
\end{align}
by using the facts that $\chi\in[0,1]$ on $\mathbb{T}^d\times\mathbb{R}\times[0,T]$ and $u_{\text{init}}\geq0$, we have for $\beta\in(0,1)$, $N>M$ and $\varsigma\in(0,1)$,
\begin{align}
&\frac{1}{\tau}\int_0^\tau\int_{x,y}|u(x,t)-u_{\text{init}}(y)|^2\kappa_{d}^{\varsigma}(x-y)\mathrm{d}t\label{eq:split the term for u}\\
&\leq-\int_0^T\int_{x,y}\partial_t\gamma(t)|u(x,t)-u_{\text{init}}(y)|^2\kappa_{d}^{\varsigma}(x-y)\mathrm{d}t\nonumber\\
&=-2\int_0^{T}\int_{x,y,\xi}\partial_t\gamma(t)(\xi-u_{\text{init}}(y))\kappa_{d}^{\varsigma}(x-y)\chi(x,\xi,t)\mathrm{d}t \nonumber\\
&\quad-\int_0^{T}\int_{x,y}\partial_t\gamma(t)|u_{\text{init}}(y)|^2\kappa_{d}^{\varsigma}(x-y)\mathrm{d}t \nonumber\\
&\leq-2\int_0^{T}\int_{x,y,\xi}\partial_t\gamma(t)\varphi_{\beta}(\xi)(\xi-u_{\text{init}}(y))\kappa_{d}^{\varsigma}(x-y)\chi(x,\xi,t)\mathrm{d}t\nonumber\\
&\quad+2\int_0^{T}\int_{x,\xi}|\partial_t\gamma(t)|\xi\mathbf{1}_{\xi\in[0,\beta]}\mathrm{d}t -\int_0^{T}\int_{y}\partial_t\gamma(t)|u_{\text{init}}(y)|^2\mathrm{d}t\nonumber\\
&\leq -2\int_0^{T}\int_{x,y,\xi}\partial_t\gamma(t)\varphi_{\beta}(\xi)\zeta_{N}(\xi)(\xi-u_{\text{init}}(y))\kappa_{d}^{\varsigma}(x-y)\chi(x,\xi,t)\mathrm{d}t\nonumber\\
&\quad+ C\beta^2+\gamma(0)\int_{y}|u_{\text{init}}(y)|^2,\nonumber
\end{align}
where we used $\zeta_N(\xi)\chi(x,\xi,t)=\chi(x,\xi,t)$ for $(x,\xi,t)\in\mathbb{T}^d\times{\mathbb{R}}\times[0,T]$.

In the following, we will handle the first term on the righthand side of (\ref{eq:split the term for u}).
For fixed $y\in\mathbb{T}^d$, by taking the test function approximating $\phi(x,\xi,t)=\gamma(t)\varphi_\beta(\xi)\zeta_{N}(\xi)(\xi-u_{\text{init}}(y))\kappa_{d}^{\varsigma}(x-y)$ in \eqref{eq:kinetic equation test with time}, and integrating $y$ over $\mathbb{T}^d$, we have almost surely
\begin{align}
& -\int_{0}^{T}\int_{x,y,\xi}\partial_t\gamma(t)\varphi_\beta(\xi)(\xi-u_{\text{init}}(y))\kappa_{d}^{\varsigma}(x-y)\chi(x,\xi,t)\mathrm{d}t\label{test for u init}\\
 & =\int_{x,y,\xi}\gamma(0)\bar{\chi}(u_{\mathrm{init}},\xi)\varphi_\beta(\xi)(\xi-u_{\text{init}}(y))\kappa_{d}^{\varsigma}(x-y)\nonumber\\
&\quad -\int_{0}^{3\tau}\int_{x,y}\Phi^{\prime}(u)\nabla_x u\cdot\nabla_x\kappa_{d}^{\varsigma}(x-y)(u- u_{\text{init}}(y))\gamma(t)\varphi_\beta(u)\mathrm{d}t\nonumber \\
 & \quad-\frac{1}{2}\int_{0}^{3\tau}\int_{x,y}\big(F_{1}(x)[\sigma^{\prime}(u)]^{2}\nabla_x u+\sigma(u)\sigma^{\prime}(u)F_{2}(x)\big)\cdot\nabla_x\kappa_{d}^{\varsigma}(x-y)(u- u_{\text{init}}(y))\gamma(t)\varphi_\beta(u)\mathrm{d}t\nonumber\\
 & \quad-\int_{0}^{T}\int_{\mathbb{R}}\int_{\mathbb{T}^{d}}\int_{y}\gamma(t)\partial_\xi[\varphi_{\beta}(\xi)\zeta_N(\xi)(\xi-u_{\text{init}}(y))]\kappa_{d}^{\varsigma}(x-y)\mathrm{d}q(x,\xi,t)\nonumber \\
 &\quad-\int_{0}^{3\tau}\int_{x,y}\gamma(t)\varphi_\beta(u)\zeta_N(u)(u-u_{\mathrm{init}}(y))\kappa^\varsigma_d(x-y)\nabla\cdot g(u)\mathrm{d}t\nonumber\\
 & \quad+\frac{1}{2}\int_{0}^{3\tau}\int_{x,y}\Big(\sigma(u)\sigma^{\prime}(u)\nabla_x u\cdot F_{2}(x)+\sigma^{2}(u)F_{3}(x)\Big)\gamma(t)\kappa_{d}^{\varsigma}(x-y)\partial_\xi[\varphi_{\beta}(\xi)(\xi-u_{\text{init}}(y))]|_{\xi=u}\mathrm{d}t\nonumber \\
 & \quad-\int_{0}^{T}\int_{\mathbb{T}^{d}}\int_{y}\gamma(t)\varphi_\beta(\psi(x,t))(\psi(x,t)-u_{\text{init}}(y))\kappa_{d}^{\varsigma}(x-y)\mathrm{d}\nu(x,t)\nonumber\\
 &\quad-\int_{0}^{3\tau}\int_{x,y}\gamma(t)\varphi_\beta(u)(u-u_{\text{init}}(y))\kappa_{d}^{\varsigma}(x-y)\nabla_x\cdot(\sigma(u)f_{k})\mathrm{d}B_{t}^{k}
 =:\sum_{i=1}^8 I_i,\nonumber
\end{align}
where $u=u(x,t)$ and $\bar{\chi}(u_{\text{init}},\xi)=\bar{\chi}(u_{\text{init}}(x),\xi)$.

Note that the terms $I_2$, $I_3$, $I_5$, $I_6$ and $I_8$ will almost surely converge to 0 when $\tau\to0$. This follows from the support of $\varphi_\beta$, the uniform bound $u\leq M$, and Definition \ref{def:def of stochastic kinetic solution} (v).
For the term $I_1$, using the definition of the function $\bar{\chi}$ and the Lebesgue's dominated convergence theorem, we have
\begin{align}\label{eq:limit of I1}
\lim_{\varsigma\to0}\lim_{\beta\to0}I_1&=\frac{\gamma(0)}{2}\lim_{\varsigma\to0}\int_{\mathbb{T}^{d}}\int_{\mathbb{T}^d}\kappa_{d}^{\varsigma}(x-y)|\xi-u_{\text{init}}(y)|^2\Big|^{\xi=u_{\mathrm{init}}(x)}_{\xi=0}\mathrm{d}x\mathrm{d}y
\\
&=-\frac{\gamma(0)}{2}\int_{y}|u_{\mathrm{init}}(y)|^2.\nonumber
\end{align}
For the term $I_7$, since $u_{\mathrm{init}}\leq\psi(\cdot,0)$ on $\Omega\times\mathbb{T}^d$, we have
\begin{align}
I_7&\leq -\int_{0}^{T}\int_{\mathbb{T}^{d}}\int_{y}\gamma(t)\varphi_\beta(\psi(x,t))(\psi(x,t)-\psi(y,0))\kappa_{d}^{\varsigma}(x-y)\mathrm{d}\nu(x,t)\label{eq:estimate of I6}\\
&\leq C(\mathfrak{C}(\varsigma)+\mathfrak{C}(\tau))\nu(Q_T),\nonumber
\end{align}
where $\mathfrak{C}$ is the modulus of continuity of $\psi$.
On this stage, it remains to deal with the term $I_4$. The chain rule yields
\begin{align}
I_4&=-\int_{0}^{T}\int_{\mathbb{R}}\int_{\mathbb{T}^{d}}\int_{y}\gamma(t)\varphi_{\beta}^{\prime}(\xi)(\xi-u_{\text{init}}(y))\kappa_{d}^{\varsigma}(x-y)\mathrm{d}q(x,\xi,t)\label{eq:split I4}\\
&\quad-\int_{0}^{T}\int_{\mathbb{R}}\int_{\mathbb{T}^{d}}\int_{y}\gamma(t)\zeta_N^\prime(\xi)(\xi-u_{\text{init}}(y))\kappa_{d}^{\varsigma}(x-y)\mathrm{d}q(x,\xi,t)\nonumber\\
&\quad-\int_{0}^{T}\int_{\mathbb{R}}\int_{\mathbb{T}^{d}}\int_{y}\gamma(t)\varphi_{\beta}(\xi)\zeta_N(\xi)\kappa_{d}^{\varsigma}(x-y)\mathrm{d}q(x,\xi,t)=:\sum_{i=1}^3I_{4,i}.\nonumber
\end{align}
Since $q$ is a nonnegative measure, we have $I_{4,3}\leq0$.
For $I_{4,1}$, using Proposition \ref{prop:proposition for limit measure}, the non-negativity of measure $q$ and function $\varphi_\beta^\prime$, and Fatou's lemma, there almost surely exists non-relabelled subsequence $\beta\to0$ such that
\begin{align}
\lim_{\beta\to0}\lim_{\tau\to0}I_{4,1}&\leq\lim_{\beta\to0}\lim_{\tau\to0}\int_{0}^{T}\int_{\mathbb{R}}\int_{\mathbb{T}^{d}}\int_{y}\gamma(t)\varphi_{\beta}^{\prime}(\xi)u_{\text{init}}(y)\kappa_{d}^{\varsigma}(x-y)\mathrm{d}q(x,\xi,t)\label{eq:estimates of I 4,1}\\
&\leq2\lim_{\beta\to0}\int_{0}^{T/2}\int_{\mathbb{R}}\int_{\mathbb{T}^{d}}\int_{y}\varphi_{\beta}^{\prime}(\xi)u_{\text{init}}(y)\kappa_{d}^{\varsigma}(x-y)\mathrm{d}q(x,\xi,t)\nonumber\\
&\leq C\lim_{\beta\to0}\int_{0}^{T/2}\int_{\mathbb{R}}\int_{\mathbb{T}^{d}}\varphi_{\beta}^{\prime}(\xi)\mathrm{d}q(x,\xi,t)\nonumber\\
&\leq C\lim_{\beta\to0}\beta^{-1}q(\mathbb{T}^d\times[\beta/2,\beta]\times[0,T/2])=0.\nonumber
\end{align}
 To estimate $I_{4,2}$, we need a new property of the kinetic measure $q$, that is, almost surely,
 \begin{equation}\label{eq:limit for N q}
\underset{N\to\infty}{\mathrm{lim\,inf}}\big[Nq(\mathbb{T}^d\times[N,N+1]\times[0,T/2])\big]=0.
\end{equation}
 Its proof is postponed to the end of this proof. We now proceed to estimate $I_{4,2}$ under the assumption that (\ref{eq:limit for N q}) holds.
 Using \eqref{eq:limit for N q}, Definition \ref{def:def of stochastic kinetic solution} (vii), the non-negativity of measure $q$ and function $-\zeta_N^\prime$, and Fatou's lemma, there almost surely exists a non-relabeled subsequence $N\to\infty$ such that
\begin{align}
\lim_{N\to\infty}\lim_{\tau\to0}I_{4,2}&\leq-\lim_{N\to\infty}\lim_{\tau\to0}\int_{0}^{T}\int_{\mathbb{R}}\int_{\mathbb{T}^{d}}\int_{y}\gamma(t)\zeta_N^\prime(\xi)\xi\kappa_{d}^{\varsigma}(x-y)\mathrm{d}q(x,\xi,t)\label{eq:estimates of I 4,2}\\
&\leq-\lim_{N\to\infty}\int_{0}^{T/2}\int_{\mathbb{R}}\int_{\mathbb{T}^{d}}\zeta_N^\prime(\xi)\xi\,\mathrm{d}q(x,\xi,t)\nonumber\\
&\leq C\lim_{N\to\infty}(N+1)q(\mathbb{T}^d\times[N,N+1]\times[0,T/2])=0.\nonumber
\end{align}
Therefore, based on \eqref{eq:split I4}, \eqref{eq:estimates of I 4,1} and \eqref{eq:estimates of I 4,2}, we have
\[
\lim_{\beta\to0}\lim_{N\to\infty}\lim_{\tau\to0}I_4\leq0.
\]
Combining with \eqref{test for u init}-\eqref{eq:estimate of I6}, we have
\begin{align*}
&\lim_{\varsigma\to0}\lim_{\beta\to0}\lim_{N\to\infty}\lim_{\tau\to0}\bigg[-\int_{0}^{T}\int_{x,y,\xi}\partial_t\gamma(t)\varphi_\beta(\xi)(\xi-u_{\text{init}}(y))\kappa_{d}^{\varsigma}(x-y)\chi(x,\xi,t)\mathrm{d}t\Bigg]\\
&\leq-\frac{\gamma(0)}{2}\int_{y}|u_{\mathrm{init}}(y)|^2.
\end{align*}
Combining with \eqref{eq:split the term for uinit}, \eqref{eq:split the term for u}, and the continuity of translations in $L^2(\mathbb{T}^d)$, the result (\ref{r-4}) follows.

In what follows, we present the proof of (\ref{eq:limit for N q}). This proof follows a similar argument to that of Proposition \ref{prop:proposition for limit measure}, and relies on \eqref{eq:preservation of energy in def}.
For any $\epsilon>0$, $N\in\mathbb{N}$ and $\beta\in(0,1)$, by taking smooth test functions in \eqref{eq:kinetic equation test with time} which converge to $\tilde{\alpha}_{\epsilon,T}(s)\zeta_{N}(\xi)\varphi_{\beta}(\xi)\xi$ with $\tilde{\alpha}_{\epsilon,T}$ defined by (\ref{r-2}), taking expectation, and using the Lebesgue's dominated convergence theorem, we have
\begin{align}
& -\mathbb{E}\int_{0}^{T}\int_{x,\xi}\chi(x,\xi,s)\partial_s\tilde{\alpha}_{\epsilon,T}(s)\zeta_N(\xi)\varphi_\beta(\xi)\xi\,\mathrm{d}s\label{eq:u^2 for equation}\\
 & =\mathbb{E}\int_{x,\xi}\bar{\chi}(u_{\mathrm{init}},\xi)\tilde{\alpha}_{\epsilon,T}(0)\zeta_N(\xi)\varphi_\beta(\xi)\xi\nonumber\\
 & \quad-\mathbb{E}\int_{0}^{T}\int_{\mathbb{R}}\int_{\mathbb{T}^{d}}\tilde{\alpha}_{\epsilon,T}(s)\xi\partial_{\xi}(\zeta_N(\xi)\varphi_\beta(\xi))\mathrm{d}q(x,\xi,s)\nonumber\\
 &\quad-\mathbb{E}\int_{0}^{T}\int_{\mathbb{R}}\int_{\mathbb{T}^{d}}\tilde{\alpha}_{\epsilon,T}(s)\zeta_N(\xi)\varphi_\beta(\xi)\mathrm{d}q(x,\xi,s)\nonumber \\
&\quad -\mathbb{E}\int_{0}^{T}\int_{x}\tilde{\alpha}_{\epsilon,T}(s)\zeta_{N}(u)\varphi_{\beta}(u)u\nabla\cdot g(u)\mathrm{d}s\nonumber\\
 & \quad+\frac{1}{2}\mathbb{E}\int_{0}^{T}\int_{x}\tilde{\alpha}_{\epsilon,T}(s)\Big(\sigma(u)\sigma^{\prime}(u)\nabla u\cdot F_{2}(x)+\sigma^{2}(u)F_{3}(x)\Big)\zeta_N(u)\varphi_\beta(u)\mathrm{d}s\nonumber \\
  & \quad+\frac{1}{2}\mathbb{E}\int_{0}^{T}\int_{x}\tilde{\alpha}_{\epsilon,T}(s)\Big(\sigma(u)\sigma^{\prime}(u)\nabla u\cdot F_{2}(x)+\sigma^{2}(u)F_{3}(x)\Big)u\partial_\xi(\zeta_N\varphi_\beta)(u)\mathrm{d}s\nonumber \\
 & \quad-\mathbb{E}\int_{0}^{T}\int_{\mathbb{T}^{d}}\tilde{\alpha}_{\epsilon,T}(s)\psi\zeta_N(\psi)\varphi_{\beta}(\psi)\mathrm{d}\nu(x,s).\nonumber
\end{align}
We will consider the limit of each term in \eqref{eq:u^2 for equation} when $N\to\infty$ and $\beta\to0$. For simplicity, denote the equation (\ref{eq:u^2 for equation}) by $J=\sum^7_{i=1}J_i$.

Using \cite[Remark 5.26]{fehrman2024well} and applying the Lebesgue's dominated convergence theorem, with the integrability of $\sigma^2(u)$ and the definitions of $\chi$ and $\bar{\chi}$, we have
\begin{align*}
\lim_{\beta\to0}\lim_{N\to\infty}J
&=-\frac{1}{2}\mathbb{E}\int_{0}^{T}\int_{x}|u(x,s)|^2\partial_s\tilde{\alpha}_{\epsilon,T}(s)\mathrm{d}s,\\
\lim_{\beta\to0}\lim_{N\to\infty}J_1&=\mathbb{E}\int_{x}|u_{\mathrm{init}}(x)|^2\tilde{\alpha}_{\epsilon,T}(0),
\\
\lim_{\beta\to0}\lim_{N\to\infty}J_3
&=-\mathbb{E}\int_{0}^{T}\tilde{\alpha}_{\epsilon,T}(s)\mathrm{d}q(\mathbb{T}^d,[0,\infty),s),\\
\lim_{\beta\to0}\lim_{N\to\infty}J_7&=-\mathbb{E}\int_{0}^{T}\int_{\mathbb{T}^{d}}\tilde{\alpha}_{\epsilon,T}(s)\psi(x,s)\mathrm{d}\nu(x,s),
\end{align*}
and
\begin{align*}
&\lim_{\beta\to0}\lim_{N\to\infty}J_5\\
&=\lim_{\beta\to0}\lim_{N\to\infty}\mathbb{E}\int_{0}^{T}\int_{x}\tilde{\alpha}_{\epsilon,T}(s)\Big(-\llbracket\zeta_N\varphi_N\sigma\sigma^\prime\rrbracket(u) \nabla\cdot F_{2}(x)+\zeta_N(u)\varphi_\beta(u)\sigma^{2}(u)F_{3}(x)\Big)\mathrm{d}s\\
&=\mathbb{E}\int_{0}^{T}\int_{x}\tilde{\alpha}_{\epsilon,T}(s)\sigma^2(u)\Big(-\frac{1}{2}\nabla\cdot F_{2}(x)+F_{3}(x)\Big)\mathrm{d}s.
\end{align*}
Moreover, due to $u\leq\psi\leq M$ on $\Omega\times Q_T$, using Assumption \ref{assu:assu for unique} for the limit of $\sigma^2(r)$ when $r\to0^+$, we have for $N>M$,
\begin{align*}\notag
&\lim_{\beta\to0}\lim_{N\to\infty}J_5\\
&=\lim_{\beta\to0}\mathbb{E}\int_{0}^{T}\int_{x}\tilde{\alpha}_{\epsilon,T}(s)\Big(-\nabla\cdot F_{2}(x)\cdot\int_0^u r\varphi_\beta^\prime(r) \sigma(r)\sigma^\prime(r)\mathrm{d}r+u\varphi_\beta^\prime(u)\sigma^{2}(u)F_{3}(x)\Big)\mathrm{d}s\\ \notag
&=0.
\end{align*}
Note that
\[
J_4=-\mathbb{E}\int_0^T\int_x\tilde{\alpha}_{\epsilon,T}(s)\nabla\cdot\int_0^u \zeta_N(r)\varphi_\beta(r)g(r)\mathrm{d}r\mathrm{d}s=0.
\]
Combining all the previous estimates, we arrive at
\begin{align*}
 \lim_{\beta\to0}\lim_{N\to\infty}J_2
  &=-\frac{1}{2}\mathbb{E}\int_{0}^{T}\int_{x}|u(x,s)|^2\partial_s\tilde{\alpha}_{\epsilon,T}(s)\mathrm{d}s-\mathbb{E}\int_{x}|u_{\mathrm{init}}(x)|^2\tilde{\alpha}_{\epsilon,T}(0)
\\
&+\mathbb{E}\int_{0}^{T}\tilde{\alpha}_{\epsilon,T}(s)\mathrm{d}q(\mathbb{T}^d,[0,\infty),s)+\mathbb{E}\int_{0}^{T}\int_{\mathbb{T}^{d}}\tilde{\alpha}_{\epsilon,T}(s)\psi(x,s)\mathrm{d}\nu(x,s)\\
  &-\mathbb{E}\int_{0}^{T}\int_{x}\tilde{\alpha}_{\epsilon,T}(s)\sigma^2(u)\Big(-\frac{1}{2}\nabla\cdot F_{2}(x)+F_{3}(x)\Big)\mathrm{d}s.
\end{align*}
With the aid of \eqref{eq:preservation of energy in def} with $\phi=\tilde{\alpha}_{\epsilon,T}$, we reach
\begin{align}\label{eq:limit for partialxi q}
\lim_{\beta\to0}\lim_{N\to\infty}J_2=\lim_{\beta\to0}\lim_{N\to\infty}\mathbb{E}\int_{0}^{T}\int_{\mathbb{R}}\int_{\mathbb{T}^{d}}\tilde{\alpha}_{\epsilon,T}(s)\xi\partial_{\xi}(\zeta_N(\xi)\varphi_\beta(\xi))\mathrm{d}q(x,\xi,s)=0.
\end{align}
Moreover, it follows from Proposition \ref{prop:proposition for limit measure} that
\begin{align}
0&\leq\lim_{\beta\to0}\lim_{N\to\infty}\mathbb{E}\int_{0}^{T}\int_{\mathbb{R}}\int_{\mathbb{T}^{d}}\tilde{\alpha}_{\epsilon,T}(s)\xi\varphi_\beta^\prime(\xi)\zeta_N(\xi)\mathrm{d}q(x,\xi,s)\label{eq:limit for paritalxi beta q}\\
&=\lim_{\beta\to0}\mathbb{E}\int_{0}^{T}\int_{\mathbb{R}}\int_{\mathbb{T}^{d}}\tilde{\alpha}_{\epsilon,T}(s)\xi\varphi_\beta^\prime(\xi)\mathrm{d}q(x,\xi,s)\nonumber\\
&\leq C\lim_{\beta\to0}\mathbb{E}\big(\beta^{-1}q(\mathbb{T}^{d}\times[\beta/2,\beta]\times[0,T))\big)=0.\nonumber
\end{align}
Combining \eqref{eq:limit for partialxi q} and \eqref{eq:limit for paritalxi beta q}, by using the fact that $\tilde{\alpha}_{\epsilon,T}=1$ on $[0,T/2]$ for $\epsilon$ small enough, we have
\begin{align*}
&\lim_{N\to\infty}\mathbb{E}\int_{0}^{T/2}\int_{\mathbb{R}}\int_{\mathbb{T}^{d}}\xi\zeta
^\prime_N(\xi)\mathrm{d}q(x,\xi,s)\\
&=\lim_{\beta\to0}\lim_{N\to\infty}\mathbb{E}\int_{0}^{T/2}\int_{\mathbb{R}}\int_{\mathbb{T}^{d}}\tilde{\alpha}_{\epsilon,T}(s)\xi\partial_{\xi}(\zeta_N(\xi)\varphi_\beta(\xi))\mathrm{d}q(x,\xi,s)=0.
\end{align*}
As a result of the definition of $\zeta_N$, we reach
\begin{align}
&0=\lim_{N\to\infty}\mathbb{E}\int_{0}^{T/2}\int_{\mathbb{R}}\int_{\mathbb{T}^{d}}\xi\zeta
^\prime_N(\xi)\mathrm{d}q(x,\xi,s)\nonumber\\
&\leq-\lim_{N\to\infty}N\mathbb{E}q(\mathbb{T}^d\times[N,N+1]\times[0,T/2])\leq0.\nonumber
\end{align}
Hence, (\ref{eq:limit for N q}) follows by using Fatou's lemma.\end{proof}
\end{appendix}

\bibliographystyle{plain}
\bibliography{bi}

\end{document}